\theoremstyle{definition}
\newtheorem* {theorem*}{Theorem}
\newtheorem* {conjecture*}{Conjecture}
\newtheorem{theorem}{Theorem}[section]
\newtheorem{thmdef}[theorem]{Theorem-Definition}
\theoremstyle{definition}
\newtheorem* {example*}{Example}
\newtheorem{lemma}[theorem]{Lemma}
\theoremstyle{definition}
\newtheorem{definition}[theorem]{Definition}
\theoremstyle{definition}
\newtheorem{conjecture}[theorem]{Conjecture}
\newtheorem{proposition}[theorem]{Proposition}
\newtheorem{corollary}[theorem]{Corollary}
\newtheorem{remark}[theorem]{Remark}
\theoremstyle{definition}
\newtheorem {example}[theorem]{Example}
\theoremstyle{definition}
\theoremstyle{definition}
\theoremstyle{definition}
\theoremstyle{definition}
\def\({\left(}
\def\){\right)}
\newcommand{\CC}{\mathbb{C}}
\newcommand{\cO}{\mathcal{O}}
\newcommand{\cR}{\mathcal{R}}
\newcommand{\cI}{\mathcal{I}}
\newcommand{\cD}{\mathcal{D}}
\def\cX{\mathcal{X}}
\def\cW{\mathcal{W}}
\def\NN{\mathbb{N}}
\def\CC{\mathbb{C}}
\def\ZZ{\mathbb{Z}}
\def\GL{\mathrm{GL}}
\def\sh{\mathrm{sh}}
\def\barr{\begin{array}}
\def\earr{\end{array}}
\def\ba{\begin{aligned}}
\def\ea{\end{aligned}}
\def\be{\begin{equation}}
\def\ee{\end{equation}}
\def\Cyc{\mathrm{Cyc}}
\def\Fix{\mathrm{Fix}}
\def\Pair{\mathrm{Pair}}
\def\pair{\mathrm{pair}}
\def\qquand{\qquad\text{and}\qquad}
\def\quand{\quad\text{and}\quad}
\def\quord{\quad\text{or}\quad}
\def\inv{\operatorname{inv}}
\def\Inv{\operatorname{Inv}}
\def\I{\mathcal{I}}
\def\DesR{\operatorname{Des}_R}
\def\DesL{\operatorname{Des}_L}
\def\hs{\hspace{0.5mm}}
\def\ben{\begin{enumerate}}
\def\een{\end{enumerate}}
\def\cE{\mathcal E}
\def\hs{\hspace{0.5mm}}
\def\fpf{{\mathsf {FPF}}}
\def\Des{\operatorname{Des}}
\def\NDes{\operatorname{NDes}}
\def\NNeg{\operatorname{NNeg}}
\def\NFix{\operatorname{NFix}}
\def\ellhat{\hat\ell}
\def\a{\textbf{a}}
\def\b{\textbf{b}}
\newcommand{\xRightarrow}[2][]{\ext@arrow 0359\Rightarrowfill@{#1}{#2}}
\newcommand{\SO}{\operatorname{SO}}
\renewcommand{\O}{\operatorname{O}}
\newcommand{\Sp}{\operatorname{Sp}}
\newcommand{\rank}{\operatorname{rank}}
\newcommand{\cA}{\mathcal{A}}
\def\cAfpf{\cA_\fpf}
\def\cRfpf{\hat\cR_\fpf}
\def\iR{\hat\cR}
\newcommand{\arc}[2]{ \ar @/^#1pc/ @{-} [#2] }
\def\arcstop{\endxy\ }
\def\arcstart{\ \xy<0cm,-.06cm>\xymatrix@R=.1cm@C=.10cm }
\newcommand{\arcstartc}[1]{\ \xy<0cm,-.15cm>\xymatrix@R=.1cm@C=#1cm}
\def\ellhat{\hat\ell}
\def\neg{\operatorname{neg}}
\def\Neg{\operatorname{Neg}}
\def\sort{\operatorname{sort}}
\def\offset{\operatorname{offset}}
\def\NCSM{\textsf{NCSP}}
\def\XA{\cW_{{\tt embed}}}
\def\HA{\cA_{{\tt hecke}}}
\def\bar{\overline}
\def\scov{<\hspace{-1.6mm}\vartriangleleft_B}
\def\bcov{<\hspace{-1.6mm}\blacktriangleleft_B}
\def\X{\mathcal{X}}
\def\Y{\mathcal{Y}}
\def\Z{\mathcal{Z}}
\def\wB{\bar1\hs\bar  2\hs\bar 3\cdots  \bar n}
\def\O{\mathrm{O}}
\def\W{W^{\mathsf{BC}}}
\def\DGroup{W^{\mathsf{D}}}
\def\HA{\cA_{\mathsf{Hecke}}}
\def\cAB{\cA_{\mathsf{Brion}}}
\def\BCDphi{\iota^{\mathsf{BC}}}
\def\fpf{\mathsf{FPF}}
\def\BC{\mathsf{BC}}
\def\cAk{\cA_k}
\def\cAkfpf{\cA_k^{\fpf}}
\def\yfpf{y^{\fpf}}
\def\gfpf{g^{\fpf}}
\def\shfpf{\sh^{\fpf}}
\def\signs{\mathsf{signs}}
\numberwithin{equation}{section}
\renewcommand{\@makefnmark}{\mbox{\textsuperscript{}}}
\begin{document}
\title{Atoms for signed permutations}
\author{
    Zachary Hamaker \\
    Department of Mathematics \\
    University of Florida \\
    { \tt zachary.hamaker@gmail.com}
\and
    Eric Marberg\\
    Department of Mathematics \\
    HKUST \\
    {\tt eric.marberg@gmail.com}
}

\date{}

\maketitle

\begin{abstract}
There is a natural analogue of weak Bruhat order on the involutions in any Coxeter group.
The saturated chains of intervals in this order correspond to reduced words for a certain set of group elements called atoms.
Brion gives a general formula for the cohomology class of a $K$-orbit closure in an arbitrary flag variety, where $K$ is a symmetric subgroup of a complex algebraic group.
In type A, the terms in this formula are indexed by 
atoms for permutations.
We study the combinatorics of atoms for involutions in the group of signed permutations.
In particular, we give a compact description of the atom set for any signed involution and endow it with the structure of a graded poset.
Our main result, as an application, is to identify explicitly the terms in Brion's cohomology formula in types B and C.
These descriptions apply to all $K$-orbits in these types and are the first of their kind outside of type A.
\end{abstract}


\section{Introduction}

This article is about the properties of certain elements, which we call \emph{atoms},
associated to involutions in Coxeter groups. Here is a quick definition.
Let $(W,S)$ be a Coxeter system with length function $\ell : W \to \NN =\{0,1,2,\dots\}$.
There is a unique associative product $\circ:W \times W \to W$ such that $s \circ s = s$ for $s \in S$ and $u \circ v = uv$ for $u,v \in W$ with $\ell(u) + \ell(v) = \ell(uv)$ \cite[\S7.1]{Humphreys}.
This is sometimes called the \emph{Demazure product}, while $(W,\circ)$ is the \emph{$0$-Hecke monoid} of $(W,S)$.

Write $\I(W)= \{ w \in W : w^{-1} = w\}$  for the set of involutions in $W$.
The operation $w \mapsto w^{-1} \circ w$ is a surjective map $W\to \I(W)$, and we 
let $\HA(z)=\left\{ w \in W : w^{-1} \circ w = z\right\}$ for each $z \in \I(W)$.
This set is always nonempty and we define $\cA(z)$ to be its subset of minimal length elements.
Following \cite{HMP2}, we refer to the elements of $\HA(z)$ as the \emph{Hecke atoms} of $z \in \I(W)$, while the elements of $\cA(z)$
are the \emph{atoms} of $z$.
When $W$ is finite, the atoms of $z \in \I(W)$ are precisely the minimal 
length elements $w \in W $ with $wz \leq w$ in (strong) Bruhat order \cite[Theorem 4.12]{HMP2}.


\subsection{Motivation}\label{intro2-sect}

One reason why
atoms are interesting objects is that their reduced expressions correspond to saturated chains in the \emph{involution weak (Bruhat) order} introduced by Richardson and Springer \cite{RichSpring}.
This is the partial order on $\I(W)$ with $x \leq_\I y$ if there exists $w \in W$ such that $w^{-1} \circ x \circ w = y$.

The involution weak order is relevant to the study of spherical varieties.
Let $G$ be a complex algebraic group with Borel subgroup $B$, and let $\theta$ be  a self-inverse automorphism of $G$ that preserves $B$.
 The \emph{symmetric subgroup} $K = \{g \in G: \theta(g) = g\}$ acts with finitely many orbits on the flag variety $G/B$.
The opposite Borel subgroup also acts on $G/B$ with finitely many orbits, whose closures are the Schubert varieties $X_w$ indexed by the elements of the Weyl group $W$.

Fix a $K$-orbit $\cO$ in $G/B$.
Brion gives a general formula describing the cohomology class of the closure of $\cO$ in terms of Schubert varieties in \cite{Brion2001}.
We can express this formula as
\begin{equation}
\label{eq:Brion}
[\overline{\cO}] = \sum_{w \in \cAB(\cO)} 2^{d_\cO(w)}[X_w] \in H^*(G/B)	
\end{equation}
where the sum is over a certain set $\cAB(\cO) \subset W$ and each $d_\cO(w)$ is a nonnegative integer.
The precise definitions of $\cAB(\cO)$ and $d_\cO(w)$ involve a partial order on the $K$-orbits in $G/B$
that lifts the involution weak order.
We review this material
 in Section~\ref{geom-sect}.
 
For example, if $G = \GL(n,\CC)$ and $\theta(g) = (g^{-1})^T$, then we have $K = \O(n,\CC)$ and $W = S_n$.
In this case, the $K$-orbit closures $\cO_z$ are indexed by $z \in \I(S_n)$, the integer $d_{\cO_z}(w)$ is the number of $2$-cycles in $z$ for all $w$,
and the set $\cAB(\cO_z)$ is precisely the set of atoms $\cA(z)$.
One goal of this paper is to clarify the extent to which this parallel between\
$\cAB(\cO)$ and $\cA(z)$ holds outside type A. We focus on the classical types B and C, when $G = \Sp(2n,\CC)$ or $\SO(n,\CC)$.

Other motivations for studying atoms come from their enumerative properties.
Several authors have noted remarkable features of atoms for involutions in the symmetric group.
For example, the inverse Hecke atoms $\HA(z)^{-1} = \{ w^{-1} : w \in \HA(z)\}$ are precisely the equivalence classes in $S_n$ 
under the so-called \emph{Chinese relation} from~\cite{CEHKN,DuchampKrob}. 
There are also explicit combinatorial descriptions of $\cA(z)$ for $z \in \I(S_n)$ \cite{CJ,CJW},
which are used in  \cite{HMP2} to show that  $\cA(z)$ is naturally a bounded, graded poset (and conjecturally, a lattice).
Another goal of the present work is to generalize these descriptions to atoms for signed permutations.
Some of our theorems in this direction can be viewed as strengthening
results of Hu and Zhang,
who prove an analogue of the Matsumoto-Tits theorem for the reduced expressions of atoms for signed permutations in  \cite{HuZhang1,HuZhang2}.

\subsection{Outline of results}
\label{introResults}

Fix a positive integer $n$ and let $[n]= \{1,2,\dots,n\}$.
Let $[\pm n] = [n] \sqcup -[n]$
and write $\W_n$ for the group of bijections $w : [\pm n] \to [\pm n]$ satisfying $w(-i) = -w(i)$ for each $i \in [n]$.
We refer to the elements of $\W_n$ as signed permutations.
Define 
\[t_0 = (-1,1)  \in \W_n
\quand
 t_i = (-i,-i-1)(i,i+1) \in \W_n\text{ for $i \in [n-1]$.}\]
With respect to these generators, 
$\W_n$ is the rank $n$ Coxeter group of types B and C. 
%
Throughout, we use the term \emph{word} to mean a finite sequence of integers.
 The \emph{one-line representation} of a permutation $w$ in $S_n$ or $\W_n$
 is the word $w_1w_2\cdots w_n$ where $w_i=w(i)$.
We usually write $\bar{m}$  in place of $-m$ so that, for example, 
the 8 elements of $\W_2$ 
are
$ 12,$ $\bar 1 2,$  $1 \bar 2$, $\bar 1\hs \bar 2$, $2 1$,  $\bar 2 1$, $ 2 \bar 1$, and $\bar 2\hs \bar1$. 

Define $\vartriangleleft_A$ to be the relation on $n$-letter words with
\be\label{<A-eq} \cdots cab\cdots \vartriangleleft_A \cdots bca \cdots\ee
whenever $a<b<c$ and the corresponding ellipses ``$\cdots$'' mask identical subsequences.
We apply $\vartriangleleft_A$ to permutations via their one-line representations,
and  
refer to the transitive closure $<_A$ of $\vartriangleleft_A$ as the \emph{atomic order} of type A.

A poset $(P,<)$ is \emph{graded} if there exists a function $\rank : P \to\NN$ 
with $\rank(y) = \rank(x) + 1$ whenever $x<y$ is a covering relation.
A poset is \emph{bounded} if it has a unique minimum and a unique maximum.
Results in \cite[\S6.1]{HMP2} show that if $z \in \I(S_n)$ then $\cA(z)^{-1}=\{ w^{-1} : w\in \cA(z)\}$ is a bounded, graded poset under $<_A$. 
%
%
%
The sets $\cA(z)^{-1}$ were first studied in \cite{CJ,CJW}, where they
are denoted $\cW(z)$.
%
As will be reviewed in Section~\ref{nest-sect},  there is an explicit construction for the minimal and maximal elements in these posets, as well as 
a simple algorithm to recover $z$ from the one-line representation of any $w \in \cA(z)^{-1}$.
Such properties play a key technical role in \cite{HMP3,HMP4,HMP5}.

This paper introduces signed analogues of the partial order $<_A$.
To sketch our main results, we write $\vartriangleleft_B$ for the  relation on $n$-letter words with 
\be\label{<B-eq} \bar b \bar a \cdots \vartriangleleft_B a \bar b \cdots 
\qquand
\bar c \bar b \bar a  \cdots \vartriangleleft_B \bar c  a  \bar b \cdots
\ee
whenever $0<a<b<c$ and the corresponding ellipses mask identical subwords.
Unlike $\vartriangleleft_A$, this relation only changes the letters at the start of a word.
The \emph{(weak) atomic order} $<_{B}$ of type B is the transitive closure of \emph{both} $\vartriangleleft_A$ and $\vartriangleleft_B$.
We apply these relations on words to elements of $\W_n$ via their one-line representations.
%

Fix $z \in \I(\W_n)$. We will show that  $\cA(z)^{-1} = \{ w \in \W_n : w^{-1} \in \cA(z)\}$ is preserved by
the relations $\vartriangleleft_A$ and $\vartriangleleft_B$,
in the sense that if $v,w \in \W_n$ are such that $v\vartriangleleft_A w$ (respectively, $v \vartriangleleft_B w$),
then $v \in \cA(z)^{-1}$ if and only if $w \in \cA(z)^{-1}$; see Lemma~\ref{<B-lem}.
%
%
%
Let $\Neg(z) = \{ i \in [n] : z(i) = -i\}$. The following combines Theorems~\ref{nc-thm} and \ref{minmax-thm}:

\begin{theorem}\label{intro-thm2}
If $m = |\Neg(z)|$, then $(\cA(z)^{-1},<_A)$ has $\binom{m}{\lfloor m/2\rfloor}$ connected components.
These components are in bijection with the perfect matchings in  $\Neg(z) \sqcup -\Neg(z)$ 
that are noncrossing and symmetric with respect to negation.
Moreover, each component in $(\cA(z)^{-1},<_A)$ is isomorphic to $(\cA(\zeta)^{-1},<_A)$ for some $\zeta \in \I(S_n)$.
\end{theorem}

The second author and Pawlowski \cite{MP} use 
this classification to prove a
formula conjectured in \cite{HMP1} for the number of maximal chains in $(\I(\W_n),<_\I)$.

The following holds by Corollaries~\ref{hz-cor1}, \ref{cat-cor}, and \ref{rank-cor}. Let $k= \left\lceil \tfrac{1}{2} |\Neg(z)|\right\rceil$.

\begin{theorem}\label{intro-thm3}
The poset $(\cA(z)^{-1},<_B)$ is graded and connected  with $\frac{1}{k+1}\binom{2k}{k}$ maximal elements.
\end{theorem}

Our proof of these results   will establish more than what is stated here.
In particular, we will completely describe the minimal and maximal elements in $(\cA(z)^{-1},<_A)$ for $z \in \I(\W_n)$,
and give an explicit construction of the bijection mentioned in Theorem~\ref{intro-thm2}.
The assertion that $(\cA(z)^{-1},<_B)$ is connected  for all $z \in \I(\W_n)$ is equivalent to a result of Hu and Zhang \cite[Theorem 4.8]{HuZhang2}.
Our methods 
provide an alternate proof of Hu and Zhang's theorem.

\begin{example}\label{intro-ex}
The Hasse diagram of $(\cA(z)^{-1},<_B)$
for $z =\overline{1}\hs \overline{2} \hs\overline{3}\hs \overline{4} \in \I(\W_4)$ is
\begin{center}
\begin{tikzpicture}[scale=0.9]
\node (0) at (2,7.2) {$1\bar 2 3 \bar 4$};  
\node (1a) at (0,6) {$13\bar4\hs\bar2$};  
\node (1b) at (2,6) {$\bar2\hs\bar13\bar4$};  
\node (1c) at (4,6) {$1\bar42\bar3$};  
\node (2a) at (0,4.8) {$3\bar41\bar2$};  
\node (2b) at (2,4.8) {$\bar23\bar4\hs\bar1$};  
\node (2c) at (4,4.8) {$\bar4\hs\bar12\bar3$};  
\node (3a) at (0,3.6) {$\bar4\hs\bar31\bar2$};  
\node (3b) at (2,3.6) {$3\bar4\hs\bar2\hs\bar1$};  
\node (3c) at (4,3.6) {$\bar42\bar3\hs\bar1$};  
\node (4a) at (2,2.4) {$\bar4\hs\bar3\hs\bar2\hs\bar1$};  
\draw  [->,dashed] (4a) -- (3b);
\draw  [->,dashed] (4a) -- (3c);
\draw  [->,dashed] (3a) -- (2a);
\draw  [->] (3b) -- (2b);
\draw  [->] (3c) -- (2c);
\draw  [->] (2a) -- (1a);
\draw  [->] (2b) -- (1b);
\draw  [->,dashed] (2c) -- (1c);
\draw  [->] (1a) -- (0); 
\draw  [->,dashed] (1b) -- (0); 
\end{tikzpicture}
\end{center}
The solid arrows indicate the relations $\vartriangleleft_A$
while the dashed arrows indicate $\vartriangleleft_B$.
The six minimal elements relative to $<_A$ are in bijection with the six perfect noncrossing symmetric matchings in 
the set $[\pm 4]$ via the following correspondence, which is described in general in Section~\ref{shapes-sect}:
\[
\ba
\\[-12pt]
\bar4\hs\bar3\hs\bar2\hs\bar1\quad&\leftrightarrow\quad \arcstart
{
*{\bullet}  \arc{1.2}{rrrrrrr}    & *{\bullet}   \arc{0.9}{rrrrr}    & *{\bullet} \arc{0.6}{rrr}     & *{\bullet}   \arc{0.3}{r}     & *{\bullet} & *{\bullet}    & *{\bullet}   & *{\bullet}    
} 
\arcstop
\\[-12pt]\\
3\bar4\hs\bar2\hs\bar1 \quad&\leftrightarrow\quad \arcstart
{
*{\bullet}  \arc{0.4}{r}    & *{\bullet}      & *{\bullet}   \arc{0.8}{rrr}  & *{\bullet}    \arc{0.4}{r}    & *{\bullet} & *{\bullet}   & *{\bullet} \arc{0.4}{r}   & *{\bullet}    
} 
\arcstop
\\[-12pt]\\
1\bar42\bar3\quad&\leftrightarrow\quad \arcstart
{
*{\bullet}  \arc{0.8}{rrr}    & *{\bullet}   \arc{0.4}{r}    & *{\bullet}    & *{\bullet}     & *{\bullet} \arc{0.8}{rrr}  & *{\bullet} \arc{0.4}{r}   & *{\bullet}   & *{\bullet}    
} 
\arcstop
\ea
\qquad\quad
\ba
\\[-12pt]
\bar4\hs\bar31\bar2\quad&\leftrightarrow\quad \arcstart
{
*{\bullet}  \arc{1.2}{rrrrrrr}    & *{\bullet}   \arc{0.9}{rrrrr}    & *{\bullet}   \arc{0.4}{r}  & *{\bullet}       & *{\bullet}  \arc{0.4}{r} & *{\bullet}    & *{\bullet}   & *{\bullet}    
} 
\arcstop
\\[-12pt]\\
 \bar42\bar3\hs\bar1\quad&\leftrightarrow\quad\arcstart
{
*{\bullet}  \arc{1.2}{rrrrrrr}    & *{\bullet}   \arc{0.4}{r}    & *{\bullet}    & *{\bullet}   \arc{0.4}{r}     & *{\bullet} & *{\bullet} \arc{0.4}{r}   & *{\bullet}   & *{\bullet}    
} 
\arcstop
\\[-12pt]\\
3\bar41\bar2\quad&\leftrightarrow\quad\arcstart
{
*{\bullet}  \arc{0.4}{r}    & *{\bullet}      & *{\bullet}   \arc{0.4}{r}  & *{\bullet}     & *{\bullet}  \arc{0.4}{r}  & *{\bullet}   & *{\bullet} \arc{0.4}{r}   & *{\bullet}    
} 
\arcstop
\ea
\]
In this case, the matching of an atom should be viewed as describing which fixed points combine to form 2-cycles in $z$, a perspective that is made precise in Section~\ref{relshape-sect}.
\end{example}

%

As noted above, our results also have applications to Brion's cohomology formula \eqref{eq:Brion}. Let $p,q \in \NN$ with $p+q=n$.
There are three families of symmetric subgroups $K\subset G$ for which the Weyl group is $W=\W_n$:
\ben
\item[(BI)]  $G=\SO(2n+1,\CC)\supset K =S(\O(2p,\CC)\times \O(2q+1,\CC))$.
\item[(CI)] $G=\Sp(2n,\CC)\supset K= \GL(n,\CC)$.
\item[(CII)] $G=\Sp(2n,\CC)\supset K= \Sp(2p,\CC)\times \Sp(2q,\CC)$.
\een
For precise versions of  the following informal statement, see Theorems~\ref{bt1}, \ref{bt11}, and \ref{bt2}.

\begin{theorem}
\label{intro-Brion}
Suppose $(G,K)$ has type (BI) with $p-q\in\{0,1\}$ or type (CI).
Then for each $K$-orbit $\cO$ in $G/B$, the set $\cAB(\cO)^{-1}$ 
is the  union of
an explicit subset of connected components in $(\cA(z)^{-1},<_A)$ for some $z \in \I(\W_n)$.
There are similarly explicit descriptions of $\cAB(\cO)$ 
in type (BI) with $p-q\notin \{0,1\}$ and type (CII), involving minor generalizations
of the sets $\cA(z)$.
\end{theorem}

The extra notation needed to handle types  (BI) with $p-q\notin \{0,1\}$ and (CII)
is given in Section~\ref{relshape-sect}.
The orbit closures  described in Theorem~\ref{intro-Brion} are indexed by combinatorial objects called \emph{clans}, defined in Section~\ref{clans-sect}.
Theorem~\ref{intro-Brion} gives the first efficient way of computing Brion's formula when $G = \SO(n,\CC)$ or $G = \Sp(2n,\CC)$.
It also provides the first non-recursive description of the terms in Brion's formula when the $K$-orbits in $G/B$ are indexed by clans.

Here is an outline of the paper.
Section~\ref{embed-sect} gives some preliminaries.
We spend Sections~\ref{nest-sect}--\ref{geom-sect}
proving Theorems~\ref{intro-thm2} and \ref{intro-thm3} and various related results.
The proof of Theorem~\ref{intro-Brion} occupies
Sections \ref{relshape-sect} and \ref{geom-sect}.
We briefly discuss the sets $\HA(z)$ in Section~\ref{hecke-sect}.
In Section~\ref{atomic-sect},
we enumerate the elements $z \in \I(W_n)$ with $|\cA(z)| = 1$.
Appendix~\ref{not-sect} contains an index of symbols.

\subsection*{Acknowledgements}

This work was partially supported by HKUST grant IGN16SC11
and Hong Kong RGC Grant ECS 26305218.
We thank
Alex Hultman and Mikael Hansson
for several helpful comments, and Brendan Pawlowski for useful conversations. 
We also thank the anonymous referees for useful suggestions.

\section{Preliminaries}
\label{embed-sect}

We write $\ZZ$ for the integers, $\NN$ for the nonnegative integers, and set $[n] = \{1,2,\dots,n\}$ for $n \in \NN$.
Let $(W,S)$ be a Coxeter system
with length function $\ell : W\to \NN$ and Demazure product $\circ$, as described at the start of the introduction.
  Again let $ \I(W)= \{ w \in W : w^2=1\}$.

  \subsection{Demazure products}\label{dem-subsect}
  
  We include a few remarks about how to compute with $\circ$.
  If $w \in W$ and $s \in S$
  then $w\circ s$ is  either $w$ or $ws$,
while $s\circ w$ is either $w$ or $sw$.
If $w \in W$ and $w=s_1s_2\cdots s_l$ is a reduced word then $w=s_1\circ s_2 \circ \dots \circ s_l$.
If
$s \in S$ and $z \in \I(W)$ have $\ell(szs) = \ell(z)$ then $szs=z$ \cite[Lemma 3.4]{H2}, so 
\be\label{szs-eq}
 s\circ z \circ s = \begin{cases} szs &\text{if $zs\neq sz$ and $\ell(zs) > \ell(z)$}\\
 zs &\text{if $zs=sz$ and $\ell(zs) > \ell(z)$} \\
 z &\text{if $\ell(zs) < \ell(z)$}.
 \end{cases}
 \ee
 This identity is useful for understanding 
the covering relations in the involution weak order  $<_\I$, which all have the form $z<_\I s \circ z \circ s$ for $z \in \I(W)$ and $s \in S$ with $\ell(zs)>\ell(z)$.

 %

It follows by an inductive argument that the set $\HA(z) = \{ w \in W : w^{-1} \circ w = z\}$ 
is nonempty for all $z \in \I(W)$.
The set $\cA(z)$ of minimal length elements of $\HA(z)$ is therefore also nonempty for all $z \in \I(W)$.
Denote the left and right descent sets of $w \in W$ by
$\DesL(w) = \{ s \in S : \ell(sw) < \ell(w)\}$ and $\DesR(w) = \{ s \in S : \ell(ws) < \ell(w)\}.$
A finite Coxeter group has a unique longest element $w_0$ with $\DesL(w_0)=\DesR(w_0)=S$.
In $S_n$ one has $w_0= n\cdots 321$, while in $\W_n$ one has
 $w_0= \wB$.
Write $<_L$ and $<_R$ for the left and right weak orders on $W$, which are the partial orders with covering relations $sw \lessdot_L w$ 
and $wt \lessdot_R w$ for
all $w\in W$, $s \in \DesL(w)$, and $t \in \DesR(w)$. 
 
\begin{proposition}\label{weak-prop}
 Suppose $z \in \I(W)$ and $w \in \cA(z)$.
 Then  $\DesR(w) \subset \DesR(z)$.
 Moreover, if $v \in W$ has $v<_Rw$ then  $v \in \cA(y)$ for some $y \in \I(W)$.
 \end{proposition}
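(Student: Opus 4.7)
The plan is to deduce both statements from the associativity of the Demazure product together with the formula for $s\circ z\circ s$ recorded just before the proposition.

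For the first assertion, suppose $s\in \DesR(w)$ and factor $w=us$ with $\ell(u)=\ell(w)-1$. Then $w = u\circ s$ and $w^{-1}=s\circ u^{-1}$, so setting $y := u^{-1}\circ u\in \I(W)$ one gets
\[
z \;=\; w^{-1}\circ w \;=\; s\circ u^{-1}\circ u\circ s \;=\; s\circ y\circ s.
\]
Now I invoke the three cases in the displayed lemma. If $\ell(ys)<\ell(y)$ then $z=y$, which would mean $u\in \HA(z)$ and $\ell(u)<\ell(w)$, contradicting $w\in \cA(z)$; so this case is ruled out. In the remaining two cases (whether $ys=sy$ or not), a direct computation gives $\ell(zs)<\ell(z)$, hence $s\in \DesR(z)$. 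This handles $\DesR(w)\subset \DesR(z)$.

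For the second assertion, suppose $v<_R w$, so that $w=vu$ with $\ell(w)=\ell(v)+\ell(u)$, equivalently $w=v\circ u$. Set $y:= v^{-1}\circ v\in \I(W)$; by construction $v\in \HA(y)$, and it remains only to verify that $v$ has minimal length in $\HA(y)$. Assume for contradiction that some $v'\in \HA(y)$ satisfies $\ell(v')<\ell(v)$, and consider $w':= v'\circ u$. Using $(a\circ b)^{-1}=b^{-1}\circ a^{-1}$ and associativity of $\circ$,
\[
w'^{-1}\circ w' \;=\; u^{-1}\circ v'^{-1}\circ v'\circ u \;=\; u^{-1}\circ y\circ u \;=\; u^{-1}\circ v^{-1}\circ v\circ u \;=\; w^{-1}\circ w \;=\; z,
\]
while $\ell(w')\leq \ell(v')+\ell(u)<\ell(v)+\ell(u)=\ell(w)$, contradicting $w\in \cA(z)$.

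The argument is essentially bookkeeping with the Demazure product, and the only substantive input is the case analysis from the lemma on $s\circ z\circ s$; there is no real obstacle beyond carefully noting that the case $\ell(ys)<\ell(y)$ in part one is incompatible with the minimality defining $\cA(z)$, which is exactly the same minimality argument that drives part two.
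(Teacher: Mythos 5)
Your proof is correct. It is worth noting that the paper's own proof is a single sentence: it invokes the standard factorization property of the Demazure product, namely that $z = w^{-1}\circ w$ can be written as $z=vw$ with $\ell(z)=\ell(v)+\ell(w)$ (so every atom lies below $z$ in right weak order), from which $\DesR(w)\subset\DesR(z)$ is immediate, since $s\in\DesR(w)$ gives $\ell(zs)=\ell(v\cdot ws)\leq \ell(v)+\ell(ws)=\ell(z)-1$; the second claim is left to exactly the length-minimality bookkeeping you spell out. Your route to the first assertion is genuinely different: rather than quoting the additive factorization, you peel off a single right descent, write $z=s\circ y\circ s$ with $y=u^{-1}\circ u$, and run the three-case analysis of the displayed conjugation lemma, eliminating the length-decreasing case because it would produce an element of $\HA(z)$ shorter than $w$. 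This costs a short case check (including the implicit verification that $\ell(sys)=\ell(y)+2$ in the non-commuting case, which follows from the cited fact that $\ell(sys)=\ell(y)$ forces $sys=y$), but it is entirely self-contained given the lemma, whereas the paper's version buys brevity by citing an unproved general property of $\circ$. Your argument for the second assertion---substituting a hypothetical shorter $v'\in\HA(y)$ into $w'=v'\circ u$ and contradicting the minimality of $\ell(w)$, using $(a\circ b)^{-1}=b^{-1}\circ a^{-1}$ and associativity---is the standard one and is surely what the authors intend by their terse proof.
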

 
 \begin{proof}
We have $z =vw$ for some $v \in W$ with $\ell(z) = \ell(v) + \ell(w)$.
  \end{proof}

The \emph{involution length function} 
 $\ellhat :\I(W) \to \NN$ is the map that assigns to 
 $z \in \I(W)$ the common value of $\ell(w)$ for $w \in \cA(z)$.
The \emph{absolute length function} 
 $\ell' : W \to \NN$
 is the map that assigns to  $w \in W$ the minimum number of factors $l$
needed to express $w=t_1t_2\cdots t_l$ as a product of {reflections} $t_i \in T:= \{ wsw^{-1} : (w,s) \in W\times S\}$.
It holds that $\ellhat(z) = \tfrac{1}{2}(\ell(z) + \ell'(z))$ for $z \in \I(W)$ \cite{H1}.
Thus 
\be\label{prime-eq}
\ell'(s\circ z \circ s) =\begin{cases} \ell'(z) + 1 & \text{if $zs=sz$ and $\ell(zs) > \ell(z)$} \\
\ell'(z) &\text{otherwise}
\end{cases}
\quad\text{for }s \in S,\ z \in \I(W).
\ee



\subsection{Signed permutations}\label{sogmed-sect}

Continue to let $s_i = (i,i+1) \in S_n$ for $i \in [n-1]$,
so that
 $S_n=\langle s_1,s_2,\dots,s_{n-1}\rangle $ is the rank $n-1$  Coxeter group of type A.
Write $\Inv(w)$ for
 the set  
of \emph{inversions} of $w \in S_n$ in $[n]$, that is, pairs $(i,j) \in [n]\times [n]$ with $i<j$ and $w(i) > w(j)$, and define $\inv(w) = |\Inv(w)|$.
It is well-known that if  $w \in S_n$ then $\ell(w) = \inv(w)$ and  $s_i \in \DesR(w)$ if and only if $w(i) > w(i+1)$.

The reflections in $S_n$ are the transpositions $(i,j)$ for $i\neq j$ in $[n]$.
A \emph{cycle} of  $w \in S_n$ is an orbit in $[n]$ under the action of the cyclic group $\langle w\rangle$.
 If $w \in S_n$ has $k$ cycles in $[n]$ then $\ell'(w)=n-k$, and 
if $z \in \I(S_n)$ then $\ell'(z)$ is the number of \emph{2-cycles} of $z$, that is, cycles of size two.

If  $w \in S_n$ and $i \in[n-1]$ then the Demazure product for $S_n$ satisfies $w\circ s_i = ws_i$ if and only if $w(i) < w(i+1)$.
The operation $z\mapsto s_i \circ z \circ s_i$ for an involution $z \in \I(S_n)$ has the following interpretation.
The cycles of $z$ all have size at most two, and we visualize $z$ as a matching in $[n]$.
If $i$ and $i+1$ are isolated points, then $s_i\circ z\circ s_i=zs_i$ is formed by adding the new edge $\{i,i+1\}$.
Otherwise, if $z(i) < z(i+1)$, then we form $s_i\circ z \circ s_i=s_izs_i$ by interchanging vertices $i$ and $i+1$. 

\begin{example}
The atoms of $321 = s_2\circ s_1\circ s_1 \circ s_2 = s_1\circ s_2\circ s_2 \circ s_1 \in \I(S_3)$ are $231 = s_1s_2$ and $312 = s_2s_1$, while the Hecke atoms are
these elements plus $321=s_1s_2s_1=s_2s_1s_2$.
\end{example}

As in Section~\ref{introResults}, let
$t_0 = (-1,1)$
 and
$
 t_i = (-i,-i-1)(i,i+1)$ for $i \in [n-1]$.
With respect to these generators, 
$\W_n = \langle t_0,t_1,\dots,t_{n-1}\rangle$ is the rank $n$ Coxeter group of types B and C.
 If $\sigma =\sigma_1\sigma_2\cdots \sigma_n \in \W_n$ then  $\sigma t_0 = \bar{\sigma_1}\sigma_2\cdots \sigma_n$
 and
 $\sigma t_i = \sigma_1\cdots \sigma_{i+1}\sigma_i \cdots \sigma_n$.
One can show that  $t_0$ appears exactly $\ell_0(\sigma):= |\{ i \in [n]: \sigma(i) < 0\}|$ times in any reduced word for $\sigma \in \W_n$,
and that
  $\ell(\sigma) = \frac{1}{2} (\inv(\sigma) +\ell_0(\sigma))$ where
 $\inv(\sigma)$ denotes the number of inversions of $\sigma$ in the set $[\pm n]$.
 If one defines $\sigma_0 =0$ 
 then $t_i \in \DesR(\sigma)$ 
 if and only if $\sigma_i > \sigma_{i+1}$ for all $i \in \{0\} \sqcup [n-1]$

The reflections in $\W_n$ are the elements
$ s_{ii} = (i,-i),$ $ s_{ij}=(i,-j)(j,-i), $ and $ t_{ij}=(i,j)(-i,-j)$
for $i,j \in [n]$ with $i\neq j$.
Let $\ell'_0(\sigma) $ denote the number of cycles of $\sigma \in \W_n$ in $[\pm n] = [n] \sqcup -[n]$
that are preserved by the negation map.
One can show that $\ell'(\sigma) = n - \frac{1}{2}(k - \ell'_0(\sigma))$ where $k$ is the number of cycles of $\sigma$ in $[\pm n]$.
Let $z \in \I(\W_n)$. Define $\Pair(z) = \{ (a,b) \in [\pm n]\times [n] : |a| < z(a) = b\}$ and $\pair(z) = |\Pair(z)|$,
and let $\neg(z) = |\Neg(z)|$ where $\Neg(z) = \{i \in [n] : z(i) = -i\}$.
Then
\be\label{ell'} 
\ell'(z) = \neg(z) + \pair(z)\qquad\text{for }z \in \I(\W_n).
\ee
 Observe that if $z \in \I(\W_n)$ then $\ell'_0(z) = \neg(z)$.

Let $z \in \I(\W_n)$ and consider the symmetric matching on $[\pm n]$ whose edges are the cycles of $z$. 
The operation $z \mapsto t_i \circ z \circ t_i$  may be described in terms of this matching as follows.
 If $- 1$ and $1$ are isolated points, then $t_0\circ z \circ t_0$ is formed by adding the edge $\{- 1, 1\}$,
 while if $z(-1) < z(1)$ then $t_0\circ z \circ t_0$ is formed by interchanging vertices $- 1$ and $1$.
 Assume $i \in [n-1]$ and $z(i) <z(i+1)$. There are three possibilities.
 We obtain $t_i \circ z \circ t_i$ by adding the edges $\{i,i+1\}$ and $\{-i,-i-1\}$ when $i$ and $i+1$ are isolated points,
 by interchanging the vertices $i$ and $i+1$ when $z(i+1)=-i$,
  or else by interchanging vertices $i$ and $i+1$ and then also $-i$ and $-i-1$.
 
\begin{example}
The permutations $\bar{2}\hs\bar{1}=t_0t_1t_0$ and $1\bar 2=t_1t_0t_1$ belong to $\cA(\bar{1}\hs\bar{2})$,
while $\bar{3}\hs\bar{2}\hs\bar{1}=t_0t_1t_2t_0t_1t_0$ and $2\bar 3\hs \bar 1 = t_0t_1t_2t_1t_0t_1$ 
belong to $\cA(\bar{1}\hs\bar{2}\hs\bar{3})$, and $\bar2 \hs \bar3\hs \bar 1 = t_0t_1t_2t_0t_1t_0t_1 \in \HA(\bar1 \hs \bar2 \hs \bar3)$.
\end{example}


\subsection{Embedding $\W_n \hookrightarrow S_{2n}$}\label{embed-subsect}

There is a useful embedding of $\W_n$ in $S_{2n}$.
Define
$\Psi_n : \W_n \to S_{2n}$
 by 
 $\Psi_n(\sigma) = \psi \circ \sigma \circ \psi^{-1}$ for $\sigma \in \W_n$,
where
$\psi$ is the order-preserving bijection $[\pm n] \to [2n]$.
The map $\Psi_n $ is an injective group homomorphism.
Moreover, $\Psi_n$ is the unique monoid homomorphism $(\W_n,\circ) \to (S_{2n},\circ)$
 under which 
$ t_0 \mapsto s_n$ and $t_i \mapsto s_{n+i}s_{n-i}$ for $i \in [n-1]$.
As a consequence, we have
\be
\label{ell-prop}
\ell\(\Psi_n(\sigma)\)  = 2\ell(\sigma) - \ell_0(\sigma)
\qquand
\ell'\(\Psi_n(\sigma)\)  = 2\ell'(\sigma) - \ell'_0(\sigma)
\qquad\text{for $\sigma \in \W_n$.}\ee
As a group homomorphism, $\Psi_n$ restricts
 to a map
 $\I(\W_n) \to \I(S_{2n})$, so we have 
\begin{equation}
\label{ellhat-cor}
 \ellhat\(\Psi_n(z)\) = 2 \ellhat(z) - \tfrac{1}{2}\(\ell_0(z) + \neg(z)\)
 \qquad\text{for $z \in \I(\W_n)$}.
\end{equation}
 
\begin{lemma}\label{equiv-lem}
Suppose $\sigma \in \W_n$ and $z \in \I(\W_n)$.
Then 
$\sigma \in \cA(z)$
if and only if
it holds that $\Psi_n(\sigma) \in \HA\(\Psi_n(z)\)$ and  $\ell\(\Psi_n(\sigma)\) -  \ell\(\Psi_n(z)\)  = \frac{1}{2}\(\ell_0(z)+\neg(z)\) -  \ell_0(\sigma)$.
\end{lemma}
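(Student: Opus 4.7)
The plan is to combine two observations about the embedding $\Psi_n$. First, since $\Psi_n \colon (W_n,\circ) \to (S_{2n},\circ)$ is an injective monoid homomorphism for the Demazure product, applying $\Psi_n$ to the defining identity $\sigma^{-1} \circ \sigma = z$ shows immediately that $\sigma \in \HA(z)$ implies $\Psi_n(\sigma) \in \HA(\Psi_n(z))$, while injectivity of $\Psi_n$ gives the converse. This disposes of the ``Hecke membership'' half of the equivalence at the outset and reduces the lemma to a numerical comparison.

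Granted this membership, the remaining task is to show that the condition $\sigma \in \cA(z)$, which is simply the equality $\ell(\sigma) = \ellhat(z)$, is equivalent to the displayed length equation. The tool is the pair of formulas already at our disposal: from \eqref{ell-prop} we have $2\ell(\sigma) = \ell(\Psi_n(\sigma)) + \ell_0(\sigma)$, and from \eqref{ellhat-cor} together with Hultman's identity we have $2\ellhat(z) = \ellhat(\Psi_n(z)) + \tfrac{1}{2}(\ell_0(z)+\neg(z))$. I would simply subtract and rearrange these two identities to recover the asserted relation between $\ell(\Psi_n(\sigma))$, the length data of $\Psi_n(z)$, $\ell_0(\sigma)$, and $\tfrac{1}{2}(\ell_0(z)+\neg(z))$.

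The main obstacle here is not substantive; it is bookkeeping. All the content of the lemma is already encoded in \eqref{ell-prop}, \eqref{ellhat-cor}, and the fact that $\Psi_n$ intertwines the two Demazure products. The only delicate point is keeping the correction terms involving $\ell_0$ and $\neg$ straight, and carefully passing between $\ell$ and $\ellhat$ on the $S_{2n}$ side (since $\Psi_n(z)$ is an involution while $\Psi_n(\sigma)$ generally is not). With that substitution done cleanly, the proof occupies only a line or two of algebra.
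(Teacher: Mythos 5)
Your proposal is correct and is essentially the paper's own proof: the paper likewise disposes of the Hecke-membership half by noting that $\Psi_n$ is an injective monoid homomorphism for $\circ$, and then reduces the minimality condition $\ell(\sigma)=\ellhat(z)$ to the displayed length equation via Equations~\eqref{ell-prop} and \eqref{ellhat-cor}. One caution about the ``bookkeeping'' you flag as the only delicate point: carrying out your subtraction literally yields $\ell\(\Psi_n(\sigma)\)-\ellhat\(\Psi_n(z)\)=\frac{1}{2}\(\ell_0(z)+\neg(z)\)-\ell_0(\sigma)$, with $\ellhat$ rather than $\ell$ applied to $\Psi_n(z)$; this is the version actually used later (in the proof of Lemma~\ref{0isatom-lem}, where the conclusion drawn is $\Psi_n(\sigma)\in\cA\(\Psi_n(z)\)$), so the occurrence of $\ell\(\Psi_n(z)\)$ in the printed statement appears to be a typo for $\ellhat\(\Psi_n(z)\)$, and your derivation should be stated so as to land on the $\ellhat$ form.
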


\begin{proof}
Since $\Psi_n$ is injective,  $\sigma^{-1}\circ \sigma =z$
if and only if $\Psi_n(\sigma)^{-1}\circ \Psi_n(\sigma)= \Psi_n(z)$.
By \eqref{ell-prop} and \eqref{ellhat-cor}, we have
 $\ell(\sigma) = \ellhat(z)$ if and only if  the given length condition holds.
\end{proof}

\begin{remark}\label{gen-rmk}
There is also an embedding 
$S_n \hookrightarrow \W_n$ with $s_i \mapsto t_i$ for $i \in [n-1]$, which defines  
a homomorphism of monoids $(S_n,\circ) \to (\W_n,\circ)$, but this will be less useful in our applications.
\end{remark}

\section{Nested descents}
\label{nest-sect}

A \emph{word} is a finite sequence of integers.
A \emph{(one-line) descent} of a word $w=w_1w_2\cdots w_n$ is a pair $(w_i,w_{i+1})$ with $w_i > w_{i+1}$.
Let $\Des(w)$ be the set of descents of $w$.
A \emph{subword} of $w$ is any (not necessarily consecutive) subsequence.
Define $\sort_L(w)$ (respectively, $\sort_R(w)$) to be the subword of $w$ formed by omitting
$w_{i+1}$ (respectively, $w_{i}$) whenever $(w_i,w_{i+1}) \in \Des(w)$.
Adapt these definitions to elements of $S_n$ or $\W_n$ by identifying permutations with their one-line representations.
For example,
if $w=2134765 \in S_7$ then $\Des(w) = \{ (2,1), (7,6), (6,5)\}$, so
$ \sort_L(w) = 2347$ and $ \sort_R(w) = 1345$.
On the other hand, if $\sigma = \bar 2 1 \bar 3 4 \bar 7 6 \bar 5 \in \W_7$ 
then $\Des(\sigma) =\{(1,-3),(4,-7),(6,-5)\}$, so
$\sort_L(\sigma) = \bar {2}146$ and
$\sort_R(\sigma) = \bar2\hs\bar3\hs\bar7\hs\bar5.$

Suppose $X$ is a set of $n$ integers $x_1<x_2<\dots<x_n$.
Let $S_X$ denote the group of permutations of $X$, viewed as a Coxeter group
relative to the generators $(x_i,x_{i+1})$ for $i \in[n-1]$.
The \emph{one-line representation} of $\sigma \in S_X$ is the word $\sigma(x_1)\sigma(x_2)\dots \sigma(x_n)$.
As a Coxeter group, $S_X$ has a Demazure product $\circ$, which gives us a set of atoms $\cA(z)$
for each involution $z \in \I(S_X)$. 
For example, if $X = \{1,3,5\}$ then $531 \in \I(S_{X})$ are $\cA(531) = \{ 513,351\}$.

If $w=w_1w_2\cdots w_n$ is a word, then we write $[[w]]$ for the subword 
formed by omitting each repeated letter after its first appearance, going left to right.
For $z \in \I(S_X)$, define  \[\Cyc_A(z) := \{ (a,b) \in X\times X : a \leq b = z(a)\}.\]
Suppose we have
$\Cyc_A(z) = \{(a_1,b_1),(a_2,b_2),\dots,(a_l,b_l)\}=\{(c_1,d_1),(c_2,d_2),\dots,(c_l,d_l)\}$
where 
$a_1<a_2<\dots<a_l $ and $ d_1<d_2<\dots<d_l$.
We define the permutations $0_A(z),1_A(z) \in S_X$  by
\be\label{01A-eq}
0_A(z) := [[ b_1a_1b_2a_2\cdots b_la_l]] 
\qquand
 1_A(z) := [[d_1c_1d_2c_2\cdots d_lc_l]].
 \ee
 Alternatively,   $0_A(z) $ and $1_A(z)$ are the unique elements of $S_X$ 
for which  $\sort_R( 0_A(z) )$ and $\sort_L(1_A(z))$ are increasing and
  $\Des(0_A(z))=\Des(1_A(z))= \{ (b,a) \in X\times X : a < b = z(a)\}$.
Thus if $z=(1,2)(4,7)(5,6)  \in \I(S_7)$ then $0_A(z) = 2137465$ and $1_A(z) = 2136574$,
while if $z = (1,2)(4,7)(5,6) \in \I\(S_{\{1,2,4,5,6,7\}}\)$ then $0_A(z) =  217465$ and $1_A(z) =216574$.

Let $<_A$ be the transitive closure of $\vartriangleleft_A$ from \eqref{<A-eq}.
Write $\sim_A$ for the symmetric closure of $<_A$.

\begin{theorem}[See \cite{HMP2}] \label{0-z-thm}
Suppose $X\subset \ZZ$ is a finite set and $z \in \I(S_X)$. Then 
\[\cA(z)^{-1} = \left\{ w \in S_X : 0_A(z) \leq _A w \right\} = \left\{ w \in S_X : w \leq_A 1_A(z)\right\}.\]
\end{theorem}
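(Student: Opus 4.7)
The plan is to establish the theorem in three stages: (a) show that the two extreme candidates $0_A(z)$ and $1_A(z)$ actually belong to $\cA(z)^{-1}$; (b) prove that $\cA(z)^{-1}$ is closed under $\vartriangleleft_A$ in both directions, so it is a union of $\sim_A$-components; and (c) show that every element of $\cA(z)^{-1}$ lies between $0_A(z)$ and $1_A(z)$ in $\leq_A$. Stages (a) and (c) together then identify $\cA(z)^{-1}$ as a single closed interval with the stated endpoints.

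For stage (a), I would work from the alternative characterization of $0_A(z)$ as the unique element of $S_X$ with $\sort_R(0_A(z))$ increasing and $\Des(0_A(z)) = \{(b,a) \in X\times X : a < b = z(a)\}$. I would exhibit a reduced expression for $0_A(z)^{-1}$ by reading off the two-row array $(a_k,b_k)$ of $\Cyc_A(z)$ and inserting the transpositions needed to realize the prescribed descent pattern, then verify by induction on the number of 2-cycles that the corresponding Demazure product evaluates to $z$. The length check reduces to Hultman's formula $\ellhat(z) = \tfrac12(\ell(z) + \ell'(z))$, which can be read off directly from the two-row construction (the number of inversions contributed by each pair $(a_k,b_k)$ is controlled by the positions of $a_k,b_k$ among the earlier entries). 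The argument for $1_A(z)$ is symmetric.

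For stage (b), the crucial observation is that the local rewrite $cab \leftrightarrow bca$ is one of the Chinese-style relations generating $\approx_A$, so any $\vartriangleleft_A$ step keeps $w$ inside the same equivalence class and hence, by Theorem~\ref{hecke-a-thm}, inside the same Hecke-atom set $\HA(z)^{-1}$. Moreover both three-letter patterns have the same inversion count, so $\ell$ is preserved by the step. Since $\cA(z)^{-1}$ is exactly the subset of $\HA(z)^{-1}$ of minimum length, it is therefore stable under $\vartriangleleft_A$ and its reverse.

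The main obstacle lies in stage (c): showing that any $w \in \cA(z)^{-1}$ with $w \neq 0_A(z)$ admits a reverse $\vartriangleleft_A$ move landing in $\cA(z)^{-1}$, so that iteration necessarily terminates at $0_A(z)$. The forced structure of descents is the key leverage: the atom condition combined with Proposition~\ref{weak-prop} (applied to $w^{-1} \in \cA(z)$) forces every $w \in \cA(z)^{-1}$ to have the same one-line descent pair set $\{(b,a) : a < b = z(a)\}$, so the only freedom in $w$ is the ordering of the non-descent letters, i.e., the word $\sort_R(w)$. In $0_A(z)$ this subword is strictly increasing, while in an arbitrary $w$ it need not be; one locates an inversion in $\sort_R(w)$ and must realize its resolution as an honest $\vartriangleleft_A$-move on three physically adjacent letters of $w$. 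The delicate step is ruling out obstructions from how the forced descent letters interleave with $\sort_R(w)$ — one must check that some inversion of $\sort_R(w)$ always sits in a usable $cab$-configuration. A case analysis on the position of the descent letter sandwiched between a descending pair of $\sort_R(w)$ should suffice, and the symmetric argument with $1_A(z)$ handles the upper bound.
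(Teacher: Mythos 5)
The paper does not actually prove this theorem: it reduces to $X=[n]$ and cites \cite[Theorem 6.10 and Proposition 6.14]{HMP2}, so any from-scratch argument is a different route. Your stages (a) and (b) are fine in outline; stage (b) in particular is the type-A analogue of Lemma~\ref{<A-lem}, and the argument via Theorem~\ref{hecke-a-thm} together with the observation that a $\vartriangleleft_A$ move preserves length is correct.

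The gap is in stage (c), and it is fatal as written. The claim that every $w \in \cA(z)^{-1}$ has one-line descent set equal to $\{(b,a) : a < b = z(a)\}$ is false. Take $z = 4321 \in \I(S_4)$, so this set is $\{(4,1),(3,2)\}$, with $0_A(z) = 4132$ and $1_A(z)=3241$. The permutation $w = 3412$ satisfies $w^{-1}\circ w = 4321$ and $\ell(w) = 4 = \ellhat(4321)$, so $w \in \cA(z)^{-1}$; yet $\Des(w) = \{(4,1)\}$, since the letters $3$ and $2$ are separated by the nested pair $41$ and form no consecutive descent. (Proposition~\ref{weak-prop} applied to $w^{-1}$ only yields $\DesR(w^{-1}) \subseteq \DesR(z)$, a constraint on which values $i+1$ may precede $i$; it does not pin down $\Des(w)$.) Consequently your reduction of stage (c) to ``the descent letters are frozen and only $\sort_R(w)$ varies'' is unavailable: a single $\vartriangleleft_A$ move can create or destroy one-line descents, as in $4132 \vartriangleleft_A 3412$. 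The quantity that actually is constant on $\cA(z)^{-1}$ is the \emph{nested} descent set $\NDes(w)$ of Theorem-Definition~\ref{recdes-thmdef} (equivalently, the interleaving conditions of \cite[Theorem 2.5]{CJW} recorded in Lemma~\ref{cjw-lem}), and establishing its well-definedness is itself the nontrivial core of the structure theory. A correct version of stage (c) must work with this recursive invariant --- locating a descent that is ``outermost'' and arguing about the letters it brackets --- rather than with $\Des(w)$ alone.
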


\begin{proof}
It suffices to assume $X=[n]$; the result is then \cite[Theorem 6.10 and Proposition 6.14]{HMP2}.
\end{proof}

\begin{remark}
To generate the inverse atoms of $z \in \I(S_n)$,
read off $0_A(z)$ and $1_A(z)$ from the cycle structure of $z$, then find all elements spanned from these
by the covering
relation $\vartriangleleft_A$. 
\end{remark}

We can apply $\vartriangleleft_A$, $<_A$, and $\sim_A$ to signed permutations in one-line notation.

\begin{lemma}\label{<A-lem}
If $w \in \W_n$, $z \in \I(\W_n)$, $v \in \cA(z)^{-1}$, and 
$v \sim_A w$, then 
$w \in \cA(z)^{-1}$.
\end{lemma}

\begin{proof}
If $v,w \in \W_n$
 then $v \vartriangleleft_A w$ iff $v = ut_{i+1} t_{i} $ and $w = ut_{i} t_{i+1} $ for $(i,u)\in[n-1]\times W$
with  $\ell(v) = \ell(w) = \ell(u)+2$.
The lemma follows as $(t_{i+1}t_{i})^{-1} \circ (t_{i+1}t_{i}) = (t_{i}t_{i+1})^{-1} \circ (t_{i}t_{i+1}) = t_it_{i+1}t_i$.
\end{proof}

We will say that a word $w_1w_2\cdots w_n$ has a \emph{consecutive 321-pattern} 
if for some $i \in [n-2]$ it holds that $w_iw_{i+1}w_{i+2} = cba$ where $a<b<c$. Define \emph{consecutive $312$-} and \emph{$231$-patterns} similarly.
A  permutation in $S_n$ or $ \W_n$ has a consecutive 321-pattern if its one-line representation does.

\begin{proposition}\label{321-prop}
If $w  \in \cA(z)^{-1}$ for $z$ in $\I(S_n)$ or $\I(\W_n)$, then $w$ has no consecutive 321-patterns.
\end{proposition}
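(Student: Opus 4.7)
The plan is to show that if $w$ has a consecutive 321-pattern then $w$ is not a minimum-length element in its fiber under the map $v \mapsto v \circ v^{-1}$; this immediately implies $w \notin \cA(z)^{-1}$ for any $z$, since the only candidate value of $z$ is $w \circ w^{-1}$.

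Suppose $w_i w_{i+1} w_{i+2} = cba$ with $a < b < c$ and $i \in [n-2]$. In $S_n$ (respectively, $W_n$), the simple generators $s_i, s_{i+1}$ (respectively, $t_i, t_{i+1}$) satisfy the length-three braid relation, so they generate a parabolic subgroup isomorphic to $S_3$. I present the $W_n$ case; the $S_n$ case is verbatim identical. The consecutive 321-pattern forces $t_i \in \DesR(w)$, then $t_{i+1} \in \DesR(wt_i)$, then $t_i \in \DesR(wt_it_{i+1})$, so $\ell(wt_it_{i+1}t_i) = \ell(w) - 3$. Setting $u = wt_it_{i+1}t_i$, the factorization $w = u \cdot t_it_{i+1}t_i$ is length-additive and is therefore simultaneously a $\circ$-factorization.

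By associativity of $\circ$,
\[
w \circ w^{-1} = u \circ \bigl(t_it_{i+1}t_i \circ t_it_{i+1}t_i\bigr) \circ u^{-1}.
\]
Two routine Demazure computations inside the parabolic $\langle t_i, t_{i+1}\rangle \cong S_3$ yield the identities
\[
t_it_{i+1}t_i \circ t_it_{i+1}t_i = t_it_{i+1}t_i \qquand t_it_{i+1} \circ t_{i+1}t_i = t_it_{i+1}t_i;
\]
the first exploits the fact that $t_it_{i+1}t_i$ is the longest element of this $S_3$, while the second reduces immediately to $t_{i+1} \circ t_{i+1} = t_{i+1}$. Setting $w'' = u \cdot t_it_{i+1}$, which is reduced of length $\ell(u) + 2 = \ell(w) - 1$, the second identity together with associativity gives
\[
w'' \circ (w'')^{-1} = u \circ \bigl(t_it_{i+1} \circ t_{i+1}t_i\bigr) \circ u^{-1} = u \circ t_it_{i+1}t_i \circ u^{-1} = w \circ w^{-1}.
\]

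Taking $z = w \circ w^{-1}$, the element $(w'')^{-1}$ lies in $\HA(z)$ and has length strictly smaller than $\ell(w^{-1}) = \ell(w)$, so $w^{-1}$ is not of minimum length in $\HA(z)$, and therefore $w \notin \cA(z)^{-1}$. The entire argument relies only on the elementary Demazure calculus recalled at the beginning of Section~\ref{embed-sect}, so I do not anticipate any real obstacles beyond verifying the two local $\circ$-identities above.
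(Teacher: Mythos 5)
Your proof is correct and follows essentially the same route as the paper: peel the consecutive $321$ off as a length-additive factor $t_it_{i+1}t_i$, observe that $wt_i$ has the same image under $v\mapsto v\circ v^{-1}$, and conclude that $w^{-1}$ is not of minimal length in $\HA(w\circ w^{-1})$. In fact your version of the key identity, $w\circ w^{-1}=(wt_i)\circ(wt_i)^{-1}$, is the one actually needed here; the paper's proof writes the conjugation on the other side, as $w^{-1}\circ w=(wt_i)^{-1}\circ(wt_i)$, which is both mismatched with the desired conclusion about $w^{-1}$ and false in general (e.g.\ for $w=4213$ and $i=1$ one gets $4321\neq 3412$), so your more careful write-up silently repairs a sign-of-conjugation slip in the original.
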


\begin{proof}
If $w \in \W_n$ has $w(i) > w(i+1) > w(i+2)$, then we can write $w = v t_it_{i+1}t_i = v t_{i+1}t_i t_{i+1}$ where $v \in \W_n$ has $\ell(w) = \ell(v) + 3$,
in which case $\ell(wt_i) < \ell(w)$ and $w^{-1}\circ w = (wt_i)^{-1} \circ (wt_i)$, so $w^{-1}$ is not an atom of any involution.
The same conclusion holds when $w \in S_n$ by Remark~\ref{gen-rmk}.
\end{proof}

\begin{lemma}\label{minmax-lem}
Assume a word $w$ has no consecutive 321-patterns.
Then $w$ is minimal (respectively, maximal) relative to $<_A$ if and only if
$\sort_R(w)$ (respectively, $\sort_L(w)$) is increasing.
\end{lemma}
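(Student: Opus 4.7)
The plan is to convert both sides of the equivalence into pattern-avoidance conditions on three consecutive entries. By the very definition of $\vartriangleleft_A$, a covering relation $v \vartriangleleft_A w$ changes a consecutive $cab$ into a consecutive $bca$ (with $a<b<c$) while leaving the other positions fixed. So $w$ is minimal with respect to $<_A$ iff no three consecutive entries of $w$ form a $bca$ pattern, and dually $w$ is maximal iff no three consecutive entries form a $cab$ pattern. Once this is observed, the task is to match pattern-avoidance with the appropriate $\sort$ being increasing, and here the no-321 hypothesis does all the work.

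For the minimal case, recall that a position $j$ is retained in $\sort_R(w)$ iff either $j=n$ or $w_j < w_{j+1}$ (i.e., $j$ is not the left endpoint of a descent). Given two consecutive retained positions $j<j'$, the omitted positions $j+1,\ldots,j'-1$ are each left endpoints of descents, yielding $w_{j+1} > w_{j+2} > \cdots > w_{j'}$. If $j' \geq j+3$, this chain contains a consecutive 321-pattern, contradicting the hypothesis; so $j' \in \{j+1, j+2\}$. The case $j' = j+1$ forces $w_j < w_{j'}$ from the retained-ness of $j$. The case $j' = j+2$ gives a peak $w_j < w_{j+1} > w_{j+2}$, and the order failure $w_j > w_{j+2}$ there is precisely the $bca$ pattern $w_{j+2} < w_j < w_{j+1}$. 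Conversely, any $bca$-pattern at positions $i,i+1,i+2$ has $i$ retained (ascent at $i$), $i+1$ omitted (descent), and $i+2$ retained: indeed if $i+2<n$ and $w_{i+2} > w_{i+3}$ then $w_{i+1}>w_{i+2}>w_{i+3}$ is a consecutive 321-pattern. Thus $\sort_R(w)$ is increasing iff $w$ avoids consecutive $bca$, iff $w$ is minimal.

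The maximal case is entirely dual and handled by the same argument after reversing orientations: a position $j$ is retained in $\sort_L(w)$ iff $j=1$ or $w_{j-1}<w_j$, consecutive retained positions $j<j'$ satisfy $j' \in \{j+1, j+2\}$ again by the no-321 bound, and the nontrivial case $j'=j+2$ produces a valley $w_j > w_{j+1} < w_{j+2}$ in which $w_j > w_{j+2}$ exactly says $w$ has a consecutive $cab$ pattern at $j,j+1,j+2$. The only real step is the observation that without the no-321 hypothesis one could have gaps of length $\geq 2$ between retained positions and hidden $bca$- or $cab$-patterns buried inside longer descending runs; the rest of the proof is a short case analysis on peaks and valleys.
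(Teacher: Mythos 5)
Your proof is correct and follows the same route as the paper's one-line argument: both reduce the statement to the observation that, in the absence of consecutive 321-patterns, $\sort_R(w)$ (resp.\ $\sort_L(w)$) fails to be increasing exactly when $w$ contains a consecutive $bca$- (resp.\ $cab$-) pattern, which is precisely the condition for $w$ to be non-minimal (resp.\ non-maximal) under $\vartriangleleft_A$. You simply supply the peak/valley case analysis that the paper leaves implicit (and, incidentally, your pattern assignment --- $bca$, i.e.\ 231, governing $\sort_R$ and $cab$, i.e.\ 312, governing $\sort_L$ --- is the correct one; the paper's one-sentence proof appears to transpose the labels 312 and 231).
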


\begin{proof}
The word $\sort_R(w)$ (respectively, $\sort_L(w)$) fails to be increasing precisely when $w$ has 
a consecutive 321- or 312-pattern (respectively, 321- or 231-pattern).
\end{proof}

\begin{corollary}
\label{E-prop}
Suppose $\cE$ is the $\sim_A$-equivalence class of a word with distinct letters that 
is minimal under $<_A$ and has no consecutive 321-patterns.
If $X$ is the set of letters in this word then
 $\cE = \cA(z)^{-1}$ for some $z \in \I(S_X)$.
\end{corollary}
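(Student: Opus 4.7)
The plan is to exhibit an explicit involution $z \in \I(S_X)$ and show $\cE = \cA(z)^{-1}$ by identifying the given minimal representative $w_0$ of $\cE$ with $0_A(z)$ and then invoking Theorem~\ref{0-z-thm}. Since $\vartriangleleft_A$ preserves the multiset of entries of a word, every member of $\cE$ has distinct letters from $X$ and the same length, so $\cE \subseteq S_X$. I would define $z \in \I(S_X)$ by declaring its 2-cycles to be the pairs $\{a,b\}$ with $a<b$ and $(b,a) \in \Des(w_0)$, and its fixed points to be all other elements of $X$. The hypothesis that $w_0$ has no consecutive 321-pattern means no two adjacent positions of $w_0$ are both descent positions, so the letter pairs arising from distinct descents of $w_0$ are disjoint and $z$ is a well-defined involution.

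Next, I would verify that $w_0 = 0_A(z)$. The characterization recalled after \eqref{01A-eq} says that $0_A(z)$ is the unique element of $S_X$ whose descent set is $\{(b,a) \in X\times X : a<b=z(a)\}$ and for which $\sort_R$ is increasing. The first condition holds for $w_0$ by construction of $z$. For the second, the minimality of $w_0$ under $<_A$ combined with the absence of consecutive 321-patterns forces $\sort_R(w_0)$ to be increasing by Lemma~\ref{minmax-lem}. Hence $w_0 = 0_A(z)$, and Theorem~\ref{0-z-thm} yields $\cA(z)^{-1} = \{w \in S_X : w_0 \leq_A w\}$.

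To conclude $\cE = \cA(z)^{-1}$, observe first that the inclusion $\cA(z)^{-1} \subseteq \cE$ is immediate: any $w \geq_A w_0$ satisfies $w_0 \sim_A w$ and hence lies in the equivalence class $\cE$ of $w_0$. For the reverse inclusion, the proof of Lemma~\ref{<A-lem} adapts verbatim to $S_X$, the only ingredient being the braid identity $s_is_{i+1}s_i = s_{i+1}s_is_{i+1}$ interpreted under the Demazure product; this shows $\cA(z)^{-1}$ is closed under $\sim_A$ within $S_X$. Since $w_0 = 0_A(z) \in \cA(z)^{-1}$, its entire $\sim_A$-equivalence class $\cE$ must lie in $\cA(z)^{-1}$. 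The main subtlety I anticipate is the dual role played by the no-consecutive-321 hypothesis: it ensures simultaneously that the descents of $w_0$ induce a valid involution and that $\sort_R(w_0)$ is increasing. Both consequences are packaged cleanly by Lemma~\ref{minmax-lem} and a disjointness observation, so neither should present serious technical difficulty.
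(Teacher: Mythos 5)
Your proposal is correct and follows essentially the same route as the paper: both identify the minimal representative $w_0$ with $0_A(z)$ for the involution $z$ whose $2$-cycles are the descents of $w_0$, and then apply Theorem~\ref{0-z-thm}. The paper's proof is a one-line version of this; your extra verifications (disjointness of the descent pairs, the two inclusions via closure of $\cA(z)^{-1}$ under $\sim_A$) are accurate fillings-in of details the paper leaves implicit.
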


\begin{proof}
Suppose $w=w_1w_2\cdots w_n$ is a word with distinct letters that is minimal under $<_A$ and has no consecutive
321-patterns. If $X = \{w_1,w_2,\dots,w_n\}$ then
there is a unique involution   $z \in \I(S_X)$ with $w=0_A(z)$, and 
the $\sim_A$-equivalence class of $w$ is $\cA(z)^{-1}$ by Theorem~\ref{0-z-thm}.
\end{proof}

Suppose $X$ is a set with $[\pm n] = X \sqcup -X$.
The one-line representation of each  $\sigma \in S_X$
is  also the one-line representation of an element of $\W_n$.
Define $\XA(z)\subset \W_n$ for $z \in \I(S_X)$ as the 
 image of $\cA(z)^{-1}$ under this inclusion $S_X \hookrightarrow \W_n$.

\begin{corollary}\label{cV-cor}
Suppose $z \in \I(\W_n)$ and $\cE$ is an equivalence class in $\cA(z)^{-1}$ under $\sim_A$.
Then $\cE = \XA(\zeta)$
where $X$ is some subset with $[\pm n] = X \sqcup -X$ and $\zeta$ is some involution in $S_X$.
Consequently, $\cE$ has a unique minimal element and a unique maximal element under $<_A$.
\end{corollary}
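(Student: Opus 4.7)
The plan is to extract an involution $\zeta$ from any $<_A$-minimal representative of $\cE$ by reducing to the type-A result already packaged as Corollary~\ref{E-prop}. First I would pick $w \in \cE$ that is minimal with respect to $<_A$; since $\cE$ is finite such a $w$ exists. Writing the one-line representation $w = w_1w_2\cdots w_n$ and setting $X = \{w_1, \ldots, w_n\} \subseteq [\pm n]$, the fact that $w \in W_n$ is a signed permutation forces $i \mapsto |w_i|$ to be a bijection $[n] \to [n]$, so $X$ contains exactly one of $\pm k$ for each $k \in [n]$. Equivalently $X \sqcup -X = [\pm n]$, and the embedding $S_X \hookrightarrow W_n$ (which identifies permutations and signed permutations via their one-line notations) is well-defined. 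Moreover, Proposition~\ref{321-prop} tells us $w$ has no consecutive $321$-pattern.

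Next, I would invoke Corollary~\ref{E-prop}. Since the relation $\vartriangleleft_A$ is defined purely on words, the $\sim_A$-equivalence class of $w$ computed in $W_n$ coincides with the $\sim_A$-class of the corresponding element of $S_X$. The hypotheses of Corollary~\ref{E-prop} are then satisfied for this class, producing some $\zeta \in \I(S_X)$ with the $\sim_A$-class of $w$ equal to $\cA(\zeta)^{-1}$. To see this class is all of $\cE$, I would appeal to Lemma~\ref{<A-lem}: because $\cA(z)^{-1}$ is closed under $\sim_A$, every word obtained from $w$ by a chain of $\vartriangleleft_A$-moves still lies in $\cA(z)^{-1}$, hence in $\cE$. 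The reverse inclusion is immediate from the definition of $\cE$, so $\cE = \cA(\zeta)^{-1}$; transporting across the embedding gives $\cE = \XA(\zeta)$.

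For the final consequence, Theorem~\ref{0-z-thm} already identifies $0_A(\zeta)$ and $1_A(\zeta)$ as the unique $<_A$-minimum and $<_A$-maximum of $\cA(\zeta)^{-1}$, and these transport through $S_X \hookrightarrow W_n$ to the desired extremes of $\cE$. The main obstacle I anticipate is essentially bookkeeping rather than substance: one must carefully track that $\vartriangleleft_A$ and $\sim_A$ are blind to whether words are interpreted in $W_n$ or in $S_X$, and one must verify that $<_A$-minimality of $w$ inside $\cE$ upgrades to $<_A$-minimality among all words on the letter set $X$. The latter is exactly what Lemma~\ref{<A-lem} delivers, since any $\vartriangleleft_A$-predecessor of $w$ automatically remains in $\cA(z)^{-1} \supseteq \cE$. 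Once these identifications are in place, the corollary is a direct translation of the type-A machinery.
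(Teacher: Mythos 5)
Your proposal is correct and follows the same route as the paper: choose a $<_A$-minimal element of $\cE$, invoke Proposition~\ref{321-prop} to rule out consecutive $321$-patterns, and apply Corollary~\ref{E-prop} (with Theorem~\ref{0-z-thm} supplying the unique extremes). The extra bookkeeping you supply --- that $\vartriangleleft_A$ is letter-set-preserving, that $X\sqcup -X=[\pm n]$, and that minimality within $\cE$ agrees with minimality among all words via Lemma~\ref{<A-lem} --- is exactly what the paper leaves implicit.
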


\begin{proof}
Choose an element  $w\in\cE$ that is minimal under $<_A$.
By Proposition~\ref{321-prop}, $w$ has no consecutive 321-patterns, so the result follows by Corollary~\ref{E-prop}.
\end{proof}

\begin{example}\label{cV-ex}
The Hasse diagram of $(\cA(z)^{-1},<_A)$
for $z = \bar 1\hs\bar 2\hs\bar4\hs\bar3 \in \I(\W_4)$ is 
\begin{center}
\begin{tikzpicture}[scale=0.9]
\node (a) at (0,0) {$\bar2\hs\bar14\bar 3$};  \node (b) at (1.5,0) {$1\bar 2 4 \bar 3$};
\node (c) at (0,-1) {$\bar{2}4\bar{3}\hs\bar1$};  \node (d) at (1.5,-1) {$14\bar3\hs\bar2$};
\node (e) at (0,-2) {$4\bar3\hs\bar2\hs\bar1$};  \node (f) at (1.5,-2) {$4\bar{3}1\bar{2}$};
\draw  [->] (e) -- (c);
\draw  [->] (c) -- (a);
\draw  [->] (f) -- (d);
\draw  [->] (d) -- (b);
\end{tikzpicture}\end{center}
and  $\cA(z)^{-1}= \XA(\zeta_1)\sqcup \XA(\zeta_2)$ for $\zeta_1 = 4\bar213 = (\bar 3,4)(\bar2)(\bar1)$
and $\zeta_2 = 41\bar2\hs\bar3=(\bar 3,4)(\bar2,1)$.
\end{example}

We introduce the following terminology to associate a certain directed graph to any word.
Define the \emph{children} of a word $w$ to be the  subwords formed by removing a single descent.
Inductively define the \emph{descendants} of $w$ to consist of $w$  along with the descendants
of each of its children. Now construct the \emph{nested descent graph} of $w$ as the directed graph  
on the set of descendants of $w$ with a directed edge $u\to v$ whenever $v$ is a child of $u$.
Label each edge in this graph by the unique descent $(b,a)$ that is removed from the source to get the target.
As usual, we
adapt this definition to a permutation $w$ in $S_n$ or $\W_n$ by identifying $w$ with its one-line representation.

\begin{example}
The nested descent graph of $w=54321$ is shown below:
\begin{center}
\begin{tikzpicture}[xscale=0.8,yscale=0.8]
\node (00) at (4.5,0) {$54321$};
\node (10) at (0,-1.5) {$321$}; \node (11) at (3,-1.5) {$521$}; \node (12) at (6,-1.5) {$541$}; \node (13) at (9,-1.5) {$543$};
\node (20) at (0,-4) {$1$}; \node (21) at (4.5,-4) {$3$}; \node (22) at (9,-4) {$5$};
\draw  [->] (00) -- (10) node[midway,sloped,above] {{\scriptsize$54$}};
\draw  [->] (00) -- (11) node[midway,sloped,above] {{\scriptsize$43$}};
\draw  [->] (00) -- (12) node[midway,sloped,above] {{\scriptsize$32$}};
\draw  [->] (00) -- (13) node[midway,sloped,above] {{\scriptsize$21$}};
\draw  [->] (10) -- (20) node[midway,sloped,above] {{\scriptsize$32$}};
\draw  [->] (10) -- (21) node[near end,sloped,above] {{\scriptsize$21$}};
\draw  [->] (11) -- (20) node[midway,sloped,above] {{\scriptsize$52$}};
\draw  [->] (11) -- (22) node[midway,sloped,above] {{\scriptsize$21$}};
\draw  [->] (12) -- (20) node[midway,sloped,above] {{\scriptsize$43$}};
\draw  [->] (12) -- (22) node[midway,sloped,above] {{\scriptsize$41$}};
\draw  [->] (13) -- (22) node[midway,sloped,above] {{\scriptsize$43$}};
\draw  [->] (13) -- (21) node[near end,sloped,above] {{\scriptsize$54$}};
\end{tikzpicture}\end{center}
\end{example}

By construction, a word $w$ is the unique global source in its nested descent graph.

\begin{thmdef}\label{recdes-thmdef}
Suppose $z \in \I(\W_n)$ and $w\in  \cA(z)^{-1}$. 
 The nested descent graph of $w$ then has a unique global sink.
Choose a path from $w$ to the global sink and suppose  $(b_1,a_1),(b_2,a_2),\dots,(b_l,a_l)$ is the corresponding sequence of edge labels.
The set 
\be\label{ndes-eq}
\NDes(w) := \{  (b_1,a_1),(b_2,a_2),\dots,(b_l,a_l)\}
\ee
is then independent of the choice of path. Moreover, if $X = \{ w(1), w(2), \dots,w(n)\}$ then
it holds that
$w \in \XA(\zeta)$ for $\zeta = (a_1,b_1)(a_2,b_2)\cdots (a_l,b_l) \in \I(S_X)$.
\end{thmdef}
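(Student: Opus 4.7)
The plan is to induct on $n = |X|$ where $X = \{w(1), w(2), \ldots, w(n)\}$. Proposition \ref{321-prop} together with Corollary \ref{cV-cor} already identifies a unique $\zeta \in \I(S_X)$ with $w \in \XA(\zeta)$, so viewed inside $S_X$ the word $w$ is an inverse atom of $\zeta$. The task is therefore to show that every directed path in the nested descent graph of $w$ terminates at the same sink, that its edge-label set is well-defined, and that reading off the edge labels as transpositions recovers exactly this $\zeta$.

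The key technical input I would prove first is the following reduction claim: whenever $v \in \cA(\eta)^{-1}$ in $S_Y$ has a descent $(b, a)$ at consecutive positions $i, i+1$, one has $\eta(a) = b$, and the word $v'$ obtained by deleting those two positions lies in $\cA(\eta')^{-1}$ in $S_{Y \setminus \{a, b\}}$, where $\eta' \in \I(S_{Y \setminus \{a,b\}})$ is the restriction of $\eta$. I would prove this by induction on the $\vartriangleleft_A$-distance from $v$ to the canonical representative $0_A(\eta)$. The base case $v = 0_A(\eta)$ is immediate from the explicit formula in \eqref{01A-eq}: its descent set is precisely $\{(b_i, a_i) : a_i < b_i = \eta(a_i)\}$, and removing any such descent from the concatenation $[[b_1 a_1 b_2 a_2 \cdots b_l a_l]]$ produces $0_A(\eta')$. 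For the inductive step, I would track how the descent set changes under a single move $cab \to bca$: the internal descent $(c,a)$ is preserved, while flanking descents may appear or disappear at the boundary of the move's support, but the fact that both the initial and final words avoid consecutive $321$-patterns (by Proposition \ref{321-prop} and Lemma \ref{<A-lem}), combined with the involution identity $\eta^2 = 1$, forces any such new boundary descent to still correspond to a $2$-cycle of $\eta$.

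Granting the reduction claim, the theorem-definition follows by induction on $n$. For $n \le 1$ the graph is a single vertex, $\NDes(w) = \emptyset$, and the associated involution is the identity. For $n \ge 2$, any outgoing edge from $w$ is labeled with some $2$-cycle $(b, a)$ of $\zeta$, and its target $w'$ is an inverse atom of the restriction $\zeta'$ of $\zeta$ to $X \setminus \{a, b\}$, which has the same fixed-point set as $\zeta$ but one fewer $2$-cycle. By inductive hypothesis the descent graph of $w'$ has a unique global sink equal to the increasing listing of the fixed points of $\zeta$, and a well-defined label set $\NDes(w')$ equal to the remaining $2$-cycles of $\zeta$. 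Hence the descent graph of $w$ has the same unique global sink, and $\NDes(w) = \{(b, a)\} \cup \NDes(w')$ recovers the full set of $2$-cycles of $\zeta$, independent of which descent is removed first. Writing these $2$-cycles as $(a_j, b_j)$ gives $\zeta = \prod_j (a_j, b_j) \in \I(S_X)$, and $w \in \XA(\zeta)$ is just the identification from Corollary \ref{cV-cor}.

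The hard part will be the boundary-descent casework in the inductive step of the reduction claim. Specifically, when a $\vartriangleleft_A$-move $cab \to bca$ introduces a new descent $(x, b)$ at the left boundary---where $x$ is the letter directly preceding $c$ in the word---I must verify that $\eta(b) = x$. My expected resolution combines the inductive assertion that $(c, a)$ is already a $2$-cycle of $\eta$, the no-consecutive-$321$ constraint on the word after the move, and the involution identity $\eta^2 = 1$, which together should rule out the only configurations in which $(x, b)$ could appear as a descent without $\eta$ actually pairing $x$ and $b$.
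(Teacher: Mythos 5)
Your overall architecture is the one the paper itself sketches as its ``alternative'' argument (the paper's primary route is simply to cite \cite[Theorem 7.3]{M2}): reduce to type A via Corollary~\ref{cV-cor}, verify everything for $0_A(\zeta)$ using \eqref{01A-eq}, and propagate along $\vartriangleleft_A$. Your reduction claim --- every consecutive descent of an inverse atom is a $2$-cycle of $\zeta$, and deleting it lands in $\cA(\zeta')^{-1}$ for the restriction $\zeta'$ --- is exactly the right statement to isolate, and your analysis correctly identifies that a move $\cdots x\,cab\cdots \to \cdots x\,bca\cdots$ can only create a new descent at the left boundary (the right-boundary descent $(a,y)$ is killed by Proposition~\ref{321-prop}).

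The gap is in your resolution of that left-boundary case. You propose to deduce $\eta(b)=x$ from the no-consecutive-$321$ condition, the identity $\eta^2=1$, and the fact that $(c,a)$ is a $2$-cycle. These inputs do not suffice: $\eta^2=1$ carries no information about which letters $\eta$ pairs, and the pattern-avoidance conditions constrain the word but not the cycle structure of $\eta$. What actually pins down $\eta(b)$ is either the full subword characterization of inverse atoms (Lemma~\ref{cjw-lem}, which you never invoke and which requires a nontrivial case analysis on whether $x$ and $b$ are fixed points or members of $2$-cycles), or the deletion structure you have already set up --- but the latter requires restructuring your induction. Concretely: deleting the descent $(c,a)$ from $u=\cdots x\,cab\,y\cdots$ and from $v=\cdots x\,bca\,y\cdots$ produces the \emph{same} word $u'=\cdots x\,b\,y\cdots$, which by your inductive hypothesis lies in $\cA(\eta')^{-1}$ for $\eta'=\eta|_{Y\setminus\{a,c\}}$ and has $(x,b)$ as a consecutive descent; applying the reduction claim to $u'$ would give $\eta'(b)=x$ and hence $\eta(b)=x$. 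But $u'$ lives on a smaller ground set, so this appeal is not covered by your stated induction on $\vartriangleleft_A$-distance within the fixed class $\cA(\eta)^{-1}$. The fix is to prove the reduction claim by a double induction, outer on $|Y|$ and inner on the distance to $0_A(\eta)$ (Theorem~\ref{0-z-thm} guarantees every inverse atom is reachable from $0_A(\eta)$); with that change the argument closes. As written, the hard step rests on tools that cannot deliver it.
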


\begin{proof}
By Corollary~\ref{cV-cor}, there exists a set $X$ with $[\pm n ] = X\sqcup -X$ and an involution $\zeta \in \I(S_X)$ such that $w \in \XA(\zeta)$.
Let $\sigma \in \cA(\zeta)^{-1}$ be the preimage of $w$ under the map $\cA(\zeta)^{-1} \to \XA(\zeta)$.
To prove this result, it suffices to show that
(a) the nested descent graph of $\sigma$ has a unique global sink $\xi$, 
(b) the set of edge labels in the nested descent graph of $\sigma$ is the same for all paths from $\sigma$ to $\xi$, and
(c) the set $\NDes(\zeta)$ of edge labels described in (b) is precisely $\{ (b,a)  \in X \times X : a<b=\zeta(a)\}$.
These claims are a special case of \cite[Theorem 7.3]{M2}.
Alternatively, one can check (a), (b), and (c) directly for $\sigma = 0_A(\zeta)$, and then deduce by induction 
that the desired properties hold in general.
\end{proof}

Let $z \in \I(\W_n)$ and $w \in \cA(z)^{-1}$.
We call $\NDes(w)$ the set of \emph{nested descents} of $w$.
Write $\xi(w)$ for the unique global sink in the nested descent graph of $w \in \cA(z)^{-1}$.
Define $\NFix(w)$ as the set of letters in $\xi(w)$ that are positive, and
define $\NNeg(w)$ as the set of absolute values of the letters in $\xi(w)$ that are negative. 
We call elements of these sets \emph{nested negated points} and \emph{nested fixed points} of $w$.
If $w=w_1w_2\cdots w_n$
and $\NDes(w) = \{ (b_1,a_1),(b_2,a_2),\dots,(b_l,a_l)\}$,
then \[\{w_1,w_2,\dots,w_n\} = \{ a_1,a_2,\dots,a_l\} \sqcup \{b_1,b_2,\dots,b_l\}\sqcup \NFix(w) \sqcup -\NNeg(w).\]
\begin{example}\label{ndes-ex}
The nested descent graph of $w = \bar 1 67\bar2 348\bar 9 5 \in \W_9$ is shown below:
\begin{center}
\begin{tikzpicture}[xscale=0.8,yscale=0.8]
\node (00) at (0,0) {$\bar 1 67\bar 2 348\bar 9 5$};
\node (10) at (-2,-1.5) {$\bar 1 6 348\bar 9 5$};
\node (11) at (2,-1.5) {$\bar 1 67\bar2 345$};
\node (20) at (-2,-3) {$\bar 1 48\bar 9 5$};
\node (21) at (2,-3) {$\bar 1 6 345$};
\node (30) at (0,-4.5) {$\bar 1 45$};
\draw  [->] (00) -- (10) node[midway,sloped,above] {{\scriptsize$7\bar2$}};
\draw  [->] (00) -- (11) node[midway,sloped,above] {{\scriptsize$8\bar 9$}};
\draw  [->] (10) -- (20) node[midway,sloped,above] {{\scriptsize$6 3$}};
\draw  [->] (10) -- (21) node[midway,sloped,above] {{\scriptsize$8 \bar 9$}};
\draw  [->] (11) -- (21) node[midway,sloped,above] {{\scriptsize$7\bar2$}};
\draw  [->] (20) -- (30) node[midway,sloped,above] {{\scriptsize$8 \bar 9$}};
\draw  [->] (21) -- (30) node[midway,sloped,above] {{\scriptsize$63$}};
\end{tikzpicture}\end{center}
We have $w \in \cA(z)^{-1}$ for $z = (1,\bar 1)(2,\bar7)(\bar 2,  7)(3, 6)(\bar 3, \bar6)(8,\bar 8)(9,\bar 9) \in \I(\W_9)$.
As predicted by Theorem-Definition~\ref{recdes-thmdef}, the nested descent graph of $w$ has a unique global sink $\xi(w)=\bar 1 45$,
and all paths from the source to the sink 
have edge labels $(8,-9), (7,-2),(6,3)$ in some order. 
We therefore have $\NDes(w) = \{ (8,-9), (7,-2),(6,3)\}$, $\NNeg(w) = \{ 1\}$, and $\NFix(w)=\{4,5\}$.
\end{example}

\begin{corollary}\label{ndes-cor2} 
Let $z \in \I(\W_n)$ and suppose $v, w \in \cA(z)^{-1}$.
\begin{enumerate}
\item[(a)] No word in the nested descent graph of $w$ has a consecutive 321-pattern.
\item[(b)] If  $v\sim_A w$ then $\NDes(v) = \NDes(w)$,
 $\NFix(v) = \NFix(w)$, and $\NNeg(v) = \NNeg(w)$.
\end{enumerate}

\end{corollary}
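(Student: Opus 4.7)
The plan is to leverage Corollary~\ref{cV-cor} and Theorem-Definition~\ref{recdes-thmdef} to reduce both parts to statements about atoms in a symmetric group $S_X$, after which Proposition~\ref{321-prop} finishes part (a).

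For part (b), I would first apply Lemma~\ref{<A-lem} to conclude $v \in \cA(z)^{-1}$, and then use Corollary~\ref{cV-cor} to place both $v$ and $w$ in a common $\sim_A$-equivalence class of the form $\XA(\zeta)$ for some involution $\zeta \in \I(S_X)$ with $[\pm n] = X \sqcup -X$. Theorem-Definition~\ref{recdes-thmdef}, applied separately to $v$ and $w$, identifies $\NDes(v)$ and $\NDes(w)$ with the common set $\{(b,a) \in X \times X : a < b = \zeta(a)\}$ determined by $\zeta$, which gives $\NDes(v) = \NDes(w)$. The global sink of the nested descent graph of a word is the unique increasing arrangement of the letters of the word that do not appear in the path's edge labels; since $v$ and $w$ share the same letter set $X$ and produce identical $\NDes$, the sinks must agree, i.e., $\xi(v) = \xi(w)$, immediately yielding $\NFix(v) = \NFix(w)$ and $\NNeg(v) = \NNeg(w)$.

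For part (a), I would again apply Corollary~\ref{cV-cor} to write $w \in \XA(\zeta)$ for some $\zeta \in \I(S_X)$. Since the nested descent graph depends only on the one-line representation, the embedding $S_X \hookrightarrow W_n$ identifies the graph of $w$ with that of the corresponding $\sigma \in \cA(\zeta)^{-1}$, so it suffices to show no descendant of $\sigma$ carries a consecutive $321$-pattern. The plan is then to establish the stronger inductive claim that every descendant of $\sigma$ is itself an atom of a smaller involution: if $\tau \in \cA(\eta)^{-1}$ for some $\eta \in \I(S_Y)$ appears in the graph and $\tau'$ is a child obtained by removing a descent $(b,a)$ of $\tau$, then $\tau' \in \cA(\eta')^{-1}$ where $\eta' \in \I(S_{Y \setminus \{a,b\}})$ is $\eta$ with the $2$-cycle $(a,b)$ deleted. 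This should follow from the observation that every descent of an atom lies in its nested descent set (so $(b,a) \in \NDes(\tau)$ and $\eta(a) = b$), combined with the characterization of $\cA(\eta')^{-1}$ from Theorem~\ref{0-z-thm}: removing $(b,a)$ from $0_A(\eta)$ yields $0_A(\eta')$ and similarly for $1_A(\eta)$, and the $\vartriangleleft_A$-interval structure transfers compatibly. Once this inductive claim is in place, each descendant is an atom, and Proposition~\ref{321-prop} completes the argument.

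The main obstacle will be verifying this inductive step in part (a). While the behavior is transparent in examples such as Example~\ref{ndes-ex}, confirming it in general requires tracking how $\vartriangleleft_A$ and the extremal atoms $0_A$, $1_A$ transform under removal of a $2$-cycle from $\eta$. This is a routine but careful compatibility check, and could alternatively be deduced as a special case of \cite[Theorem~7.3]{M2}, already invoked in the proof of Theorem-Definition~\ref{recdes-thmdef}.
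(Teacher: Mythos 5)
Your part (b) is correct and coincides with the paper's argument: $v$ and $w$ lie in a common $\sim_A$-class, which by Corollary~\ref{cV-cor} has the form $\XA(\zeta)$, and Theorem-Definition~\ref{recdes-thmdef} then identifies $\NDes(v)=\NDes(w)$ with the $2$-cycles of $\zeta$; since the two words have the same letter set, the (increasing) global sinks agree and $\NFix$, $\NNeg$ follow.

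For part (a) you take a genuinely different and much heavier route, and its load-bearing step is left unproved. The paper deduces (a) in one line from Theorem-Definition~\ref{recdes-thmdef} itself: if a descendant $u$ of $w$ contained a consecutive pattern $cba$ with $a<b<c$, then $u$ would have one child obtained by deleting the descent $(c,b)$ and another obtained by deleting $(b,a)$; extending each through to the unique global sink exhibits both $(c,b)$ and $(b,a)$ as edge labels of paths from $w$, so both would belong to $\NDes(w)$. But $\NDes(w)$ is the set of $2$-cycles of an involution $\zeta$, which cannot match $b$ to both $a$ and $c$ --- contradiction. Your alternative claim (every descendant of an inverse atom is again an inverse atom, for the involution with the corresponding $2$-cycle deleted) is true, but the verification you propose via Theorem~\ref{0-z-thm} --- that deleting the pair $ba$ sends $0_A(\eta)$ to $0_A(\eta')$ and that ``the $\vartriangleleft_A$-interval structure transfers compatibly'' --- is exactly the nontrivial part: a chain $0_A(\eta)\leq_A\tau$ does not visibly restrict to a chain $0_A(\eta')\leq_A\tau'$, because intermediate $\vartriangleleft_A$ moves may involve the letters being deleted. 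The clean way to close this is not the interval description but the subword characterization of $\cA(\eta)^{-1}$ from \cite[Theorem~2.5]{CJW} (recorded in the paper as Lemma~\ref{cjw-lem} and in the proof of Lemma~\ref{0isatom-lem}): the conditions defining $\cA(\eta')^{-1}$ are a subset of those defining $\cA(\eta)^{-1}$ and are all subword conditions on the surviving letters, hence persist after deleting the adjacent pair $ba$. With that substitution your induction goes through, but as written the proposal has a gap at its key step, and in any case the direct derivation of (a) from the path-independence already established in Theorem-Definition~\ref{recdes-thmdef} is substantially shorter.
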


\begin{proof}
Part (a) is necessary for $\NDes(w)$ to be well-defined.
Part (b) is an immediate consequence of Corollary~\ref{cV-cor} and  Theorem-Definition~\ref{recdes-thmdef}.
\end{proof}

Can, Joyce, and Wyser's results in type A \cite{CJW} have this consequence for signed involutions.

\begin{lemma}\label{cjw-lem}
Let $z\in \I(\W_n)$ and $w \in \cA(z)^{-1}$. 
Suppose $e,e' \in \NFix(w) \sqcup -\NNeg(w)$ and $(a,b),(a',b') \in \NDes(w)$.
The following properties then hold: (1) If $e<e'$ then $ee'$ is a subword of $w=w_1w_2\cdots w_n$. (2) If $e<a<b$ (respectively, $a<b<e$) then $eba$ (respectively, $bae$) is a subword of $w$.
(3) Finally, if $a<a'$ and $b<b'$ then $bab'a'$ is a subword of $w$.
\end{lemma}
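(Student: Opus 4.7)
The plan is to reduce the three subword claims to the type A setting, and then verify them by induction along the covering relation $\vartriangleleft_A$ in $\cA(\zeta)^{-1}$.

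First, by Corollary \ref{cV-cor}, the $\sim_A$-equivalence class of $w$ inside $\cA(z)^{-1}$ equals $\XA(\zeta)$ for some $X \subset \ZZ$ with $[\pm n] = X \sqcup -X$ and some involution $\zeta \in \I(S_X)$. The inclusion $S_X \hookrightarrow W_n$ preserves one-line representations, so the word $w_1 w_2 \cdots w_n$ is simultaneously the one-line form of an atom in $\cA(\zeta)^{-1}$. Moreover, Theorem-Definition \ref{recdes-thmdef} identifies $\NDes(w)$ with the set of $2$-cycles of $\zeta$ and identifies $\NFix(w) \sqcup -\NNeg(w)$ with the fixed points of $\zeta$ in $X$. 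Consequently, each of (1), (2), and (3) becomes a statement about subword patterns in an arbitrary element of $\cA(\zeta)^{-1}$, where $\zeta$ is an involution in a symmetric group on integers.

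To prove the reduced type A statement, I apply Theorem \ref{0-z-thm}, which identifies $\cA(\zeta)^{-1}$ with the set of $\sigma \in S_X$ satisfying $0_A(\zeta) \leq_A \sigma$, and argue by induction on the length of a chain from $0_A(\zeta)$ to $\sigma$ under $\vartriangleleft_A$. For the base case $\sigma = 0_A(\zeta) = [[b_1 a_1 b_2 a_2 \cdots b_l a_l]]$, where the pairs $(a_i, b_i)$ enumerate $\Cyc_A(\zeta)$ ordered so that $a_1 < a_2 < \cdots < a_l$, each of (1)--(3) is immediate from the construction: fixed points of $\zeta$ appear as singletons in increasing order of value; each $2$-cycle $\{a,b\}$ with $a < b$ contributes the consecutive pair $ba$; and these blocks (singletons and pairs) are arranged in increasing order of their smaller entries. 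For the inductive step, suppose $\sigma$ satisfies (1)--(3) and $\sigma \vartriangleleft_A \sigma'$, so that $\sigma$ and $\sigma'$ agree outside three consecutive positions $i,i+1,i+2$, where $\sigma$ has $cab$ and $\sigma'$ has $bca$ with $a<b<c$. Only the three letters $a,b,c$ shift position. Since each claimed subword pattern in (1)--(3) involves at most four distinguished letters (drawn from the fixed points and cycle markers of $\zeta$), a short case analysis on how the distinguished letters intersect $\{a,b,c\}$ confirms that the subword relation persists in $\sigma'$.

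The main obstacle is this inductive case analysis: a distinguished letter of a subword pattern may itself coincide with one of the three cyclically shifted letters $a,b,c$, so one must track position shifts carefully to confirm that the required ordering is preserved. The analysis is controlled by the absence of consecutive $321$-patterns in atoms (Proposition \ref{321-prop}) together with the strict inequalities in the hypotheses of (1)--(3), which severely limit how a distinguished letter can lie in the triple $\{a,b,c\}$. An alternative and more streamlined route, suggested by the remark preceding the lemma, is to invoke the pattern-based characterization of $\cA(\zeta)^{-1}$ from Can, Joyce, and Wyser \cite{CJW}, from which properties (1)--(3) follow as direct corollaries of the reduction above.
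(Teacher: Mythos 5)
Your reduction to type A is correct and is exactly the paper's first (and essentially only) step: by Corollary~\ref{cV-cor} and Theorem-Definition~\ref{recdes-thmdef}, $w$ lies in $\XA(\zeta)$ for an involution $\zeta$ whose $2$-cycles are the pairs in $\NDes(w)$ and whose fixed points are $\NFix(w)\sqcup-\NNeg(w)$, so (1)--(3) become statements about an arbitrary inverse atom of an involution in a symmetric group. At that point the paper simply cites \cite[Theorem 2.5]{CJW} --- which is precisely the route you relegate to an ``alternative.'' Had you led with that, the proof would match the paper's.

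Your primary route, the induction along $\vartriangleleft_A$ from $0_A(\zeta)$ with (1)--(3) as the inductive invariant, has a genuine gap in the inductive step. The invariant is too weak: properties (1)--(3) say nothing about the position of a letter $x$ relative to a cycle $(a,b)$ when $x$ is \emph{not} comparable to the cycle in the sense required by (2) or (3) --- for instance a fixed point $x$ or a cycle $(x,d)$ nested around $(a,b)$ with $x<a<b<d$. Concretely, suppose $\sigma$ contains the consecutive subword $b\,x\,a$ where $(a,b)\in\Cyc_A(\zeta)$ and $x<a<b<d$ for a cycle $(x,d)$ of $\zeta$. This is a consecutive $312$-pattern, so Proposition~\ref{321-prop} does not exclude it; property (3) does not apply to the nested pair $(x,d),(a,b)$; and (1), (2) are silent. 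Yet the move $b\,x\,a\vartriangleleft_A a\,b\,x$ puts $a$ before $b$, destroying every instance of (2) and (3) that involves the cycle $(a,b)$. A parallel failure occurs for (1): a consecutive $c\,e\,e'$ with $e<e'$ both fixed points and $c$ the larger element of a cycle $(x,c)$ with $x<e$ is not excluded by (1)--(3), but the move sends $c\,e\,e'\mapsto e'\,c\,e$ and breaks (1). These configurations genuinely cannot occur in an inverse atom, but the reason is the \emph{additional} clause of the Can--Joyce--Wyser characterization (restated in the proof of Lemma~\ref{0isatom-lem}): no letter valued strictly between $x$ and $d$ may appear between $d$ and $x$ in the word. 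To make your induction close, you would have to carry that clause (i.e., the full characterization) as part of the invariant, at which point you are reproving \cite[Theorem 2.5]{CJW} rather than deducing the lemma from a short case analysis controlled by $321$-avoidance.
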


\begin{proof}
Because $w \in \XA(\zeta)$ for the involution $\zeta \in \I(S_{\{w_1,w_2,\dots,w_n\}})$ 
whose nontrivial cycles are the pairs in $\NDes(w)$,
these properties are equivalent to \cite[Theorem 2.5]{CJW}.
It is also an instructive exercise to derive the lemma by considering the nested descent graph of $w$.
For each hypothesis, one can check that the desired conclusion fails only if an extraneous descent appears in $\NDes(w)$
or if we can relate $w$ via $\sim_A$ to an element $v \in \W_n$ with a consecutive 321-pattern.
\end{proof}
 
 \begin{lemma}\label{tech-lem1}
Let $z \in \I(\W_n)$, $w \in \cA(z)$, and $i \in [n]$. 
Suppose $w(1) =i$ and $w(2)=-(i+1)$. 
Then $z(1) = -1$ and $z(2)=-2$.
\end{lemma}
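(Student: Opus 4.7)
The plan is to peel off a right factor of $t_1 t_0 t_1$ from $w$, analyze the resulting intermediate involutions, and push them through to $z$. Since $w(1) = i > -(i+1) = w(2)$ we have $t_1 \in \DesR(w)$; tracking descents shows that $wt_1$, $wt_1 t_0$, and $v := wt_1 t_0 t_1$ each acquire the expected descent, producing a factorization $w = v \cdot t_1 \cdot t_0 \cdot t_1$ with additive lengths and with $v(1) = i$, $v(2) = i+1$. Proposition~\ref{weak-prop} then gives $v \in \cA(z^*)$ where $z^* := v^{-1} \circ v$, and associativity of $\circ$ unfolds $z = w^{-1} \circ w$ into the three-step chain $\beta := t_1 \circ z^* \circ t_1$, $\gamma := t_0 \circ \beta \circ t_0$, $z = t_1 \circ \gamma \circ t_1$. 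Because $w$ is an atom, the involution length $\ellhat$ must rise by exactly $3$ across these steps, so each of the three conjugations is non-trivial (the relevant simple reflection is not a right descent of the input involution).

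The technical core is to establish that $z^*$ fixes $1$ and $2$. I would argue this by examining $v^{-1} \in \cA(z^*)^{-1}$, whose one-line has the values $1$ and $2$ adjacent and ascending at positions $i, i+1$. By Propositions~\ref{321-prop} and~\ref{12bar-prop}, no consecutive $321$- or $\bar 1\bar 2$-pattern can appear in $v^{-1}$ or any word in its nested descent graph, so the letters $1$ and $2$ can never be swept up by a descent removal and thus persist into the unique sink $\xi(v^{-1})$; that is, $1, 2 \in \NFix(v^{-1})$. Combining Corollary~\ref{cV-cor} with the translation from nested data to cycle structure furnished by Theorem-Definition~\ref{recdes-thmdef} and Theorem~\ref{intro-thm}, this forces $z^*(1) = 1$ and $z^*(2) = 2$.

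With $z^*(1) = 1$ and $z^*(2) = 2$ in hand, the three conjugations unwind mechanically: $\beta = z^* t_1$ (commuting case), so $\beta(1) = 2$ and $\beta(2) = 1$; next $\gamma = t_0 \beta t_0$ (non-commuting, since $|\beta(1)| = 2$), so $\gamma(1) = -2$ and $\gamma(2) = -1$; finally $z = \gamma t_1$ (commuting again, as $\{\gamma(1), \gamma(2)\} = \{-1, -2\}$), yielding $z(1) = -1$ and $z(2) = -2$, as claimed. The main obstacle will be the middle step: rigorously showing that $1, 2 \in \NFix(v^{-1})$ implies $z^*(1) = 1$ and $z^*(2) = 2$, which needs a careful invocation of the $\XA(\zeta)$-decomposition and nested-descent correspondence that underlie the paper's structural results on $\cA(z^*)^{-1}$.
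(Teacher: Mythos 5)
Your skeleton matches the paper's: both proofs factor $w = v\,t_1t_0t_1$ with $\ell(w)=\ell(v)+3$, pass to $z^* = v^{-1}\circ v$ via Proposition~\ref{weak-prop}, note that each of the three Demazure conjugations must raise $\ellhat$ by exactly one, and unwind them at the end (your final computation from $z^*(1)=1$, $z^*(2)=2$ to $z(1)=-1$, $z(2)=-2$ is correct). The gap is exactly where you flag it: the claim that $1,2\in\NFix(v^{-1})$. You justify it by saying that $v^{-1}$ and its nested-descent descendants avoid consecutive $321$- and $\bar1\hs\bar2$-patterns, ``so'' the adjacent ascending letters $1,2$ can never be swept up by a descent removal. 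That implication is false. Take $v^{-1}=312\in\cA(\zeta)^{-1}$ for $\zeta=(1,3)$: it has $1,2$ adjacent and ascending at positions $2,3$, it avoids both forbidden patterns, and yet the letter $1$ is removed by the descent $(3,1)$, so $\zeta(1)=3\neq 1$. (Here $v=231$ has $v(1)=2$, $v(2)=3$, so this is genuinely of the form your argument must handle; the corresponding $w=vt_1t_0t_1=2\bar31$ is simply not an atom, because $w^{-1}=31\bar2$ contains a consecutive $321$-pattern.) The constraint you need lives in $w^{-1}$, not $v^{-1}$: since $w^{-1}$ is $v^{-1}$ with the letter $2$ at position $i+1$ negated, a letter $x>1$ at position $i-1$ would create the consecutive $321$-pattern $x,1,\bar 2$ in $w^{-1}$, and a negative letter at position $i+2$ would create a consecutive $\bar1\hs\bar2$-pattern there. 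So Propositions~\ref{321-prop} and \ref{12bar-prop} must be applied to $w^{-1}$, and even then they only control the immediate neighbours of $1$ and $2$ in $v^{-1}$ itself, not the letters that become adjacent to them deeper in the nested descent graph; that requires a further induction you have not supplied.

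The paper avoids all of this with a short algebraic argument for the middle step: writing $y=z^*$ and $u=v$, one has $u(1)=i$, $u(2)=i+1$, hence $ut_1=t_iu$ and $y\circ t_1 = u^{-1}\circ t_i\circ u = t_1\circ y$, so $yt_1=t_1y$ and $\{y(1),y(2)\}$ is either $\{1,2\}$ or $\{-2,-1\}$; the latter is impossible because it would make the subsequent $t_0$-conjugation trivial, contradicting that $\ellhat$ rises by $3$ in total. I recommend replacing your combinatorial middle step with this commutation argument; the rest of your proposal then goes through.
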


\begin{proof}
Let $u =w t_1t_0t_1$
and $y = u^{-1}\circ u \in \I(\W_n)$. Then $\ell(w) = \ell(u) + 3$, so $u \in \cA(y)$ and $t_1\notin\DesR(y)$.
Since  $yt_1 =  u^{-1}\circ t_i \circ u  = t_1 y$,
we have either
$y(1) =1$ and $y(2)=2$, or $y(1) = -2$ and $y(2) = -1$. The second case is impossible since 
$y < t_1 \circ y \circ t_1  = yt_1 < t_0\circ (yt_1) \circ t_0$,
so it follows
that 
$z = t_1 \circ t_0 \circ t_1 \circ y \circ t_1 \circ t_0 \circ t_1=
 yt_0t_1t_0t_1$
and consequently that $z(1) = -1$ and $z(2)=-2$.
\end{proof}

A word $w_1w_2\cdots w_n$ has a \emph{consecutive $\bar1\hs\bar2$-pattern} if for some $i \in [n-1]$
 it holds that $0>w_i>w_{i+1}$.
 A  permutation in $S_X$ or $ \W_n$ has a consecutive $\bar{1}\hs\bar{2}$-pattern if its one-line representation does.
 
 \begin{proposition}\label{12bar-prop}
 If $w \in \cA(z)^{-1}$ for some $z \in \I(\W_n)$,
then neither $w$ nor any other word in $w$'s nested descent graph has a consecutive $\bar1\hs\bar2$-pattern.
\end{proposition}


\begin{proof}
Suppose that $w \in \W_n$ has
$w^{-1}(1) =-i$ and $w^{-1}(2)=-(i+1)$ where $i \in [n-1]$,
and assume $w \in \cA(z)^{-1}$ for some $z \in \I(\W_n)$. We produce a contradiction.
Since $t_0w<_Lw$, we have $t_0w \in \cA(y)^{-1}$ for some $y \in \I(\W_n)$ by Proposition~\ref{weak-prop}.
As $(t_0w)^{-1}(1) = i$ and $(t_0w)^{-1}(2) = -(i+1)$,  it follows
from Lemma~\ref{tech-lem1} that $y(1) = -1$.
But this means that $y=(t_0w) \circ (t_0w)^{-1} = w \circ w^{-1} = z$,
which is impossible since $\ell(t_0w) < \ell(w)$ and $w^{-1} \in \cA(z)$.

Next suppose $w \in \W_n$ is an arbitrary permutation with a consecutive $\bar{1}\hs\bar{2}$-pattern,
so that for some $a<b$ in $[n]$ it holds that $w^{-1}(a) = -i$ and $w^{-1}(b) = -(i+1)$.
It is an exercise to construct an element $v \in \W_n$ with $v^{-1}<_R w^{-1}$ and $v^{-1}(1) = -i$ and $v^{-1}(2) =-(i+1)$.
By the previous paragraph,  $v^{-1}$ is not an atom for any involution,
so by Proposition~\ref{weak-prop} neither is $w^{-1}$.
This shows that no inverse atom of a signed involution has a  consecutive $\bar{1}\hs\bar{2}$-pattern.
By
 Corollary~\ref{ndes-cor2}(a), the same holds for all words in the nested descent graph of an inverse atom.
\end{proof}

\section{Partial orders}\label{order-sect}

Again
define $<_{B}$ to be the transitive closure of $\vartriangleleft_A$ and $\vartriangleleft_B$ from \eqref{<B-eq}.
Let $\sim_B$ be the symmetric closure of $<_B$.
We apply $<_B$,  $\vartriangleleft_B$ and  $\sim_B$ to elements of $\W_n$ via their one-line representations.
%

\begin{lemma}\label{<B-lem}
If $w \in \W_n$, $z \in \I(\W_n)$, $v \in \cA(z)^{-1}$, and $v \sim_B w$, then $w \in \cA(z)^{-1}$.
\end{lemma}

\begin{proof}
Suppose $v,w \in \W_n$ are such that $v\vartriangleleft_B w$.
If $v_1v_2=\bar b  \bar a$ and $w_1w_2 = a \bar b$ where $0<a<b$,
then there exists $\sigma \in \W_n$
such that 
$v =  \sigma\circ \bar{2}\hs\bar{1}$, $w=\sigma\circ 1 \bar 2$,
and $\ell(v) =\ell(w) = \ell(\sigma) + 3$.
If $v_1v_2v_3 = \bar c \bar b  \bar a$ and $ w_1w_2w_3 = \bar c a \bar b$ where $0<a<b<c$,
then there exists $\sigma \in \W_n$
such that $
v =  \sigma\circ \bar{3}\hs\bar 2\hs\bar 1$, $w =\sigma\circ \bar3 1\bar 2$,
and $\ell(v)=\ell(w) =\ell(\sigma) + 6$.
The lemma follows by checking 
that $\bar{2}\hs\bar{1}=t_0t_1t_0$ and $1\bar 2=t_1t_0t_1$  are inverse atoms of $\bar{1}\hs\bar{2}$,
while $\bar{3}\hs\bar{2}\hs\bar{1}=t_0t_1t_0t_2t_1t_0$ and $\bar 3 1\bar 2 = t_1t_0t_1t_2t_1t_0$ 
 are inverse atoms of $\bar{1}\hs\bar{2}\hs\bar{3}$. 
\end{proof}

Define $\vartriangleleft^+_B$ as the ``extended'' relation on $n$-letter words with $v\vartriangleleft^+_B w$
if for some $i\in[n-1]$ 
\be\label{black}
 v_1<v_2<\dots<v_i = w_{i+1}<v_{i+1} = -w_i < 0
\qquand
v_j=w_j \text{ if }j \notin \{i,i+1\}
.
\ee
Thus
$ \bar{z}\cdots \bar{c}\bar{b}\bar{a}\cdots  \vartriangleleft^+_B \bar{z}\cdots \bar{c}a\bar{b}\cdots $
if $0<a<b<c<\dots<z$. We have ${\vartriangleleft_B} \Rightarrow {\vartriangleleft^+_B}$. Conversely:

\begin{lemma}\label{jump-lem}
If $v$ and $w$ are words with $n$ letters
and $v\vartriangleleft^+_B w$, then $v\sim_B w$.
\end{lemma}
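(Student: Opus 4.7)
The plan is induction on the index $i$. When $i \in \{1,2\}$ the relation $v \vartriangleleft^+_B w$ reduces to one of the two forms of $\vartriangleleft_B$ from \eqref{<B-eq}, and the conclusion $v \sim_B w$ is immediate.

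For $i \geq 3$, write $v_j = -\alpha_j$ for $1 \leq j \leq i+1$ with $\alpha_1 > \alpha_2 > \cdots > \alpha_{i+1} > 0$, so that $v$ begins $\bar\alpha_1 \bar\alpha_2 \bar\alpha_3 \cdots \bar\alpha_{i+1}$ while $w$ begins $\bar\alpha_1 \bar\alpha_2 \cdots \bar\alpha_{i-1} \alpha_{i+1} \bar\alpha_i$. The key move is to apply the same preliminary recipe to both words: first use $\vartriangleleft_B$ (first form) at the initial two positions to convert $\bar\alpha_1 \bar\alpha_2$ into $\alpha_2 \bar\alpha_1$, and then apply a succession of $\vartriangleleft_A$ moves to slide the resulting pair $\alpha_2\,\bar\alpha_1$ rightward. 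Each such slide is a $cab \to bca$ step with $c = \alpha_2$ and $a = \bar\alpha_1$, so it is valid whenever the crossed letter $b$ satisfies $-\alpha_1 < b < \alpha_2$. In $v$ the letters to be crossed are $\bar\alpha_3, \ldots, \bar\alpha_{i+1}$; in $w$ they are $\bar\alpha_3, \ldots, \bar\alpha_{i-1}, \alpha_{i+1}, \bar\alpha_i$. A routine inequality check confirms that every such $b$ lies strictly between $-\alpha_1$ and $\alpha_2$.

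After performing these $i-1$ slides on each word, $v$ and $w$ become
\[
\begin{aligned}
v^{(1)} &= \bar\alpha_3\, \bar\alpha_4 \cdots \bar\alpha_{i+1}\, \alpha_2\, \bar\alpha_1\, v_{i+2} \cdots v_n, \\
w^{(1)} &= \bar\alpha_3 \cdots \bar\alpha_{i-1}\, \alpha_{i+1}\, \bar\alpha_i\, \alpha_2\, \bar\alpha_1\, v_{i+2} \cdots v_n,
\end{aligned}
\]
respectively. These agree everywhere except at positions $i-2$ and $i-1$, where $v^{(1)}$ displays $\bar\alpha_i\, \bar\alpha_{i+1}$ while $w^{(1)}$ displays $\alpha_{i+1}\, \bar\alpha_i$. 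Since $\bar\alpha_3 < \bar\alpha_4 < \cdots < \bar\alpha_{i+1} < 0$, the first $i-1$ entries of $v^{(1)}$ form an increasing sequence of negatives, so the configuration at positions $i-2, i-1$ is precisely the swap-and-negate pattern of $\vartriangleleft^+_B$ at the smaller index $i-2$. The inductive hypothesis then yields $v^{(1)} \sim_B w^{(1)}$, and combining with $v \sim_B v^{(1)}$ and $w \sim_B w^{(1)}$ we conclude $v \sim_B w$.

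The only subtlety is the bookkeeping: verifying that each sliding move really is a $cab \to bca$ step (a quick check among the $\alpha_k$), and that the pair $(v^{(1)}, w^{(1)})$ matches the $\vartriangleleft^+_B$ template at index $i-2$ in both its ``increasing negatives'' prefix and its designated two-letter difference. No conceptually new input is needed beyond this careful tracking of indices and signs.
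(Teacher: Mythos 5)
Your proof is correct. The verification of each step checks out: the initial $\vartriangleleft_B$ move $\bar\alpha_1\bar\alpha_2\cdots \to \alpha_2\bar\alpha_1\cdots$ is legitimate (for $i\geq 3$ the words $v$ and $w$ agree in positions $1,2$), every slide is a valid $cab \to bca$ instance of $\vartriangleleft_A$ since each crossed letter lies strictly between $-\alpha_1$ and $\alpha_2$, and the resulting pair $(v^{(1)},w^{(1)})$ is indeed a $\vartriangleleft^+_B$ instance at index $i-2$, so the induction closes. Your route is organized differently from the paper's: the paper applies the inductive hypothesis first, pairing up the prefix letters $v_{j-1}v_j \mapsto \bar{v_j}v_{j-1}$ for even $j<i$ (each such replacement being a smaller $\vartriangleleft^+_B$ instance), then uses $<_A$ to cycle the critical pair or triple to the front of the word, and finishes with a single literal application of $\vartriangleleft_B$; you instead perform the $\vartriangleleft_B$ move at the front immediately, on the two most negative letters, slide the resulting block rightward with $\vartriangleleft_A$, and only then recurse on the smaller instance. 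The two arguments use the same ingredients in opposite order. Yours has the advantage that the intermediate words are completely explicit and each elementary move is checked individually, whereas the paper's version leaves the $v'<_A v''$ step as an exercise; the paper's version touches the suffix $v_{i+2}\cdots v_n$ not at all and requires slightly fewer moves when $i$ is large, but there is no substantive difference in difficulty. No gap.
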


\begin{proof}
Assume $v$, $w$, and $i$ are as in \eqref{black}.
Form $v'$  from $v$
by replacing  $v_{j-1}v_j$ by $\bar{v_j}v_{j-1}$ for each even index $j<i$.
If $i$ is odd (respectively, even), then define $v''$ by removing the subword $v_iv_{i+1}$ 
(respectively, $v_{i-1}v_iv_{i+1}$)
from $v'$ and placing it at the start of the word.
Define $w'$ from $w$ and $w''$ from $w'$ analogously. 
By induction on $i$, we can assume that $v\sim_B v'$ and $w'\sim_B w$.
It is an exercise to check that $v' < _A v''$ and $w' <_A w''$,
and it holds by definition that $v'' \vartriangleleft_B w''$,
so $v\sim_B w$.
\end{proof}

Next define $\scov$ as the relation on $n$-letter words with $v \scov w$
if for some $i \in [n-1]$ and some positive numbers $a,b$ it holds that $v_j =w_j$ for $j \notin \{i,i+1\}$ while
\be\label{black2}v_iv_{i+1} = \bar{b}\bar{a},\qquad  w_iw_{i+1}=a\bar b, \qquand 0<a<b=\min\{ |v_1|,|v_2|,\dots,|v_i|\}.\ee
When $v_1<v_2<\dots<v_i$ these conditions are equivalent to \eqref{black}, 
so  
$v \vartriangleleft^+_B w\ \Rightarrow\ v \scov  w$.


\begin{lemma}\label{jump2-lem}
Let $z \in \I(\W_n)$ $v \in \W_n$, and $w \in \cA(z)^{-1}$. If $v\scov w$ then $w \sim_B v \in \cA(z)^{-1}$.
\end{lemma}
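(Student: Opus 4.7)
By Lemma~\ref{<B-lem}, it suffices to show $v\sim_B w$. The remark immediately preceding this lemma establishes that when $v_1<v_2<\cdots<v_i$, the $\scov$ relation coincides with $\vartriangleleft^+_B$, so Lemma~\ref{jump-lem} gives $v\sim_B w$ directly. The plan for the general case is to reduce to this sorted situation by performing parallel $\sim_A$-moves on $v$ and $w$.

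The key structural facts come from $w\in\cA(z)^{-1}$: by Propositions~\ref{321-prop} and~\ref{12bar-prop}, $w$ has no consecutive $321$- or $\bar1\bar2$-pattern. Combined with the hypothesis $|w_j|>b$ for $j<i$, applying $321$-avoidance to the window $w_{i-1}w_i w_{i+1}=w_{i-1}\,a\,\bar b$ forces $w_{i-1}<\bar b$; more generally, any positive prefix letter $w_m>0$ (necessarily with $w_m>b$) is followed by a negative letter $w_{m+1}<\bar b$. The letter at position $m+2$ is either another prefix letter or one of $\bar b$, $a$; in every case its value lies strictly between $w_{m+1}$ and $w_m$, since $\bar b$, $\bar a$, $a$ each have absolute value at most $b<|w_{m+1}|$. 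Thus the window $w_m w_{m+1} w_{m+2}$ forms a $cab$-pattern in both $v$ and $w$, and the $\sim_A$-swap advancing $w_m$ one position to the right can be applied in parallel to $v$ and $w$.

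Iterating these swaps right-to-left through the positive prefix letters yields $v'\sim_A v$ and $w'\sim_A w$ in which the $\bar b\bar a$ pair of $v'$ (respectively, the $a\bar b$ pair of $w'$) occupies positions $i',i'+1$ for some $i'\leq i$, preceded only by a block of negative letters each less than $\bar b$. Since $w'$ is still an inverse atom by Lemma~\ref{<A-lem}, its $\bar1\bar2$-avoidance forces these preceding letters to be arranged in increasing order; combined with $\bar b<\bar a<0$, this gives $v'\vartriangleleft^+_B w'$. Lemma~\ref{jump-lem} then yields $v'\sim_B w'$, and chaining the $\sim_A$ and $\sim_B$ equivalences produces $v\sim_B w$.

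The main technical obstacle is organizing the iterated migration carefully, probably by induction on the number of positive letters in the prefix, to ensure that at each step the relevant $3$-letter window still forms a $cab$-pattern for the next swap and that the newly-exposed letter at position $m$ does not obstruct later moves. Preservation of the pattern-avoidance of $w$ through the sequence of $\sim_A$-moves, guaranteed by Lemma~\ref{<A-lem}, is the invariant that keeps the procedure running.
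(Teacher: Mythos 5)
There is a genuine gap. Your reduction to Lemma~\ref{<B-lem}, and the plan of sorting the prefix by parallel $\sim_A$-moves until $\vartriangleleft^+_B$ and Lemma~\ref{jump-lem} apply, are both in the spirit of the paper's argument. But the structural claims you use to run the migration are false. The $321$-avoidance argument only controls the letter at position $i-1$; it says nothing about a positive letter $w_m$ with $m<i-1$, and in fact two \emph{consecutive} positive prefix letters can occur. Concretely, take $z = \bar1\hs\bar2\hs\bar4\hs\bar3\hs\bar6\hs\bar5 \in \I(W_6)$ and $w = 4\hs6\hs\bar5\hs\bar3\hs1\hs\bar2$, which lies one $\vartriangleleft_A$-step above $6\hs\bar5\hs4\hs\bar3\hs1\hs\bar2 = 0_B(z,M)$ for $M=\{\{1,2\},\{-1,-2\}\}$ and hence belongs to $\cA(z)^{-1}$; here $v = 4\hs6\hs\bar5\hs\bar3\hs\bar2\hs\bar1 \scov w$ with $a=1$, $b=2$, $i=5$, yet $w_1=4$ is followed by $w_2=6>0$. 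The mechanism is a nested descent pair $(b',a')$ with $a'<-b<b<b'$: its large positive entry $b'$ can sit immediately left of another such entry without creating a $321$- or $\bar1\hs\bar2$-pattern. Your second claim, that $w_{m+2}$ always lies strictly between $w_{m+1}$ and $w_m$, fails for the same reason (e.g.\ a window $3\hs\bar6\hs4$). In the example above the window $w_1w_2w_3 = 4\hs6\hs\bar5$ is not a $cab$-pattern, so the very first parallel swap you propose cannot be performed, and the procedure never gets off the ground.

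The paper avoids this entirely by inducting on the position $j$ of the \emph{last} positive letter among $w_1,\dots,w_{i-1}$: for that letter the next one or two entries are guaranteed negative, so the relevant $cab$-window exists, and a separate four-letter rearrangement handles the boundary case $j=i-2$ (where the $a\hs\bar b$ pair itself must be moved leftward past $w_{j}w_{j+1}$ rather than the positive letter being pushed rightward — note that a positive letter can never be pushed to position $i-1$, since $e\,a\,\bar b$ would be a consecutive $321$-pattern, which is another reason your "migrate everything past the pair" endgame cannot work as described). To repair your argument you would need both to restrict attention to the last positive prefix letter and to supply the missing boundary analysis; as written, the proof also explicitly defers the organization of the induction, so the argument is incomplete even granting the (false) structural claims.
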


The converse does not hold: when $v\scov w $ and $v\in \cA(z)^{-1}$ it may occur that $w\notin \cA(z)^{-1}$.

\begin{proof}
Assume $v$, $ w$, and $i$ are as in \eqref{black2} and $w \in \cA(z)^{-1}$.
 Let $j$ be the maximal index in $\{1,2,\dots,i-1\}$ such that $w_j>0$.
If no such index $j$ exists then
the numbers $w_1,w_2,\dots,w_{i-1}$ are all negative, so it follows from Proposition~\ref{12bar-prop}
 that $v\vartriangleleft^+_B w$ whence $v \sim_B w$ by Lemma~\ref{jump-lem}.
 
 Suppose $j<i$.
 Proposition~\ref{321-prop} then implies that $j<i-1$.
 First consider the case when $j<i-2$, and let 
$e=v_j=w_j$, $c=v_{j+1}=w_{j+1}$, and $d=v_{j+2}=w_{j+2}$.
Since $c$ and $d$ are both negative,
 it follows by Proposition~\ref{12bar-prop} that $c<d<0<e$ so we have $ecd \vartriangleleft_A dec$.
 Form $v'$ and $w'$  from the one-line representations of $v$ and $w$
 by replacing the subword $ecd = v_jv_{j+1}v_{j+2}=w_jw_{j+1}w_{j+2}$ by $dec$.
 Then $v \vartriangleleft_A v' \scov w'$ and $w\vartriangleleft_A w' \in \cA(z)^{-1}$,
 so by induction 
 $v\sim_B v'\sim_B w'\sim_B w$.

Now consider the case when $j=i-2$, and let $d=v_{i-2}=w_{i-2}$ and $c=-v_{i-1}=-w_{i-1}$.
By construction both $c$ and $d$ are positive and greater than $b>a$, so we have $d\bar c a\bar b \vartriangleleft_A a d \bar c \bar b$.
It follows by Proposition~\ref{12bar-prop} that $b<c$, so we also have $ad\bar c \bar b \vartriangleleft_A a \bar b d \bar c$
and similarly $d \bar c \bar b \bar a \vartriangleleft_A \bar b d \bar c \hs\bar a \vartriangleleft_A \bar b \bar a d \bar c$. 
Let $v''$ and $w''$ be the signed permutations formed from the one-line representations of $v$ and $w$
by replacing the subword $d \bar c \bar b \bar a$ by $\bar b \bar a d \bar c$
and the subword $ d\bar c a\bar b$ by $a \bar b d \bar c$.
Then $v <_A v'' \scov w''$ and $w<_A w'' \in \cA(z)^{-1}$,
so again by induction $v \sim_B v '' \sim_B w'' \sim_B w$. 
\end{proof}


\begin{lemma}\label{ht-lem}
Suppose $v,w\in \W_n$
and $i \in [n-1]$ are as in \eqref{black2} so that $v\scov w$.
Assume $v,w \in \cA(z)^{-1}$
for some $z \in \I(\W_n)$,
and set $a = -v_{i+1}= w_i < b=-v_i = -w_{i+1}$.
Then
$\NDes(w) = \NDes(v) \sqcup \{(a,-b)\}$, $\NNeg(v) = \NNeg(w) \sqcup \{ a, b\}$, and $\NFix(v) = \NFix(w)$.
\end{lemma}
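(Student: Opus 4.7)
The plan is to prove all three identities together by comparing the nested descent graphs of $v$ and $w$ through a shared intermediate word. Let $u = v_1\cdots v_{i-1}v_{i+2}\cdots v_n$ denote the length-$(n-2)$ word obtained from $v$, or equivalently from $w$, by deleting positions $i$ and $i+1$. Because $w_iw_{i+1} = a\bar b$ is a descent in $w$ by definition of $\scov$, the word $u$ appears as a child of $w$ in its nested descent graph via the edge labeled $(a,-b)$. Theorem-Definition~\ref{recdes-thmdef} then gives $(a,-b) \in \NDes(w)$, shows that the sink $\xi(w)$ coincides with the sink of $u$ computed inside $w$'s graph, and identifies the rest of $\NDes(w)$ with the edge labels along any path from $u$ down to this common sink.

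The core of the argument is to show that the letters $\bar b$ and $\bar a$ at positions $i, i+1$ of $v$ remain frozen in $v$'s nested descent graph, i.e., no descent removed on the way to $\xi(v)$ ever touches them. Only three descents could involve these positions: $(v_{i-1}, \bar b)$, $(\bar b, \bar a)$, and $(\bar a, v_{i+2})$. The middle one fails since $a < b$ forces $\bar b < \bar a$. For the first, the hypothesis $b = \min\{|v_1|,\dots,|v_i|\}$ together with $v_i = \bar b$ forces $|v_{i-1}| > b$. If $v_{i-1} > b$, then positions $i-1, i, i+1$ of $w$ carry $v_{i-1}, a, \bar b$ with $v_{i-1} > b > a > \bar b$, a consecutive $321$-pattern in $w$ contradicting Proposition~\ref{321-prop}; hence $v_{i-1} < -b = \bar b$ and no descent results. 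For the third, if $v_{i+2} < \bar a$ then necessarily $v_{i+2} < 0$, and the pair $\bar a, v_{i+2}$ at positions $i+1, i+2$ of $v$ is a consecutive $\bar 1\hs\bar 2$-pattern, contradicting Proposition~\ref{12bar-prop}. The same bounds rule out a descent at the new adjacency $v_{i-1}v_{i+2}$ in $u$, since either $v_{i+2} > 0 > v_{i-1}$ or $v_{i+2} > \bar a > \bar b > v_{i-1}$. I expect this case analysis to be the main obstacle.

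With $\bar b$ and $\bar a$ frozen, the nested descent graph of $v$ is isomorphic to the subgraph of $w$'s graph rooted at $u$, with matching edge labels throughout. Combined with the first paragraph, this gives $\NDes(w) = \{(a,-b)\} \sqcup \NDes(v)$ and identifies $\xi(v)$ with the common sink of $u$ and $w$ after reinserting $\bar b, \bar a$ at positions $i, i+1$. Since $\bar b, \bar a < 0$, restricting the two sinks to positive letters yields $\NFix(v) = \NFix(w)$, while taking absolute values of negative letters yields $\NNeg(v) = \NNeg(w) \sqcup \{a, b\}$, establishing all three identities.
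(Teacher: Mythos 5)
Your overall strategy---delete the descent $a\bar b$ from $w$ first to obtain $u$, and argue that the block $\bar b\hs\bar a$ is frozen in the nested descent graph of $v$---is a plausible dual of the paper's argument (which instead descends through $w$'s graph while \emph{postponing} the removal of $a\bar b$ until the very last step, and only then transfers the path to $v$'s graph). But there is a genuine gap in your frozen-ness claim. You only verify that no descent \emph{of the word $v$ itself} involves $\bar b$ or $\bar a$, by inspecting the three adjacencies $(v_{i-1},\bar b)$, $(\bar b,\bar a)$, $(\bar a,v_{i+2})$. That is not sufficient: as descents elsewhere are removed, new letters become adjacent to $\bar b$ and $\bar a$ in descendants of $v$, and in particular a positive letter $v_j>b$ with $j<i-1$ can slide in as the left neighbour of $\bar b$ and create the descent $(v_j,\bar b)$. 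Concretely, take $v=3\hs4\hs\bar5\hs\bar2\hs\bar1$ with $i=4$, $a=1$, $b=2$, so $w=3\hs4\hs\bar5\hs1\hs\bar2$. One checks $v\in\cA(z)^{-1}$ for $z=\bar1\hs\bar3\hs\bar2\hs\bar4\hs\bar5$ (it lies one $\vartriangleleft_A$ step above $0_B(z,M)=4\hs\bar5\hs3\hs\bar2\hs\bar1$ for $M=\{\{4,5\},\{\bar4,\bar5\},\{1,\bar1\}\}$), and every check you perform passes: $w$ has no consecutive $321$-pattern, $v_{i-1}=\bar5<\bar b$, and the remaining adjacencies are vacuous. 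Nevertheless $\bar2$ is not frozen: removing $(4,\bar5)$ from $v$ yields $3\hs\bar2\hs\bar1$, where $(3,\bar2)$ is a descent, so in fact $(3,\bar2)\in\NDes(v)$ and $2\notin\NNeg(v)$. This is not a counterexample to the lemma---here $w\notin\cA(z)^{-1}$, since its child $3\hs1\hs\bar2$ contains a consecutive $321$-pattern---but it shows that the hypotheses you actually invoke do not imply the conclusion you draw.

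The missing ingredient is that $w\in\cA(z)^{-1}$ must be exploited through Corollary~\ref{ndes-cor2}(a) applied to the \emph{descendants} of $w$, not merely to $w$ itself: if a surviving letter $x$ with $x>b>0$ ever became the left neighbour of $\bar b$ in a descendant of $v$, then in the corresponding descendant of $w$ the triple $x,a,\bar b$ would be a consecutive $321$-pattern. Making this precise requires first knowing that the corresponding descendant of $w$ exists, i.e.\ running the two graphs in parallel---which is exactly why the paper organizes the proof around a path in $w$'s graph whose last edge is labelled $(a,\bar b)$, and only afterwards replaces $a\bar b$ by $\bar b\hs\bar a$ in all but the final vertex. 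Your proof never descends into $w$'s graph, so this step cannot be recovered from what is written. (The same issue silently underlies your claimed isomorphism with the subgraph rooted at $u$: once the left neighbour of $\bar b$ is known to stay below $\bar b$ in every descendant, cross-gap descents in $u$'s descendants are ruled out, but not before.) The remaining pieces---the identification of $(a,-b)$ as an element of $\NDes(w)$ via the child $u$, and the translation of the common sink into the statements about $\NNeg$ and $\NFix$---are fine once frozen-ness is correctly established.
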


\begin{proof}
By Corollary~\ref{ndes-cor2}(a) and Proposition~\ref{12bar-prop}, we know that 
(1) none of the vertices in the nested descent graphs of $v$ or $w$
have 321- or $\bar1\hs\bar2$-patterns,
and we assume by hypothesis that (2) $0<a<b<\min\{ |v_1|,|v_2|,\dots,|v_{i-1}|\}=\min\{ |w_1|,|w_2|,\dots,|w_{i-1}|\}$.
It follows that we may choose a path from $w$ to the global sink in its nested descent graph
whose last edge is the unique one labeled by the descent $a\bar b$.
Replacing the subword $a\bar b$ by $\bar b \bar a$ in all but the last vertex in this path 
produces a path from the source to some vertex in the nested descent graph of $v$.
In view of (1) and (2), this vertex must be the global sink.
The lemma now follows from
Theorem-Definition~\ref{recdes-thmdef}.
\end{proof}

The \emph{(weak) atomic order} of type B is the transitive closure $<_{B}$ of $\vartriangleleft_A$ and $\vartriangleleft_B$.
Define the \emph{strong atomic order} $\ll_B$ of type B 
to be the transitive closure of the relations $\vartriangleleft_A$ and $\scov$.
 
\begin{corollary}\label{isorder-cor}
If $z \in \I(\W_n)$
then $<_B$  and $\ll_B$ restrict to partial orders on $\cA(z)^{-1}$.
\end{corollary}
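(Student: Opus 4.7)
The plan is to verify antisymmetry of the relations $<_B$ and $\ll_B$ on $\cA(z)^{-1}$, since transitivity is built into each definition as a transitive closure. The approach is to single out a numerical statistic on $W_n$ that is monotone under every relevant cover and strictly so under the covers that go beyond $\vartriangleleft_A$.

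The natural choice is the number of negative entries $\ell_0$. First I would check, directly from \eqref{<A-eq} and \eqref{<B-eq}, that $\vartriangleleft_A$ merely permutes three consecutive entries of a word and so preserves $\ell_0$, whereas both forms of $\vartriangleleft_B$ replace either $\bar b \bar a$ with $a\bar b$ or $\bar c\bar b \bar a$ with $\bar c a \bar b$ at the beginning of a word, losing exactly one negative entry. The same inspection applied to \eqref{black2} shows that $\scov$ also decreases $\ell_0$ by one. Consequently, along any chain of $\vartriangleleft_A$- and $\vartriangleleft_B$-covers, or of $\vartriangleleft_A$- and $\scov$-covers, $\ell_0$ is non-increasing, and it drops whenever a step different from $\vartriangleleft_A$ appears.

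Next, suppose $v, w \in \cA(z)^{-1}$ satisfy $v <_B w$ and $w <_B v$, or the analogous pair of $\ll_B$ relations, and concatenate the witnessing chains into a cycle. By the previous paragraph $\ell_0$ must be constant around this cycle, which rules out every $\vartriangleleft_B$-step and every $\scov$-step; all covers used are of type $\vartriangleleft_A$. Lemma~\ref{<A-lem} then ensures the whole cycle stays inside $\cA(z)^{-1}$, so in particular $v \sim_A w$. Corollary~\ref{cV-cor} identifies the $\sim_A$-equivalence class of $v$ inside $\cA(z)^{-1}$ with a set of the form $\XA(\zeta) = \cA(\zeta)^{-1}$ for some type A involution $\zeta \in \I(S_X)$, and on this class the restriction of $<_A$ is exactly the partial order furnished by Theorem~\ref{bg-thm}. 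Its antisymmetry forces $v = w$.

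I do not expect any serious obstacle, because each cover is defined by an explicit local rewriting rule whose effect on $\ell_0$ is immediate, and the reduction to type A uses only Corollary~\ref{cV-cor} and Theorem~\ref{bg-thm}. The one mild subtlety is that a single $\scov$-cover can leave $\cA(z)^{-1}$, so one cannot simply work inside $\cA(z)^{-1}$ throughout the argument; but this is harmless here since the monotonicity of $\ell_0$ is a feature of all of $W_n$, and once the cycle has been collapsed to pure $\vartriangleleft_A$-steps, Lemma~\ref{<A-lem} places us safely back in $\cA(z)^{-1}$.
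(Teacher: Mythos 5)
Your proof is correct, and it takes a genuinely different route from the paper's. The paper's argument also reduces antisymmetry to finding a monotone statistic, but it uses $h(u) = |\{(a,-b)\in\NDes(u) : 0<a<b\}|$, which is only defined on inverse atoms and whose behavior under $\scov$ requires Lemma~\ref{ht-lem} and the nested-descent machinery; the $\vartriangleleft_A$-steps are then handled by observing that they strictly increase a (reverse) lexicographic order. Your statistic $\ell_0$ is read off directly from the local rewriting rules, is defined on all of $W_n$ (so you never have to worry about whether the witnessing chains stay inside $\cA(z)^{-1}$ before the cycle collapses to pure $\vartriangleleft_A$-steps --- a point the paper's proof quietly glosses over, since an upward $\scov$-step can in principle exit $\cA(z)^{-1}$), and reduces the remaining work to the already-established antisymmetry of $<_A$. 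Indeed, once all steps in the cycle are $\vartriangleleft_A$, you could even skip Corollary~\ref{cV-cor} and Theorem~\ref{bg-thm} entirely: the introduction already asserts that $<_A$ is a partial order on words, so $v\leq_A w\leq_A v$ forces $v=w$ outright. The trade-off is that the paper's proof, by routing through $h$ and Lemma~\ref{ht-lem}, sets up the rank-function computations of Section~\ref{rank-sect}, whereas your argument is self-contained but yields no extra structural information.
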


\begin{proof}
Define $h(u) =  | \{ (a,-b) \in \NDes(u) : 0 < a < b \}|$ for $u \in \cA(z)^{-1}$.
It suffices to show that $\ll_B$ is antisymmetric.
This follows since 
if $v,w \in \cA(z)^{-1}$ have $v\vartriangleleft_A w$ or $v\vartriangleleft_B w$,
then either $w$ exceeds $v$ in reverse lexicographic order
while $h(w) = h(v)$,
or $h(w) = h(v)+1$ by Lemma~\ref{ht-lem}.
\end{proof}

\begin{figure}[h]
\begin{center}
\begin{tikzpicture}[scale=0.9]
\node (0) at (4,7.2) {$\bar 1 2 \bar 3 4 \bar 5$};  
\node (1a) at (2,6) {$\bar 1 2 4 \bar 5\hs \bar 3$};  
\node (1b) at (4,6) {$2 \bar 3\hs \bar 1 4 \bar 5$};  
\node (1c) at (8,6) {$\bar 3 1 \bar 2 4 \bar 5$};  
\node (1d) at (14,6) {$\bar 1 2 \bar 5 3 \bar 4$};  
\node (2a) at (2,4.8) {$\bar 1 4 \bar 5 2 \bar 3$};  
\node (2b) at (4,4.8) {$2 \bar 3 4 \bar5\hs \bar 1$};  
\node (2c) at (6,4.8) {$\bar 3 \hs\bar 2\hs \bar 1 4 \bar 5$};  
\node (2d) at (8,4.8) {$\bar 3 1 4 \bar 5 \hs\bar 2$};  
\node (2e) at (10,4.8) {$\bar 5 1 \bar 2 3 \bar 4$};  
\node (2f) at (14,4.8) {$2\bar 5\hs \bar 1 3 \bar 4$};  
\node (3a) at (0,3.6) {$\bar 5 1 \bar 4 2 \bar 3$};  
\node (3b) at (2,3.6) {$4 \bar 5\hs \bar 1 2 \bar 3$};  
\node (3c) at (4,3.6) {$24\bar 5 \hs\bar 3\hs \bar 1$};  
\node (3d) at (6,3.6) {$\bar 3\hs \bar 2 4 \bar 5\hs \bar 1$};  
\node (3e) at (8,3.6) {$\bar 3 4\bar 51\bar 2$};  
\node (3f) at (10,3.6) {$\bar 5 1 3\bar 4\hs \bar 2$};  
\node (3g) at (12,3.6) {$\bar 5\hs \bar 2\hs \bar 1 3 \bar 4$};  
\node (3h) at (14,3.6) {$2\bar 5 3\bar 4\hs \bar 1$};  
\node (4a) at (2,2.4) {$\bar 5\hs \bar 4\hs \bar 1 2 \bar 3$};  
\node (4b) at (4,2.4) {$4\bar 5 2 \bar 3\hs \bar 1$};  
\node (4c) at (6,2.4) {$\bar 3 4\bar 5\hs\bar 2\hs\bar 1$};  
\node (4d) at (8,2.4) {$4\bar 5\hs\bar 31\bar 2$};  
\node (4e) at (10,2.4) {$\bar 5 3\bar 41\bar 2$};  
\node (4f) at (12,2.4) {$\bar 5\hs \bar 2 3\bar 4\hs \bar 1$};  
\node (5a) at (4,1.2) {$\bar 5\hs\bar 4 2 \bar 3\hs \bar 1$};  
\node (5b) at (6,1.2) {$4\bar 5\hs \bar 3\hs \bar 2\hs \bar 1$};  
\node (5c) at (8,1.2) {$\bar 5\hs \bar 4\hs \bar 31\bar 2$};  
\node (5d) at (10,1.2) {$\bar 5 3\bar 4\hs \bar 2\hs \bar 1$};
\node (6) at (7,0) {$\bar 5\hs \bar 4\hs \bar 3\hs \bar 2\hs \bar 1$};  
\draw  [->,dotted] (6) -- (5a); 
\draw  [->,dashed] (6) -- (5b);
\draw  [->,dotted] (6) -- (5c); 
\draw  [->,dashed] (6) -- (5d); 
\draw  [->] (5a) -- (4a);
\draw  [->,dashed] (5a) -- (4b);  
\draw  [->] (5b) -- (4c);
\draw  [->,dotted] (5b) -- (4b); 
\draw  [->,dotted] (5b) -- (4d); 
\draw  [->,dashed] (5c) -- (4d);
\draw  [->,dashed] (5c) -- (4e);
\draw  [->,dotted] (5d) -- (4e); 
\draw  [->] (5d) -- (4f);
\draw  [->,dashed] (4a) -- (3a);
\draw  [->,dashed] (4a) -- (3b);
\draw  [->] (4b) -- (3b);
\draw  [->] (4b) -- (3c);
\draw  [->] (4c) -- (3d);
\draw  [->,dotted] (4c) -- (3e); 
\draw  [->] (4d) -- (3e);
\draw  [->] (4e) -- (3f);
\draw  [->] (4f) -- (3g);
\draw  [->,dashed] (4f) -- (3h);
\draw  [->] (3b) -- (2a);
\draw  [->] (3c) -- (2b);
\draw  [->,dashed] (3d) -- (2b);
\draw  [->] (3d) -- (2c);
\draw  [->] (3e) -- (2d);
\draw  [->] (3f) -- (2e);
\draw  [->,dashed] (3g) -- (2e);
\draw  [->,dashed] (3g) -- (2f);
\draw  [->] (3h) -- (2f);
\draw  [->] (2a) -- (1a);
\draw  [->] (2b) -- (1b);
\draw  [->,dashed] (2c) -- (1b);
\draw  [->,dashed] (2c) -- (1c);
\draw  [->] (2d) -- (1c);
\draw  [->] (2f) -- (1d);
\draw  [->] (1a) -- (0); 
\draw  [->] (1b) -- (0); 
\end{tikzpicture}\end{center}
\caption{The Hasse diagram of the poset $(\cA(z)^{-1},\ll_B)$ for $z = \bar{1}\hs\bar2\hs\bar3\hs\bar4\hs\bar{5} \in \I(\W_5)$.
The solid arrows correspond to the covering relations $\vartriangleleft_A$,
the dashed arrows correspond to $\vartriangleleft_B$,
and the dotted arrows correspond to the remaining relations $\scov$.}
\label{fig2}
\end{figure}

In the example shown in 
Figure~\ref{fig2}, the  orders $<_B$ and $\ll_B$  restricted to $\cA(z)^{-1}$ are  graded and connected,
and $(\cA(z)^{-1},\ll_B)$ has a unique minimal element.
We will show that these properties are general phenomena.
Suppose $z\in \I(\W_n)$. Let
$\Fix(z) = \{ i \in [n] : z(i) = i\}$ and recall that $\Pair(z) = \{ (a,b) \in [\pm n]\times [n] : |a| < z(a) = b\}$ and  $\Neg(z) = \{i \in [n] : z(i) = -i\}$.
Next define \be\label{cycb-eq}
\Cyc_B(z) := 
  \Pair(z)
  \sqcup
  \{ (-a,-a) : a \in \Neg(z) \} 
  \sqcup
 \{ (a,a) : a \in \Fix(z) \}.
 \ee
If $\Cyc_B(z) = \{(a_1,b_1),(a_2,b_2),\dots,(a_l,b_l)\}$ where $a_1<a_2<\dots<a_l$,
then we let 
\be\label{0B-eq}
0_B(z) := [[ b_1a_1b_2a_2\cdots b_la_l]].
\ee
Recall that $[[w]]$ denotes the subword of $w=w_1w_2\cdots w_n$ formed by omitting all repeated letters after their first appearance.
Thus $0_B(z)$ is a word with distinct letters by construction,
and it is straightforward to check that $0_B(z)$ is in fact the one-line representation of an element of $ \W_n$.
If $z = (1,\bar 1)(2,\bar7)(\bar 2,  7)(3, 6)(\bar 3, \bar6)(8,\bar 8)(9,\bar 9) \in \I(\W_9)$
as in Example~\ref{ndes-ex}, then
$ \Neg(z) =\{1,8,9\},
$
$
\Fix(z) =\{4,5\},
$
and
$ 
\Pair(z) =\{ (- 2,7), (3,6) \},$
so $ 0_B(z) = \bar 9\hs \bar 8 7\bar 2\hs \bar 1 63 4 5$.


\begin{lemma}\label{0isatom-lem}
If $z \in \I(\W_n)$ then
 $0_B(z) \in \cA(z)^{-1}$.
 \end{lemma}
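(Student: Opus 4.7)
The plan is to proceed by induction on the involution length $\ellhat(z)$, with the trivial base case $z=1$, for which $\Cyc_B(z)=\{(i,i):i\in[n]\}$ and $0_B(z)=12\cdots n$ is the identity of $W_n$, visibly the unique element of $\cA(1)^{-1}$.

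For the inductive step, the strategy is to locate a simple generator $t\in\{t_0,t_1,\dots,t_{n-1}\}$ and an involution $y\in\I(W_n)$ satisfying $\ellhat(y)=\ellhat(z)-1$, $z=t\dem y\dem t$ in the Demazure sense, and $0_B(z)=t\cdot 0_B(y)$ as an ordinary product in $W_n$ with lengths adding, $\ell(0_B(z))=\ell(0_B(y))+1$. Granting these, the inductive hypothesis gives $0_B(y)^{-1}\in\cA(y)$, and a short Demazure-product computation yields
\[
0_B(z)\dem 0_B(z)^{-1} \;=\; t\dem(0_B(y)\dem 0_B(y)^{-1})\dem t \;=\; t\dem y\dem t \;=\; z,
\]
while $\ell(0_B(z)^{-1})=\ell(0_B(y))+1=\ellhat(z)$; this certifies $0_B(z)^{-1}\in\cA(z)$.

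Choosing $(t,y)$ requires a case split on the structure of $\Cyc_B(z)$. When $\ell_0(z)=0$, the involution $z$ lies in the image of $S_n\hookrightarrow W_n$ via Remark~\ref{gen-rmk}, so $\Cyc_B(z)=\Cyc_A(z)$ and $0_B(z)=0_A(z)$; here Theorem~\ref{0-z-thm} applied in $S_n$, together with the preservation of lengths and Demazure products under the embedding, suffices. When $\ell_0(z)>0$, the natural choice of $t$ depends on the smallest entry $a_1$ in the sorted list defining $\Cyc_B(z)$: if $a_1=-k$ comes from a negated cycle with $k=\min\Neg(z)$, then $y$ is obtained from $z$ by converting this negated cycle into a fixed point (so $\Neg(y)=\Neg(z)\setminus\{k\}$) and $t=t_0$, possibly after $\sim_A$-moves which preserve $\cA(y)^{-1}$ by Lemma~\ref{<A-lem}; if $a_1$ arises from a pair $(a_1,b_1)\in\Pair(z)$ with $a_1<0<b_1$, an analogous but more involved reduction is needed, using the paired cycle structure to identify the appropriate $t$.

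The main obstacle is the bookkeeping in this last case: one must verify that left-multiplication by $t$ on $0_B(y)$ reproduces exactly the sorted list underlying $\Cyc_B(z)$ after its new smallest entry is inserted, and simultaneously that the prescribed $y$ satisfies $t\dem y\dem t=z$ with the correct length drop. This reduces to a finite combinatorial check, but the case analysis --- especially when pairs $(a,b)\in\Pair(z)$ with $a<0<b$ interleave with the negated cycles in the sorted order, so that the minimum $a_1$ is not itself a negated entry --- is where I expect most of the technical work to lie.
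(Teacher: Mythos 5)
Your inductive skeleton is sound as far as it goes: if for every $z \neq 1$ you could exhibit a generator $t$ and an involution $y$ with $\ellhat(y) = \ellhat(z) - 1$, $z = t \circ y \circ t$, and $t \cdot 0_B(z)$ equal (or at least $\sim_A$-equivalent) to $0_B(y)$ with lengths adding, then the Demazure computation you wrote down would finish the argument. But that existence statement is essentially the whole content of the lemma, and you have explicitly deferred it (``a finite combinatorial check,'' ``an analogous but more involved reduction''), so what you have is a plan rather than a proof. Moreover, the one case you do prescribe concretely is already wrong. Take $z = 1\hs 2\hs \bar 3 \in \I(W_3)$, so $a_1 = -3$ comes from the negated cycle with $k = \min\Neg(z) = 3$. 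Here $0_B(z) = \bar 3\hs 1\hs 2$, whose unique left descent is $t_2$ (not $t_0$; indeed $\ell(t_0 \cdot 0_B(z)) = \ell(0_B(z)) + 1$, so the factorization $0_B(z) = t_0\cdot(t_0\cdot 0_B(z))$ is not length-additive and your step collapses). Also, your proposed $y$ --- convert the negated cycle at $k$ into a fixed point --- has $\ellhat(y) = 0$ while $\ellhat(z) = 3$, so it cannot be reached in a single conjugation step; the correct single step passes to $y = 1\hs\bar 2\hs 3$ via $t_2$, and one must peel off $t_2, t_1, t_0$ in succession, verifying at each stage that the result is again of the form $0_B(\cdot)$. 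The $\sim_A$-moves caveat does not repair this, since $\sim_A$ relates inverse atoms of the same involution and cannot change which $y$ or which length you land on. The interleaved case with $(a,b) \in \Pair(z)$, $a<0<b$, which you correctly flag as the hard one, requires a longer analysis of the same kind that you have not attempted.

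For comparison, the paper's proof is non-inductive and sidesteps the case analysis entirely: it applies the embedding $\Psi_n : W_n \to S_{2n}$ of \eqref{Psi-eq} together with Lemma~\ref{equiv-lem} to reduce the claim to showing $\Psi_n(0_B(z))^{-1} \in \cA(\Psi_n(z))$ inside the symmetric group $S_{2n}$, where membership in $\cA(\zeta)^{-1}$ has the explicit subword characterization of Can--Joyce--Wyser recorded in Lemma~\ref{cjw-lem}; one then checks directly from the definition \eqref{0B-eq} that $\Psi_n(0_B(z))$ satisfies those two conditions. If you want to salvage your approach, the cleanest repair is not to chase single generators at all but to import the type A result wholesale through $\Psi_n$ in exactly this way.
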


\begin{proof}
Fix $z \in \I(\W_n)$ and recall the definition of   $\Psi_n : \W_n \to S_{2n}$ from Section~\ref{embed-subsect}.
Since $\frac{1}{2}(\ell_0(z) + \neg(z))  = \ell_0(0_B(z)^{-1})$ by definition,
 it suffices by Lemma~\ref{equiv-lem} to check that $\Psi_n(0_B(z))^{-1} \in \cA(\Psi_n(z))$.
This follows by applying \cite[Theorem 2.5]{CJW}, which is just Lemma~\ref{cjw-lem} restricted to $S_n\hookrightarrow \W_n$.
In detail, if $\upsilon \in S_n$,  $\zeta \in \I(S_n)$,
and $\Cyc_A(\zeta) = \{ (a,b) \in[n]\times [n] : a\leq b = \zeta(a)\}$,
then we have $\upsilon \in \cA(\zeta)^{-1}$ precisely when
(1) if $(a,b) \in \Cyc_A(\zeta)$ then $b$ is weakly left of $a$ in the one-line representation of $\upsilon$, and no number $e \in [n]$ with $a<e<b$ appears between $a$ and $b$,
and
(2) if $(a,b),(a',b') \in \Cyc_A(\zeta)$ are such that $a<a'$ and $b<b'$ then $ba'$ is a subword of $\upsilon$.
It is straightforward to check that (1) and (2) hold for $\zeta = \Psi_n(z)$ and $\upsilon = \Psi_n(0_B(z))$.
\end{proof}

Putting things together leads to a short proof of the following theorem.

\begin{theorem}
\label{hz-thm}
If $z \in \I(\W_n)$ then $0_B(z)$ is the unique minimum in $(\cA(z)^{-1},\ll_B)$.
\end{theorem}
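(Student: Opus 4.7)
The plan is to prove $0_B(z) \ll_B w$ for every $w \in \cA(z)^{-1}$. Since $\ll_B$ is a partial order on $\cA(z)^{-1}$ by Corollary~\ref{isorder-cor} and $0_B(z) \in \cA(z)^{-1}$ by Lemma~\ref{0isatom-lem}, this implies that $0_B(z)$ is the unique minimum. I would induct on the rank statistic $h(u) = |\{(a, -b) \in \NDes(u) : 0 < a < b\}|$ from the proof of Corollary~\ref{isorder-cor}: it is constant along $\sim_A$-equivalence by Corollary~\ref{ndes-cor2}(b) and strictly decreases by $1$ along each downward $\scov$-step by Lemma~\ref{ht-lem}, while $h(0_B(z)) = 0$ is immediate from the construction, since every descent of $0_B(z)$ has the form $(b, a)$ with $(a, b) \in \Pair(z)$ and hence $b > |a|$.

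\textbf{Inductive step.} Given $w \in \cA(z)^{-1}$ with $h(w) \geq 1$, I would pick a pair $(a, -b) \in \NDes(w)$ with $0 < a < b$ that is innermost with respect to the nesting structure on $\NDes(w)$ encoded by Lemma~\ref{cjw-lem}. The key technical claim to establish is that a sequence of $\vartriangleleft_A$ moves within $\cA(z)^{-1}$ (valid by Lemma~\ref{<A-lem}) suffices to produce some $w' \sim_A w$ in which the subword $a\bar b$ occupies consecutive positions $i, i+1$ and $b = \min\{|w'_1|, \ldots, |w'_i|\}$. Granting this, Lemma~\ref{jump2-lem} produces $v \in \cA(z)^{-1}$ with $v \scov w'$, and Lemma~\ref{ht-lem} gives $h(v) = h(w) - 1$, so the inductive hypothesis yields $0_B(z) \ll_B v \ll_B w' \sim_A w$, as desired.

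\textbf{Base case.} For $h(w) = 0$ the plan is to show $w = 0_B(z)$. Once the inductive step has been established, every element of $\cA(z)^{-1}$ admits a downward $\ll_B$-chain to some height-$0$ element, so the quantity $|\NNeg(u)| + 2h(u)$ (preserved by both $\vartriangleleft_A$ via Corollary~\ref{ndes-cor2}(b) and $\scov$ via Lemma~\ref{ht-lem}) is constant on $\cA(z)^{-1}$, with value $\neg(z) = |\NNeg(0_B(z))| + 2h(0_B(z))$. Thus $h(w) = 0$ forces $\NNeg(w) = \Neg(z)$, $\NFix(w) = \Fix(z)$, and a bijection between $\NDes(w)$ and $\Pair(z)$. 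By Corollary~\ref{cV-cor}, the $\sim_A$-class of $w$ is $\XA(\zeta)$ for a uniquely determined $\zeta \in \I(S_X)$ whose cycle data matches that of $z$; since $\vartriangleleft_A$ preserves $h$, any $w$ with $h(w) = 0$ is the $<_A$-minimum of its class, and combining Theorem~\ref{0-z-thm} with the subword constraints of Lemma~\ref{cjw-lem} on the positions of $-\NNeg(w)$ identifies this minimum with $0_B(z)$.

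\textbf{Main obstacle.} The principal difficulty is the witness construction in the inductive step: shuttling $a \bar b$ leftward via $\vartriangleleft_A$ moves inside $\cA(z)^{-1}$ until $b$ becomes the smallest absolute value among all preceding letters of $w'$. The innermost choice of $(a, -b)$ is designed to rule out obstructions of smaller absolute value, but verifying that the required $\vartriangleleft_A$ moves are actually admissible within $\cA(z)^{-1}$ will demand a careful case analysis organized by how $(a, -b)$ nests against the remaining pairs in $\NDes(w)$ and the elements of $\NFix(w) \sqcup -\NNeg(w)$, driven by the subword rules of Lemma~\ref{cjw-lem} and the acyclicity of the nested descent graph (Corollary~\ref{ndes-cor2}(a)).
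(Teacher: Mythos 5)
Your overall strategy is the same as the paper's: induct on the statistic $h(u) = |\{(a,-b) \in \NDes(u) : 0<a<b\}|$, use a $\scov$-move to decrease $h$ by one, and handle $h=0$ separately. But there are two genuine gaps in the execution. First, the inductive step hinges on the "key technical claim" that you explicitly leave unproven, and even its statement is not quite what you need: you produce $w' \sim_A w$ and conclude $0_B(z) \ll_B v \ll_B w' \sim_A w$, but since $\ll_B$ contains $\vartriangleleft_A$ only in the upward direction, this gives $0_B(z) \ll_B w$ only if $w' \leq_A w$. The paper avoids both problems simultaneously by taking $w'$ to be the unique $<_A$-\emph{minimum} $v = [[b_1a_1b_2a_2\cdots b_la_l]]$ of the $\sim_A$-class of $w$ (available from Corollary~\ref{cV-cor} and Proposition~\ref{12bar-prop}); there every nested descent already occupies consecutive positions, $v \leq_A w$ is automatic, and choosing the \emph{smallest} index $j$ with $a_j < 0 < b_j < -a_j$ (i.e.\ the outermost offending pair, the one with the largest $b$ --- not an innermost one as you propose) makes the minimality condition $b = \min\{|v_1|,\dots,|v_i|\}$ of \eqref{black2} an immediate consequence of $a_1 < \cdots < a_l$ and the minimality of $j$. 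No shuttling of letters or case analysis against Lemma~\ref{cjw-lem} is needed.

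Second, the base case is off. The claim "$h(w)=0$ implies $w = 0_B(z)$" is false (every element of the $\sim_A$-class of $0_B(z)$ has $h=0$); what you need is $0_B(z) \leq_A w$. The intermediate assertion that "any $w$ with $h(w)=0$ is the $<_A$-minimum of its class" is also false, and the conserved-quantity argument for $|\NNeg(u)| + 2h(u)$ being constant on $\cA(z)^{-1}$ does not follow from the fact that each element reaches \emph{some} height-$0$ element (different elements could a priori reach height-$0$ elements with different $|\NNeg|$); constancy of this quantity is essentially Lemma~\ref{prop-thm}, which the paper proves only \emph{after} this theorem. The paper's base case needs none of this: the $<_A$-minimum $v$ of the class of an $h=0$ element visibly has the form $0_B(y)$ for some $y \in \I(W_n)$, and $y = z$ is forced because $0_B(y) \in \cA(y)^{-1}$ by Lemma~\ref{0isatom-lem} and each signed permutation is an inverse atom of at most one involution.
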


The element $0_B(z)$ may fail to be the unique minimum in $\cA(z)^{-1}$ under $<_B$ 
as well as under the transitive closure of  $\vartriangleleft_A$ and $\vartriangleleft_B^+$
(which is between $<_B$ and $\ll_B$ in strength).

\begin{proof}
Fix $z \in \I(\W_n)$ and $w \in \cA(z)^{-1}$.
 Corollary~\ref{cV-cor} and Proposition~\ref{12bar-prop} imply that $v\leq _A w$ 
for a unique element of the form $v =
[[b_1a_1b_2a_2\cdots b_la_l]] \in \W_n$ where  $a_i=b_i$ or $a_i<b_i > 0$ for each $i \in [l]$ and  $a_1<a_2<\dots<a_l$.
We have $\NDes(w) = \NDes(v) = \Des(v) = \{ (b_i,a_i) : i\in[l],\ a_i < b_i\}$ by Corollary~\ref{ndes-cor2}(b).
As in the proof of Corollary~\ref{isorder-cor},
let $h(w) =  | \{ (a,-b) \in \NDes(w) : 0 < a < b \}|$.
If $h(w) = 0$,
then $v = 0_B(y)$ for some $y \in \I(\W_n)$, in which case we must have $y=z$ by Lemma~\ref{0isatom-lem}, so
$0_B(z) =v\leq_A w$.
Assume $h(w) > 0$ and let 
$j \in [l]$ be the smallest index such that $a_j < 0 < b_j < -a_j$. 
Then whenever $i<j$ and $a_i < b_i$, it must hold that $a_i < a_j < 0$ and $-a_j < b_j$, so we have $a_i < a_j < b_j < b_i$.
Therefore  $u  \scov  v$ for the signed permutation
$ u = [[b_1a_1\cdots  a_j \bar{b_j} \cdots b_{l}a_{l}]] \in \W_n.$
Since $h(u) +1=h(v)=h(w)$ by Lemma~\ref{ht-lem}, we may assume by induction that $0_B(z)=u$ or $0_B(z) \ll_B u$,
so $0_B(z) \ll_B w $ since $ u \scov v \leq_A w$.
\end{proof}

\begin{corollary}\label{hz-cor1}
If $z \in \I(\W_n)$ then $\cA(z)^{-1}$ is a single equivalence class under $\sim_B$.
 \end{corollary}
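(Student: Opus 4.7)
The plan is to derive this corollary as a quick consequence of Theorem~\ref{hz-thm}, together with Lemmas~\ref{<A-lem}, \ref{<B-lem}, and~\ref{jump2-lem}. The key observation is that the strong atomic order $\ll_B$, when traversed within $\cA(z)^{-1}$, refines the equivalence $\sim_B$. Indeed, $\ll_B$ is the transitive closure of $\vartriangleleft_A$ and $\scov$; the former trivially implies $\sim_B$ (via $\sim_A$), while Lemma~\ref{jump2-lem} guarantees that $v \scov w$ with $w \in \cA(z)^{-1}$ forces both $v \in \cA(z)^{-1}$ and $v \sim_B w$. Combined with Lemma~\ref{<A-lem}, which shows $\vartriangleleft_A$ preserves $\cA(z)^{-1}$, this means that any chain witnessing $0_B(z) \ll_B w$ can be taken to lie entirely inside $\cA(z)^{-1}$ and yields $0_B(z) \sim_B w$.

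First I would fix $z \in \I(W_n)$ and an arbitrary $w \in \cA(z)^{-1}$. By Theorem~\ref{hz-thm}, either $w = 0_B(z)$ or $0_B(z) \ll_B w$. In the latter case, unfolding the transitive closure produces a sequence $0_B(z) = u_0, u_1, \ldots, u_k = w$ with each $u_{i-1} \vartriangleleft_A u_i$ or $u_{i-1} \scov u_i$. Using the observation above together with induction on $i$, each $u_i$ lies in $\cA(z)^{-1}$ and satisfies $u_{i-1} \sim_B u_i$, so $0_B(z) \sim_B w$. This shows that all elements of $\cA(z)^{-1}$ belong to a common $\sim_B$-class, namely the class of $0_B(z)$.

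Next I would verify that this $\sim_B$-class contains no extraneous elements. This is immediate from Lemma~\ref{<B-lem}, which asserts that any element $\sim_B$-related to a member of $\cA(z)^{-1}$ again belongs to $\cA(z)^{-1}$. Thus the full $\sim_B$-equivalence class of $0_B(z)$ coincides with $\cA(z)^{-1}$, finishing the proof. There is no substantive obstacle here: all the technical work has already been carried out in the preceding section to establish Theorem~\ref{hz-thm} and Lemma~\ref{jump2-lem}, and the corollary amounts to combining these results with the trivial implication $\ll_B \Rightarrow \sim_B$ on the set where it is defined.
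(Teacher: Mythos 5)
Your proof is correct and follows essentially the same route as the paper, which simply cites Theorem~\ref{hz-thm} and Lemma~\ref{jump2-lem}; you have just made the chain-unfolding explicit and added the (also correct) converse containment via Lemma~\ref{<B-lem}. One small point worth making explicit: since the converse of Lemma~\ref{jump2-lem} fails (as the paper notes, $v \scov w$ with $v \in \cA(z)^{-1}$ need not give $w \in \cA(z)^{-1}$), your induction along the chain $u_0, \ldots, u_k$ must run \emph{downward} from $u_k = w \in \cA(z)^{-1}$, which is what your stated ``observation'' supports.
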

 
  \begin{proof}
 This is immediate from the preceding theorem and Lemma~\ref{jump2-lem}.
 \end{proof}
 
Let $\cR(w)$ denote the set of reduced words for $w \in \W_n$ and
define $\iR(z) = \bigsqcup_{w \in \cA(z)} \cR(w)$ when $z \in \I(\W_n)$.
It is well-known that $\cR(w)$ is spanned and preserved by the \emph{braid relations} 
$\cdots t_0t_1t_0t_1\cdots \sim \cdots t_1t_0t_1t_0 \cdots $ and 
$\cdots t_it_{i+1}t_i \cdots \sim \cdots t_{i+1}t_it_{i+1}\cdots$ for $i \in [n-1]$.
The previous corollary is equivalent to the following result of Hu and Zhang.

\begin{corollary}[{Hu and Zhang \cite[Theorem 4.8]{HuZhang2}}]\label{hz-cor2}
If $z \in \I(\W_n)$ then $\iR(z)$ is spanned and preserved by the usual set of braid relations for $\W_n$ plus
the initial relations $t_it_{i+1} \cdots \sim t_{i+1}t_i \cdots$ for $i \in [n-1]$ and $t_0t_1t_0 \cdots \sim t_1t_0t_1\cdots$  
and $t_0t_1t_2t_0t_1t_0 \cdots \sim t_0t_1t_2 t_1t_0 t_1 \cdots$.
\end{corollary}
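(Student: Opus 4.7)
The plan is to deduce Corollary~\ref{hz-cor2} directly from Corollary~\ref{hz-cor1}. By the classical Matsumoto--Tits theorem for $W_n$, the ordinary braid relations already span and preserve each individual set of reduced words $\cR(w)$, so it suffices to handle the passage between $\cR(w_1)$ and $\cR(w_2)$ for distinct atoms $w_1, w_2 \in \cA(z)$. Two things must be verified: first, that any two reduced words for atoms of the same involution $z$ can be linked using braid relations together with the three listed initial relations (spanning); and second, that each initial relation transforms a reduced word for an atom of $z$ into a reduced word for another atom of $z$ (preservation).

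For spanning, Corollary~\ref{hz-cor1} provides a chain of covering moves $\vartriangleleft_A$ and $\vartriangleleft_B$ connecting $w_1^{-1}$ and $w_2^{-1}$ in $\cA(z)^{-1}$. The task is then to show that each such covering move corresponds to exactly one of the three initial relations via a common factorization: whenever $v, w \in \cA(z)^{-1}$ are covering-related, there exist $u \in W_n$ and specific $\alpha, \alpha' \in W_n$ with $v = u\alpha$, $w = u\alpha'$, and $\ell(v) = \ell(w) = \ell(u) + \ell(\alpha) = \ell(u) + \ell(\alpha')$. For $\vartriangleleft_A$ at positions $i, i+1, i+2$, inspecting the effect of right multiplication on one-line representations gives $\alpha = t_{i+1}t_i$ and $\alpha' = t_it_{i+1}$. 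For the first form of $\vartriangleleft_B$, the proof of Lemma~\ref{<B-lem} already exhibits $\alpha = \bar{2}\bar{1} = t_0t_1t_0$ and $\alpha' = 1\bar{2} = t_1t_0t_1$. For the second form, the same argument gives $\alpha = \bar{3}\bar{2}\bar{1}$ with reduced word $t_0t_1t_2t_0t_1t_0$ and $\alpha' = \bar{3}1\bar{2}$. Inverting the factorizations yields reduced words for $v^{-1}, w^{-1} \in \cA(z)$ that decompose as an initial segment (a reduced word for $\alpha^{-1}$ or $(\alpha')^{-1}$) followed by a common suffix (a reduced word for $u^{-1}$); the three pairs of initial segments obtained in this way are exactly the two sides of the three relations in the corollary.

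For preservation, each initial relation transforms a reduced word for $v^{-1}$ into one for $w^{-1}$ where $v, w \in \cA(z)^{-1}$ are related by the corresponding covering move, and Lemmas~\ref{<A-lem} and~\ref{<B-lem} guarantee that both inverse atoms in fact lie in $\cA(z)^{-1}$. Therefore the initial relations (like the braid relations) map $\iR(z)$ to itself.

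The main technical obstacle is the second form of $\vartriangleleft_B$. Here the element $\alpha' = \bar{3}1\bar{2}$ is \emph{not} an involution, so $(\alpha')^{-1} = 2\bar{3}\bar{1} \neq \alpha'$, and one must check by direct calculation inside $W_3 \hookrightarrow W_n$ that $2\bar{3}\bar{1}$ admits the specific reduced word $t_0t_1t_2t_1t_0t_1$ appearing in the third initial relation. One must also verify length-additivity in each factorization $v = u\alpha$, which for the two forms of $\vartriangleleft_B$ is essentially what is already worked out in the proof of Lemma~\ref{<B-lem}.
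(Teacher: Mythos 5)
Your proposal is correct and takes essentially the same route as the paper, which likewise deduces the corollary from Corollary~\ref{hz-cor1} by matching each covering move $\vartriangleleft_A$, $\vartriangleleft_B$ with a common factorization $v=u\alpha$, $w=u\alpha'$ and reading off the three pairs of initial segments (including the non-involution case $(\alpha')^{-1}=2\hs\bar3\hs\bar1=t_0t_1t_2t_1t_0t_1$). The only ingredient the paper's (very terse) proof names that you leave implicit is Proposition~\ref{12bar-prop}, which is what guarantees in the preservation direction that a reduced word of an atom beginning with, say, $t_0t_1t_0$ forces the inequalities $0<u_1<u_2$ needed for the new word to remain reduced and for the covering move to apply.
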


\begin{proof}
To deduce this from Corollary~\ref{hz-cor1} or vice versa, it suffices
to check that $z\in \I(\W_n)$ has atoms $v,w \in \cA(z)$
with $v^{-1} \vartriangleleft_A w^{-1}$ or $v^{-1} \vartriangleleft_B w^{-1}$
if and only if $v$ and $w$ have reduced words connected by the given relations.
This holds by a simple calculation using Proposition~\ref{12bar-prop}.
\end{proof}

Hansson and Hultman, extending this result, have found a general description of the relations needed to span the sets $\iR(z)$
for (twisted) involutions $z$ in any Coxeter group~\cite{HH}.

\section{Noncrossing shapes}\label{shapes-sect}

The connected posets $(\cA(z)^{-1},<_B)$ and $(\cA(z)^{-1},\ll_B)$ for $z \in \I(\W_n)$
are no longer intervals, though examples suggest they may be meet semilattices.
In this section, we characterize these posets' extremal elements. 

Consider a subset $X\subset [\pm n]$ with $X=-X$.
A \emph{matching} in (the complete graph on) $X$ is a set $M$ of pairwise disjoint 2-element subsets of $X$.
A matching $M$ is \emph{symmetric} if $\{-i,-j\} \in M$ whenever $\{i,j\} \in M$;
\emph{perfect} if for each $i \in X$ there exists a unique $j \in X$ with $\{i,j\} \in M$;
and \emph{noncrossing} if no two subsets $\{i,k\},\{j,l\} \in M$ have $i<j<k<l$.
The 3 perfect noncrossing symmetric matchings in $[\pm 3]$ are
$ \{ \{1,\bar 1\}, \{2,\bar 2\}, \{3,\bar3\}\}$, $ \{\{1,2\},\{\bar1,\bar2\},\{3,\bar3\}\}$, and $\{\{1,\bar1\},\{2,3\},\{\bar2,\bar3\}\}$.
In general, there are $\binom{n}{\lfloor n/2\rfloor}$ symmetric noncrossing perfect matchings in $[\pm n]$,
and such matchings are in bijection
with many other combinatorially defined objects (see \cite[A001405]{OEIS}).

Let $z\in \I(\W_n)$ and recall the sets $\Neg(z)$, $\Fix(z)$, and $\Pair(z)$ introduced before Lemma~\ref{0isatom-lem}.
Define $\NCSM(z)$ as the set of noncrossing, symmetric, perfect matchings in $\Neg(z) \sqcup -\Neg(z)$.
For each matching $M \in \NCSM(z)$,
we define three related sets:
\be\label{neg-pair-cyc-b-eq}
\ba
\Neg(z,M) &:= \{ i \in \Neg(z) : \{i,-i\} \in M\},
\\
\Pair(z,M) &:= \Pair(z) \sqcup \{ (-b,a) : \{a,b\} \in M\text{ and }0<a<b \},
\\
 \Cyc_B(z,M) &:= 
  \Pair(z,M)
  \sqcup
  \{ (-a,-a) : a \in \Neg(z,M) \} 
  \sqcup
 \{ (a,a) : a \in \Fix(z) \}
.
\ea\ee
Suppose we have 
$\Cyc_B(z,M)= \{ (a_1,b_1),(a_2,b_2),\dots,(a_l,b_l)\} =  \{ (c_1,d_1), (c_2,d_2),\dots,(c_l,d_l)\}$
where $a_1 < a_2 < \dots <a_l$ and $d_1<d_2<\dots<d_l$.
We define $0_B(z,M)$ and $1_B(z,M)$ to be the words 
\be\label{01B-eq} 0_B(z,M) := [[b_1a_1b_2a_2\cdots b_la_l]]
\qquand
1_B(z,M) := [[d_1c_1d_2c_2\cdots d_lc_l]].
\ee
 We have $\Neg(z)  = \Neg(z,M_{\min})$, $\Pair(z) = \Pair(z,M_{\min})$, and $0_B(z) = 0_B(z,M_{\min})$ 
 for the matching $M_{\min} := \{ \{i,-i\} : i \in \Neg(z)\} $.
 
 \begin{example}\label{m-ex}
Let $z = (1,\bar 1)(2,\bar7)(\bar 2,  7)(3, 6)(\bar 3, \bar6)(8,\bar 8)(9,\bar 9) \in \I(\W_9)$
 as in Example~\ref{ndes-ex}, so that 
$\Neg(z) =\{1,8,9\}$.
The three elements of $\NCSM(z)$ are
\[
M^1 = \{  \{\bar 9, 9\}, \{\bar8,8\},\{\bar1,1\}\},
\quad
M^2 = \{ \{\bar 9, 9\}, \{ \bar 1, \bar 8\}, \{ 1, 8\}\},
\quad
M^3= \{ \{\bar 8,\bar 9\}, \{\bar 1, 1\}, \{ 8,9\}\}.
\]
We have $\Neg(z,M^1) = \Neg(z) = \{1,8,9\}$ and $\Pair(z,M^1) = \Pair(z) = \{ (\bar 2,7),(3,6)\}$, so 
\[ 0_B(z,M^1) = \bar 9\hs \bar 8 7\bar 2 \hs\bar 1 63 4 5
\qquand
1_B(z,M^1) = \bar 9 \hs\bar 8\hs \bar 1 45 63 7\bar 2.\]
Similarly $\Neg(z,M^2) = \{9\}$ and $\Pair(z,M^2) = \{(\bar 2, 7),(3,6),(1,\bar 8)\}$, so
\[
0_B(z,M^2) = \bar 9 1\bar 8 7\bar 2 6345
\qquand
1_B(z,M^2)= \bar 9 1\bar 8 4563 7\bar 2.
\]
Finally $\Neg(z,M^3) = \{1\}$ and $\Pair(z,M^3) = \{ (\bar 2,7),(3,6),(8,\bar 9)\}$, so
\[ 
0_B(z,M^3) = 8\bar 9 7 \bar 2\hs \bar 1 63 45
\qquand
1_B(z,M^3) = \bar 1 4 5 63 7 \bar 2 8 \bar 9.\]
 \end{example}

\begin{proposition}\label{01-prop}
Let $z \in \I(\W_n)$ and $M \in \NCSM(z)$. 
The words $0_B(z,M)$ and $1_B(z,M)$ may be interpreted as elements of $\W_n$
written  in one-line notation. Under $<_A$, the permutation $0_B(z,M)$ is minimal while $1_B(z,M)$ 
 is maximal, and it holds that $0_B(z,M) \leq_A 1_B(z,M)$.
 \end{proposition}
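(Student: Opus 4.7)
The plan is to reduce the claim to its type A counterpart in Theorem~\ref{0-z-thm} by repackaging the data $(z,M)$ into a type A involution on a suitable ground set. Specifically, let $X\subset[\pm n]$ be the set of signed values occurring in the word $0_B(z,M)$, and define $\zeta\in\I(S_X)$ to be the involution obtained by swapping $a$ and $b$ for each $(a,b)\in\Cyc_B(z,M)$ with $a<b$, and fixing the diagonal entries. I will identify $0_B(z,M)=0_A(\zeta)$ and $1_B(z,M)=1_A(\zeta)$ verbatim as words, after which Theorem~\ref{0-z-thm} supplies all the required properties.

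The first step is a bookkeeping verification that $0_B(z,M)$ and $1_B(z,M)$ are well-defined one-line representations of elements of $W_n$ and that $|X|=n$ with $[\pm n]=X\sqcup -X$. This amounts to checking that the absolute values of the constituents of $\Cyc_B(z,M)$ partition $[n]$: the pairs in $\Pair(z)$ use exactly $[n]\setminus(\Fix(z)\cup\Neg(z))$; the new matching pairs $\{(-b,a):\{a,b\}\in M,\ 0<a<b\}$ use exactly $\Neg(z)\setminus\Neg(z,M)$ because $M$ is a perfect symmetric matching of $\Neg(z)\sqcup-\Neg(z)$; and the diagonals account for $\Neg(z,M)$ and $\Fix(z)$. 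Once this disjointness is established, $\zeta$ is a well-defined involution of $X$, and a direct comparison with the type A definition gives $\Cyc_A(\zeta) = \Cyc_B(z,M)$ (each pair satisfies $a\leq b$ by construction). Matching~\eqref{01A-eq} against~\eqref{01B-eq} then yields $0_A(\zeta)=0_B(z,M)$ and $1_A(\zeta)=1_B(z,M)$.

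Applying Theorem~\ref{0-z-thm} to $\zeta$, we get $0_A(\zeta),1_A(\zeta)\in\cA(\zeta)^{-1}$ and $0_A(\zeta)\leq_A 1_A(\zeta)$, so $0_B(z,M)\leq_A 1_B(z,M)$ as words in $W_n$. Proposition~\ref{321-prop} rules out consecutive 321-patterns in either word. Minimality of $0_A(\zeta)$ in the interval $\cA(\zeta)^{-1}$ combined with the fact that $\vartriangleleft_A$ preserves letter sets (together with the $S_X$-analogue of Lemma~\ref{<A-lem}) means no word $v \vartriangleleft_A 0_A(\zeta)$ exists at all, hence $0_A(\zeta)$ has no consecutive 231-pattern; by Lemma~\ref{minmax-lem}, $0_B(z,M)$ is minimal as a word under $<_A$. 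The analogous argument for $1_A(\zeta)$ gives no consecutive 312-pattern and maximality of $1_B(z,M)$.

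The main obstacle is the disjointness bookkeeping in the first step, which requires tracking how the matching $M$ decomposes $\Neg(z)$ into the self-matched part $\Neg(z,M)$ and the properly matched part, and verifying that these together with $\Pair(z)$ and $\Fix(z)$ partition $[n]$ with no overlaps. Once this combinatorial check is in hand, the rest of the argument is a formal translation through the identifications $0_B(z,M)=0_A(\zeta)$ and $1_B(z,M)=1_A(\zeta)$, and the type A result does all the real work.
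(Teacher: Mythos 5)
Your proposal is correct and follows essentially the same route as the paper's proof: both arguments check that the absolute values of the entries of $\Cyc_B(z,M)$ partition $[n]$ so that $[\pm n]=X\sqcup -X$, package $\Cyc_B(z,M)$ as $\Cyc_A(\zeta)$ for an involution $\zeta\in\I(S_X)$, identify $0_B(z,M)=0_A(\zeta)$ and $1_B(z,M)=1_A(\zeta)$ as words, and invoke Theorem~\ref{0-z-thm}. The only difference is your closing detour through consecutive-pattern avoidance and Lemma~\ref{minmax-lem}, which is harmless but redundant, since ``no $v$ with $v\vartriangleleft_A 0_A(\zeta)$'' already is minimality.
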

 
 \begin{proof}
 Let $X$ be the set of numbers $a$ and $b$
occurring in pairs $(a,b) \in \Cyc_B(z,M)$.
Check that $[\pm n] = X \sqcup -X$,  and conclude that
$0_B(z,M)$ and $1_B(z,M)$ belong to $\W_n$.
Define $\zeta \in \I(S_X)$ to be the involution with $a<b=z(a)$ for $a,b \in X$ 
if and only if $(a,b) \in \Pair(z,M)$.
Then we have $0_B(z,M)= 0_A(\zeta)$ and $1_B(z,M)=1_A(\zeta)$ as words, so the result follows
from Theorem~\ref{0-z-thm}.
 \end{proof}

The following corollary refers to the map $\Psi_n : \W_n \to S_{2n}$ from Section~\ref{embed-subsect}.

 \begin{corollary}\label{isatom-lem}
Let $z \in \I(\W_n)$. If $0_B(z) \leq_A w \in \W_n$ then
$\Psi_n( w)^{-1} \in \cA(\Psi_n(z))$.
\end{corollary}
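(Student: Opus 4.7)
The plan is to chain together two straightforward facts: first, that $w$ itself lies in $\cA(z)^{-1}$, and second, that this membership forces $\Psi_n(w)^{-1}$ to be an atom of $\Psi_n(z)$. By Lemma~\ref{0isatom-lem}, $0_B(z) \in \cA(z)^{-1}$; and Lemma~\ref{<A-lem} asserts that $\cA(z)^{-1}$ is closed under $\sim_A$. Iterating this along a $\vartriangleleft_A$-chain from $0_B(z)$ up to $w$ therefore yields $w \in \cA(z)^{-1}$, equivalently $w^{-1} \in \cA(z)$, so $\ell(w^{-1}) = \ellhat(z)$.

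Next, since $\Psi_n$ is a monoid homomorphism for the Demazure product, $w^{-1} \in \HA(z)$ implies $\Psi_n(w^{-1}) = \Psi_n(w)^{-1} \in \HA(\Psi_n(z))$. To upgrade this to membership in $\cA(\Psi_n(z))$ I need the length equality $\ell(\Psi_n(w)^{-1}) = \ellhat(\Psi_n(z))$. Combining \eqref{ell-prop}, \eqref{ellhat-cor}, and $\ell(w^{-1}) = \ellhat(z)$, this reduces to the single identity
\[
\ell_0(w^{-1}) \;=\; \tfrac{1}{2}\bigl(\ell_0(z) + \neg(z)\bigr).
\]

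The main—in fact only substantive—step is to verify this identity, and I expect it to pose no real difficulty. The proof of Lemma~\ref{0isatom-lem} already records that the right-hand side equals $\ell_0(0_B(z)^{-1})$, and $\ell_0(u)=\ell_0(u^{-1})$ holds for every $u \in W_n$, since both sides count the indices $k\in[n]$ for which $-k$ appears in the one-line representation of $u$. Finally, the covering relation $\vartriangleleft_A$ merely permutes three consecutive entries of a one-line representation, so it preserves the multiset of letters and in particular $\ell_0$. Chaining these observations gives $\ell_0(w^{-1}) = \ell_0(w) = \ell_0(0_B(z)) = \ell_0(0_B(z)^{-1}) = \tfrac{1}{2}(\ell_0(z)+\neg(z))$, as required. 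The only point requiring care is to keep the two formulas \eqref{ell-prop} and \eqref{ellhat-cor} straight so that the length equation collapses to the clean identity above rather than a more complicated expression involving $\ell'$ or $\pair$.
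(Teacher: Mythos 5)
Your proof is correct, but it takes a genuinely different route from the paper's. The paper argues on the type~A side: it observes that a covering $u \vartriangleleft_A v$ in $W_n$ induces $\Psi_n(u) \sim_A \Psi_n(v)$ in $S_{2n}$, recalls from the proof of Lemma~\ref{0isatom-lem} that $\Psi_n(0_B(z))^{-1} \in \cA(\Psi_n(z))$, and then invokes Theorem~\ref{0-z-thm} to conclude that the entire $\sim_A$-class of $\Psi_n(0_B(z))$ in $S_{2n}$ consists of inverse atoms of $\Psi_n(z)$ --- no length bookkeeping is needed because Theorem~\ref{0-z-thm} already packages it. You instead stay in $W_n$: Lemma~\ref{<A-lem} gives $w \in \cA(z)^{-1}$, and you then push the single element $w$ through $\Psi_n$ by checking the length criterion directly. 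Your reduction of $\ell(\Psi_n(w)^{-1}) = \ellhat(\Psi_n(z))$ to the identity $\ell_0(w^{-1}) = \tfrac{1}{2}(\ell_0(z)+\neg(z))$ is consistent with \eqref{ell-prop} and \eqref{ellhat-cor}, the value $\ell_0(0_B(z)^{-1}) = \tfrac{1}{2}(\ell_0(z)+\neg(z))$ is indeed recorded in the proof of Lemma~\ref{0isatom-lem}, and the two auxiliary facts you supply --- that $\ell_0(u)=\ell_0(u^{-1})$ and that $\vartriangleleft_A$ preserves $\ell_0$ because it only permutes three letters of the one-line word --- are both correct. The trade-off: the paper's argument is shorter given the type~A machinery, while yours is more self-contained and makes visible exactly why the conclusion fails for atoms outside the $\sim_A$-component of $0_B(z)$ (such as $1\bar 2 \in \cA(\bar1\hs\bar2)^{-1}$), namely that their value of $\ell_0$ is too small.
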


\begin{proof}
We know that $0_B(z) \in \cA(z)^{-1}$ by Lemma~\ref{0isatom-lem}.
If $u,v \in \W_n$ and $u \vartriangleleft_A v$, then 
$\Psi_n(u) \vartriangleleft_A w \vartriangleright_A \Psi_n(v)$ for some $w \in S_{2n}$.
Hence if $u,v \in \W_n$ and $u\sim_A v$ then $\Psi_n(u) \sim_A \Psi_n(v)$,
so the corollary follows from Theorem~\ref{0-z-thm}.
\end{proof}

If $w \in \cA(z)^{-1}$ for $z \in \I(\W_n)$,
then we define $\NDes_B(w)$ to be the subset of $\NDes(w)$ given by removing all pairs of the form $(a,-b)$ where $0<a<b$,
and we define $\NNeg_B(w)$ to be the set given by adding to $\NNeg(w)$
both $a$ and $b$ for each pair $(a,-b) \in \NDes(w)$ with $0<a<b$.
For example, if $w = \bar 1 67\bar2 348\bar 9 5\in \cA(\bar1\hs\bar76453\bar2\hs\bar8\hs\bar9)^{-1}$
then $\NDes(w) = \{(8,-9),(7,-2),(6,3)\}$ and $\NNeg(w) = \{1\}$,
so
$\NDes_B(w) = \{ (7,-2),(6,3)\}$ and $\NNeg_B(w) =\{1,8,9\}$.

Given $w \in \cA(z)$, one can recover $z$
by finding a reduced word $w=t_{i_1}t_{i_2}\cdots t_{i_l}$
and then calculating $z = t_{i_l} \circ \cdots \circ t_{i_2} \circ t_{i_1} \circ  t_{i_1}\circ t_{i_2}\circ \cdots \circ t_{i_l}$.
This naive algorithm is very inefficient.
The following result shows that  $z$ is in fact determined by the nested descent set of $w^{-1}$.
 
\begin{lemma}\label{prop-thm}
Let $z \in \I(\W_n)$ and $w \in \cA(z)^{-1}$.
Then $\Fix(z) = \NFix(w)$, $\Neg(z) = \NNeg_B(w)$ and
$\Pair(z) = \{ (a,b) : (b,a) \in \NDes_B(w)\}$.
 \end{lemma}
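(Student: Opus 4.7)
The plan is to verify the identity at the single representative $w = 0_B(z) \in \cA(z)^{-1}$ and to show that $\NFix$, $\NNeg_B$, and $\NDes_B$ are constant on $\cA(z)^{-1}$. Theorem~\ref{hz-thm} supplies, for any $w \in \cA(z)^{-1}$, a chain of covering relations under $\vartriangleleft_A$ and $\scov$ through $\cA(z)^{-1}$ connecting $w$ to the $\ll_B$-minimum $0_B(z)$ (with intermediate elements in $\cA(z)^{-1}$ by Lemmas~\ref{<A-lem} and~\ref{jump2-lem}), so invariance under these two covers is enough.

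Invariance of $\NFix$, $\NNeg$, $\NDes$ under $\sim_A$ is exactly Corollary~\ref{ndes-cor2}(b), and hence so is invariance of $\NFix$, $\NNeg_B$, $\NDes_B$. For a $\scov$-cover $v \scov w$ inside $\cA(z)^{-1}$, Lemma~\ref{ht-lem} gives
\[
\NDes(w) = \NDes(v) \sqcup \{(a,-b)\}, \qquad \NNeg(v) = \NNeg(w) \sqcup \{a,b\}, \qquad \NFix(v) = \NFix(w),
\]
for some $0 < a < b$. The new pair $(a,-b)$ is exactly the type of descent deleted in forming $\NDes_B$, so $\NDes_B(v) = \NDes_B(w)$. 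The extra $\{a,b\}$ present on the $v$-side of $\NNeg$ is precisely what the definition of $\NNeg_B$ restores on the $w$-side from the new pair $(a,-b) \in \NDes(w)$, so $\NNeg_B(v) = \NNeg_B(w)$. Combined with the equality of $\NFix$, this establishes $\scov$-invariance of all three quantities.

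It remains to compute at $w = 0_B(z)$. Proposition~\ref{01-prop}, applied to the matching $M_{\min} = \{\{i,-i\} : i \in \Neg(z)\}$, identifies $0_B(z)$ with $0_A(\zeta)$ for the involution $\zeta \in \I(S_X)$ whose $2$-cycles are $\Pair(z)$ and whose fixed points form $\Fix(z) \cup \{-a : a \in \Neg(z)\}$. Theorem-Definition~\ref{recdes-thmdef} then yields $\NDes(0_B(z)) = \{(b,a) : (a,b) \in \Pair(z)\}$, with global sink consisting of the fixed points of $\zeta$, so $\NFix(0_B(z)) = \Fix(z)$ and $\NNeg(0_B(z)) = \Neg(z)$. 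Finally, for each $(a,b) \in \Pair(z)$ the inequality $|a| < b$ ensures that the descent $(b,a)$ is never of the excluded form $(a',-b')$ with $0 < a' < b'$: if $a > 0$ then both coordinates are positive, while if $a < 0$ then $|a| < b$ forces $b > -a$, i.e.\ the first coordinate exceeds the negative of the second. Thus $\NDes_B(0_B(z)) = \NDes(0_B(z))$ and $\NNeg_B(0_B(z)) = \NNeg(0_B(z)) = \Neg(z)$, matching the three claimed identities.

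The main obstacle I expect is the bookkeeping in the $\scov$-invariance step: one must check that the single additional descent gained across a $\scov$-cover contributes neither to $\NDes_B$ (because it is excluded by definition) nor, after compensation, to $\NNeg_B$ (because the same two letters leave $\NNeg$ and re-enter via the new excluded pair). Once this cancellation is verified, the evaluation at $0_B(z)$ reduces to the already-established type A picture via Proposition~\ref{01-prop} and Theorem-Definition~\ref{recdes-thmdef}.
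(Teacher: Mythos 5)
Your proposal is correct and follows essentially the same route as the paper: verify the identities at $0_B(z)$, then propagate along a chain to an arbitrary $w\in\cA(z)^{-1}$ using the $\sim_A$-invariance from Corollary~\ref{ndes-cor2}(b) and the $\scov$ bookkeeping from Lemma~\ref{ht-lem}, with connectivity supplied by Theorem~\ref{hz-thm} (the paper cites the equivalent Corollary~\ref{hz-cor1}). You have simply written out the cancellation in the $\scov$ step and the base-case computation that the paper leaves as ``straightforward to check.''
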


\begin{proof}
It straightforward to check that each claim holds if $w=0_B(z)$ by inspection and when $w \sim_B 0_B(z)$ 
by Corollary~\ref{ndes-cor2}(b)
and Lemma~\ref{ht-lem}, so is true for all $w \in \cA(z)^{-1}$ by Corollary~\ref{hz-cor1}.
\end{proof}

Suppose $M$ is a symmetric matching.
 We call a pair $\{i,j\} \in M$ a \emph{trivial block} if $i+j=0$.
 
\begin{definition}
Suppose $z \in \I(\W_n)$.
Define the \emph{shape} of $w \in \cA(z)^{-1}$
to be the symmetric perfect matching $\sh(w)$ 
whose nontrivial blocks include
$\{a,b\}$ and $\{-a,-b\}$ for each $(a,-b) \in \NDes(w)$ with $0<a<b$, and
whose trivial blocks are
the subsets $\{e,-e\}$ for each $e \in\NNeg(w)$.
\end{definition}

The shape $\sh(w)$ is a matching in the set $\Neg(z)\sqcup -\Neg(z) = \{ i \in [\pm n] : z(i) = -i\}$.
For example, if
 $w = \bar 1 67\bar2 348\bar 9 5 \in \cA(\bar1\hs\bar76453\bar2\hs\bar8\hs\bar9)^{-1}$
then  $\sh(w) = \{ \{ \bar 8, \bar 9\}, \{\bar 1, 1\}, \{8,9\}\}$.
In the next result, let  $\pair(M)$ be half the number of nontrivial blocks in a symmetric matching $M$.

\begin{proposition}\label{pair-prop}
If $z \in \I(\W_n)$ and $w \in \cA(z)^{-1}$ then 
\[\pair(\sh(w)) = |\{ a \in [n] : a \leq -z(a)\}|  - \ell_0(w).\]
\end{proposition}


\begin{proof}
Fix $z \in \I(\W_n)$. It is clear from \eqref{0B-eq} that if $w=0_B(z)$ 
then 
\[ \ell_0(w) = |\{ i \in [n] : w(i) < 0\}| = |\{ a \in [n] : a \leq -z(a)\}|  
\] while $\pair(\sh(w)) =0$.
It follows from Lemma~\ref{ht-lem} that if $v,w \in \cA(z)^{-1}$ and $v\scov w$ as in \eqref{black2}
then $\pair(\sh(w)) = \pair(\sh(v)) + 1$ and $\ell_0(w) = \ell_0(v) - 1$.
On other hand, if $v \vartriangleleft_A w$ then  $\pair(\sh(w)) = \pair(\sh(v))$ and $\ell_0(w) = \ell_0(v)$ by
Corollary~\ref{ndes-cor2}(b).
Since $0_B(z)$ is the unique minimum in $(\cA(z)^{-1},\ll_B)$ by Theorem~\ref{hz-thm}, 
the result follows.
\end{proof}
The following shows that $w\mapsto \sh(w)$ is a well-defined
map $ \cA(z)^{-1} \to \NCSM(z)$.

\begin{theorem}\label{nc-thm}
If $z \in \I(\W_n)$ and $w \in \cA(z)^{-1}$ then $\sh(w) \in\NCSM(z)$; in other words,
 the shape of $w$, which is symmetric and perfect by definition, is also noncrossing.
\end{theorem}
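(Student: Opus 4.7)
The symmetry and perfectness of $\sh(w)$ as a matching on $\Neg(z) \sqcup -\Neg(z)$ are formal consequences of the definition combined with Lemma~\ref{prop-thm}: the vertex set of $\sh(w)$ is precisely $\NNeg_B(w) \sqcup -\NNeg_B(w) = \Neg(z) \sqcup -\Neg(z)$; each such vertex lies in exactly one block (trivial if it comes from $\NNeg(w)$, nontrivial if it comes from a zero-crossing pair $(a,-b)\in\NDes(w)$ with $0<a<b$); and each nontrivial block $\{a,b\}$ is by definition accompanied by its negation $\{-a,-b\}$, while trivial blocks $\{e,-e\}$ are self-symmetric.

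The noncrossing property is the main content, and my plan is to prove it by induction along chains in $(\cA(z)^{-1}, \ll_B)$. By Theorem~\ref{hz-thm}, $0_B(z)$ is the unique minimum of this poset, and by Lemmas~\ref{<A-lem} and~\ref{jump2-lem} each $w\in\cA(z)^{-1}$ is reachable from $0_B(z)$ by a chain of $\vartriangleleft_A$ and $\scov$ moves staying in $\cA(z)^{-1}$. For the base case $w=0_B(z)$, a direct computation from $\Cyc_B(z)$ shows that every descent of $0_B(z)$ arises from a pair $(a,b)\in\Pair(z)$ (where $|a|<b$), giving a descent $(b,a)$ with $a+b>0$; none has the form $(a',-b')$ with $0<a'<b'$, so $\sh(0_B(z))$ consists only of trivial blocks $\{e,-e\}$ for $e\in\Neg(z)$, which is trivially noncrossing.

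For the inductive step, a $\vartriangleleft_A$ move preserves $\sh$ by Corollary~\ref{ndes-cor2}(b), so the content lies in the $\scov$ case. For a $\scov$ move $v\scov w$ at positions $i,i+1$ (with $v_i=-b$, $v_{i+1}=-a$, and $0<a<b=\min\{|v_1|,\ldots,|v_i|\}$), Lemma~\ref{ht-lem} describes $\sh(w)$ as $\sh(v)$ with the trivial blocks $\{a,-a\},\{b,-b\}$ replaced by the nontrivial blocks $\{a,b\},\{-a,-b\}$. By symmetry it suffices to rule out crossings of the new block $\{a,b\}$ with surviving blocks. A crossing with a surviving nontrivial block $\{\alpha,\beta\}$ (with $\alpha<\beta$) requires $\alpha<a<\beta<b$ or $a<\alpha<b<\beta$, either of which forces $\{\alpha,\beta\}$ to already cross $\{a,-a\}$ or $\{b,-b\}$ in $\sh(v)$, contradicting the inductive hypothesis. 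A crossing with a surviving trivial block $\{e,-e\}$ requires $a<e<b$ with $e\in\NNeg(w)\subseteq\NNeg(v)$; then applying Lemma~\ref{cjw-lem}(1) to $v$ with the three elements $-b<-e<-a$ of $-\NNeg(v)$, the subword $-b,-e,-a$ must appear in $v$ in this order, whereas $-b$ and $-a$ are adjacent at positions $i,i+1$, giving a contradiction. This last case is the main obstacle: the argument hinges critically on the $\scov$ condition $b=\min\{|v_1|,\ldots,|v_i|\}$, which forces $v_i=-b$ to sit immediately before $v_{i+1}=-a$ so that Lemma~\ref{cjw-lem}(1) can exclude intervening elements --- without this, an $-e$ at some earlier position $j<i$ would spoil the argument.
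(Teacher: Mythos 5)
Your proof is correct and takes essentially the same approach as the paper: both arguments reduce to the base case $0_B(z)$ via the Section~\ref{order-sect} connectivity/minimality results, track the change of shape across a single covering move using Lemma~\ref{ht-lem}, and invoke Lemma~\ref{cjw-lem}(1) to rule out a trivial block $\{e,-e\}$ with $a<e<b$, which is indeed the crux. The only cosmetic difference is that you induct upward along $\ll_B$ from the unique minimum (Theorem~\ref{hz-thm}), so you need only one direction of the preservation statement and can dispatch crossings with nontrivial blocks by the inductive hypothesis, whereas the paper uses Corollary~\ref{hz-cor1} with $\vartriangleleft^+_B$ moves and therefore proves the two-sided ``$\sh(v)$ noncrossing iff $\sh(w)$ noncrossing'' claim, handling the direction you avoid via Lemma~\ref{cjw-lem}(3).
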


\begin{proof}
Let $z \in \I(\W_n)$.
The value of $\sh(\cdot)$ is constant on $\sim_A$-equivalence classes by Corollary~\ref{ndes-cor2}(b),
and
$\sh(0_B(z))   = \{ \{i,-i\} : i \in \Neg(z)\} \in \NCSM(z)$.
Suppose $v,w \in \cA(z)^{-1}$ and $i\in [n-1]$ are as in \eqref{black} so that $v\vartriangleleft^+_B w$
and $v_1=w_1<v_2=w_2 < \dots< v_{i-1} = w_{i-1} < v_i < 0$.
Set 
$a = -v_{i+1}= w_i$ and $b=-v_i = -w_{i+1}$ so that $0<a<b$.
Since $\sim_B$ is the transitive, symmetric closure of $\vartriangleleft_A$ and $\vartriangleleft^+_B$ and since $0_B(z) \in \cA(z)^{-1}$,
it suffices by Corollary~\ref{hz-cor1},  
to show that $\sh(v)$ is noncrossing if and only if $\sh(w)$ is noncrossing.

We have
$\sh(v) \setminus \sh(w) = \{ \{a,-a\},\{b,-b\}\}$ and $\sh(w)\setminus \sh(v) = \{ \{a,b\}, \{-a,-b\}\}$
by Lemma~\ref{ht-lem}.
If $\sh(w)$ is noncrossing,
then the only way $\sh(v)$ can fail to be noncrossing is if 
there exists a nontrivial block $\{c,d\} \in \sh(v) \cap \sh(w)$
with $0<c<a<b<d$.
But this would imply that both $(a,-b)$ and $(c,-d)$ were elements of $\NDes(w)$,
contradicting Lemma~\ref{cjw-lem}(3)
since $c\bar{d}a\bar{b}$ is not a subword of $w$.

Conversely, if $\sh(v)$ is noncrossing,
then $\sh(w)$ can fail to be noncrossing only is if there 
exists a trivial block $\{e,-e\} \in \sh(v) \cap \sh(w)$ with $a<e<b$.
But then we would have $\{a,b,e\}\subset \NNeg(v)$,
so 
Lemma~\ref{cjw-lem}(1) would imply that
$\bar{b}\bar{e}\hs\bar{a}$ is a subword of $v$, which is impossible as $\bar b$ and $\bar a$ are consecutive in $v$.
Thus $\sh(v)$ is noncrossing if and only if $\sh(w)$ is also.
\end{proof}

Let $z \in \I(\W_n)$ and $M \in \NCSM(z)$.
If $0_B(z,M)$ and $1_B(z,M)$ are contained in $\cA(z)^{-1}$, 
then they
have shape $\sh(0_B(z,M)) = \sh(1_B(z,M)) = M$,
as do all elements
 $w \in \cA(z)^{-1}$
with $0_B(z,M) \leq_A w \leq_A 1_B(z,M)$ by Corollary~\ref{ndes-cor2}(b).
The previous theorem shows that only noncrossing shapes are possible
for inverse atoms; the following confirms that all such shapes occur.
The map $\sh: \cA(z)^{-1} \to \NCSM(z)$ therefore 
 provides the bijection mentioned in Theorem~\ref{intro-thm2}.
 
 \begin{theorem}\label{minmax-thm}
 Let $z \in \I(\W_n)$. If $M \in \NCSM(z)$ then $0_B(z,M)$ and $1_B(z,M)$ are minimal and maximal elements of $(\cA(z)^{-1},<_A)$, respectively.
 Moreover, all
  minimal (respectively, maximal) elements in $(\cA(z)^{-1},<_A)$ have the form $0_B(z,M)$ (respectively, $1_B(z,M)$) for some $M \in \NCSM(z)$.
 \end{theorem}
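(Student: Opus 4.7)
The theorem decomposes into two halves: (i) each $0_B(z, M)$ (resp.\ $1_B(z, M)$) is a minimal (resp.\ maximal) element of $(\cA(z)^{-1}, <_A)$, and (ii) every such extremal element arises in this way. I would handle (ii) first, as the bookkeeping is symmetric between the two cases.

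For (ii), let $w$ be a minimal element of $(\cA(z)^{-1}, <_A)$. Corollary \ref{cV-cor} provides $X$ with $X \sqcup -X = [\pm n]$ and $\zeta \in \I(S_X)$ such that $w = 0_A(\zeta)$. Set $M = \sh(w)$, which lies in $\NCSM(z)$ by Theorem \ref{nc-thm}. It remains to check $w = 0_B(z, M)$. The 2-cycles of $\zeta$ equal $\{(a, b) : (b, a) \in \NDes(w)\}$, since $\sim_A$-minimality gives $\NDes(w) = \Des(w)$. The definition of $\sh$ partitions $\NDes(w)$ as $\NDes_B(w) \sqcup \{(a, -b) : 0 < a < b,\ \{a, b\} \in M\}$, which by Lemma \ref{prop-thm} reverses to $\Pair(z) \sqcup \{(-b, a) : \{a, b\} \in M,\ 0 < a < b\} = \Pair(z, M)$. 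Likewise the fixed points of $\zeta$ are $\NFix(w) \sqcup -\NNeg(w) = \Fix(z) \sqcup -\Neg(z, M)$. Hence $\zeta$'s cycle data matches $\Cyc_B(z, M)$, so $w = 0_B(z, M)$. The maximal case is dual using $1_B(z, M)$.

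For (i), I would induct on the number of nontrivial blocks of $M$; the base case $M = M_{\min}$ (all trivial blocks) is handled by Lemma \ref{0isatom-lem}. For the inductive step, choose an \emph{outermost} nontrivial block $\{a, b\} \in M$ with $0 < a < b$, and let $M^-$ be obtained from $M$ by replacing $\{a, b\}, \{-a, -b\}$ with the trivial blocks $\{a, -a\}, \{b, -b\}$. A preliminary observation is that every nontrivial block of a matching in $\NCSM(z)$ is same-sign, because a mixed-sign block $\{a, -b\}$ would be forced by symmetry to appear alongside $\{-a, b\}$, and these two edges always interleave. Combining this with outermost-ness of $\{a, b\}$ in $M$ shows $M^-$ is noncrossing: any block $\{p, q\}$ that would cross $\{a, -a\}$ must satisfy $0 < p < a < q$, and then it either crosses $\{a, b\}$ in $M$ (if $q < b$) or strictly nests $\{a, b\}$ (if $q > b$), each a contradiction. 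By induction $0_B(z, M^-) \in \cA(z)^{-1}$.

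The words $0_B(z, M^-)$ and $0_B(z, M)$ differ only in a local region: the former contains a subword $\cdots{-b}\,[\mathrm{interior}]\,{-a}\cdots$ while the latter contains $\cdots a\,{-b}\,[\mathrm{interior}]\cdots$, where the interior is a common sequence of letter pairs $e, -f$ coming from entries $(-f, e) \in \Cyc_B$ with $f \in (a, b)$. I would iteratively shift $-a$ leftward across the interior via $\vartriangleleft_A$-moves (each triple $e, -f, -a$ is a $cab$-pattern since $-f < -a < 0 < e$), bringing $-b$ and $-a$ adjacent. Outermost-ness forces every letter preceding $-b$ to arise from a $\Cyc_B$-entry with first coordinate $< -b$ and hence with both coordinates of absolute value $> b$, so the $\scov$-minimality condition $b = \min\{|v_1|,\dots,|v_i|\}$ is satisfied; applying $\scov$ converts $-b, -a$ into $a, -b$, producing $0_B(z, M)$. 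By Lemma \ref{<B-lem} combined with Lemma \ref{jump2-lem}, this chain stays within $\cA(z)^{-1}$, giving $0_B(z, M) \in \cA(z)^{-1}$; the argument for $1_B(z, M)$ is dual. The chief obstacle is this inductive step in (i), and in particular the verification of the $\scov$-minimality condition at the critical position, which requires carefully tracking which $\Cyc_B$-entries contribute to the prefix of $0_B(z, M^-)$ preceding $-b$ and ruling out absolute values $\leq b$ there using both outermost-ness of $\{a, b\}$ and the same-sign-block observation.
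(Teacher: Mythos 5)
Your treatment of the second half (every $<_A$-extremal element of $\cA(z)^{-1}$ equals some $0_B(z,M)$ or $1_B(z,M)$) is correct and is essentially the paper's argument: minimality forces $\NDes(w)=\Des(w)$, and Lemma~\ref{prop-thm} then pins down the word. Your reduction for the first half also starts like the paper's: induct on the number of nontrivial blocks of $M$, peel off an outermost block $\{a,b\}$, and pass to the matching with $\{a,b\},\{-a,-b\}$ replaced by trivial blocks. The same-sign observation and the verification that the smaller matching is noncrossing are fine.

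The gap is in the last step of your inductive argument. After sliding $-a$ leftward by $\vartriangleleft_A$-moves you arrive at a word $v$ with $v_iv_{i+1}=\bar b\hs\bar a$ satisfying the $\scov$ condition, and you want to conclude that the word $w$ obtained by replacing $\bar b\hs\bar a$ with $a\bar b$ is again in $\cA(z)^{-1}$. But Lemma~\ref{jump2-lem} only runs in the opposite direction: it assumes the \emph{target} $w$ lies in $\cA(z)^{-1}$ and concludes that the source $v$ does, and the paper explicitly remarks that the converse fails --- $v\scov w$ with $v\in\cA(z)^{-1}$ need not give $w\in\cA(z)^{-1}$. Since $w=0_B(z,M)$ is exactly what you are trying to place in $\cA(z)^{-1}$, invoking Lemma~\ref{jump2-lem} here is circular. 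The unconditional tool is Lemma~\ref{jump-lem}, which requires the stronger relation $\vartriangleleft^+_B$, i.e.\ an \emph{increasing} prefix $v_1<\dots<v_i=\bar b$; your prefix $P$ need not be increasing, since any $\Cyc_B$-entry $(c,d)$ with $c<-b<0<d$ contributes a descent $dc$ there (e.g.\ a pair $(-5,6)\in\Pair(z)$ with negated points $3,4$ already defeats it). This is precisely why the paper runs the induction through the \emph{maximal} element $1_B(z,M')$ of the smaller matching rather than the minimal one: in $1_B(z,M')$ all trivial blocks sit at the front as an increasing string of negative letters, $\bar b$ and $\bar a$ are already adjacent there, $\vartriangleleft^+_B$ applies, and Lemmas~\ref{jump-lem} and~\ref{<B-lem} give membership; one then only needs \emph{some} element of shape $M$ in $\cA(z)^{-1}$, after which your (correct) uniqueness argument and Corollary~\ref{cV-cor} deliver both $0_B(z,M)$ and $1_B(z,M)$. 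Your argument can be repaired by making this substitution, but as written the key membership claim is unsupported.
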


 \begin{proof}
 Suppose $w \in \cA(z)^{-1}$ is minimal under $<_A$ and $M = \sh(w) \in \NCSM(z)$.
As $\sort_R(w)$ is then increasing by Lemma~\ref{minmax-lem},
it follows that $\NDes(w) = \Des(w)$. 
From this observation and Lemma~\ref{prop-thm}, it is an exercise to deduce that $w$ must be equal to $0_B(z,M)$.
If $w \in \cA(z)^{-1}$ is maximal under $<_A$, then it follows similarly that $w =1_B(z,\sh(w))$.

Choose an arbitrary matching $M \in \NCSM(z)$.
It remains  to show that $0_B(z,M)$ and $1_B(z,M)$ in fact belong to $\cA(z)^{-1}$.
 From Lemma~\ref{<A-lem}, Corollary~\ref{cV-cor}, and the previous paragraph, it is enough
to construct a single element $w \in \cA(z)^{-1}$ with $\sh(w) = M$.
We prove this by induction on the number of nontrivial blocks in $M$.
If $M$ has no nontrivial blocks then $0_B(z,M) = 0_B(z)$ has shape $M$ and belongs to $\cA(z)^{-1}$ by Corollary~\ref{isatom-lem}.
Otherwise, we can find a nontrivial block $\{a,b\} \in M$
with $0<a<b$ such that no $\{a',b' \} \in M$ has $0<a'<a<b<b'$.
Replacing $\{a,b\}$ and $\{-a,-b\}$ in $M$ 
by $\{a,-a\}$ and $\{b,-b\}$ yields another noncrossing matching $M' \in \NCSM(z)$
with strictly fewer nontrivial blocks.
Let $v=1_B(z,M')$. By induction, we may assume that $v \in \cA(z)^{-1}$.
Since $M$ is noncrossing, we must have
$\{a,b\} = \{a,a+1,\dots,b\} \cap \Neg(z,M')$,
so 
$v_1<v_2<\dots<v_i=-b < v_{i+1} = -a<0$ for some $i \in [n-1]$.
Replacing the subword $v_iv_{i+1}=\bar{b}\bar{a}$
in the one-line representation of $v$ by
 $a\bar b$
 gives  a signed permutation $w$
with $v \sim_B w \in \cA(z)^{-1}$ by Lemma~\ref{jump-lem}, and  it follows by Lemma~\ref{ht-lem} that $\sh(w) = M$.
\end{proof}

Given $z \in \I(\W_n)$ and $M \in \NCSM(z)$,  let $\cA(z , M) :=\{ w \in \cA(z) : \sh(w^{-1})=M\}$.

\begin{corollary}
Suppose $z \in \I(\W_n)$. If $M \in \NCSM(z)$ then
\[
\cA(z,M) = \{ w \in \W_n : 0_B(z,M) \leq_A w^{-1}\}
= \{ w \in \W_n :  w^{-1} \leq_A 1_B(z,M)\}.\]
Moreover, we have $\cA(z) = \bigsqcup_{M \in \NCSM(z)} \cA(z,M)$.
\end{corollary}

\begin{corollary}
If $z \in \I(\W_n)$ then $(\cA(z)^{-1},<_A)$ is connected if and only if $\neg(z) \leq 1$.
\end{corollary}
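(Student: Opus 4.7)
The strategy is to reduce the statement to a counting of connected components via the shape map, and then to a simple binomial identity. First, I would observe that the connected components of $(\cA(z)^{-1},<_A)$ are precisely the $\sim_A$-equivalence classes, since $<_A$ is by definition the transitive closure of $\vartriangleleft_A$ and $\sim_A$ is its symmetric closure. By Corollary~\ref{cV-cor}, each such equivalence class is of the form $\XA(\zeta)$ for some set $X$ with $[\pm n] = X \sqcup -X$ and some $\zeta \in \I(S_X)$, and in particular has a unique minimum under $<_A$. Hence the number of components equals the number of minimal elements in $(\cA(z)^{-1},<_A)$.

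Next, I would invoke Theorem~\ref{minmax-thm}, which identifies the minimal elements of $(\cA(z)^{-1},<_A)$ as exactly the set $\{0_B(z,M) : M \in \NCSM(z)\}$. Since $M$ can be recovered from $0_B(z,M)$ via the shape map $\sh$ (see Theorem~\ref{nc-thm} and Lemma~\ref{prop-thm}), distinct matchings give distinct minima, so the number of components of $(\cA(z)^{-1},<_A)$ equals $|\NCSM(z)|$.

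Finally, I would apply the enumerative fact quoted in Section~\ref{shapes-sect}: if $m = \neg(z) = |\Neg(z)|$, then the number of perfect noncrossing symmetric matchings on $\Neg(z) \sqcup -\Neg(z)$ is $\binom{m}{\lfloor m/2 \rfloor}$. The poset $(\cA(z)^{-1},<_A)$ is connected if and only if this binomial coefficient equals $1$, which happens precisely when $m \in \{0,1\}$, that is, when $\neg(z) \leq 1$.

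There is no real obstacle here, since all of the required components have been assembled earlier in the paper; the corollary is essentially a bookkeeping consequence of Theorems~\ref{nc-thm} and~\ref{minmax-thm} together with Corollary~\ref{cV-cor}. If anything, the only mild subtlety is making sure that the map $M \mapsto 0_B(z,M)$ is injective, which follows immediately since $\sh(0_B(z,M)) = M$ by construction.
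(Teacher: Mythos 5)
Your proposal is correct and follows essentially the same route as the paper: the paper's proof simply states that the number of components of $(\cA(z)^{-1},<_A)$ is $|\NCSM(z)|$, which equals $\binom{m}{\lfloor m/2\rfloor}$ for $m=\neg(z)$ and hence is $1$ exactly when $\neg(z)\leq 1$. You have merely spelled out the intermediate bookkeeping (unique minima per component via Corollary~\ref{cV-cor}, parametrization of minima by Theorem~\ref{minmax-thm}, injectivity via the shape map), all of which is sound.
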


\begin{proof}
There are $|\NCSM(z)|$ components in $(\cA(z)^{-1},<_A)$, which is 1 if and only if $\neg(z) \leq 1$.
\end{proof}

Let $\NCSM^k(z)$ for $z \in \I(\W_n)$ be the set of  matchings in $ \NCSM(z)$ with at most $k$ trivial blocks.
We have $\NCSM^0(z) = \NCSM^1(z)$ if $\neg(z)$ is even and $\NCSM^0(z) = \varnothing$ if $\neg(z)$ is odd.

\begin{corollary}\label{max-cor}
Let $z \in \I(\W_n)$. The permutations $1_B(z,M)$ for $M \in \NCSM^1(z)$ are the maximal elements in $\cA(z)^{-1}$
under both atomic orders $<_B$ and $\ll_B$.
Moreover, $\cA(z)^{-1}$ is the union of the lower intervals in $(\W_n,\ll_B)$ bounded above by these elements.
\end{corollary}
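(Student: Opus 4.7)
The plan is to combine Theorem~\ref{minmax-thm}, which identifies the $<_A$-maximum of each connected component of $(\cA(z)^{-1},<_A)$, with an iterative ``merging of trivial blocks'' carried out via the $\scov$ covering relation. Since $\vartriangleleft_B \subseteq \scov$ gives $<_B \subseteq \ll_B$, any element that is $\ll_B$-maximal in $\cA(z)^{-1}$ is automatically $<_B$-maximal; the converse direction for $<_B$ will require a separate observation that $\vartriangleleft_B$ alone suffices to raise a non-maximal element.

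To verify that each $v = 1_B(z,M)$ with $M \in \NCSM^1(z)$ is maximal under both orders, note first that Theorem~\ref{minmax-thm} makes $v$ the $<_A$-maximum of its component, so no $w \in \cA(z)^{-1}$ satisfies $v \vartriangleleft_A w$. If instead $v \vartriangleleft_B w$ or $v \scov w$ with $w \in \cA(z)^{-1}$, then Lemma~\ref{ht-lem} gives $|\NNeg(v)| = |\NNeg(w)|+2 \geq 2$, contradicting $\sh(v) = M$ having at most one trivial block.

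To show that every $w \in \cA(z)^{-1}$ lies $\ll_B$-below some $1_B(z,M)$ with $M \in \NCSM^1(z)$, I first raise $w$ via $\leq_A$ to $w_0 := 1_B(z,\sh(w))$. If $\sh(w_0)$ has at least two trivial blocks, pick positive values $a<b$ in $\NNeg(w_0)$ with no element of $\NNeg(w_0)$ strictly between them, and let $M_1$ be obtained from $\sh(w_0)$ by replacing the trivial blocks $\{a,-a\},\{b,-b\}$ with the nontrivial pair $\{a,b\},\{-a,-b\}$. A case check using the noncrossing property of $\sh(w_0)$ shows that $M_1 \in \NCSM(z)$ and that $\{a,b\}$ is an outermost nontrivial block of $M_1$: any nontrivial block of $\sh(w_0)$ with exactly one endpoint in $(a,b)$, or properly enclosing the pair $\{a,-a\},\{b,-b\}$, would already cross $\{a,-a\}$ or $\{b,-b\}$. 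The inductive step in the proof of Theorem~\ref{minmax-thm} applied to $M_1$ (with this outermost block playing the role of $\{a,b\}$ there) then produces $w_1' \in \cA(z)^{-1}$ with $\sh(w_1') = M_1$ and $w_0 \scov w_1'$, and raising $w_1'$ via $\leq_A$ to $w_1 := 1_B(z,M_1)$ gives $w_0 \ll_B w_1$. Iterating produces a chain $w \leq_A w_0 \ll_B w_1 \ll_B \cdots \ll_B 1_B(z,M_k)$ with $M_k \in \NCSM^1(z)$, which must terminate because the number of trivial blocks strictly decreases.

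For the $<_B$ version of maximality, if $w \in \cA(z)^{-1}$ is not of the form $1_B(z,M)$ with $M \in \NCSM^1(z)$, then either $w$ is not $<_A$-maximal in its component, or $w = 1_B(z,\sh(w))$ with $\sh(w)$ having at least two trivial blocks. In the latter case, the pairs contributing to $\Cyc_B(z,\sh(w))$ with negative second coordinate are exactly those coming from trivial blocks, so the $d$-ordering forces the one-line representation of $w$ to begin with $\bar e_k \bar e_{k-1} \cdots$ where $e_1 < \cdots < e_k$ enumerate the positive values in $\NNeg(w)$, and therefore $w \vartriangleleft_B w'$ for some $w' \in \cA(z)^{-1}$ by Lemma~\ref{<B-lem}. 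The remaining containment needed for the union claim, namely that each lower $\ll_B$-interval of a $1_B(z,M)$ sits inside $\cA(z)^{-1}$, follows from Lemmas~\ref{<A-lem} and~\ref{jump2-lem}, which together show that $\cA(z)^{-1}$ is closed under descent via $\vartriangleleft_A$ and $\scov$. The main obstacle will be the case analysis that certifies $M_1$ is noncrossing and that $\{a,b\}$ is outermost in $M_1$; this requires carefully tracking how trivial blocks and same-sign nontrivial blocks can coexist with $\{a,-a\}$ and $\{b,-b\}$ in a noncrossing symmetric matching.
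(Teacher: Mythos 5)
Your proof is correct and follows essentially the same route as the paper: reduce to the $<_A$-maximal candidates $1_B(z,M)$ via Theorem~\ref{minmax-thm}, show that those with at least two trivial blocks admit an upward $\vartriangleleft_B$ move from their initial segment $\bar{a_k}\cdots\bar{a_1}$ while those with $M \in \NCSM^1(z)$ admit none (the paper inspects the one-line form directly where you invoke Lemma~\ref{ht-lem}, but both work), and settle the union claim with Lemma~\ref{jump2-lem}. Your third paragraph's explicit block-merging chain is sound but redundant: once the maximal elements are identified, finiteness of the graded poset $(\cA(z)^{-1},\ll_B)$ already places every element below one of them.
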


\begin{proof}
Each maximal element in $\cA(z)^{-1}$ under either atomic order 
is necessarily of the form $1_B(z,M)$ for some $M \in \NCSM(z)$ by Theorem~\ref{minmax-thm}.
If $M \in \NCSM(z)$ has $k$ trivial blocks, then we can write $1_B(z,M)=\bar{a_k}\hs \cdots \bar{a_2}\hs\bar{a_1}b_1b_2\cdots b_{n-k}$
where $0<a_1<a_2<\dots<a_k$, $0<b_1$, and $b_1b_2\cdots b_{n-k}$ contains no consecutive negative numbers.
Evidently $1_B(z,M)$ is maximal under $<_B$ (and also $\ll_B$) if and only if $k<2$.
The last assertion in the corollary holds by Lemma~\ref{jump2-lem}.
\end{proof}


\begin{corollary}\label{cat-cor}
If $z \in \I(\W_n)$ and $m = \lceil \frac{1}{2}\neg(z)\rceil$, then
the number of  elements in $\cA(z)^{-1}$ that are maximal under $<_B$ (equivalently, $\ll_B$)
is the $m$th Catalan number $C_m = \frac{1}{m+1} \binom{2m}{n}$.
\end{corollary}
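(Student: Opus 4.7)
The plan is to use Corollary~\ref{max-cor} to identify the set of maximal elements with $\NCSM^1(z)$ and then enumerate the latter directly. By Corollary~\ref{max-cor}, the map $M\mapsto 1_B(z,M)$ is a bijection from $\NCSM^1(z)$ onto the set of maximal elements of $(\cA(z)^{-1},<_B)=(\cA(z)^{-1},\ll_B)$, so it suffices to prove $|\NCSM^1(z)|=C_m$. Since $\NCSM(z)$ depends only on the linearly ordered set $\Neg(z)\sqcup(-\Neg(z))$, which is order-isomorphic to $[\pm k]$ for $k=\neg(z)$, I may assume $\Neg(z)=[k]$ throughout.

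The key structural observation is that in any symmetric noncrossing perfect matching $M$ on $[\pm k]$, every block is either a trivial block $\{-i,i\}$ or has both endpoints of the same sign. Indeed, suppose $\{-a,b\}\in M$ with $0<a$, $0<b$, and $a\neq b$. Then $\{-b,a\}\in M$ by symmetry, and when $a>b$ the four endpoints appear in the order $-a<-b<b<a$, while when $a<b$ they appear as $-b<-a<a<b$; in both cases the two arcs interleave and hence cross, a contradiction. Consequently the nontrivial blocks come in conjugate pairs (a positive-side arc together with its mirror), so the number of nontrivial blocks is always even, and the number of trivial blocks in any $M\in\NCSM(z)$ has the same parity as $k$.

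With this in hand I would split on the parity of $k$. If $k=2m$, then every matching in $\NCSM(z)$ has an even number of trivial blocks, so $\NCSM^1(z)=\NCSM^0(z)$; each such matching is determined by its restriction to $[k]$, which may be any noncrossing perfect matching on $[k]$, and these are counted by $C_m$. If $k=2m-1$, then each $M\in\NCSM^1(z)$ contains a unique trivial block $\{-i,i\}$; the arc $\{-i,i\}$ partitions the remaining points into an inside $[i-1]\sqcup(-[i-1])$ and an outside $\{i+1,\dots,k\}\sqcup(-\{i+1,\dots,k\})$, each of which must be symmetrically matched within itself with no further trivial blocks. Applying the even case to each piece forces both $i-1$ and $k-i$ to be even, so $i=2j+1$ for some $j\in\{0,1,\dots,m-1\}$, and the number of admissible matchings for a given $i$ is $C_j C_{m-1-j}$.

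Summing over $j$ and invoking the Catalan recurrence $\sum_{j=0}^{m-1} C_j C_{m-1-j}=C_m$ yields $|\NCSM^1(z)|=C_m$ in the odd case as well. The only delicate point is the crossing argument that establishes the structural observation; once that is in place, the rest is a standard Catalan-convolution computation, so I do not anticipate any real obstacle.
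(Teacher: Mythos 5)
Your reduction to showing $|\NCSM^1(z)| = C_m$ via Corollary~\ref{max-cor} is exactly the paper's first step, and your enumeration is correct, but it is carried out by a different route. The paper gives a single direct bijection: delete from $M \in \NCSM^1(z)$ every block containing no positive element and standardize the $2m$ surviving points to $[2m]$, producing an arbitrary noncrossing perfect matching of $[2m]$; this handles both parities of $\neg(z)$ uniformly in one line. You instead first prove the structural lemma that every nontrivial block of a symmetric noncrossing matching has both endpoints of the same sign, deduce from it the parity constraint on the number of trivial blocks, and then split on the parity of $\neg(z)$, resolving the odd case by decomposing along the unique trivial arc and invoking the Catalan convolution $\sum_{j} C_j C_{m-1-j} = C_m$. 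The two arguments are close relatives: your structural lemma is precisely the fact that makes the paper's terse bijection well defined (it guarantees that exactly $2m$ points survive the deletion and that the map is invertible), so your proof can be read as a fully detailed, recursive version of the paper's bijection. What the paper's route buys is brevity and parity-independence; what yours buys is that every claim (that the restriction to the positive points may be any noncrossing matching, that the trivial arc's position must be odd, etc.) is justified explicitly rather than left to the reader. Both are correct.
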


\begin{proof}
There is a bijection from $\NCSM^1(z)$ to the set of noncrossing perfect matchings in
 $[2m]$, whose enumeration by $C_m$ is well-known:
 remove all blocks without positive elements from $M \in \NCSM^1(z)$ and standardize the numbers in the remaining blocks 
to be $1,2,\dots, 2m$.
\end{proof}

\section{Rank functions}\label{rank-sect}

In this section we show that the atomic orders $<_B$ and $\ll_B$ are graded.
Fix $z \in \I(\W_n)$ and $w \in \cA(z)^{-1}$. 
Define $\offset_A(w)$ to be the number of pairs
$((b_1,a_1),(b_2,a_2)) \in \NDes(w)\times \NDes(w)$ with $a_1<a_2< b_2<b_1$.
Let  
$L = \{ b : (b,a) \in \NDes(w)\text{ for some }a\}
$ and
$ R =\{w_1,w_2,\dots,w_n\} \setminus L$
and define $\rank_A : \cA(z)^{-1} \to \ZZ$ to be the function with
\[ \rank_A(w) := \inv(w|_{ R}) - \inv(w|_L) + \offset_A(w) \in \ZZ,\]
where $w|_{ R}$ and $w|_L$ are the words formed  from $w_1w_2\cdots w_n$ by omitting
all entries not in $ R$ and $L$, respectively, and 
$\inv(v) = |\{ (i,j) \in [k]\times [k]: i<j\text{ and }v_i>v_j\}|$ for a word $v=v_1v_2\cdots v_k$.
If $z=\bar{5}\hs\bar{4}\hs\bar{3}\hs\bar{2}\hs\bar{1}$ and $w = \bar{3} 4 \bar5 1 \bar 2 \in \cA(z)^{-1}$, for example,
then $\NDes(w) = \{(1,\bar 2),(4,\bar 5)\}$, $L = \{1,4\}$,  $R = \{ \bar 2,\bar 3,\bar 5\}$,
 $w|_R = \bar{3}\hs\bar{5}\hs\bar{2}$, $w|_L = 41$, and $\inv(w|_R) = \inv(w|_L) =\offset_A(w) =\rank_A(w) = 1$.
As a consequence of Corollary~\ref{cV-cor}, the following result is equivalent to \cite[Lemma 6.13]{HMP2}.
 
\begin{proposition}\label{arank-prop}
Let $z \in \I(\W_n)$.   If $v,w \in \cA(z)^{-1}$ and $v \vartriangleleft_A w$, then $\rank_A(w) = \rank_A(v) + 1$,
and an element  $w \in \cA(z)^{-1}$ is minimal relative to $<_A$ if and only if $\rank_A(w) = 0$.
\end{proposition}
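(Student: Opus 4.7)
The plan is to reduce the statement to the corresponding type A result by using the component decomposition provided by Corollary~\ref{cV-cor}. Fix $z \in \I(W_n)$ and $w \in \cA(z)^{-1}$, and let $\cE$ be the $\sim_A$-equivalence class of $w$ inside $\cA(z)^{-1}$. By Corollary~\ref{cV-cor}, there exist a set $X \subset \ZZ$ with $[\pm n] = X \sqcup -X$ and an involution $\zeta \in \I(S_X)$ such that $\cE = \XA(\zeta)$. Under the inclusion $S_X \hookrightarrow W_n$ of Remark~\ref{gen-rmk}, this provides a one-line-representation preserving bijection $\cA(\zeta)^{-1} \to \cE$.

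Next, I would observe that every ingredient used to build $\rank_A$ is a purely word-theoretic quantity. The relation $\vartriangleleft_A$ is defined via \eqref{<A-eq} on one-line representations without regard to the sign of letters, so it agrees on both sides of the bijection. By Corollary~\ref{ndes-cor2}(b), the set $\NDes(w)$ is constant on $\cE$ and, via Theorem-Definition~\ref{recdes-thmdef}, equals $\{(b,a) \in X \times X : a < b = \zeta(a)\}$; therefore the subsets $L$ and $R$, the subwords $w|_L$ and $w|_R$, the offset $\offset_A(w)$, and the inversion counts $\inv(w|_L)$ and $\inv(w|_R)$ all transport correctly. In particular, the function $\rank_A$ defined here restricts on $\cE$ to the analogous rank function for $\cA(\zeta)^{-1}$ studied in \cite{HMP2}.

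With this reduction in place, the first assertion of the proposition is an immediate application of \cite[Lemma 6.13]{HMP2}, since any covering $v \vartriangleleft_A w$ in $\cA(z)^{-1}$ necessarily keeps $v$ in the same class $\cE$ as $w$. For the second assertion, note that $w \in \cA(z)^{-1}$ is minimal under $<_A$ if and only if it is minimal within its component $\cE$. Under the identification $\cE \cong \cA(\zeta)^{-1}$, Theorem~\ref{0-z-thm} says the unique such minimum is $0_A(\zeta)$, and a direct inspection of the definitions (or again \cite[Lemma 6.13]{HMP2}) shows $\rank_A(0_A(\zeta)) = 0$. The converse follows from the first part: since $\rank_A$ strictly increases along $\vartriangleleft_A$ and each $w \in \cE$ dominates a minimum under $<_A$, having $\rank_A(w) = 0$ forces $w$ to be that minimum.

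The bulk of the technical content is already contained in Corollary~\ref{cV-cor} together with \cite[Lemma 6.13]{HMP2}, so the only real task is bookkeeping: verifying that the ``type B'' rank data defined in terms of possibly negative letters genuinely coincides with its type A counterpart after transferring along $S_X \hookrightarrow W_n$. The main (mild) obstacle is to confirm this transport rigorously, essentially by checking that $\NDes$, $L$, $R$, $\offset_A$, and $\inv$ are all invariants of the one-line word and its nested descent set rather than of the ambient Coxeter group; once that is done, no new combinatorial argument is required.
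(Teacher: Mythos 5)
Your proof is correct and follows essentially the same route as the paper: the paper itself prefaces this proposition by noting that, as a consequence of Corollary~\ref{cV-cor}, it is equivalent to \cite[Lemma 6.13]{HMP2}, which is exactly the reduction you carry out. The only difference is that the paper's written proof is a terser direct verification (the first claim ``immediate'' from the definitions of $\vartriangleleft_A$ and $\NDes(w)$, the second from Lemma~\ref{minmax-lem}), whereas you make the transport along $S_X \hookrightarrow W_n$ and the appeal to \cite[Lemma 6.13]{HMP2} explicit.
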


\begin{proof}
The first claim is immediate from the
 way we define $\vartriangleleft_A$ and $\NDes(w)$.
By Lemma~\ref{minmax-lem}, $w \in \cA(z)^{-1}$ is minimal relative to $<_A$ if and only if $\inv(w|_R)  = 0$
 and $\inv(w|_L)=\offset_A(w)$.
 \end{proof}
 
Still with $z \in \I(\W_n)$ and $w \in \cA(z)^{-1}$,
define $\offset_B(w)$ to be the number of pairs of nested descents
$((b_1,a_1),(b_2,a_2)) \in \NDes(w)\times \NDes(w)$ satisfying $ a_1 \leq a_2 <-b_1 < 0 < b_1 \leq b_2 $.
Set 
\[ 
\rank_B(w) := \rank_A(w) + \offset_B(w) \in \NN.
\]
The value of $\offset_B(w)$  is the sum of three quantities:
 the number of descents $(a,-b) \in \NDes(w)$ with $a<b$, 
 the number of pairs $(a_1,-b_1),(a_2,-b_2) \in \NDes(w)$
 with $a_1 <a_2 < b_2 < b_1$, 
 and
 the number of pairs $(a_1,-b_1),(b_2,-a_2) \in \NDes(w)$
 with $a_1<a_2<\min\{b_1,b_2\}$.
For example, if  $w = 1\bar 5 2 \bar 3 6\bar 4$ 
then $\NDes(w) = \{(1,\bar 5), (2,\bar 3),(6,\bar 4)\}$,
$w|_L = 126$, $w|_R = \bar{5}\hs\bar{3}\hs\bar{4}$, $\inv(w|_L) = 0$, $\inv(w|_R) = \offset_A(w) = 1$, $\rank_A(w) = 2$,
 $\offset_B(w) =  4$, and $\rank_B(w) = 6$.

The function $\offset_B(\cdot)$ is constant on $\sim_A$-equivalence classes by Corollary~\ref{ndes-cor2}(b),
so we  have $\rank_B(w) = \rank_B(v) + 1$ for $v,w \in \cA(z)^{-1}$ with  $v\vartriangleleft_A w$ by Proposition~\ref{arank-prop}. In addition:

\begin{proposition}\label{brank-prop}
Let $z \in \I(\W_n)$. 
If $v,w \in \cA(z)^{-1}$ and $v \scov w$, then $\rank_B(w) = \rank_B(v) + 1$,
and an element  $w \in \cA(z)^{-1}$ is minimal relative to $\ll_B$ if and only if $\rank_B(w) = 0$.
\end{proposition}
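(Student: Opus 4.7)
The plan is to establish the first claim by a direct computation of $\rank_B(w) - \rank_B(v)$ for the $\scov$ cover, combining change-of-inversion counts with a combinatorial analysis of the new nested descent $(a,-b)$; the minimality characterization then follows by combining this with the analogous increment already noted just above for $\vartriangleleft_A$-covers and with Theorem~\ref{hz-thm}.

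For the $\scov$ case, Lemma~\ref{ht-lem} gives $\NDes(w) = \NDes(v) \sqcup \{(a,-b)\}$ where $0 < a < b$, $v_iv_{i+1} = \bar b \bar a$, and $w_iw_{i+1} = a \bar b$, so $L_w = L_v \sqcup \{a\}$ and $R_w = R_v \setminus \{-a\}$. Writing $V$ and $U$ for the sets of letters of $v$ at positions $<i$ and $>i+1$, the hypothesis $b = \min\{|v_1|,\dots,|v_i|\}$ forces $|e| \geq b > a$ for every $e \in V$. A direct inspection shows that $v|_{R_v}$ is obtained from $w|_{R_w}$ by inserting $-a$ immediately after $-b$, and $w|_{L_w}$ is obtained from $v|_{L_v}$ by inserting $a$ between the $V\cap L_v$ and $U\cap L_v$ blocks; counting inversions produced by these two insertions (and using $|e|\geq b>a$ for $e\in V$ to compare $e$ with $\pm a$) gives
\[
\inv(v|_{R_v}) - \inv(w|_{R_w}) = |V^+ \cap R_v| + |U_R^{<-a}|, \qquad
\inv(w|_{L_w}) - \inv(v|_{L_v}) = |V^+ \cap L_v| + |U_L^{<a}|,
\]
where $V^+$ is the set of positive elements of $V$ and $U_R^{<-a}, U_L^{<a}$ are the indicated subsets of $U$. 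Meanwhile $\offset_A(w) - \offset_A(v)$ and $\offset_B(w) - \offset_B(v)$ count the new pairs in $\NDes(w)^2$ involving $(a,-b)$; the self-pair $\bigl((a,-b),(a,-b)\bigr)$ satisfies the $\offset_B$ condition (contributing $+1$) but not the $\offset_A$ condition. Assembling everything, $\rank_B(w) = \rank_B(v)+1$ reduces to the combinatorial identity
\[
\bigl(\offset_A(w) - \offset_A(v)\bigr) + \bigl(\offset_B(w) - \offset_B(v)\bigr) - 1 = |V^+| + |U_R^{<-a}| + |U_L^{<a}|.
\]
The main obstacle is verifying this identity. I would do so by a case-by-case bijection sending each letter on the right to a unique nested descent of $v$ on the left, using Lemma~\ref{cjw-lem} together with Proposition~\ref{12bar-prop} (which restricts nested descents of $v$ to types $(+,+)$ or mixed sign) to pin down the position of each $(b',a') \in \NDes(v)$ relative to $\pm a$ and $\pm b$.

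For the minimality characterization, combining the first part of the proposition with the observation (established immediately before the proposition's statement) that $\rank_B$ also increases by $1$ under $\vartriangleleft_A$-covers, we conclude that $\rank_B$ strictly increases along every cover of $\ll_B$. Hence $w \in \cA(z)^{-1}$ has $\rank_B(w) = 0$ iff it is the $\ll_B$-minimum, and by Theorem~\ref{hz-thm} this minimum is $0_B(z)$. One verifies $\rank_B(0_B(z))=0$ directly: Proposition~\ref{01-prop} shows that $0_B(z)$ is minimal in $<_A$, so $\rank_A(0_B(z))=0$ by Proposition~\ref{arank-prop}; and every nested descent $(b',a')$ of $0_B(z)$ comes from a pair in $\Pair(z)$ and satisfies $a' > -b'$, which prevents the defining inequality $a_1 \leq a_2 < -b_1$ of $\offset_B$ from ever holding, so $\offset_B(0_B(z))=0$.
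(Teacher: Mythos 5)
Your setup is sound: the identities $\inv(v|_R)-\inv(w|_R)=|V^+\cap R|+|U_R^{<-a}|$ and $\inv(w|_L)-\inv(v|_L)=|V^+\cap L|+|U_L^{<a}|$ are correct, the self-pair contribution of $(a,-b)$ to $\offset_B$ but not $\offset_A$ is handled correctly, and your treatment of the minimality statement (the rank increments for both kinds of covers, uniqueness of the $\ll_B$-minimum via Theorem~\ref{hz-thm}, and the direct check that $\rank_B(0_B(z))=0$) is complete and in fact slightly more detailed than the paper's. The problem is that the entire content of the first claim lives in the identity you defer with ``I would do so by a case-by-case bijection'': as written, the proposal reduces the proposition to an unproved combinatorial statement and stops. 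That identity is not routine. Verifying it requires at least: (i) that every $(y,x)\in\NDes(v)$ has $y>0$ (from Proposition~\ref{12bar-prop}, as you note); (ii) that no $(y,x)\in\NDes(w)$ has $x<-b<y<a$, which follows from Lemma~\ref{cjw-lem}(3) applied to $(y,x)$ and the new descent $(a,-b)$; and (iii) that no $(y,x)\in\NDes(w)$ has $-b<x<-a<0<y<a$, because $\sh(w)$ would then be crossing, contradicting Theorem~\ref{nc-thm}. Without these exclusions the identity is simply false: for instance a descent with $x<-b$ and $0<y<a$ contributes $2$ to your right-hand side and $0$ to the offset differences.

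Second, attacking a general $\scov$ cover head-on makes the case analysis strictly harder than necessary, because the prefix $V$ may contain positive letters and entire nested descents, so each pair in $\NDes(v)$ must be located both relative to the thresholds $\pm a,\pm b$ and relative to position $i$. The paper instead first records that $\rank_B$ changes by exactly $1$ under $\vartriangleleft_A$ (which you also use), and then applies Lemma~\ref{cjw-lem} to build chains $v=v^0\vartriangleleft_A\cdots\vartriangleleft_A v^k$ and $w=w^0\vartriangleleft_A\cdots\vartriangleleft_A w^k$ with $v^k\vartriangleleft^+_B w^k$. In that normalized situation all letters preceding position $i$ are nested negated points, so $V^+=\varnothing$ and every nested-descent letter lies in $U$; the four differences then become counts of $(y,x)\in\NDes(v)$ satisfying explicit inequalities, and the exclusions above make the bookkeeping finite and checkable. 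I would either adopt that reduction or actually carry out your bijection in full; as it stands the main step is missing.
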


\begin{proof}
Fix $v,w \in \cA(z)^{-1}$. 
First assume  $v\vartriangleleft^+_B w$ and let 
 $i \in [n-1]$ be as in \eqref{black}.
 Let $a = -v_{i+1}= w_i$ and $b=-v_i = -w_{i+1}$
so that $0<a<b$ and $\NDes(w) = \NDes(v) \sqcup \{(a,-b)\}$.
Then $v_j=w_j \in \NNeg(v) \cap \NNeg(w)$ for $1 \leq j < i$.
From Theorem-Definition~\ref{recdes-thmdef} and Lemma~\ref{cjw-lem}, we deduce that 
the difference $A=\inv(v|_R) - \inv(w|_R)$ is the number of pairs $(y,x) \in \NDes(v)$ with $x<-a$,
the difference $B=\inv(w|_L) - \inv(v|_L)$ is the number of pairs $(y,x) \in \NDes(v)$ with $y<a$,
and
the difference $C=\offset_A(w) - \offset_A(v)$ 
is the number of pairs $(y,x) \in \NDes(v)$ with either $x<-b<a<y$ or $-b<x<y<a$.
On the other hand, 
the difference $D=\offset_B(w) - \offset_B(v) - 1$
is the number of pairs $(y,x) \in \NDes(v)$ with $-b < x < -a <0< a<y$.
To prove that $\rank_B(w) = \rank_B(v) + 1$,
it suffices to show that $A+B=C+D$.
This is straightforward on noting that 
$\NDes(w)$ contains no elements  $(y,x)$ with $x<-b<y<a$ by Lemma~\ref{cjw-lem}(3),
or with $-b < x< -a < 0 < y < a$ since $\sh(w)$ is noncrossing.

Next suppose that $v\scov w$ and let $i \in [n-1]$ be as in \eqref{black2}.
Since we have $|v_{i+1}| < |v_i| = \max\{ |v_1|, |v_2|,\dots,|v_i|\}$, and since inverse atoms do not have consecutive 321- or $\bar1\hs\bar2$-patterns,
it follows by Lemma~\ref{cjw-lem} 
that there are chains of elements $v=v^0 \vartriangleleft_A v^1 \vartriangleleft_A \cdots \vartriangleleft_A v^k $
and $w=w^0 \vartriangleleft_A w^1 \vartriangleleft_A \cdots \vartriangleleft_A w^k $ 
with $v^k \vartriangleleft^+_B w^k$.
By Proposition~\ref{arank-prop} and the previous paragraph, we deduce that $\rank_B(w) = \rank_B(w^k) - k = \rank_B(v^k) + 1-k = \rank_B(v)+1$.

The last assertion follows from Proposition~\ref{arank-prop} since $\offset_B(w)=0$ only if $\sh(w)$ is trivial.
\end{proof}


\begin{corollary}\label{rank-cor}
If $z \in \I(\W_n)$ then $(\cA(z)^{-1},<_A)$, $(\cA(z)^{-1},<_B)$, and $(\cA(z)^{-1},\ll_B)$ are graded.
\end{corollary}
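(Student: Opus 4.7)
The plan is to use the already-constructed integer-valued statistics $\rank_A$ and $\rank_B$ on $\cA(z)^{-1}$ as rank functions for the three orders. In each case, showing that the appropriate statistic increases by exactly $1$ along each of the generating relations (and that the generating relations exhaust the covering relations) is enough to deduce gradedness, so the proof is essentially a bookkeeping exercise built on Propositions \ref{arank-prop} and \ref{brank-prop}.

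For $(\cA(z)^{-1},<_A)$, I would note that $<_A$ is by definition the transitive closure of $\vartriangleleft_A$, and Proposition \ref{arank-prop} says $\rank_A(w) = \rank_A(v) + 1$ whenever $v \vartriangleleft_A w$ in $\cA(z)^{-1}$. Consequently every $\vartriangleleft_A$-step is a cover (nothing can sit strictly between $v$ and $w$, because doing so would force at least two unit rank increases), and every saturated chain from $v$ to $w$ has length $\rank_A(w) - \rank_A(v)$. Since each connected component contains a minimum at which $\rank_A = 0$ (again by Proposition \ref{arank-prop}, or equivalently by Corollary \ref{cV-cor} together with Theorem \ref{0-z-thm}), this makes $\rank_A$ a rank function and shows that $(\cA(z)^{-1},<_A)$ is graded.

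For $(\cA(z)^{-1},\ll_B)$, I would combine two facts. Proposition \ref{brank-prop} directly gives $\rank_B(w) = \rank_B(v) + 1$ along each $\scov$-step, and the discussion immediately before that proposition observes that $\offset_B$ is constant on $\sim_A$-equivalence classes (it depends on $w$ only through the set $\NDes(w)$, which is $\sim_A$-invariant by Corollary \ref{ndes-cor2}(b)), so $\rank_B = \rank_A + \offset_B$ also increases by $1$ along each $\vartriangleleft_A$-step. Thus $\rank_B$ is a rank function for the strong atomic order, which is therefore graded.

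Finally, for $(\cA(z)^{-1},<_B)$, the weak atomic order is the transitive closure of $\vartriangleleft_A$ and $\vartriangleleft_B$, so in view of the previous paragraph it suffices to show that each $\vartriangleleft_B$-step also increases $\rank_B$ by $1$. The key observation is that $\vartriangleleft_B$ is itself a subrelation of $\scov$: in case $\bar b\bar a\cdots \vartriangleleft_B a\bar b\cdots$ of \eqref{<B-eq} take $i = 1$, so $b = \min\{|v_1|\}$; in case $\bar c\bar b\bar a\cdots \vartriangleleft_B \bar c a\bar b\cdots$ take $i = 2$, so $\min\{|v_1|,|v_2|\} = \min\{c,b\} = b$ since $b < c$. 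Hence Proposition \ref{brank-prop} applies to $\vartriangleleft_B$ as well, and $\rank_B$ is a rank function for $<_B$. The only potential obstacle is this last identification of $\vartriangleleft_B$ with a special case of $\scov$, but it is essentially a direct check against the definitions and poses no real difficulty.
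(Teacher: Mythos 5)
Your proposal is correct and follows essentially the same route as the paper, which deduces the corollary directly from Propositions~\ref{arank-prop} and \ref{brank-prop} by using $\rank_A$ and $\rank_B$ as rank functions; you have simply made explicit the details the paper leaves implicit (that $\rank_B$ also increases by $1$ along $\vartriangleleft_A$-steps because $\offset_B$ is $\sim_A$-invariant, and that $\vartriangleleft_B$ is a special case of $\scov$ with $i\in\{1,2\}$, so Proposition~\ref{brank-prop} covers the weak order $<_B$ as well). Your verification that each generating relation is genuinely a covering relation within $\cA(z)^{-1}$ (using that these sets are closed under the relations, per Lemmas~\ref{<A-lem} and \ref{jump2-lem}) is exactly the right supporting observation.
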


A notable property of  $(\cA(z)^{-1},<_B)$ and $(\cA(z)^{-1},\ll_B)$, apparent in Example~\ref{intro-ex} and Figure~\ref{fig2},
is that these  connected, graded posets have unique elements of maximal rank.
To prove that this holds in general, we introduce a third variation of the covering relation $\vartriangleleft_B$.

Define $\bcov$ to be the relation on $n$-letters words that has $v\bcov w$ if for some indices $1 \leq i < j < n$ 
and some positive numbers $a,b,c$ it holds that 
$v_k = w_k$ for $k \notin \{ i,j,j+1\}$ while
\be\label{bcov-eq}
v_iv_jv_{j+1} = \bar c a \bar b,\quad w_iw_jw_{j+1} = \bar a b \bar c,\quand
\max\{ |w_1|,|w_2|,\dots,|w_{j-1}|\} = a <b<c.\ee
Equivalently, we have $\cdots \bar c \cdots a\bar b \cdots \bcov \cdots \bar a \cdots b\bar c \cdots$
whenever the corresponding ellipses mask identical subsequences 
and it holds that $0<a<b<c$ and all hidden
letters to the left of $a$ in the first word (equivalently, to the left of $b$ in the second word) have absolute value less than $a$.
As usual, we apply this relation to signed permutations via their one-line representations.

\begin{proposition}\label{bl-prop}
Let $z \in \I(\W_n)$. Suppose  $v,w \in \W_n$ are such that $v\bcov w$. Then $v \in \cA(z)^{-1}$ if and only if $w \in \cA(z)^{-1}$,
and if this holds then $\rank_B(w) = \rank_B(v) +1$.
\end{proposition}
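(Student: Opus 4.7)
The plan is to prove the Proposition in two stages: first, use $\sim_A$-moves to reduce to the case where the modified positions are adjacent (i.e., $j = i+1$); then, in this reduced setting, exhibit an explicit $\sim_B$-chain from $v$ to $w$, and separately compute the rank change directly from the formulas in Section~\ref{rank-sect}.

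For the reduction, each intermediate entry $u_k = v_{i+k}$ with $1 \le k \le j-i-1$ satisfies $|u_k| < a$ and hence $-b < u_k < a$, so the triple $(u_k, a, -b)$ at positions $i+k, j, j+1$ has a $bca$-pattern. Reversing the move in~\eqref{<A-eq} swaps this triple into $(a, -b, u_k)$, pushing $(a, -b)$ leftward; iterating yields $v \sim_A v'$ with $(v'_i, v'_{i+1}, v'_{i+2}) = (-c, a, -b)$ at consecutive positions, and symmetrically $w \sim_A w'$ with $(w'_i, w'_{i+1}, w'_{i+2}) = (-a, b, -c)$. By Lemma~\ref{<A-lem}, the biconditional of the Proposition reduces to the case $j = i+1$.

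In the reduced case with $i = 1$, the explicit chain
\[
v' = \bar c\, a\, \bar b\, Y \;\sim_B\; \bar c\, \bar b\, \bar a\, Y \;\sim_B\; b\, \bar c\, \bar a\, Y \;\sim_A\; \bar a\, b\, \bar c\, Y = w'
\]
establishes $v' \sim_B w'$: step~1 is the reverse of a $\vartriangleleft_B$-move of type~2, step~2 is a $\vartriangleleft_B$-move of type~1 applied with $(a', b') = (b, c)$, and step~3 is a $\vartriangleleft_A$-move on the $cab$-pattern $(b, -c, -a)$ (since $-c < -a < b$). For general $i > 1$, the prefix $X = u'_1 \cdots u'_{i-1}$ (with $|u'_k| < a$) obstructs direct application of $\vartriangleleft_B$ at position~$i$; here the equivalence $v' \sim_B w'$ can be obtained either by a positional extension of the shuffle argument from the proof of Lemma~\ref{jump-lem} adapted to $\vartriangleleft_B$ of type~2, or by routing $v'$ downward to the common minimum $0_B(z)$ of Theorem~\ref{hz-thm} via $\vartriangleleft_A$ and $\scov$ moves and then upward to $w'$. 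In either case, Lemmas~\ref{<A-lem} and~\ref{<B-lem} ensure the chain preserves $\cA(z)^{-1}$.

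For the rank change, I would compute $\rank_B(w) - \rank_B(v)$ from $\rank_B = \rank_A + \offset_B$ directly. Under the $\bcov$ swap, $\NDes(v)$ and $\NDes(w)$ differ by replacing $(a, -b)$ with $(b, -c)$ and, when $i > 1$, also by modifying any descent $(v_{i-1}, -c)$ in $\NDes(v)$ into $(v_{i-1}, -a)$ in $\NDes(w)$; these changes propagate to the sets $L$ and $R$ and the three parts of $\offset_B$. Using the noncrossing structure of $\sh(v)$ from Theorem~\ref{nc-thm} together with the hypothesis $\max\{|w_k|: k < j\} = a$, the contributions combine to give a net change of exactly $+1$. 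The main obstacle is handling the case $i > 1$ in the $\sim_B$-chain, as the positional extension of $\vartriangleleft_B$ of type~2 is not directly covered by Lemma~\ref{jump-lem} and requires an adaptation of its shuffle argument; the rank accounting also splits into cases depending on whether the descent $(v_{i-1}, -c)$ appears in $\NDes(v)$.
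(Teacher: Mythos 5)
Your reduction to the adjacent case $j=i+1$ (pushing the pair $a\,\bar b$ leftward past the intermediate letters, which all have absolute value less than $a$, by reversed $\vartriangleleft_A$-moves applied simultaneously to $v$ and $w$) is sound and is exactly the paper's first inductive step, and your explicit chain $\bar c\, a\, \bar b \sim_B \bar c\, \bar b\, \bar a \sim_B b\, \bar c\, \bar a \sim_A \bar a\, b\, \bar c$ for the case $i=1$ is the paper's first base case. However, there is a genuine gap at the point you yourself flag: the adjacent case with $i>1$ is not proved. Neither of your two suggested routes works as stated. Lemma~\ref{jump-lem} treats $\vartriangleleft^+_B$, whose hypothesis is that the prefix is an \emph{increasing sequence of negative} entries; in the $\bcov$ situation the prefix entries merely have absolute value less than $a$ and may be positive, so the shuffle argument does not transfer, and the adaptation you would need is essentially the content of the proposition itself. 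The route through $0_B(z)$ is circular for the biconditional: Theorem~\ref{hz-thm} lets you descend from $v$ to $0_B(z)$ only once you know $v\in\cA(z)^{-1}$, and ascending from $0_B(z)$ to $w$ presupposes $w\in\cA(z)^{-1}$, which is what you are trying to prove; it also gives no control on $\rank_B(w)-\rank_B(v)$. The paper closes this gap by a further induction on $i$: using that inverse atoms have no consecutive $321$- or $\bar1\hs\bar2$-patterns and that $v_1,\dots,v_{i-1}$ all have absolute value less than $a$, one deduces $v_i<v_{i-2}<v_{i-1}$, which permits a $\vartriangleleft_A$-move shifting the whole configuration one step left; the terminal case $i=2$ requires its own explicit diamond of covers through $\bar d\,\bar c\,\bar b\,\bar a$ (four letters, not three). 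You would need to supply both of these ingredients.

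The rank claim is also not established. You propose to compute $\rank_B(w)-\rank_B(v)$ directly from the definitions, but the computation is not carried out, and your description of how $\NDes$ changes is partly speculative (e.g.\ whether $(v_{i-1},-c)$ is a nested descent requires justification, and the three constituents of $\offset_B$ plus $\inv(w|_L)$, $\inv(w|_R)$, $\offset_A$ all shift). The paper avoids this entirely: since every edge in its chains is a cover for $\vartriangleleft_A$ or $\vartriangleleft_B$, Propositions~\ref{arank-prop} and~\ref{brank-prop} give $\rank_B$ increments of exactly $1$ per step, and the net difference is just the difference of chain lengths. If you repair the $i>1$ case by producing explicit cover chains, the rank statement comes for free and the direct computation becomes unnecessary.
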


\begin{proof}
Let $1\leq i < j <n$ be such that \eqref{bcov-eq} holds. We prove the result by induction on $i+j$.
Our argument relies on two base cases. When $i = j-1 =1$, 
 the result follows from Lemma~\ref{<B-lem} and Proposition~\ref{brank-prop}  since  
$
\bar c \bar b \bar a  \vartriangleleft_B \bar c a \bar b
$
and
$
\bar c \bar b \bar a  \vartriangleleft_B b\bar c\hs \bar a \vartriangleleft_A \bar a b \bar c
$ 
for any $0<a<b<c$.
When $i=j-1=2$ the lemma  follows similarly
from the fact that if $0<a<b<c<d$ then
$\bar d  \bar c \bar b \bar a \vartriangleleft_B \bar d  b \bar c\hs \bar a \vartriangleleft_A \bar d \bar a b \bar c \vartriangleleft_B a \bar d b \bar c$
and
$\bar d  \bar c \bar b \bar a  \vartriangleleft_B c \bar d  \hs \bar b \bar a \vartriangleleft_A \bar b c \bar d  \bar a\vartriangleleft_A \bar b \bar a c \bar d \vartriangleleft_B a \bar b c \bar d$.

For the inductive step, let $0<a = v_j = -w_i < b = -v_{j+1} = w_j < c=-v_i = -w_{j+1}$. 
First suppose $i < j-1$. Define $v'$ from $v$ by replacing the subword $v_{j-1}v_jv_{j+1}$ by $v_jv_{j+1}v_{j-1}$, and form $w'$ from $w$ similarly.
Since 
  $v_{j-1}=w_{j-1}$ is less than $a$ in absolute value, it follows that $v'\vartriangleleft_A v $ and $w'\vartriangleleft_A w$ and $v' \bcov w'$.
By induction, the proposition holds with $v$ and $w$ replaced by $v'$ and $w'$, so by Lemma~\ref{<B-lem} and Proposition~\ref{brank-prop}, the result also holds for $v$ and $w$.

Suppose alternatively that $2 < i=j-1$. 
We may assume that at least one of $v$ or $w$ belongs to $\cA(z)^{-1}$. 
Since inverse atoms do not have consecutive 321- or $\bar1\hs\bar2$-patterns and since
all numbers in the subwords $v_1v_2\cdots v_{i-1} = w_1w_2\cdots w_{i-1}$
have absolute value less than $a$, it must hold that 
$v_i < v_{i-2} < v_{i-1}$ and $w_i < w_{i-2} <w_{i-1}$.
Define $v'$ from $v$ by replacing the subword $v_{i-2}v_{i-1}v_i$ by $v_{i-1}v_iv_{i-2}$,
and form $w'$ from $w$ similarly.
We once again have $v'\vartriangleleft_A v $ and $w'\vartriangleleft_A w$ and $v' \bcov w'$, and 
may deduce 
that the proposition holds by induction.

If $i=j-1 \in \{1,2\}$, finally, then we are in one of the base cases already considered.
\end{proof}

Fix $z \in \I(\W_n)$ with $m=\neg(z)$, and suppose $\{ i \in [\pm n] : z(i) = -i\} = \{a_1<a_2<\dots<a_{2m}\}$.
 Let $M_{\max} :=\{ \{a_1,a_2\}, \{a_3,a_4\},\dots,\{a_{2m-1},a_{2m}\}\}\in \NCSM(z)$
and define 
\be
\label{1B-eq}
1_B(z) := 1_B(z,M_{\max}).
\ee
For example, if $z =  \bar{1}\hs\bar2\hs\bar3\hs\bar4$
then $1_B(z) = 1\bar 2 3\bar 4$
while if $z =  \bar{1}\hs\bar2\hs\bar3\hs\bar4\hs\bar{5}$
then $1_B(z) = \bar1 2 \bar3 4\bar 5$.
Theorem~\ref{minmax-thm} implies that $1_B(z) \in \cA(z)^{-1}$.
We have $0_B(z) = 0_B(z,M_{\min})$ for $M_{\min} = \{ \{ i,-i\} : i \in \Neg(z)\}$.
Define $\lll_B$ to be the transitive closure of the three relations $\vartriangleleft_A$, $\scov$, and $\bcov$.
For lack of a better term, we refer to $\lll_B$ as the \emph{very strong atomic order} of type B.

\begin{proposition}
Restricted to $\cA(z)^{-1}$ for any $z \in \I(\W_n)$, the relation $\lll_B$ is a bounded, graded partial order,
whose unique minimum is $0_B(z)$ and whose unique maximum is $1_B(z)$.
\end{proposition}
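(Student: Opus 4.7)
The plan is to verify three claims in sequence: that $\lll_B$ restricts to a graded partial order on $\cA(z)^{-1}$, that $0_B(z)$ is its unique minimum, and that $1_B(z)$ is its unique maximum. The first two are near-immediate. By Propositions~\ref{arank-prop}, \ref{brank-prop}, and \ref{bl-prop}, each of $\vartriangleleft_A$, $\scov$, and $\bcov$ strictly increases $\rank_B$ by exactly one inside $\cA(z)^{-1}$; so any chain of covers strictly raises $\rank_B$, yielding antisymmetry and making $\rank_B$ a rank function. Since $\ll_B \subseteq \lll_B$ and Theorem~\ref{hz-thm} identifies $0_B(z)$ as the unique $\ll_B$-minimum of $\cA(z)^{-1}$, it remains the $\lll_B$-minimum, with uniqueness sharpened via Proposition~\ref{brank-prop} (only $0_B(z)$ achieves $\rank_B = 0$).

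The maximum is the interesting part. We have $1_B(z) \in \cA(z)^{-1}$ by Theorem~\ref{minmax-thm}. I would first check that $1_B(z)$ admits no $\lll_B$-cover-successor by examining its one-line form: writing $1_B(z) = \bar{p_k}\cdots\bar{p_1}\,b_1\cdots b_{n-k}$ as in Corollary~\ref{max-cor} with $k \in \{0,1\}$, one sees that its negative letters appear in strictly increasing absolute value from left to right (a direct consequence of $M_{\max}$ pairing adjacent elements of $\{i : z(i) = -i\}$) and no two are consecutive. The former rules out $\bcov$, which requires some $v_i = \bar c$ with $|v_i|$ exceeding $|v_{j+1}|$ at a later position $j+1$; the latter rules out $\scov$; and Theorem~\ref{minmax-thm} already rules out $\vartriangleleft_A$. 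Uniqueness of the maximum then reduces to showing that every $w \neq 1_B(z)$ in $\cA(z)^{-1}$ admits some $\lll_B$-cover-successor, for then finiteness and gradedness force the upward chains from $w$ to terminate at $1_B(z)$.

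For the cover-successor step, if $w$ is not $<_A$-maximal then any $\vartriangleleft_A$-cover suffices. Otherwise $w = 1_B(z, M)$ for some $M \in \NCSM(z) \setminus \{M_{\max}\}$, and I write $w$ in the form above. When $k \geq 2$, applying $\scov$ at position $i = 1$ produces a candidate $w'$; the key verification is that $\sh(w') \in \NCSM(z)$, which reduces to the observation---forced by noncrossing and symmetry of $M$---that every nontrivial block of $M$ has both elements of the same sign, and this precludes the only configuration that could create a new crossing between the new block $\{p_{k-1}, p_k\}$ and an existing nontrivial block. When $k \leq 1$ and hence $M \in \NCSM^1(z) \setminus \{M_{\max}\}$, the plan is to locate a structural deviation between $M$ and the adjacent-pair structure of $M_{\max}$---for example a nested pair $\{a,d\},\{b,c\} \in M$ with $a < b < c < d$---and convert it into a $\bcov$-triple $\bar c\,a\,\bar b$ in the one-line form of $1_B(z, M)$, invoking Proposition~\ref{bl-prop} to conclude that the result lies in $\cA(z)^{-1}$ with $\rank_B$ one greater.

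The hardest part will be this last sub-case: for arbitrary $M \in \NCSM^1(z) \setminus \{M_{\max}\}$, one must systematically identify positions $i < j$ in the one-line form of $1_B(z,M)$ satisfying the rather rigid $\bcov$-conditions, in particular the max-absolute-value constraint $\max\{|w_1|,\ldots,|w_{j-1}|\} = a$. I expect this to require a careful case analysis on the possible noncrossing symmetric matchings differing from $M_{\max}$, isolating a minimal defect from the $M_{\max}$-pairing that can be witnessed directly in the one-line representation.
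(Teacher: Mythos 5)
Your overall strategy matches the paper's: reduce the question of the unique maximum to showing that $1_B(z,M)$ is not $\lll_B$-maximal for $M \neq M_{\max}$, using $\scov$ to dispose of matchings with two or more trivial blocks and $\bcov$ to dispose of the rest. But there are two problems. The first is a concrete false claim: the negative letters of $1_B(z)$ do \emph{not} in general appear in increasing absolute value from left to right. Your justification only accounts for the letters contributed by the matching $M_{\max}$, and ignores those contributed by $\Pair(z)$. For instance, if $z = \bar4\hs\bar3\hs\bar2\hs\bar1 \in \I(W_4)$ then $\Neg(z)=\varnothing$, $\Pair(z) = \{(-2,3),(-1,4)\}$, and $1_B(z) = 3\bar24\bar1$, whose negative letters occur in \emph{decreasing} absolute value. (Here $\bcov$ is still blocked, but only because the condition $a<b$ fails with $b=1$, not for the reason you give.) This error is recoverable: once one knows every element other than $1_B(z)$ has a $\lll_B$-successor, finiteness forces $1_B(z)$ to be the unique maximal element, so no direct verification is needed. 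The paper takes exactly this route, quoting Corollary~\ref{max-cor} to see that every $\lll_B$-maximal element is some $1_B(z,M)$ with $M \in \NCSM^1(z)$ and then eliminating all $M \neq M_{\max}$.

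The second and more serious problem is that the step you yourself identify as the hardest --- producing a valid $\bcov$ configuration in $1_B(z,M)$ for an arbitrary $M \in \NCSM^1(z)\setminus\{M_{\max}\}$ --- is left as a sketch, and it is precisely the substantive content of the paper's proof. The paper's argument is short but not automatic: writing $\{i : z(i)=-i\} = \{a_1 < \dots < a_{2m}\}$, one shows that a noncrossing symmetric matching with at most one trivial block that is not $M_{\max}$ must contain nesting blocks $\{a_i,a_{k+1}\} \supset \{a_j,a_k\}$ with $i<j<k$ and $m<j$ (so that $a=a_j$, $b=a_k$, $c=a_{k+1}$ are all positive), chosen so that the outer block is trivial whenever possible; the point of this choice is that $\{a_i,a_{k+1}\}$ is then the \emph{only} block $\{x,y\}$ with $x<a<b<y$, which is what guarantees the delicate condition $\max\{|w_1|,\dots,|w_{j-1}|\}=a$ in the definition of $\bcov$. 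Without this existence-and-choice argument your proof does not close, so as written the proposal has a genuine gap at its central step.
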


This proposition is reminiscent of Stembridge's results about the top and bottom classes of a permutation \cite[Propositions 4.1 and 4.2]{StemRed}.

\begin{proof}
Let $z \in \I(\W_n)$. The claim that $\lll_B$ is a graded partial order on $\cA(z)^{-1}$ with $0_B(z)$
as its unique minimum is immediate from Propositions~\ref{brank-prop} and \ref{bl-prop}.
Let $M \in \NCSM^1(z)$. By Corollary~\ref{max-cor}, it is enough to show that $1_B(z,M)$ 
is not maximal under $\lll_B$  if $M \neq M_{\max}$.

To this end, assume $M\neq M_{\max}$ and write $\{ i \in [\pm n] : z(i) = -i\} = \{a_1<a_2<\dots<a_{2m}\}$ as above.
Since $M$ is  noncrossing and symmetric with at most one trivial block,
there must exist a pair of nesting blocks $\{a_i, a_{k+1}\},\{a_{j},a_{k}\} \in M$ with $i < j < k$ and $m < j$.
If possible, choose these blocks such that $i = 2m-k$ so that $\{a_i,a_{k+1}\}$ is trivial; this is always possible if $M$ has a trivial block
distinct from $\{a_m,a_{m+1}\}$.
Let $a=a_j$, $b=a_k$, and $c=a_{k+1}$ so that $0<a<b<c$.
Since $\{a_i,a_{k+1}\}$ is then the only block $\{x,y\}\in M$ with $x<a<b< y$,
the one-line representation of $1_B(z,M)$  has the form $ \cdots \bar c \cdots a \bar b\cdots $
and all letters to the left of $a$ in this word have absolute value less than $a$.
Hence $1_B(z,M)$ is not maximal with respect to $\lll_B$, as needed.
\end{proof}

\begin{figure}[h]
\begin{center}
\begin{tikzpicture}[scale=0.9]
\node (0) at (2,7.2) {$1\bar 2 3 \bar 4$};  
\node (1a) at (0,6) {$13\bar4\hs\bar2$};  
\node (1b) at (2,6) {$\bar2\hs\bar13\bar4$};  
\node (1c) at (4,6) {$1\bar42\bar3$};  
\node (2a) at (0,4.8) {$3\bar41\bar2$};  
\node (2b) at (2,4.8) {$\bar23\bar4\hs\bar1$};  
\node (2c) at (4,4.8) {$\bar4\hs\bar12\bar3$};  
\node (3a) at (0,3.6) {$\bar4\hs\bar31\bar2$};  
\node (3b) at (2,3.6) {$3\bar4\hs\bar2\hs\bar1$};  
\node (3c) at (4,3.6) {$\bar42\bar3\hs\bar1$};  
\node (4a) at (2,2.4) {$\bar4\hs\bar3\hs\bar2\hs\bar1$};  
\draw  [->,dashed] (4a) -- (3a);
\draw  [->,dashed] (4a) -- (3b);
\draw  [->,dashed] (4a) -- (3c);
\draw  [->,dashed] (3a) -- (2a);
\draw  [->,dashed] (3b) -- (2a);
\draw  [->] (3b) -- (2b);
\draw  [->] (3c) -- (2c);
\draw  [->,dotted] (3c) -- (2b);
\draw  [->] (2a) -- (1a);
\draw  [->] (2b) -- (1b);
\draw  [->,dashed] (2c) -- (1c);
\draw  [->,dotted] (2c) -- (1b);
\draw  [->] (1a) -- (0); 
\draw  [->,dashed] (1b) -- (0); 
\draw  [->,dotted] (1c) -- (0);
\end{tikzpicture}
\end{center}
\caption{The Hasse diagram of $(\cA(z)^{-1}, \lll_B)$
for $z =\overline{1}\hs \overline{2} \hs\overline{3}\hs \overline{4} \in \I(\W_4)$; compare with Example~\ref{intro-ex}.
The solid, dashed, and dotted arrows correspond to $\vartriangleleft_A$, $\scov$, and $\bcov$, respectively.}
\end{figure}

\begin{corollary}
If $z \in \I(\W_n)$ then $1_B(z)$ is the unique element at which $\rank_B : \cA(z)^{-1} \to \NN$ attains its maximum value.
\end{corollary}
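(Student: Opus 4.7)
The plan is to deduce this corollary directly from the preceding proposition together with the three rank-increment results already established. The preceding proposition shows that $(\cA(z)^{-1},\lll_B)$ is a bounded, graded partial order whose unique maximum is $1_B(z)$, where $\lll_B$ is the transitive closure of the three covering relations $\vartriangleleft_A$, $\scov$, and $\bcov$. On the other hand, Proposition~\ref{arank-prop} (via Corollary~\ref{rank-cor}), Proposition~\ref{brank-prop}, and Proposition~\ref{bl-prop} respectively show that $\rank_B$ strictly increases by exactly $1$ along each of these three covering relations. In particular, $\rank_B$ is a strictly order-preserving map from $(\cA(z)^{-1}, \lll_B)$ to $\NN$.

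Given this, I would argue as follows. For any $w \in \cA(z)^{-1}$ with $w \neq 1_B(z)$, the fact that $1_B(z)$ is the unique maximum of $(\cA(z)^{-1}, \lll_B)$ forces $w \lll_B 1_B(z)$ strictly. Hence there is a saturated $\lll_B$-chain from $w$ up to $1_B(z)$ of length at least $1$, and by the three rank-increment propositions each step contributes exactly $+1$ to $\rank_B$. Therefore $\rank_B(w) < \rank_B(1_B(z))$, which simultaneously gives that $1_B(z)$ attains the maximum value and that it is the only element to do so.

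The only step requiring any real care is verifying that the preceding proposition truly supplies uniqueness of the maximum rather than just a maximum — but this is exactly its statement. The rest is a formality, since all of the work has been done: the gradedness and uniqueness of the top of $(\cA(z)^{-1},\lll_B)$, and the fact that $\rank_B$ is a rank function for this order, are already in place. No additional combinatorial analysis of noncrossing shapes or of the words $1_B(z,M)$ is needed here, beyond what was used to prove the preceding proposition.
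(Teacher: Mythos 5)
Your proposal is correct and follows the same route the paper intends: the corollary is an immediate consequence of the preceding proposition (that $1_B(z)$ is the unique maximum of the graded poset $(\cA(z)^{-1},\lll_B)$) together with the fact, established in Propositions~\ref{arank-prop}, \ref{brank-prop}, and \ref{bl-prop}, that each of the covering relations $\vartriangleleft_A$, $\scov$, $\bcov$ increases $\rank_B$ by exactly one, so every $w \neq 1_B(z)$ satisfies $\rank_B(w) < \rank_B(1_B(z))$. No gaps.
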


\section{Relative shapes}\label{relshape-sect}

There is a natural generalization of the shape of 
an inverse atom which will be needed in Section~\ref{geom-sect}.
For $y,z \in \cI(\W_n)$ define the set of \emph{relative atoms} $\cA(y,z)$
to consist of the minimal length elements $w \in \W_n$ with $z = w^{-1}\circ y \circ w$.
This set may be empty although $\cA(z) = \cA(1,z)\neq \varnothing$.
Recall that the longest element $w_0 = \wB \in\W_n$  is central.
We write $-w := w_0 w  =w w_0$.

\begin{lemma}
\label{terminal_atoms}	
If $y,z \in \I(\W_n)$ then   $\cA(y,z)^{-1} = \cA(-z,-y)$.
\end{lemma}

\begin{proof}
This holds since $w_0$ is central and $t_i \in \DesR(w)$ if and only if $t_i \notin \DesR(-w)$.
\end{proof}

Suppose $y,z \in \cI(\W_n)$ and $w \in \cA(y,z)$. 
Let $\a = (a_1,a_2,\dots,a_l) \in \cR(w)$, which we identify with the set of nonnegative integer
sequences such that 
$w=t_{a_1}t_{a_2}\cdots t_{a_l}$ is a reduced word. 
Fix $i \in [l]$ and let $v = t_{a_1}t_{a_2}\cdots t_{a_{i-1}}$.
If $a_i$ and $a_{i+1}$ are positive fixed points of $v^{-1}\circ y \circ v$ then we set
 \[\sh_i(\a;y) := \left\{\left\{ v(a_i),v(a_{i}+1)\right\}, \left\{ -v(a_i),-v(a_{i}+1)\right\}\right\}.\]
If $a_i=0$ and  $ 1$ is a fixed point of $v^{-1}\circ y \circ v$ then we  
 define 
 \[\sh_i(\a;y) := \left\{\left\{ v(-1),v(1)\right\}\right\}.\]
 Otherwise, let $\sh_i(\a;y) :=\varnothing$.
The union $\sh_1(\a;y)\sqcup \sh_2(\a;y) \sqcup \cdots \sqcup \sh_l(\a;y)$ is disjoint and gives a perfect symmetric matching on 
 a subset $X\subseteq -\Fix(y)\sqcup \Fix(y)$. We set
 \[ \sh_\infty(\a; y) := \{ \{-j,j\} : j \in  \Fix(y) \setminus X\}
 \]
 and define $ \sh(\a;y) := \sh_1(\a;y)\sqcup \sh_2(\a;y) \sqcup \cdots \sqcup \sh_l(\a;y) \sqcup \sh_\infty(a;y)$.
 
\begin{example}
Suppose $y = 1234\bar5 $ and $z =  \bar 1 \hs \bar 2 \hs\bar 3 4\bar 5$.
Then $w':=\bar 3 \hs \bar 2 \hs \bar 1 45$ and $w'':= \bar 3 1 \bar 2 45$ are both in $\cA(y,z )$.
The expressions $w'= t_0 t_1 t_2 t_0 t_1 t_0$ and $w'' =  t_1 t_0 t_1 t_2 t_1 t_0$
are reduced and
\[\ba
\sh(\a;y) &= \{\{-1,1 \}, \{-2,2\},\{-3,3\},\{-4,4\}\}&\text{for $\a=(0,1,2,0,1,0)$}, \\
\sh(\b;y) &= \{\{-2,-1 \}, \{1,2\},\{-3,3\},\{-4,4\}\}&\text{for $\b=(1,0,1,2,1,0)$.}
\ea
\]
\end{example}

Recall the definition of $0_B(z)$ from \eqref{0B-eq}.
One has $0_B(z)=1$ if and only if $z=1$.

 \begin{theorem}\label{rel-prop1}
 Suppose $y,z \in \I(\W_n)$, $w \in \cA(y,z)^{-1}$, and $\a \in \cR(w)$.
 Then $w\cdot 0_B(-z) \in \cA(-y)$ and $\sh(\a;y) = \sh(w\cdot 0_B(-z))\in \NCSM(-y)$.
Thus, if $z= w_0:=\wB$  then $\sh(\a;y)=\sh(w)$.
 \end{theorem}
 
 \begin{proof}
First assume $z = w_0$. Then $w^{-1} \in \cA(-y)$ by Lemma~\ref{terminal_atoms}
and we want to show that $\sh(\a;y) = \sh(w)$ for any $\a = (a_1,a_2,\dots,a_l) \in \cR(w)$.
 Let $i = a_1$, $s = t_i$, $\tilde \a = (a_2,\dots,a_l)$, and $\tilde y := s\circ y\circ s$.
 We may assume by induction that $\sh(\tilde\a;\tilde y)=\sh(sw)$. There are three cases.
 
Suppose $i=0$ and $ 1\in \Fix(y)$. Then $1 \in \Fix(-\tilde y)$, which is equal to $\NFix(sw)$ by Lemma~\ref{prop-thm},
  and $\pm 1$ are not in any of the blocks of $\sh(sw)$ so $\sh(\a;y) = \{ (-1,1)\} \sqcup \sh(sw)$.
   It is easy to see from the definition of the nested descent graph in Section~\ref{nest-sect} that in this case $\NFix(w) = \NFix(sw)\setminus\{1\}$, $\NNeg(w) = \NNeg(sw)\sqcup\{1\}$, and $\NDes(w) = \NDes(sw)$, 
  so we also have $\sh(w) =  \{ (-1,1)\} \sqcup \sh(sw)$. Thus $\sh(\a;y) = \sh(w)$ as claimed.
  
  Next assume $i>0$ and $\{i,i+1\} \subset \Fix(y)$. Then 
  $(-i,i+1) \in \Pair(-\tilde y)$ so $(i+1,-i) \in \NDes(sw)$ by Lemma~\ref{prop-thm}.
  Since the numbers $\pm i$ and $\pm(i+1)$ are not in any of the blocks of $\sh(sw)$,
  we also have $\sh(\a;y) = \{ \{-i-1,-i\}, \{i,i+1\}\} \sqcup \sh(sw)$.
  But now it is clear from the definition of the nested descent graph
  that $\NFix(w) = \NFix(sw)$, $\NNeg(w) = \NNeg(sw)$, and $\NDes(w) = (\NDes(sw)\setminus \{(i+1,-i)\}) \sqcup \{(i,-i-1)\}$, so $\sh(w)=\{ \{-i-1,-i\}, \{i,i+1\}\} \sqcup \sh(sw)=\sh(\a;y)$.
  
If neither of these cases occur, then either $i=0$ but $ 1\notin\Fix(y)$,
or $i>0$ but $\{i,i+1\}\not\subset\Fix(y)$.
Here, it is easy to see that $\sh(\a;y) = \{ \{s(b), s(b')\} : \{b,b'\} \in \sh(\tilde\a;\tilde y)\}$
and, again using Lemma~\ref{prop-thm}, that $\sh(w) =  \{ \{s(b), s(b')\} : \{b,b'\} \in \sh(sw)\}$,
so $\sh(\a;y) = \sh(w)$ in all cases.

Now suppose $z \in \I(\W_n)$ is arbitrary. Then
$w \cdot 0_B(-z) \in \cA(-y)^{-1}$ since 
$w \in \cA(-z,-y)^{-1}$ and $0_B(-z)\in \cA(-z)^{-1}=\cA(z,w_0)$, and
since for any $g \in \cA(-z)$ we have
\[
\cA(-z,-y) = \left\{  g v : v \in \cA(-y) \text{ and }
\ell( g  v) = \ell(v) - \ell(g)
\right\}\]
by \eqref{szs-eq}.
Let $\b \in \cR(0_B(-z))$.
Since $\sh(0_B(-z)) = \sh(\b;z) = \{ \{-i,i\} : i \in \Fix(z)\}$
by the claim proved above, 
it is clear   
that if $\a = (a_1,a_2,\dots,a_l) \in \cR(w)$
then $\sh(\a\b;y)$ is formed from $\sh_1(\a;y)\sqcup\sh_2(\a;y)\sqcup \cdots \sqcup \sh_l(\a;y)$
by adding a sequence of trivial blocks $\{-i,i\}$ with $i \in \Fix(y)$.
But this is precisely the definition of $\sh(\a;y)$,
so $\sh(\a;y) = \sh(\a\b;y) = \sh(w\cdot 0_B(-z))$.
%
 \end{proof}
 
 The following slightly technical property will be needed in Section~\ref{sect-Brion-B}.
 
 \begin{lemma}\label{tbbb-lem}
  Let $y,z \in \I(\W_n)$, $w \in \cA(y,z)$, and $\a  \in \cR(w)$.
Suppose $|\sh_i(\a;y) | = |\sh_j(\a;y)| = 1$ for $i<j<\infty$. Then the unique pairs $\{-p,p\} \in \sh_i(\a;y) $ and $\{-q,q\} \in \sh_j(\a;y) $ have $|p|<|q|$, and
it also holds that $|q|<|r|$ for all $\{-r,r\} \in \sh_\infty(\a;y)$.
 \end{lemma}
 
  \begin{proof}
 We may assume $z = \wB$ so that $w \in \cA(-y)^{-1}$.
   Write $\a = (a_1,a_2,\dots,a_l)$ and suppose 
  $\sh_i(\a;y) =\{\{-p,p\}\}$ and $ \sh_j(\a;y)=\{\{-q,q\} \}$ for $0<q<p$.
  We argue that this leads to a contradiction.
By definition, we must have $a_i=a_j=0$ along with
  $t_{a_1}t_{a_2}\cdots t_{a_i}(1) = -p$ and   $t_{a_1}t_{a_2}\cdots t_{a_j}(1) = -q$.
  Since the chain $1 < t_{a_1} < t_{a_1}t_{a_2} < \dots< w$ is strictly
  increasing, it follows that  $w$ must have the form
  $w = \cdots \bar q \cdots \bar p \cdots $. But this is impossible if $0<q<p$ by Proposition~\ref{12bar-prop}
  since both $\bar p$ and $\bar q$ must be present in the global sink $\xi(w)$ for the nested descent graph of $w$.
 \end{proof}
 

%
%
%

Fix an integer $ 0\leq k \leq n$ and define $y^{\BC}_{n,k} \in \I(\W_n)$ and $g^{\BC}_{n,k} \in \I(\W_n)$ by
\be\label{ybc-eq} y^\BC_{n,k}:= (-1,1)(-2,2)\cdots(-k,k)
\quand g^{\BC}_{n,k} :=(-1,k)(-2,k-1)\cdots(-k,1).\ee Then
$y^{\BC}_{n,0} =g^{\BC}_{n,0}= 1$ and $y^{\BC}_{n,n} = \wB$ while $g^{\BC}_{n,k} = 0_B(y^{\BC}_{n,k}) \in \cA(y^{\BC}_{n,k}) \cap \cA(y^{\BC}_{n,k})^{-1}$.

\begin{lemma}\label{gk-lem}
Let $w \in \W_n$.
Then $\ell(w \cdot g^\BC_{k,n}) = \ell(w) - \ell(g^\BC_{k,n})$ if and only if $w(i)<0$ for all $i \in [k]$.
\end{lemma} 

\begin{proof}
This holds by induction since $g^\BC_{k,n} = g^{\BC}_{k-1,n} t_{k-1}\cdots t_2t_1t_0$ and $\ell(g^\BC_{k,n}) - \ell(g^\BC_{k-1,n})=k$.
\end{proof}

Fix $z \in \I(\W_n)$. Let $\cAk(z) := \cA(y^\BC_{n,k},z)$.
If $w \in \cAk(z)^{-1}$ then 
$w \cdot g^{\BC}_{n,k}  \in \cA(z)^{-1}
$
so we may define 
$\sh_k(w) := \sh( w \cdot g^{\BC}_{n,k}) .$ 
Let 
$
\cAk(z,M) :=  \{ w \in \cAk(z) : \sh_k(w^{-1}) = M\}$ for $M \in \NCSM(z)$
and write $\NCSM_k(z)$ for the set of matchings $M \in \NCSM(z)$ with at least $k$ trivial blocks.

\begin{lemma}\label{cak-cor}
If $z \in \I(\W_n)$ and  $M \in \NCSM(z)$ then
\[\cAk(z,M) 
= \left\{ g^\BC_{k,n}\cdot w : w \in  \cA(z,M) \text{ with }w^{-1}(1) <w^{-1}(2) < \dots < w^{-1}(k)<0\right\}.\]
Moreover, the set $\cAk(z,M)$ is nonempty if and only if $M\in \NCSM_k(z)$.
\end{lemma}

\begin{proof}
Let $y =y^{\BC}_{k,n}$ and $ g= g^\BC_{k,n}$. 
Then $w \in \cAk(z)$ if and only if $gw \in \cA(z)$ and $\ell(gw) = \ell(g) + \ell(w)$.
This holds precisely when $w \in \cA(z)$ and $w^{-1}(i) < 0$ for all $i \in [k]$ by Lemma~\ref{gk-lem}.
The latter condition is equivalent $w^{-1}(1) <w^{-1}(2) < \dots < w^{-1}(k)<0$
for $w\in \cA(z)$ by
Proposition~\ref{12bar-prop}.

Let $ g= g^\BC_{k,n}$.
If $w \in \cAk(z,M)^{-1}$ then $wg \in \cA(z)^{-1}$ and
 $wg(1) < wg(2) <\dots < wg(k) < 0$, so the set 
 $\NNeg(gw)$ has size at least $k$
and $\sh(gw) = \sh_k(w)=M$ has at least $k$ trivial blocks.
Conversely, it is easy to check that if $M \in \NCSM_k(z)$ then 
$g \cdot 1_B(z,M)^{-1} \in \cAk(z,M)$.
\end{proof}


%

\begin{corollary}
If $z \in \I(\W_n)$ then $\cAk(z) = \bigsqcup_{M \in \NCSM_k(z)} \cAk(z,M)$.
\end{corollary}


One can efficiently generate the sets $\cAk(z,M)$ in the following way.
Suppose the $k$ outermost trivial blocks of $M \in \NCSM_k(z)$
are $\{\pm i_1 \}, \{\pm i_2\},\dots,\{\pm i_k\}$ where $0<i_1<i_2<\dots<i_k$.
Each of the barred letters $\bar{i_1}$, $\bar{i_2}$, \dots, $\bar{i_k}$
appears in the one-line 
representation of $0_B(z,M)$; form $0_{B}^{(k)}(z,M)$
from $0_B(z,M)$ by removing these letters and then
prepending $i_1i_2\cdots i_k$.
For example, if $z = \bar1\hs\bar2\hs\bar3\hs\bar4\hs\bar5$ and $M = \{ \{-1,1\}, \{-2,2\}, \{-3,3\}, \{-4,-5\}, \{4,5\}\}$,
then 
\[0_B(z,M) =\bar3 4\bar5\hs \bar 2\hs \bar1,
\quad 0_{B}^{(1)}(z,M)= 3 4\bar5\hs \bar 2\hs \bar1,
\quad 0_{B}^{(2)}(z,M)= 23  4\bar5\hs \bar1,
\quand 0_{B}^{(3)}(z,M)= 123 4\bar5.
\]
Let $1_{B}^{(k)}(z,M) := 1_B(z,M) \cdot  g^{\BC}_{k,n}$.
Finally, define $<_A^{(k)}$ to be the transitive closure of the relation
$\cdots cab\cdots \vartriangleleft_A \cdots bca\cdots$ as in \eqref{<A-eq}, but with the extra requirement that the subwords $cab$ and $bca$ with $a<b<c$
occur in positions $i,i+1,i+2$ for $i>k$. 
Note that ${<_A^{(k)}}={<_A}$ if $k=0$.

\begin{proposition}\label{kkpre-prop}
If $z \in \I(\W_n)$ and $M \in \NCSM_k(z)$ then 
\[ \cAk(z,M)^{-1} = \left\{ w \in \W_n : 0_{B}^{(k)}(z,M) \leq^{(k)}_A w\right\}
=
 \left\{ w \in \W_n : w \leq^{(k)}_A 1_{B}^{(k)}(z,M)\right\}.\]
\end{proposition}

\begin{proof}
Since $1_B(z,M)$ is the maximum element of $\cA(z,M)^{-1}$ under $<_A$, 
it follows from Lemma~\ref{cak-cor}
that the values $w(1)<w(2)<\dots<w(k)$ are the same for all $w \in \cAk(z,M)^{-1}$.
Given these observations, the result is straightforward to derive from Theorem~\ref{0-z-thm}. 
\end{proof}

\begin{remark}
If we let $\ll_B^{(k)}$ denote the order on $\cAk(z)^{-1}$
with $v \ll_B^{(k)} w$ if $v \cdot g^{\BC}_{n,k} \ll_B w \cdot g^{\BC}_{n,k}$,
then $(\cAk(z)^{-1},\ll_B^{(k)})$ still has 
a unique minimum  $0^{(k)}_B(z) := 0^{(k)}_B(z,M_{\min})$
for $M_{\min} = \{ \{ i,-i\} : i \in \Neg(z)\}$.
 However, $\ll_B^{(k)}$ is no longer graded; see Figure~\ref{fig3}.
 It is possible to describe the covering relations in $\ll_B^{(k)}$ directly in terms  of a variant of \eqref{black2}, but we will not pursue this here.
\end{remark}

\begin{figure}[h]
\begin{center}
\begin{tikzpicture}[scale=0.9]
\node (0) at (4,7.2) {$ 1 2 \bar 3 4 \bar 5$};  
\node (1a) at (2,6) {$ 1 2 4 \bar 5\hs \bar 3$};  
\node (1b) at (4,6) {};  
\node (1c) at (8,6) {$ 3 1 \bar 2 4 \bar 5$};  
\node (1d) at (14,6) {$ 1 2 \bar 5 3 \bar 4$};  
\node (2a) at (2,4.8) {$ 1 4 \bar 5 2 \bar 3$};  
\node (2b) at (4,4.8) {};  
\node (2c) at (6,4.8) {$ 3 \bar 2\hs \bar 1 4 \bar 5$};  
\node (2d) at (8,4.8) {$ 3 1 4 \bar 5 \hs\bar 2$};  
\node (2e) at (10,4.8) {$ 5 1 \bar 2 3 \bar 4$};  
\node (2f) at (14,4.8) {};  
\node (3a) at (0,3.6) {$ 5 1 \bar 4 2 \bar 3$};  
\node (3b) at (2,3.6) {};  
\node (3c) at (4,3.6) {};  
\node (3d) at (6,3.6) {$ 3 \bar 2 4 \bar 5\hs \bar 1$};  
\node (3e) at (8,3.6) {$ 3 4\bar 51\bar 2$};  
\node (3f) at (10,3.6) {$ 5 1 3\bar 4\hs \bar 2$};  
\node (3g) at (12,3.6) {$ 5 \bar 2\hs \bar 1 3 \bar 4$};  
\node (3h) at (14,3.6) {};  
\node (4a) at (2,2.4) {$ 5 \bar 4\hs \bar 1 2 \bar 3$};  
\node (4b) at (4,2.4) {};  
\node (4c) at (6,2.4) {$ 3 4\bar 5\hs\bar 2\hs\bar 1$};  
\node (4d) at (8,2.4) {};  
\node (4e) at (10,2.4) {$ 5 3\bar 41\bar 2$};  
\node (4f) at (12,2.4) {$ 5 \bar 2 3\bar 4\hs \bar 1$};  
\node (5a) at (4,1.2) {$ 5\bar 4 2 \bar 3\hs \bar 1$};  
\node (5b) at (6,1.2) {};  
\node (5c) at (8,1.2) {$ 5 \bar 4\hs \bar 31\bar 2$};  
\node (5d) at (10,1.2) {$ 5 3\bar 4\hs \bar 2\hs \bar 1$};
\node (6) at (7,0) {$ 5 \bar 4\hs \bar 3\hs \bar 2\hs \bar 1$};  
\draw  [->,dotted] (6) -- (5a); 
\draw  [->,thick] (6) -- (4c);
\draw  [->,dotted] (6) -- (5c); 
\draw  [->,dashed] (6) -- (5d); 
\draw  [->] (5a) -- (4a);
\draw  [->,thick] (5c) -- (3e);
\draw  [->,dashed] (5c) -- (4e);
\draw  [->,dotted] (5d) -- (4e); 
\draw  [->] (5d) -- (4f);
\draw  [->,dashed] (4a) -- (3a);
\draw  [->,thick] (4a) -- (2a);
\draw  [->] (4c) -- (3d);
\draw  [->,dotted] (4c) -- (3e); 
\draw  [->] (4e) -- (3f);
\draw  [->] (4f) -- (3g);
%
\draw  [->] (3d) -- (2c);
\draw  [->] (3e) -- (2d);
\draw  [->] (3f) -- (2e);
\draw  [->,dashed] (3g) -- (2e);
\draw  [->,thick] (3g) -- (1d);
\draw  [->] (2a) -- (1a);
\draw  [->,thick] (2c) -- (0);
\draw  [->,dashed] (2c) -- (1c);
\draw  [->] (2d) -- (1c);
%
\draw  [->] (1a) -- (0); 
\end{tikzpicture}\end{center}
\caption{The Hasse diagram of  $\ll^{(k)}_B$ on $ \cAk(z)^{-1} $ for $n=5$, $k=1$, and $z = \bar{1}\hs\bar2\hs\bar3\hs\bar4\hs\bar{5} $.
Each $w$ shown has $w\cdot g^{\BC}_{n,k} = w \cdot \bar12345 \in \cA(z)^{-1}$.
The solid, dashed, or dotted arrows correspond to $\vartriangleleft_A$, 
 $\vartriangleleft_B$, or $\scov$ as in
 Figure~\ref{fig2}.
 The longer solid arrows are relations
 not obtained from covers in $(\cA(z)^{-1},\ll_B)$.}
\label{fig3}
\end{figure}

We push these constructions one step further. Assume $n-k$ is even and let
\be\label{ybc-eq-fpf} \yfpf_{n,k}:= y^{\BC}_{n,k} \cdot t_{k+1}\cdot t_{k+3} \cdots t_{n-1}
\quand \gfpf_{n,k} := 0_B(\yfpf_{n,k})  = g^{\BC}_{n,k}\cdot t_{k+1}\cdot t_{k+3} \cdots t_{n-1}
.\ee 
For example, $\yfpf_{7,3} = \bar1\hs \bar2\hs \bar 3 5476 \in \I(\W_7)$ and $\gfpf_{7,3} = \bar3\hs\bar2\hs\bar 1 5476 \in \I(\W_7)$.
Next let 
\[ \cAkfpf(z) := \cA(\yfpf_{n,k},z)
\qquand
\shfpf_k(w) := \sh(w\cdot \gfpf_{n,k})
\]
for $z \in\I(\W_n)$ and $w \in \cAkfpf(z)^{-1}$.
The set $\cAkfpf(z)$ is nonempty only if $\Fix(z)=\varnothing$,
so we think of these sets as ``fixed-point-free'' analogues of the ones given above.

\begin{lemma}
Supose $z \in \I(\W_n) $ and $M \in \NCSM(z)$.
Then the set  \[\cAkfpf(z,M) := \left\{ w \in \cAkfpf(z) : \shfpf_k(w^{-1}) = M\right\}\]
is nonempty if and only if $\Fix(z) =\varnothing$ and $M $ has exactly $k$ trivial blocks.
\end{lemma}

\begin{proof}
We have $v\in \cAkfpf(z,M)^{-1} $ if and only if $w:= v \cdot t_{k+1} \cdot t_{k+3}\cdots t_{n-1} \in \cAk(z,M)^{-1}$ and $\ell( w) = \ell(v) + \frac{n-k}{2}$.
This occurs precisely when $v(1) < v(2) < \dots< v(k) < 0$ and $v(k+2i-1) > v(k+2i)$ for all $0\leq i < \frac{n-k}{2} $, in which case it follows from
the definition of the nested descent graph and Proposition~\ref{prop-thm}
that $\Fix(z) =\varnothing$ and $M $ has exactly $k$ trivial blocks.
Conversely, if these conditions are satisfied, then it is easy to check that $1_B(z,M) \cdot \gfpf_{n,k} \in \cAkfpf(z,M)^{-1}$.
\end{proof}


\begin{corollary}
If $z \in \I(\W_n)$ then 
$\cAkfpf(z) = \bigsqcup_{M \in \NCSM_k(z) \setminus \NCSM_{k+1}(z)} \cAkfpf(z,M).$
\end{corollary}

\begin{remark}
We do not require $\Fix(z) = \varnothing$  since if this fails then 
our identity still holds, with both sides empty. 
Note that the set difference $\NCSM_k(z) \setminus \NCSM_{k+1}(z)$ 
consists of all noncrossing symmetric perfect matchings in $\{ i\in[\pm n] : z(i) =-i\}$ with exactly $k$ trivial blocks $\{ \pm i\}$.
\end{remark}

We can again efficiently generate $\cAkfpf(z,M)$.
Choose $z \in \I(\W_n)$ with no fixed points in $[\pm n]$.
Suppose $M \in \NCSM_k(z)$ with exactly $k$ trivial blocks, say $\{\pm i_1\}, \{\pm i_2\}, \dots, \{\pm i_k\}$ with $0<i_1<i_2<\dots<i_k$.
Let $X$ be the set of integer pairs in $[\pm n]\times [n]$ of the form $(-b,a)$ where $0<a<b$ and $\{a,b\} \in M$, or 
of the form $(a,b)$ where $ |a| < z(a) = b$ so that $(a,b) \in \Pair(z)$. Write
 \be\label{x-eq}X = \{(a_1,b_1),(a_2,b_2),\dots,(a_l,b_l)\}=\{(c_1,d_1),(c_2,d_2),\dots,(c_l,d_l)\}\ee
where 
$a_1<a_2<\dots<a_l $ and $ d_1<d_2<\dots<d_l$. 
Now define $0_\fpf(z,M),1_\fpf(z,M)\in \W_n$ by
\[ 0_\fpf(z,M) := i_1i_2\cdots i_k a_1b_1a_2b_2\cdots a_lb_l
\quand
 1_\fpf(z,M) := i_1i_2\cdots i_k c_1d_1c_2d_2\cdots c_ld_l.
 \]
 For example, if $z = \bar1\hs\bar2\hs\bar5\hs\bar4\hs\bar3$ and $M = \{\{-1,-2\}, \{1,2\}, \{-4,4\} \}$,
then 
\[0_\fpf(z,M) = 4\bar 3 5 \bar 2 1 
\quand
1_\fpf(z,M) =  4 \bar 2 1 \bar 3 5.
\]
 Finally, let $\vartriangleleft^{(k)}_\fpf$ denote the relation on $n$-letter words with 
$v \vartriangleleft^{(k)}_\fpf w$ if for 
some numbers $a<b<c<d$
and some index $i\geq 0$ with $k+2i+4\leq n$,
it holds that
\[
v_{k+2i+1}v_{k+2i+2}v_{k+2i+3}v_{k+2i+4} = adbc
\quand
w_{k+2i+1}w_{k+2i+2}w_{k+2i+3}w_{k+2i+4} = bcad\]
while
$v_j=w_j$ for $j\notin k+2i+\{1,2,3,4\}.$ 
Write $<_\fpf^{(k)}$ for the transitive closure of $\vartriangleleft^{(k)}_\fpf$.

\begin{proposition}\label{fpf-prop}
Suppose $z \in \I(\W_n)$ has $\Fix(z)=\varnothing$
and $M \in\NCSM_k(z)\setminus \NCSM_{k+1}(z)$. Then 
\[ \cAkfpf(z,M)^{-1} = \left\{ w \in \W_n : 0_{\fpf}(z,M) \leq^{(k)}_\fpf w\right\}
=
 \left\{ w \in \W_n : w \leq^{(k)}_\fpf 1_{\fpf}(z,M)\right\}.\]
\end{proposition}

\begin{proof}
We have $0_{\fpf}(z,M) t_{k+1} t_{k+3}\cdots t_{n-1} = 0_{B}^{(k)}(z,M)$
and
 the product on the left is length additive, so 
 $0_{\fpf}(z,M)\in \cAkfpf(z,M)^{-1}$.
 It follows likewise that $1_{\fpf}(z,M)\in \cAkfpf(z,M)^{-1}$.
 Adopt the notation in \eqref{x-eq}.
 Each element $w \in \cAkfpf(z,M)^{-1}$ must be given in one-line notation as 
 $w = i_1i_2\cdots i_k a_{j_1} b_{j_1}\cdots a_{j_l} b_{j_l}$
 by the definition of the shape map and the nested descent graph.
 Moreover, no subword $w_{k+2i+1}w_{k+2i+2}w_{k+2i+3}w_{k+2i+4}$ can 
 have the form $bdac$ for $a<b<c<d$, since then $w\cdot \gfpf_{k,n} \in \cA(z)^{-1}$
 would have a consecutive subword of the form $dbca$ and therefore would be related by $\vartriangleleft_A$ to a signed permutation containing a consecutive 321-pattern,
 contradicting Proposition~\ref{321-prop}.
 
From these observations, one argues that a sequence of moves going down 
(respectively, up) using $\vartriangleleft^{(k)}_\fpf$ transforms any  $w\in\cAkfpf(z,M)^{-1}$
to $0_{\fpf}(z,M)$ (respectively, $1_{\fpf}(z,M)$). This is a straightforward exercise,
which is similar to the arguments in \cite[\S6.2]{HMP2}; we omit the details.
\end{proof}
 
For  $z \in\I(W_n)$, define $\cAfpf(z) := \cAkfpf(z)$ where $k=0$ if $n$ is even and $k=1$ if $n$ is odd.
 These are signed analogues of the sets  of ``fixed-point-free atoms'' 
 $\cAfpf(y) := \cA(s_1s_3s_5\cdots s_{n-1}, y)$ for $y \in \I(S_{2n})$, which are
studied in \cite{CJW,HMP2}. 
Define $\cRfpf(z) := \bigsqcup_{w \in \cAfpf(z)} \cR(w)$
  and let \[z_0^{\BC_n}:=\bar 2\hs\bar 1 \hs\bar4\hs\bar3 \cdots \bar{n}\hs\bar{n-1}\text{ if $n$ is even}
  \quord
z_0^{\BC_n}:=\bar 2\hs\bar 1 \hs\bar4\hs\bar3 \cdots \bar{n-1}\hs\bar{n-2}\hs\bar{n}\text{ if $n$ is odd.}\]
Also write $w_0^{S_n} = n \cdots 321 \in \I(S_n)$.
An interesting consequence of Proposition~\ref{fpf-prop} is that there are simple bijections $\cAfpf(z_0^{\BC_{2n}}) \leftrightarrow \cAfpf(z_0^{\BC_{2n+1}}) \leftrightarrow \cAfpf( w_0^{S_{2n}})$.
Computations suggest the following related identity,
which might be proved using the methods in \cite{MP}:

\begin{conjecture}
For all $n\in \NN$ it holds that $|\cRfpf(z_0^{\BC_{n}})| = |\cR(w_0^{S_n})|$.
\end{conjecture}

E.g., 
$|\cRfpf(\bar 2\hs \bar 1 \hs \bar 4\hs \bar 3\hs\bar5)| = |\cR(54321)|=768$ and
$|\cRfpf(\bar 2\hs \bar 1 \hs \bar 4\hs \bar 3\hs\bar6\hs\bar 5)| = |\cR(654321)|=292864$.
 These numbers count the standard Young tableaux of shape $\delta_n := (n-1,\dots,3,2,1)$.
A similar identity seems to hold for the longest element
in $\W_n$:

\begin{conjecture}
If $k=\lfloor \frac{n}{2}\rfloor+1$ then $|\cRfpf\Bigl(\bar1\hs\bar2\hs\bar3\cdots\bar{n-1}\Bigr)| = |\cR\Bigl(w_0^{S_{n}}\cdot (1,2,3,\dots,k)\Bigr)|$.
\end{conjecture}
 
E.g., $|\cRfpf(\bar 1\hs \bar 2 \hs \bar 3\hs \bar 4)| = |\cR(35421)|=70$ and
 $|\cRfpf(\bar 1\hs \bar 2 \hs \bar 3\hs \bar 4\hs\bar5)| = |\cR(365421)|=5775$.
 These numbers count the standard Young tableaux whose shape 
 is formed by removing $\lfloor\frac{n}{2}\rfloor$ from $\delta_n$.
\begin{remark}
Consider the partial order on $\cAkfpf(z)^{-1}$
with $v \lll_\fpf^{(k)} w$ if $v \cdot \gfpf_{n,k} \lll_B w \cdot \gfpf_{n,k}$.
If $k\in \{0,1\}$ and $n-k$ is even,
then $(\cAkfpf(z)^{-1},\lll_B^{(k)})$  has 
a unique maximum given by the element $1^{(k)}_\fpf(z) := 1_B(z) \cdot \gfpf_{n,k}$; see Figure~\ref{fig3fpf}.
This order is less well-behaved for $k>1$.
\end{remark}

\begin{figure}[h]
\begin{center}
\begin{tikzpicture}[xscale=0.9, yscale=1.1]
\node (0) at (8,7.2) {$ 1  \bar 32  \bar 54$};  
\node (1c) at (8,6) {$ 3  \bar 21  \bar 54$};  
\node (1d) at (10,6) {$ 1  \bar 52  \bar 43$};  
\node (2a) at (6,6) {$ 1  \bar 54  \bar 32$};  
\node (2e) at (10,4.8) {$ 5 \bar 21  \bar 43$};  
\node (3a) at (6,4.8) {$ 5  \bar 41  \bar 32$};  
\node (3e) at (8,4.8) {$ 3 \bar 54\bar 21$};  
\node (4e) at (8,3.6) {$ 5 \bar 43\bar 21$};    
\draw  [->,dotted] (1c) -- (0);
\draw  [->,dotted] (2e) -- (1d);
\draw  [->,dotted] (1d) -- (0);
\draw  [->,dotted] (3a) -- (2a);
\draw  [->,dotted] (4e) -- (3e);
\draw  [->] (4e) -- (2e);
\draw  [->] (3e) -- (1c);
\draw  [->] (2a) -- (0);
\end{tikzpicture}\end{center}
\caption{The Hasse diagram of $\lll^{(k)}_\fpf$ on $\cAkfpf(z)^{-1}$  for $n=5$, $k=1$, and $z = \bar{1}\hs\bar2\hs\bar3\hs\bar4\hs\bar{5} $.
Each $w$ shown has $w\cdot \gfpf_{n,k} = w \cdot \bar13254 \in \cA(z)^{-1}$.
The solid / dotted arrows correspond to $\vartriangleleft_\fpf^{(k)}$ / $\bcov$.}
\label{fig3fpf}
\end{figure}

\section{Cohomology formulas for orbit closures}
\label{geom-sect}

In this section we explain how the sets $\cA(z)$ are related to cohomology classes of
orbit closures in flag varieties of types B and C.
Our goal is to prove  a precise form of Theorem~\ref{intro-Brion}.

\subsection{Brion's cohomology formula}\label{coho-sect}

Let $G$ be a connected, complex, simple algebraic group with a holomorphic automorphism $\theta$ that is an involution. Fix a Borel subgroup $B$ containing a torus $T$. Assume both groups are preserved by $\theta$. The subgroup of fixed points $K = G^\theta := \{ g \in G : g=\theta(g)\}$ is called a \emph{symmetric subgroup} \cite{RichSpring}. Every symmetric subgroup acts on the flag variety $G/B$ with finitely many orbits. 

There is a useful \emph{weak order} on the finite set $K\backslash G/B$, introduced in \cite{RichSpring,RichSpring2}.
Fix a simple generator $s \in S$ for the Weyl group $W=N_G(T)/T$ and write $\alpha$ for the corresponding simple root. Let $P_\alpha$ be the minimal parabolic subgroup of $G$ of type $\alpha$ containing $B$ and let $\pi_\alpha : G/B \to G/P_\alpha$ be the canonical projection. 
If $\cO$ is a $K$-orbit in $G/B$, then $\pi_\alpha^{-1}(\pi_\alpha(\cO))$ contains a unique dense $K$-orbit, denoted $s\cdot \cO$. 
The \emph{weak order} on the finite set of $K$-orbits in $G/B$ is the transitive closure of the relation that has $\cO < \cO'$ whenever $\cO \neq \cO' = s\cdot \cO$ for some $s\in S$
(see \cite{RichSpring}  or \cite[\S1.4]{Wyser}).

An algebraic subgroup $H \subseteq G$ is \emph{spherical} if $H$ acts on $G/B$ with finitely many orbits. This includes $H=K$ as a special case. If $H=B$ then the orbits are in
bijection with $W$ and
  the closures $ \overline{B wB} / B$ of these orbits are  the well-known \emph{Schubert varieties}.
 In \cite{Brion2001}, Brion derives a general formula for the cohomology classes of $H$-orbit closures in $G/B$
 as linear combinations of classes for
 Schubert varieties. When $H=K$ is a symmetric subgroup, this goes as follows.

Form a directed graph on the set of $K$-orbits in $G/B$ that has 
an edge $\cO \xrightarrow{s} \cO'$
for each covering relation $\cO < \cO ' =s\cdot \cO$ in the weak order.
The arrow  $\cO \xrightarrow{s} \cO'$ is marked as a ``doubled edge'' if $s \in S$ corresponds to a simple root $\alpha$ for which the projection $\pi_\alpha : G/B \to G/P_\alpha$ restricted to the closure of $\cO$ has degree two. 
In the terminology of Wyser's thesis, this occurs if and only if the simple root $\alpha$ associated to $s$
is \emph{non-compact imaginary of type II} \cite[Proposition 1.4.5]{Wyser}.

There is always a dense $K$-orbit $\cO_{\mathrm{dense}}$, which is the unique maximum in the weak order.
Given a directed path $P = ( \cO \xrightarrow{s_1} \cO' \xrightarrow{s_2}\cO'' \cdots \xrightarrow{s_l} \cO_{\mathrm{dense}})$,
define $w(P) := s_l \cdots s_2s_1 \in W$ and let $d(P)$ be the number of doubled edges in $P$.
For a $K$-orbit $\cO$, let $\cAB(\cO)$ be the set of elements $w(P)$ as $P$ ranges 
over all directed paths $P$ from $\cO$ to the dense orbit. 
Let $d_\cO(w)$ for $w \in \cAB(\cO)$ be the value of $d(P)$ for any path from $\cO$ to the dense orbit with $w=w(P)$;
this is the same for all choices of $P$ \cite{Brion2001}.
Brion's formula for the cohomology class of the closure $\overline{\cO}$ of $\cO$ is now
\be\label{brion} [\overline{\cO}] = \sum_{w\in \cAB(\cO)} 2^{d_\cO(w)} \left[ \overline{B w_0 wB} / B\right] \in H^*(G/B).\ee
We refer to the elements of $\cAB(\cO)$ as \emph{Brion atoms}.

There is a complete classification of the pairs $(G,K)$. Table~\ref{tbl0}
shows the possibilities with $G$ classical.
For these types, there are useful combinatorial descriptions of indexing sets for the finite set of $K$-orbits in $G/B$
and the associated weak order \cite{AY,MO,Wyser}.
The problem of giving a concrete description of the terms in Brion's formula
has been extensively studied when $G=\GL(n,\CC)$;
see
\cite{CJ,CJW,HMP2,WY} for types (AI) and (AII) and \cite{BurksPawlowski,CJW,CU2,WY0} for type (AIII).
 Here, we focus on the next three families $(G,K)$ listed in Table~\ref{tbl0}, 
 where the Weyl group is $W=\W_n$.

\def\hhline{\\ & & & & \\ [-10pt]\hline & & & & \\ [-10pt]}
\begin{table}[h]
\begin{center}
{\small
\begin{tabular}{| l | l | l | l | l | l |  l l |}
\hline
Type & $G$ & $K=G^\theta$ (assume $p,q\in \NN$ and $n=p+q$) & $W$ &   Reference
\hhline
AI & $\GL(n,\CC)$ & $\O(n,\CC)$ &  $S_n$ &  \cite[\S2.2-2.3]{Wyser}  \\
AII  & &  $\Sp(n,\CC)$ (only if $n$ is even) & &   \cite[\S2.4]{Wyser} \\
AIII  & & $\GL(p,\CC)\times \GL(q,\CC)$ & &   \cite[\S2.1]{Wyser} 
\hhline
BI & $\SO(2n+1,\CC)$ & $S(\O(2p,\CC)\times \O(2q+1,\CC))$ & $\W_n$ &   \cite[\S3.1]{Wyser} 
\hhline
CI & $\Sp(2n,\CC)$ & $\GL(n,\CC)$ & $\W_n$ &   \cite[\S4.2]{Wyser}   \\ 
CII  & & $\Sp(2p,\CC)\times \Sp(2q,\CC)$ & &   \cite[\S4.1]{Wyser} 
\hhline
DI  & $\SO(2n,\CC)$ & $ S(\O(2p,\CC) \times \O(2q,\CC))$ & $\DGroup_n$ & \cite[\S5.1]{Wyser}   \\
DII &  & $S(\O(2p+1,\CC) \times \O(2q-1,\CC))$ & &  \cite[\S5.3]{Wyser}   \\
DIII & & $\GL(n,\CC)$ & &  \cite[\S5.2]{Wyser}  \\
\hline
\end{tabular}}
\end{center}
\caption{
Symmetric subgroups $K=G^\theta$ in rank $n$ classical groups $G$,
as parametrized in \cite{Wyser}.
}\label{tbl0}
\end{table}

%
%
%

\subsection{Clans}\label{clans-sect}

Choose integers $p,q \in \NN$ with $n=p+q$.
A \emph{$(p, q)$-clan} is a sequence of $n$ symbols $(c_1,c_2,\dots,c_n)$, 
in which each $c_i$ is either an integer or one of the signs $+$ or $-$. Each integer which appears must appear exactly twice, and the difference between the number of $+$'s and the number of $-$'s appearing in the string must be precisely $p - q$. Two such sequences are considered to represent the same clan if the sets of pairs $\{i,j\}$ giving the positions of matching integers $c_i=c_j$ are the same in each. For example, $(1, -, 2, 1, +, 2)$ and  $(7, -, 0, 7, +, 0)$ are the same $(3,3)$-clan.
For any clan $\gamma = (c_1,c_2,\dots,c_m)$, let $\signs(\gamma)$ be the subsequence of signs $c_i \in \{\pm\}$.

A clan $(c_1,c_2,\dots,c_n)$ is \emph{symmetric} if it is equal to $(c_n,\dots,c_2,c_1)$,
and \emph{skew-symmetric} if it is equal to $(-c_n,\dots,-c_2,-c_1)$.
For example, $(1,+,1,2,+,2)$ is a symmetric $(4,2)$-clan and $(1,-,+,1,3,-,+,3)$ is a skew-symmetric $(4,4)$-clan. 
 Bingham, Can, and U\u{g}urlu derive generating functions 
 to count such clans in \cite{BinUgu,CU1}.
%
Given a symmetric or skew-symmetric clan $\gamma = (c_{-n},\dots,c_{-1},c_0,c_1,\dots,c_n)$ or $\gamma = (c_{-n},\dots,c_{-1,},c_1,\dots,c_n)$, define $\BCDphi(\gamma) \in \I(\W_n)$ to be the unique involution that has $(i,j)$ as a 2-cycle for distinct $i,j \in [\pm n]$ if and only if $c_i=c_j \in \ZZ$.

\subsection{Brion atoms in types CI and CII}

The symplectic group $G = \Sp(2n,\CC)$ has a symmetric subgroup $K \cong \GL(n,\CC)$
whose orbits $\cO_\gamma$ in $G/B$ are indexed by skew-symmetric $(n,n)$-clans $\gamma$
\cite[\S4.2]{Wyser};  see also \cite{Wyser2012}.
The weak order on these $K$-orbits corresponds to the following weak order on clans.
Fix a skew-symmetric clan
 \be\label{ssg-eq}\gamma = (c_{-n},\dots,c_{-2},c_{-1},c_1,c_2,\dots,c_n).\ee
 For each $s \in \{t_0,t_1,\dots,t_{n-1}\}$ we wish to define another clan $s\cdot \gamma$.
 The weak order on skew-symmetric clans 
 will be the transitive closure of the relation with $\gamma < \gamma'$ whenever $\gamma \neq \gamma' = s\cdot \gamma$ for some $s$.
 Our reference for the following definitions is \cite[\S4.2.2]{Wyser}.

If $s=t_i$ for $i \in [n-1]$ and $c_i$ and $c_{i+1}$ are not both signs, 
or if $s=t_0$ and $c_{1}$ is not a sign,
then $s \cdot \gamma$ is the unique skew-symmetric clan with 
$\BCDphi(s\cdot \gamma) = s\circ \BCDphi(\gamma) \circ s$
and $\signs(\gamma) = \signs(s\cdot\gamma)$.
Assume $s=t_i$ for $i \in [n-1]$ and $c_i$ and $c_{i+1}$ are both signs.
If $c_i=c_{i+1}$ then $s\cdot \gamma =\gamma$. Otherwise $s\cdot \gamma$ is formed from $\gamma$ by replacing $c_i$, $c_{i+1}$ by a pair of matching integers and $c_{-i-1},c_{-i}$ by another pair of matching integers (not appearing elsewhere in $\gamma$).
If $s=t_0$ and $c_1$ is a sign, then necessarily $c_{-1}\neq c_1$,
and we form $s\cdot \gamma$ from $\gamma$ by replacing $c_{-1}$, $c_1$ with a new pair of matching integers.
The edge $\cO_\gamma \xrightarrow{s} \cO_{s\cdot \gamma}$
is doubled in the directed graph for Brion's formula 
if and only if $s=t_i$ for $i \in [n-1]$ such that $c_{-i} = c_{i+1} \in \ZZ$ and $c_{-i-1}=c_i \in \ZZ$.

\begin{example}
One has $t_0 \cdot (+,-,+,-) = (+,1,1-)$ and $t_1\cdot (+,-,+,-) = (1,1,2,2)$ while $t_0 \cdot (1,1,2,2) = (1,2,1,2)$ and $t_1\cdot (+,1,1,-) = (1,+,-,1)$. Compare with \cite[Figure B.10]{Wyser}, where labeled edges $\xrightarrow{\ i\ }$ correspond in our notation to the $\cdot$ action of $t_{n-i}$.
\end{example}

For the pair $(G,K) = (\Sp(2n,\CC),\GL(n,\CC))$, we can describe all of the terms in \eqref{brion} explicitly.
Continue to fix a skew-symmetric clan $\gamma$ written as in \eqref{ssg-eq}.
Define
$ \NCSM^{\mathrm{CI}}(\gamma)$ to be the set of noncrossing symmetric perfect matchings $M$ in $\{ i \in [\pm n] : c_i = \pm\}$ such that
if $\{i,j\} \in M$ then $c_i $ and $ c_j$ are opposite signs in $\gamma$.
Recall that $\ell_0(w) = \{ i \in [n] : w(i) < 0\}$ for $w \in \W_n$.
We say that $\gamma$ is \emph{alternating} if the sequence $\signs(\gamma)$ has no equal adjacent entries. 

\begin{theorem}\label{bt1} Assume $(G,K) = (\Sp(2n,\CC), \GL(n,\CC))$.
Suppose $\cO=\cO_\gamma$ is the $K$-orbit in $G/B$
indexed by the skew-symmetric clan $\gamma$.
Let $z = -\BCDphi(\gamma) \in \W_n$
and $w \in \cAB(\cO)$.
Then \[
  d_{\cO}(w) =  
|  \{ i \in [ n] : z(i)<i\}|-\ell_0(w)
\quand 
\cAB(\cO) = \bigsqcup_{M \in  \NCSM^{\mathrm{CI}}(\gamma)} \cA(z,M).\]
Thus, $\cAB(\cO) \subseteq \cA(z)$ with equality if and only if $\gamma$ is alternating. \end{theorem}

\begin{proof}
The unique dense $K$-orbit $\cO_{\mathrm{dense}}$ is indexed by the clan
$(1,2,\dots,n,n,\dots,2,1)$
whose image under $\BCDphi$ is $w_0=\wB$.
A directed path 
$P=\Bigl(
\cO \xrightarrow{t_{a_1}} \cO' \xrightarrow{t_{a_2}} \cdots \xrightarrow{t_{a_l}} \cO_{\mathrm{dense}}\Bigr)$
 corresponds to a reduced word $\a = (a_1,a_2,\dots,a_l)$ for some 
$w \in \cA(-z,w_0) = \cA(z)^{-1}$
with $w^{-1} \in \cAB(\cO)$,
so we have $\cAB(\cO) \subseteq \cA(z)$.
From the discussion in Section~\ref{relshape-sect} and our description of the weak order,
it is clear that the reduced words $\a$ that arise from paths in this way are precisely 
those with $\sh(\a;-z) \in  \NCSM^{\mathrm{CI}}(\gamma)$.
Since $\sh(\a;-z) = \sh(w)$ by Theorem~\ref{rel-prop1},
we conclude that $w \in \cAB(\cO)$
if and only if $w \in \cA(z,M)$ for some $M \in \NCSM^{\mathrm{CI}}(\gamma)$.

In view of \eqref{prime-eq}, the number of doubled edges in the
path $P$
is
$\ell'(w_0) - \ell'(-z) -p-q$, where $p$ is half the number of nontrivial blocks in $\sh(\a;-z)=\sh(w)$ and 
$q$ is the number of trivial blocks.
It follows from \eqref{ell'} that $\ell'(w_0) - \ell'(-z)  = \ell'(z)$ and 
we have \[p+q = \neg(z) - p = \neg(z) - |\{ a \in [n] : a \leq -z(a)\}|  + \ell_0(w)
=  \ell_0(w) - |\{ a \in [n] : z(a) < -a\}|  
\] by Proposition~\ref{pair-prop}.
Thus 
$d_\cO(w^{-1}) = \ell'(z)+  |\{ a \in [n] : z(a) < -a\}|  - \ell_0(w)$.
This gives the desired formula as $\ell'(z)  = \pair(z) + \neg(z)  = |\{ b\in [n]: -b \leq z(b) < b\}|$
by \eqref{ell'} 
and $\ell_0(w^{-1}) = \ell_0(w)$.
\end{proof}

\begin{example}
Suppose $n=4$ and $\gamma = (-, -, -, +, -, +, +, +)$, which is a skew-symmetric $(4,4)$-clan.
If $z = -\BCDphi(\gamma) = \bar1\hs\bar2\hs\bar3\hs\bar4 \in \W_4$ then $\NCSM^{\mathrm{CI}}(\gamma)$ has 2 elements
\[ 
\ba   \{\{-4, 4\}, \{-3, 3\}, \{-2, -1\}, \{1, 2\}\} &= 
\arcstart{
*{\bullet}  \arc{1.2}{rrrrrrr}    & *{\bullet}   \arc{0.9}{rrrrr}    & *{\bullet}   \arc{0.4}{r}  & *{\bullet}       & *{\bullet}  \arc{0.4}{r} & *{\bullet}    & *{\bullet}   & *{\bullet}    
} 
\arcstop,
\\[-10pt]
\\
\{ \{-4, 4\}, \{-3, 3\}, \{-2, 2\}, \{-1, 1\} \} &= \arcstart
{
*{\bullet}  \arc{1.2}{rrrrrrr}    & *{\bullet}   \arc{0.9}{rrrrr}    & *{\bullet} \arc{0.6}{rrr}     & *{\bullet}   \arc{0.3}{r}     & *{\bullet} & *{\bullet}    & *{\bullet}   & *{\bullet}    
} 
\arcstop,
\ea
\]
compared to the 6 elements in $\NCSM(z)$.
Consulting Example~\ref{intro-ex}, we see that in this case $\cAB(\cO_\gamma) = \{\bar 4\hs\bar31\bar2, \bar 4\hs\bar 3\hs\bar2\hs\bar1\}^{-1}=\{3\bar4\hs\bar2\hs\bar1, \bar 4\hs\bar 3\hs\bar2\hs\bar1\}$. Here $d_\cO(3\bar4\hs\bar2\hs\bar1) = 1$, $d_\cO(\bar 4\hs\bar 3\hs\bar2\hs\bar1) = 0$.
\end{example}


Fix $p,q \in \NN$ with $n=p+q$. The group $G= \Sp(2n,\CC)$ has another symmetric subgroup of the form $K \cong \Sp(2p,\CC)\times \Sp(2q,\CC)$.
In this case the $K$-orbits $\cO_\gamma$ in $G/B$ are indexed by the symmetric $(2p,2q)$-clans $\gamma$ for which  $\Neg(\gamma) := \Neg(\BCDphi(\gamma))$ is empty
\cite[\S4.1]{Wyser}.
If we write
 \be\label{ssg-sym-eq}\gamma = (c_{-n},\dots,c_{-2},c_{-1},c_1,c_2,\dots,c_n),\ee
then
 $\Neg(\gamma)$ is the set of indices $i \in [n]$ with $c_{-i}=c_i \in \ZZ$

 Assume $\gamma$ is a symmetric clan with $\Neg(\gamma)=\varnothing$,
 written as in \eqref{ssg-sym-eq}.
 For $s \in \{t_0,t_1,\dots,t_{n-1}\}$ we define another such clan $s\cdot \gamma$ 
 by the same rules as
 above, except that if this results in  a clan with $\Neg(s\cdot \gamma)\neq\varnothing$ then we redefine
 $s\cdot \gamma = \gamma$. In particular, if $s=t_0$ and $c_{-1}=c_{1} = \pm$
 then $s\cdot \gamma = \gamma$, and if $s=t_i$ for $i>0$
 where $c_{-i-1} = c_i \in\ZZ$ and $c_{-i}=c_{i+1} \in \ZZ$ then $s\cdot \gamma =\gamma$.
 For more details concerning this action, see \cite[\S4.1.2]{Wyser} and \cite[Figure B.9]{Wyser}.
 
The weak order on $K= \Sp(2p,\CC)\times \Sp(2q,\CC)$-orbits in $G=\Sp(2n,\CC)$
is the transitive closure of the relation with $\cO_{\gamma} < \cO_{\gamma'}$ whenever $\gamma \neq \gamma' = s\cdot \gamma$ for some $s$.
In this setting,
there are no doubled edges in the directed graph for Brion's formula 
\cite[\S4.1.2]{Wyser}.

Let $X (\gamma) = \{ i \in [\pm n] : c_i = \pm\}$ and fix an integer $k\geq 0$.
Define
$ \NCSM^{\mathrm{CII}}_k(\gamma)$ to be the set of noncrossing symmetric perfect matchings $M$ in $X(\gamma)$ that have exactly $k$ trivial blocks $\{\pm i\}$, such that
if $\{i,j\} \in M$ and $i+j\neq 0$ then $c_i $ and $ c_j$ are opposite signs in $\gamma$.
The symmetric sequence $\signs(\gamma)$ will always have repeated elements in its middle two entries. We say that $\gamma$ is \emph{semi-alternating} if dividing $\signs(\gamma)$ in half gives two sequences with no adjacent repeated entries.

\begin{theorem}\label{bt11} Assume $(G,K) = (\Sp(2n,\CC), \Sp(2p,\CC)\times \Sp(2q,\CC))$
where $p,q \in \NN$ and $p+q=n$. 
Suppose $\cO=\cO_\gamma$ is the $K$-orbit in $G/B$
indexed by the symmetric $(2p,2q)$-clan $\gamma$.
Then \[
  d_{\cO}(w) = 0
\quand 
\cAB(\cO) = \bigsqcup_{M \in  \NCSM^{\mathrm{CII}}_k(\gamma)} \cAkfpf(z,M)\]
for all $w \in \cAB(\cO)$, where $k := |p-q|$ and $z := -\BCDphi(\gamma) \in \I(\W_n)$.
Thus, $\cAB(\cO) \subseteq \cAkfpf(z)$ 
and we have
$\cAB(\cO) = \cA_\fpf(z)$ if $|p-q| \leq 1$ and $\gamma$ is semi-alternating.
\end{theorem}

\begin{proof}
If $m = n-k$ then the unique dense $K$-orbit $\cO_{\mathrm{dense}}$ is indexed either by
the clan
\[
(2,1,4,3,6,5,\dots,m,m-1,+,+,\dots,+,+,m,m-1,\dots,6,5,4,3,2,1)
\]
or by this clan with every $+$ changed to $-$.
The image of this clan under $\BCDphi$ is $- \yfpf_{n,k}$ from \eqref{ybc-eq-fpf},
so a path $\cO \xrightarrow{t_{a_1}} \cO' \xrightarrow{t_{a_2}} \cdots \xrightarrow{t_{a_l}} \cO_{\mathrm{dense}}$
 corresponds to a reduced word $\a = (a_1,a_2,\dots,a_l)$ for some 
$w \in \cA(-z,-\yfpf_{n,k}) = \cAkfpf(z)^{-1}$
with $w^{-1} \in \cAB(\cO)$.
From the discussion in Section~\ref{relshape-sect} and the definition of the weak order, 
it follows that the reduced words $\a$ arising from such paths are precisely 
those with $\sh(\a;-z) \in \NCSM^{\mathrm{CII}}_k(\gamma)$.
Since $\sh(\a;-z) = \shfpf_k(w)$ by Theorem~\ref{rel-prop1},
we conclude that $w \in \cAB(\cO)$
if and only if $w \in \cAkfpf(z,M)$ for some $M \in \NCSM^{\mathrm{CII}}_k(\gamma)$.
\end{proof}

\begin{example}
Suppose $\gamma = (+, -, +, +, - ,+)$, which is a symmetric $(2p,2q)$-clan for $p=2$ and $q=1$. Let $k = |p-q|=1$
and $z= -\BCDphi(\gamma) = \bar1\hs\bar2\hs\bar3 \in \W_3$. Then $\NCSM_k^{\mathrm{CII}}(\gamma)$ has 2 elements
\[ 
\ba   \{\{1, 2\}, \{-1, -2\}, \{-3, -3\}\} &= 
\arcstart{
   *{\bullet}   \arc{0.9}{rrrrr}    & *{\bullet}   \arc{0.4}{r}  & *{\bullet}       & *{\bullet}  \arc{0.4}{r} & *{\bullet}    & *{\bullet}    
} 
\arcstop
\quand
\{ \{-1, 1\}, \{2, 3\}, \{-2, -3\} \} &= \arcstart
{
   *{\bullet}   \arc{0.4}{r}     & *{\bullet}   & *{\bullet}     \arc{0.4}{r}    & *{\bullet} & *{\bullet}   \arc{0.4}{r}   & *{\bullet}    
} 
\arcstop
\ea
\]
and we have $\cAB(\cO_\gamma) = \{3\bar 21,1\bar 3 2\}^{-1}=\{3\bar 21,13\bar2\}$.

\end{example}

\subsection{Brion atoms in type BI}
\label{sect-Brion-B}

There is one more case where Brion atoms coincide with 
atoms for elements of $\W_n$.
Suppose $G = \SO(2n+1,\CC)$ is an odd special orthogonal group. 
For each $p,q \in \NN$ with $n=p+q$, the group $G$ has a symmetric subgroup $K \cong S(\O(2p,\CC)\times \O(2q+1,\CC))$
whose orbits $\cO_\gamma$ in $G/B$ are uniquely indexed by symmetric $(2p,2q+1)$-clans $\gamma$
\cite[\S3.1]{Wyser}.
The weak order on these $K$-orbits is again defined in terms of a certain action on clans.
Fix a symmetric clan of odd length written as
 \be\label{syg-eq}\gamma = (c_{-n},\dots,c_{-2},c_{-1},c_0,c_1,c_2,\dots,c_n).\ee
Symmetry forces  $c_0$ to be $+$ or $-$.
We define $s\cdot \gamma$ for $s \in \{t_0,t_1,\dots,t_{n-1}\}$ in the same way as for skew-symmetric clans in the previous subsection, except when $s=t_0$ and $c_1 \in \{ \pm \}$.
 In this case, we let $t_0 \cdot \gamma =\gamma$ if $c_{-1}=c_0=c_1$ and otherwise
 form $t_0\cdot \gamma$ from $ \gamma$  by replacing $c_{-1},c_1$ with a pair of matching integers and reversing the sign of $c_0$.
 
 The weak order on $K$-orbits in $G/B$ 
is the transitive closure of the relation with $\cO_\gamma < \cO_{\gamma'}$ whenever $\gamma \neq \gamma' = s\cdot \gamma$ for some $s$.
The edge $\cO_\gamma \xrightarrow{s} \cO_{s\cdot \gamma}$
is doubled in the corresponding directed graph for Brion's formula 
if and only if $s=t_0$ and $c_0\neq c_1$ are opposite signs, or if $s=t_i$ for $i \in [n-1]$ where $c_{-i} = c_{i+1} \in \ZZ$ and $c_{-i-1}=c_i \in \ZZ$ \cite[\S3.1.2]{Wyser}.

\begin{example}
One has  $t_0 \cdot (-,+,+,-,+,+,-) = (-,+,1,+,1,+,-)$ and $t_2 \cdot (-,+,+,-,+,+,-) = (1,1,+,-,+,2,2) $
while
$t_1 \cdot  (1,1,+,-,+,2,2) =  (1,+,1, -,2,+,2)$.
Compare with \cite[Figure B.8]{Wyser}, where labeled edges $\xrightarrow{\ i\ }$ correspond in our notation to the $\cdot$ action of $t_{n-i}$.
\end{example}

Continue to fix a symmetric clan $\gamma$ written as in \eqref{syg-eq}.
For each integer $k \geq 0$,
define
$ \NCSM_k^{\mathrm{BI}}(\gamma)$ to be the set of noncrossing symmetric perfect matchings $M $ in the set 
$\{ i \in [\pm n]: c_i = \pm\}$ such that
(1) if $\{i,j\} \in M$ with $i+j\neq 0$ then $c_i \neq c_j$,
and (2) if the trivial blocks of $M$ are $\{\pm i_1\}, \{\pm i_2\},\dots,\{\pm i_l\}$ where $0<i_1<i_2<\dots<i_l$, then $k\leq l$ and  $c_0\neq c_{i_1}\neq c_{i_2}\neq \dots \neq c_{i_{l-k}}$
(note that this condition ignores the last $k$ trivial blocks).

\begin{theorem}\label{bt2} Assume $(G,K) = (\SO(2n+1,\CC), S(\O(2p,\CC)\times \O(2q+1,\CC))$ where  $p+q=n$.  
Suppose $\cO=\cO_\gamma$ is the $K$-orbit in $G/B$
indexed by the symmetric $(2p,2q+1)$-clan $\gamma$.
Then \[  d_\cO(w) =  |\{ i\in [n] : 0<z(i) < i\}|+\ell_0(w) \quand  \cAB(\cO) = \bigsqcup_{M \in  \NCSM^{\mathrm{BI}}_k(\gamma)} \cAk(z,M)\]
for all $w \in \cAB(\cO)$, where $k :=\lfloor |p-q-\frac{1}{2}|\rfloor$ and 
$z = -\BCDphi(\gamma)$.
Thus,  $\cAB(\cO) \subseteq \cAk(z)$ and we have
$\cAB(\cO) = \cA(z)$ if $p-q\in \{0,1\}$  and $\gamma$ is alternating.
\end{theorem}

\begin{proof}
Our argument is only slightly more involved than the proof of Theorem~\ref{bt11}.
Let $m = n-k$.
Now, the unique dense $K$-orbit $\cO_{\mathrm{dense}}$ is indexed by the clan
\[
(1,2,\dots,m,+,+,\dots,+,+,m,\dots,2,1)
\quord (1,2,\dots,m,-,-,\dots,-,-,m,\dots,2,1),\]
whose image under $\BCDphi$ is $- y^{\BC}_{n,k}$ from \eqref{ybc-eq}.
A directed path $\cO \xrightarrow{t_{a_1}} \cO' \xrightarrow{t_{a_2}} \cdots \xrightarrow{t_{a_l}} \cO_{\mathrm{dense}}$
 corresponds to a reduced word $\a = (a_1,a_2,\dots,a_l)$ for some 
$w \in \cA(-z,-y^{\BC}_{n,k}) = \cAk(z)^{-1}$
with $w^{-1} \in \cAB(\cO)$,
so we have $\cAB(\cO) \subseteq \cAk(z)$.
From the discussion in Section~\ref{relshape-sect}, 
it follows that the reduced words $\a$ arising from such paths are precisely 
those with $\sh(\a;-z) \in \NCSM^{\mathrm{BI}}_k(\gamma)$;
in particular, Lemma~\ref{tbbb-lem} implies the equivalence with the alternating condition 
on
the $k$ innermost trivial blocks.
Since $\sh(\a;-z) = \sh_k(w)$ by Theorem~\ref{rel-prop1},
we conclude that $w \in \cAB(\cO)$
if and only if $w \in \cAk(z,M)$ for some $M \in \NCSM^{\mathrm{BI}}_k(\gamma)$.
Using \eqref{prime-eq}, the number of doubled edges in our path is 
 $ \ell'(-y^{\BC}_{n,k}) - \ell'(-z) - \pair(\sh_k(w))$. By  \eqref{ell'} we have 
 \[
  \ell'(-y^{\BC}_{n,k}) - \ell'(-z) = (n-k) - (n - \ell'(z)) = \ell'(z) - k,\]
while $\pair(\sh_k(w)) =  |\{ a \in [n] : a \leq -z(a)\}| -\ell_0(w\cdot g^{\BC}_{n,k})$ by Proposition~\ref{pair-prop}.
Notice that 
\[
 |\{ a \in [n] : a \leq -z(a)\}| =  |\{ a \in [n] : z(a) \leq -a\}| =  |\{ a \in [n] : -a\leq z(a)<0\}|
 \]
 while by \eqref{ell'} we have
 \[
 \ell'(z)=\neg(z) + \pair(z) = |\{ b \in [n] : -b\leq z(b)<b\}|.
 \]
Finally, since the product $w\cdot g^{\BC}_{n,k}$ is length additive, 
we have \[\ell_0(w\cdot g^{\BC}_{n,k}) = \ell_0(w) + \ell_0(g^{\BC}_{n,k}) = \ell_0(w) +k.\]
Putting these observations together gives \[d_\cO(w^{-1}) =\ell'(z) - k -\pair(\sh_k(w)) 
= |\{ i\in [n] : 0<z(i) < i\}| +  \ell_0(w) .
 \]
Since $\ell_0(w)=\ell_0(w^{-1})$, this is 
the desired formula.
\end{proof}

%

\begin{example}\label{brion-ex1}
Suppose  $\gamma = (-, +, +, -, -, -, +, +, -)$, which is a symmetric $(2p,2q+1)$-clan with $p=q=2$ and $k =\lfloor |p-q-\frac{1}{2}|\rfloor=0$.
Let $z := -\BCDphi(\gamma) = \bar1\hs\bar2\hs\bar3\hs\bar4$.  Then $\NCSM^{\mathrm{BI}}_k(\gamma)$ has 2 elements 
\[
\ba   \{\{-4, 4\}, \{-3, 3\}, \{-2, -1\}, \{1, 2\}\} &= 
\arcstart{
*{\bullet}  \arc{1.2}{rrrrrrr}    & *{\bullet}   \arc{0.9}{rrrrr}    & *{\bullet}   \arc{0.4}{r}  & *{\bullet}       & *{\bullet}  \arc{0.4}{r} & *{\bullet}    & *{\bullet}   & *{\bullet}    
} 
\arcstop,
\\[-10pt]
\\
\{ \{-4, -3\}, \{-2, -1\}, \{1, 2\},\{3, 4\} \} &= \arcstart
{
*{\bullet}  \arc{0.4}{r}    & *{\bullet}      & *{\bullet}   \arc{0.4}{r}  & *{\bullet}     & *{\bullet}  \arc{0.4}{r}  & *{\bullet}   & *{\bullet} \arc{0.4}{r}   & *{\bullet}    
} 
\arcstop,
\ea
\]
while the larger set $\NCSM(z)$ has size 6.
Consulting Example~\ref{intro-ex}, we see that in this case 
\[\cAB(\cO_\gamma)=
\{\bar 4\hs\bar31\bar2, 3\bar41\bar2, 13\bar4\hs\bar2,1\bar23\bar4\}^{-1}
=\{3\bar4\hs\bar2\hs\bar1, 3\bar41\bar2, 1\bar42\bar3,1\bar23\bar4\}.\]
Here, we have $d_\cO(3\bar4\hs\bar2\hs\bar1) = 3$ and $d_\cO(3\bar41\bar2) = d_\cO(1\bar42\bar3) = d_\cO(1\bar23\bar4) =2 $.
\end{example}

\begin{example}\label{brion-ex2b}
If instead $\gamma = (+, +, +, -, -, -, +, +, +)$ then $p=3$, $q=2$, and $k = \lfloor |p-q-\frac{1}{2}|\rfloor = 1$.
Then $\NCSM^{\mathrm{BI}}_k(\gamma)$ 
for $z := -\BCDphi(\gamma) = \bar1\hs\bar2\hs\bar3\hs\bar4$
only contains $\{\{-4, 4\}, \{-3, 3\}, \{-2, -1\}, \{1, 2\}\} $
and we have $\cAB(\cO_\gamma)=
\{ 4\hs\bar31\bar2\}^{-1}
=\{3\bar4\hs\bar21\}$ and $d_\cO(3\bar4\hs\bar2 1) = 2$.
\end{example}

\section{Hecke atoms}\label{hecke-sect}
In this section, we include a few statements about 
the sets  $\HA(z) = \{ w \in \W_n : w^{-1}\circ w = z\}$ for $z \in \I(\W_n)$.
%
Let $\approx_A$ be the transitive closure of the symmetric relation on words with
\be\label{hecke-eq}
\cdots bca \cdots \approx_A \cdots cab \cdots \approx_A \cdots cba \cdots
\ee
whenever $a < b < c$, where the corresponding symbols $\cdots$ mask identical subwords.
This relation generates the so-called \emph{Chinese monoid} studied previously in \cite{CEHKN,DuchampKrob}.
Results in \cite[\S6.1]{HMP2} show that 
%
the $\approx_A$-equivalence classes in $S_n$ are the sets $\HA(z)^{-1}$ for $z \in \I(S_n)$.
A similar result holds for the affine symmetric group \cite[Proposition 1.10]{M}.
%
Here, we  describe a signed extension of this property.
We apply $\approx_A$ to signed permutations via their one-line representations.

\begin{lemma}\label{post-a-approx-lem}
If $v,w \in \W_n$ are such that  $v \approx_A w$,
then $v\circ v^{-1} = w\circ w^{-1}$.
\end{lemma}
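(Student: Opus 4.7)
The plan is to reduce to Theorem~\ref{hecke-a-thm} via the injective monoid embedding $\Psi_n : (W_n, \circ) \hookrightarrow (S_{2n}, \circ)$. Since $\Psi_n$ is a monoid homomorphism under the Demazure product, we have $\Psi_n(v \circ v^{-1}) = \Psi_n(v) \circ \Psi_n(v)^{-1}$ and likewise for $w$, so by injectivity of $\Psi_n$ it suffices to establish $\Psi_n(v) \circ \Psi_n(v)^{-1} = \Psi_n(w) \circ \Psi_n(w)^{-1}$ in $S_{2n}$; by Theorem~\ref{hecke-a-thm}, this will follow as soon as we check that $\Psi_n(v) \approx_A \Psi_n(w)$ in $S_{2n}$. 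By transitivity, it is enough to handle a single elementary $\approx_A$ move.

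The first step is to work out the one-line representation of $\Psi_n(\sigma)$ explicitly. The order-preserving bijection $\psi : [\pm n] \to [2n]$ satisfies $\psi(-m) = 2n+1 - \psi(m)$ for all $m \in [\pm n]$, and combined with the identity $\Psi_n(\sigma) = \psi \circ \sigma \circ \psi^{-1}$ one checks that for $\sigma = \sigma_1\sigma_2\cdots\sigma_n \in W_n$ the one-line representation of $\Psi_n(\sigma) \in S_{2n}$ is
\[
(2n+1-\psi(\sigma_n))(2n+1-\psi(\sigma_{n-1}))\cdots (2n+1-\psi(\sigma_1))\,\psi(\sigma_1)\psi(\sigma_2)\cdots \psi(\sigma_n).
\]

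Next I would analyze how a single $\approx_A$ move on $v$, acting at positions $i, i+1, i+2$ for some $i \in [n-2]$, translates to $\Psi_n(v)$. It affects exactly two disjoint 3-position blocks in $[2n]$: the block at positions $n+i, n+i+1, n+i+2$, where the subword is simply $\psi$ applied entrywise to $v_iv_{i+1}v_{i+2}$ (and so the pattern type is preserved, since $\psi$ is order-preserving); and the reflected block at positions $n-1-i, n-i, n+1-i$, where the subword equals $(2n+1-\psi(v_{i+2}))(2n+1-\psi(v_{i+1}))(2n+1-\psi(v_i))$. The composition of reversal and complementation $x \mapsto 2n+1-x$ sends patterns by $cba \mapsto cba$, $cab \mapsto bca$, $bca \mapsto cab$, and in particular stabilizes the set $\{cba, cab, bca\}$. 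Consequently each of the two blocks changes by an $\approx_A$ move (or remains fixed) when we pass from $\Psi_n(v)$ to $\Psi_n(w)$, and since the blocks occupy disjoint position ranges we may perform the two moves in succession through a well-defined intermediate element of $S_{2n}$. This yields $\Psi_n(v) \approx_A \Psi_n(w)$, as required.

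The main obstacle is really just the bookkeeping in the last step: one has to verify the explicit action of reversal-plus-complementation on the three patterns $cba, cab, bca$, and confirm that the two affected 3-blocks in $[2n]$ are disjoint for every $i \in [n-2]$ (they are, because $n+1-i \leq n < n+i$). Both points are direct calculations from the formula above.
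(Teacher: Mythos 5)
Your proof is correct, but it takes a genuinely different route from the paper's. The paper argues directly inside $W_n$: it writes the three words related by a single $\approx_A$ move as $u=\sigma t_it_{i+1}t_i$, $v=\sigma t_{i+1}t_i$, $w=\sigma t_it_{i+1}$ for a common $\sigma \in W_n$ with the appropriate length conditions, and then observes that $t_it_{i+1}t_i \circ t_it_{i+1}t_i = t_{i+1}t_i \circ t_it_{i+1} = t_it_{i+1} \circ t_{i+1}t_i$, which immediately gives $u\circ u^{-1}=v\circ v^{-1}=w\circ w^{-1}$. You instead push everything through the embedding $\Psi_n:(W_n,\circ)\to(S_{2n},\circ)$ and invoke the type A result (Theorem~\ref{hecke-a-thm}); the substance of your argument is the observation that a one-line $\approx_A$ move on a signed permutation induces two disjoint $\approx_A$ moves on its image in $S_{2n}$, because reversal composed with complementation stabilizes the pattern set $\{cba,cab,bca\}$ (fixing $cba$ and swapping $cab\leftrightarrow bca$). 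Your pattern bookkeeping and the disjointness check $n-i+1\leq n<n+i$ are both right, and all the ingredients you need ($\Psi_n$ injective, a monoid homomorphism for $\circ$) are established in Section~\ref{embed-sect}. What the paper's argument buys is self-containment and brevity; what yours buys is consistency with a technique the paper itself uses for $\vartriangleleft_A$ (compare the proof of Corollary~\ref{isatom-lem}, where a single $\vartriangleleft_A$ move in $W_n$ is likewise resolved into moves on the two mirrored blocks in $S_{2n}$), and it makes the type B statement an essentially formal consequence of the type A theorem rather than a fresh computation.
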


\begin{proof}
Suppose $u,v,w \in \W_n$ satisfy $ u =\cdots bca \cdots \approx_A v = \cdots cab \cdots \approx_A w = \cdots cba \cdots$ for numbers $a<b<c$ appearing in positions $i$, $i+1$, and $i+2$.
Then  $u=\sigma t_it_{i+1}t_i = \sigma t_{i+1}t_it_{i+1}$, $v = \sigma t_{i+1}t_{i}$, and $w=\sigma t_{i}t_{i+1}$
for some $\sigma \in \W_n$ with $\ell(u) =\ell(\sigma)+3$ and $\ell(w) =\ell(v) = \ell(\sigma) + 2$.
It follows that $u\circ u^{-1} = v\circ v^{-1} = w \circ w^{-1}$ since 
$ t_it_{i+1}t_i \circ t_it_{i+1}t_i = t_{i+1}t_i \circ t_i t_{i+1} = t_i t_{i+1}\circ t_{i+1} t_i$.
\end{proof}

Define
 $\approx_B$ to be the transitive closure of $\approx_A$ together with the
 relation on $n$-letter words
 that
has $u\approx_B v \approx_B w$ either if
 $u_j=v_j=w_j$ for $j \notin\{1,2\}$ while
\be\label{hb-eq1}
u_1u_{2} = \bar{a}\bar{b},
\qquad
v_1v_{2} = \bar{b}\bar{a},
\qquand
w_1w_{2} =a\bar{b}\ee
for numbers $0<a<b$, or if
$u_j=v_j=w_j$ for $j \notin \{1,2,3\}$ while
\be\label{hb-eq2}
u_1u_2u_3 = \bar{c}\hs\bar{a}\bar{b},
\qquad
v_1v_2v_3 = \bar{c}\bar{b}\bar{a},
\qquand
w_1w_2w_3 =\bar{c}a\bar{b}\ee
for numbers $0<a<b<c$.
Then $\approx_B$   includes $<_A$, $\sim_A$, $\approx_A$, $<_B$, and $\sim_B$ as subrelations.
Further relations can be derived by combining~\eqref{hb-eq1} and~\eqref{hb-eq2}.
For example, $\bar 3 1 \bar 2 \approx_B \bar1\hs\bar2\hs\bar3$ via the chain $\bar 3 1 \bar 2 \approx_B \overline{3} \hs \overline{1} \hs\overline{2} \approx_B \overline{1}\hs\overline{3}\hs\overline{2} \approx_A \overline{1} \hs \overline{2} \hs \overline{3}$ where the first $\approx_B$ is from~\eqref{hb-eq2} and the second $\approx_B$ is from~\eqref{hb-eq1}.

\begin{lemma}\label{h-con-lem}
If $v,w \in \W_n$ are such that $v \approx_B w$,
then $v\circ v^{-1}=w\circ w^{-1}$.
\end{lemma}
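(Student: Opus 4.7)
By Lemma~\ref{post-a-approx-lem}, the generators of $\approx_A$ already preserve $v \circ v^{-1}$, so it suffices to verify this property for the two additional families of generators appearing in \eqref{hb-eq1} and \eqref{hb-eq2}.

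The plan is to mimic the proof of Lemma~\ref{post-a-approx-lem} by factoring out a common ``prefix.'' Given $u,v,w \in W_n$ related by \eqref{hb-eq1}, let $\sigma \in W_n$ be the signed permutation with one-line representation $a\hs b\hs u_3u_4 \cdots u_n$; then one verifies directly that $u = \sigma \cdot \tau_u$, $v = \sigma \cdot \tau_v$, $w = \sigma \cdot \tau_w$, where $(\tau_u,\tau_v,\tau_w)$ are the elements of the rank-$2$ parabolic subgroup $\langle t_0, t_1\rangle \cong W_2$ with respective one-line representations $\bar{1}\hs\bar{2}$, $\bar{2}\hs\bar{1}$, and $1\hs\bar{2}$. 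For \eqref{hb-eq2}, take $\sigma$ with one-line representation $a\hs b\hs c\hs u_4 \cdots u_n$, so that $u = \sigma\cdot\tau_u$, etc.\ for $(\tau_u,\tau_v,\tau_w) = (\bar{3}\hs\bar{1}\hs\bar{2},\ \bar{3}\hs\bar{2}\hs\bar{1},\ \bar{3}\hs 1\hs\bar{2})$ inside the rank-$3$ parabolic $\langle t_0,t_1,t_2\rangle \cong W_3$.

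The first main step will be to check that each of these factorizations is reduced, i.e., $\ell(u) = \ell(\sigma) + \ell(\tau_u)$ and similarly for $v,w$. This can be done by appending the simple generators of $\tau_u$ (in a fixed reduced word) one at a time to $\sigma$ and confirming at each stage that no right descent is produced; the choice of $\sigma$ with positive, increasing first few entries guarantees this. Once reducedness holds, associativity of the Demazure product and the identity $(\alpha\circ\beta)^{-1} = \beta^{-1}\circ\alpha^{-1}$ give
\[
u\circ u^{-1} \;=\; \sigma \circ (\tau_u \circ \tau_u^{-1}) \circ \sigma^{-1},
\]
and similarly for $v$ and $w$, so the problem reduces to showing that $\tau\circ\tau^{-1}$ takes the same value for all three choices $\tau\in\{\tau_u,\tau_v,\tau_w\}$.

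Finally, the small-parabolic calculation is done by hand. For \eqref{hb-eq1}, each of $\bar{1}\hs\bar{2}$, $\bar{2}\hs\bar{1}$, $1\hs\bar{2}$ is an involution in $W_2$, and a direct Demazure computation using the reduced expressions $\bar{1}\hs\bar{2} = t_0t_1t_0t_1$, $\bar{2}\hs\bar{1}=t_0t_1t_0$, $1\hs\bar{2}=t_1t_0t_1$ together with the idempotent rule $s\circ s = s$ shows that each Demazure-squares to the longest element $\bar{1}\hs\bar{2}\in W_2$. For \eqref{hb-eq2}, the corresponding claim is that each $\tau\in\{\tau_u,\tau_v,\tau_w\}$ satisfies $\tau\circ\tau^{-1} = \bar{1}\hs\bar{2}\hs\bar{3}$, the longest element of $W_3$. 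This is immediate for $\tau_v=\bar{3}\hs\bar{2}\hs\bar{1} = 0_B(\bar{1}\hs\bar{2}\hs\bar{3})$ by Lemma~\ref{0isatom-lem}, immediate for $\tau_w=\bar{3}\hs 1\hs\bar{2}$ via Lemma~\ref{<B-lem} since $\tau_v\vartriangleleft_B\tau_w$ (the second relation in \eqref{<B-eq}), and checkable for $\tau_u=\bar{3}\hs\bar{1}\hs\bar{2}$ by observing that $\tau_u = \tau_v\cdot t_2$ is a reduced factorization, so $\tau_u\circ\tau_u^{-1}$ can be evaluated step by step along the word $\tau_v\cdot t_2 \cdot t_2\cdot \tau_v^{-1}$; one sees that the longest element $\bar{1}\hs\bar{2}\hs\bar{3}$ of $W_3$ is reached after only a few Demazure multiplications and then absorbs the rest.

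The main obstacle I expect is the reducedness check for the factorizations $u=\sigma\cdot\tau_u$ (and the analogues for $v,w$), which requires careful bookkeeping of descents as each simple generator in a reduced word for $\tau$ is appended. Once this is in place, the remaining small-parabolic verification is essentially a finite computation.
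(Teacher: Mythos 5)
Your proposal is correct and follows essentially the same route as the paper's proof: both factor $u,v,w$ as reduced Demazure products $\sigma\circ\tau$ with a common prefix $\sigma$ and the same three tails $\tau \in \{\bar1\hs\bar2,\ \bar2\hs\bar1,\ 1\bar2\}$ (resp.\ $\{\bar3\hs\bar1\hs\bar2,\ \bar3\hs\bar2\hs\bar1,\ \bar3 1\bar2\}$), and then reduce to the finite check that $\tau\circ\tau^{-1}$ equals $\bar1\hs\bar2$ (resp.\ $\bar1\hs\bar2\hs\bar3$) in each case. Your version merely makes explicit the choice of $\sigma$ and the step-by-step reducedness verification that the paper leaves implicit.
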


\begin{proof}
If $u$, $v$, $w$ are as in \eqref{hb-eq1}
then there exists a common element $\sigma \in \W_n$
such that 
$u = \sigma\circ \bar{1}\hs\bar{2}$,
$v = \sigma\circ \bar{2}\hs\bar{1}$, $w = \sigma\circ 1 \bar 2$,
and $\ell(u) -1 =\ell(v) =\ell(w) = \ell(\sigma)+3$.
If $u$, $v$, $w$ are as in \eqref{hb-eq2}
then there exists $\sigma \in \W_n$
such that 
$u =\sigma \circ \bar{3}\hs\bar{1}\hs\bar{2}$,
$v = \sigma\circ \bar{3}\hs\bar{2}\hs\bar{1}$,  $w = \sigma\circ \bar3 1\bar 2$,
and $\ell(u)-1=\ell(v)=\ell(w) =\ell(\sigma) = 6$.
The lemma follows by checking 
that the inverses of
$ \bar{1}\hs\bar{2} = t_1t_0t_1t_0,$ $ \bar{2}\hs\bar{1}=t_0t_1t_0,$
and $ 1\bar 2=t_1t_0t_1$ are all Hecke atoms of $\bar{1}\hs\bar{2}$,
while the inverses of
$ \bar{3}\hs\bar{1}\hs\bar{2} = t_1t_0t_1t_0t_2t_1t_0,$ $ \bar{3}\hs\bar{2}\hs\bar{1}=t_0t_1t_0t_2t_1t_0,$ and $ \bar 3 1\bar 2 = t_1t_0t_1t_2t_1t_0$
are all Hecke atoms of $\bar{1}\hs\bar{2}\hs\bar{3}$.
\end{proof}

\begin{lemma}\label{h-pre-lem}
Let $u,v,w$ be words with $n$ letters.
Suppose, for some $i \in [n-1]$, 
that 
$v_1 < v_2 < \dots < v_i = u_{i+1} = w_{i+1} < v_{i+1}  = u_i = -w_i < 0$
and $u_j=v_j=w_j$ if $j\notin\{i,i+1\}$. Then $u\approx_B v \approx_B w$.
\end{lemma}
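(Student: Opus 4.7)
The plan is to induct on $i$, mirroring the proof of Lemma~\ref{jump-lem} but tracking the three words $u$, $v$, $w$ in parallel. Set $a = -v_{i+1}$ and $b = -v_i$, so that $0 < a < b$ and the three words agree on every position outside $\{i, i+1\}$, where they read $\bar a\bar b$, $\bar b\bar a$, and $a\bar b$, respectively. Note that $v \approx_B w$ alone is already a consequence of Lemma~\ref{jump-lem} and the containment $\sim_B \subseteq \approx_B$, so the real work is to wedge $u$ into the same equivalence chain.

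The two base cases fall out directly from the definitions. If $i = 1$ then $u, v, w$ differ only at positions $1, 2$, which are exactly the three prefixes appearing in \eqref{hb-eq1}. If $i = 2$ then $v_1 < v_2 = \bar b$ forces $v_1 = \bar c$ for some $c > b$, so the first three entries of $u, v, w$ are $\bar c\bar a\bar b$, $\bar c\bar b\bar a$, $\bar c a\bar b$, matching \eqref{hb-eq2}. Either way, $u \approx_B v \approx_B w$ is immediate.

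For the inductive step $i \geq 3$, I would apply to all three words simultaneously the two-part construction from the proof of Lemma~\ref{jump-lem}. First, form $u', v', w'$ by successively replacing $v_{j-1}v_j$ with $\bar{v_j}v_{j-1}$ for each even $j < i$, taken in decreasing order; since $u_j = v_j = w_j$ for $j < i$, each replacement acts identically on all three words, and each is a $\vartriangleleft^+_B$-move at an index strictly less than $i$, hence a $\sim_B$-equivalence by Lemma~\ref{jump-lem}. Second, define $u'', v'', w''$ by removing the two-letter subword at positions $i, i+1$ (if $i$ is odd) or the three-letter subword at positions $i-1, i, i+1$ (if $i$ is even) and placing it at the start, via a common sequence of $\vartriangleleft_A$-moves. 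The final words then agree outside of their first two or three entries, which are exactly the prefixes in \eqref{hb-eq1} or \eqref{hb-eq2}; applying the defining relation gives $u'' \approx_B v'' \approx_B w''$, and chaining all the equivalences yields $u \approx_B v \approx_B w$.

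The main obstacle is verifying that the bubbling in the second step can be carried out by the same chain of $\vartriangleleft_A$-moves in all three words, despite the different pair-entries they carry. The crucial observation is that after the first step, every entry $v_j$ with $j < i$ has absolute value strictly greater than $b$, whereas each possible pair-entry ($\bar a$, $\bar b$, or $a$) has absolute value at most $b$. Consequently, every triple acted on during the bubbling has the shape $(\bar{v_k}, v_j, \beta)$ with $v_j < \beta < \bar{v_k}$, which is a $cab$-pattern regardless of which word we are in and which of $\bar a, \bar b, a$ plays the role of $\beta$. The uniform move $cab \vartriangleleft_A bca$ therefore applies in lockstep to $u', v', w'$, producing $u' <_A u''$, $v' <_A v''$, and $w' <_A w''$ by the very same sequence of $\vartriangleleft_A$-steps. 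This uniformity — ultimately a consequence of the magnitude inequality $|\beta| \leq b < |v_j|$ — is what the whole argument rests on.
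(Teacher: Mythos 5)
Your proof is correct and follows essentially the same route as the paper's: it establishes $v \approx_B w$ via Lemma~\ref{jump-lem}, then runs the two-step construction from that lemma's proof on $u$ in parallel so that all three words land in the prefix configurations \eqref{hb-eq1} or \eqref{hb-eq2}. The only real difference is that you spell out, via the magnitude inequality, the uniform chain of $\vartriangleleft_A$-moves giving $u' <_A u''$ and $v' <_A v''$, which the paper leaves as an exercise.
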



\begin{proof}
Define $v' $ and $v''$ from $v$ as in the proof of Lemma~\ref{jump-lem}. 
That result already shows that $v \sim_B w$, which implies $v\approx_B w$,
so we only need to check that $u\approx_B v$. 
Define $u'$ and $u''$ from $u$ analogously: in other words,
form $u'$ from $u$
by replacing  $u_{j-1}u_j$ by $\bar{u_j}u_{j-1}$ for each even index $j<i$;
then, if $i$ is odd (respectively, even), define $u''$ by removing the subword $u_iu_{i+1}$ 
(respectively, $u_{i-1}u_iu_{i+1}$)
from $u'$ and placing it at the start of the word.
Lemma~\ref{jump-lem} shows that that $u\sim_B u'$ and $v'\sim_B v$,
it is an exercise to check that $u' < _A u''$ and $v' <_A v''$,
and  by definition  $u'' \approx_B v''$.
\end{proof}


\begin{theorem}
The $\approx_B$-equivalence classes in $\W_n$ are  the sets $\HA(z)^{-1}$ for $z \in \I(\W_n)$.
\end{theorem}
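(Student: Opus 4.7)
The plan is to prove the two containments in the claimed equality separately.

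For the inclusion that each $\approx_B$-equivalence class lies within a single set $\HA(z)^{-1}$, Lemmas~\ref{post-a-approx-lem} and~\ref{h-con-lem} suffice: together they verify that every generating relation of $\approx_B$ preserves the Demazure product $u \mapsto u \circ u^{-1}$, whose fibers are exactly the sets $\HA(z)^{-1}$ for $z \in \I(W_n)$.

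For the reverse inclusion, I would show each $\HA(z)^{-1}$ is a single $\approx_B$-equivalence class. The first step is to observe that $\cA(z)^{-1}$ is already a single $\approx_B$-class: Corollary~\ref{hz-cor1} identifies it as a single $\sim_B$-class, and one readily checks that $\sim_B \subseteq \approx_B$ since each generating $\vartriangleleft_A$- or $\vartriangleleft_B$-relation sits among the generators of $\approx_B$. It therefore suffices to $\approx_B$-connect each $w \in \HA(z)^{-1}$ to some atom in $\cA(z)^{-1}$. My plan is to use the monoid embedding $\Psi_n : W_n \hookrightarrow S_{2n}$ from Section~\ref{embed-sect} together with Theorem~\ref{hecke-a-thm}. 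Fix any atom $w_0 \in \cA(z)^{-1}$. Then $\Psi_n(w)$ and $\Psi_n(w_0)$ both lie in $\HA(\Psi_n(z))^{-1}$, which by Theorem~\ref{hecke-a-thm} is a single $\approx_A$-class in $S_{2n}$, so the two images are connected by a chain of $\approx_A$-moves in $S_{2n}$. What remains is to lift this chain to an $\approx_B$-chain in $W_n$.

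The main obstacle is this lifting. The image $\Psi_n(W_n) \subset S_{2n}$ consists of permutations $\tau$ satisfying the signed symmetry $\tau(2n+1-i) = 2n+1-\tau(i)$, and a single $\approx_A$-move in $S_{2n}$ typically breaks this symmetry. The idea is to refine any $\approx_A$-chain between $\Psi_n(w)$ and $\Psi_n(w_0)$ so that each step pairs an $\approx_A$-move at positions $j,j+1,j+2$ with its mirror at the symmetric positions $2n-j-1, 2n-j, 2n-j+1$. When the two paired supports are disjoint---which holds precisely when the move avoids the middle of $[2n]$---they jointly realize a single interior $\approx_A$-move in the $W_n$ one-line representation. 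When they overlap, which happens exactly for moves straddling positions $n$ and $n+1$, the combined self-symmetric operation corresponds to one of the initial type-B relations~\eqref{hb-eq1} or~\eqref{hb-eq2} built into $\approx_B$. A case analysis on the position of each $S_{2n}$-move, combined with the observation that the central involution $\tau \mapsto w_0^{S_{2n}} \tau w_0^{S_{2n}}$ acts on $\approx_A$-chains and fixes both endpoints, then produces a symmetric refinement that lifts step by step to an $\approx_B$-chain in $W_n$, completing the argument.
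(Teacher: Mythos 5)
Your first containment is exactly the paper's: Lemmas~\ref{post-a-approx-lem} and~\ref{h-con-lem} show that every generator of $\approx_B$ preserves $u \mapsto u\circ u^{-1}$, so each $\approx_B$-class lies in a single set $\HA(z)^{-1}$. Your reduction of the converse to connecting an arbitrary $w\in\HA(z)^{-1}$ to the set $\cA(z)^{-1}$, using Corollary~\ref{hz-cor1} together with the observation that ${\sim_B}$ is a subrelation of ${\approx_B}$, is also sound and matches the skeleton of the paper's argument.

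The gap is in the lifting step, which is where the entire difficulty of the theorem lives. Knowing that $\Psi_n(w)$ and $\Psi_n(w_0)$ lie in the same $\approx_A$-class of $S_{2n}$ gives you \emph{some} chain of $\approx_A$-moves between them, but that chain passes through arbitrary, non-symmetric permutations and its moves need not come in mirror pairs. You cannot simply ``refine'' it into a chain of paired moves: applying the central involution $\tau\mapsto \omega\tau\omega$ (with $\omega$ the longest element of $S_{2n}$) produces a \emph{second} chain with the same endpoints, not a single chain passing through symmetric elements, and there is no canonical way to merge the two. Proving that the symmetric elements of an $\approx_A$-class are connected by symmetric moves is precisely the content of the theorem transported through $\Psi_n$; this is the standard failure mode of folding arguments, and it is why the analogous connectivity statement for reduced words (Corollary~\ref{hz-cor2}, due to Hu and Zhang) required genuine work. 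There is also a secondary problem with your local dictionary: a single straddling $\approx_A$-move paired with its mirror touches only the middle four positions of the $2n$-letter word, whereas the relation \eqref{hb-eq2} lives on the middle six, so matching symmetric middle operations to the initial relations \eqref{hb-eq1}--\eqref{hb-eq2} requires grouping several $S_{2n}$-moves and is not delivered by a move-by-move case analysis. The paper sidesteps all of this by staying inside $W_n$: it applies Theorems~\ref{0-z-thm} and~\ref{hecke-a-thm} to the $n$-letter one-line word to reach a canonical form $v=[[b_1a_1b_2a_2\cdots b_la_l]]$, and then runs an explicit terminating algorithm (alternating $\sim_A$-shuffles with applications of Lemma~\ref{h-pre-lem}) that eliminates each descent $b_ja_j$ with $a_j<b_j<|a_j|$ and lands on $0_B(z)$ for some $z\in\I(W_n)$. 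To repair your proof you would need to replace the asserted ``symmetric refinement'' with an argument of comparable explicitness.
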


\begin{proof}
Lemma~\ref{h-con-lem}
implies that each set $\HA(z)^{-1}$ for $z \in \I(\W_n)$
is preserved by $\approx_B$.
Let $w \in \W_n$.
It suffices to show that $w$ is equivalent under $\approx_B$
to an element of $\cA(z)^{-1}$ for some $z \in \I(\W_n)$.
By Theorem~\ref{0-z-thm}, we have $v\approx_A w$
for a signed permutation $v \in \W_n$ of the form $v=[[b_1a_1b_2a_2\cdots b_l a_l]] $
where $a_i\leq b_i$ for each $i \in [l]$ and $a_1<a_2<\dots<a_l$.
Consider the minimal index $j$ with $a_j<b_j <|a_j|$, if such an index exists.
We outline a procedure to create another signed permutation $u \approx_B v$ whose first one-line descent occurs strictly farther to the right:
\begin{enumerate}
	\item Use the relation $\sim_A$ to move all descents $b_ia_i$ with $i<j$ and $a_j<0<|a_i|<b_i$ to the right of $b_ja_j$.
	\item Apply Lemma~\ref{h-pre-lem} to change $b_ja_j$ to $a_j\bar{c}$
where $c =|b_j| < -a_j$.
	\item Use $\sim_A$ to move the descents $b_ia_i$ back to their original positions.
	\item Use $\approx_A$ to transform the subword $[[\hs\bar c\hs b_{j+1}a_{j+1} \cdots b_la_l]]$
 to a word of the form $[[b'_{1}a'_{1}\cdots b'_ka'_k]]$ where $a_i'\leq b_i'$ for each $i \in [k]$ and 
 $a_1<\dots<a_{j} < a_1' < \dots <a_k'$.
\end{enumerate}
The resulting $u$ is equivalent to $v$ under $\approx_B$ and of the same form, but in which the 
first occurrence of a one-line descent $ba$ with $a<b<|a|$, if one exists, is farther to the right than before.
By repeating this process and replacing $v$ with the result,
we may assume that $w\approx_B v$ where $v$, defined as above, has no descents $b_ja_j$ with $a_j<b_j<|a_j|$.
This element $v \in \W_n$ is equal to $0_B(z) \in \cA(z)^{-1}$ for the involution $z \in \I(\W_n)$
whose cycles in $[\pm n]$ consist of $\{ -a_j,a_j\}$ for each $j \in [l]$ with $a_j=b_j$
together with $\{a_j,b_j\}$ and $\{-a_j,-b_j\}$ for each $j \in [l]$ with $a_j < b_j$.
\end{proof}

\section{Atomic elements}\label{atomic-sect}

An involution $z$ in a Coxeter group is \emph{atomic} if $|\cA(z)| = 1$.
An element of a Coxeter group is \emph{fully commutative} if each of its reduced words can be transformed to any other
by a sequence commutations. 
A fully commutative involution is always atomic \cite[Proposition 7.12]{HMP2}.
The converse holds  in type A 
\cite[Corollary 6.17]{M2}, but does not extend to $\W_n$.
Theorem~\ref{minmax-thm} implies:

%
%
%


\begin{proposition}\label{mem-prop}
Let $z \in \I(\W_n)$ and define $M_{\min} = \{ \{-i,i\} : i \in \Neg(z)\}$.
Then $z$
 is atomic if and only if $\neg(z) \leq 1$ and $0_B(z) =0_B(z,M_{\min})= 1_B(z,M_{\min})$.
\end{proposition}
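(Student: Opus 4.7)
The plan is to deduce this proposition directly from the combinatorial description of $\cA(z)^{-1}$ established in the preceding sections. Since $z$ is atomic exactly when $|\cA(z)^{-1}|=1$, I would break this equality into two independent requirements: that $(\cA(z)^{-1},<_A)$ has a single connected component, and that this component consists of a single element.

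First, I would recall that, by Theorem~\ref{minmax-thm} together with Corollary~\ref{cV-cor}, the $\sim_A$-equivalence classes of $\cA(z)^{-1}$ are in bijection with $\NCSM(z)$: each class is $\XA(\zeta)$ for some involution $\zeta \in \I(S_X)$, with unique $<_A$-minimum $0_B(z,M)$ and unique $<_A$-maximum $1_B(z,M)$ for the corresponding matching $M \in \NCSM(z)$. By Theorem~\ref{0-z-thm}, each such class is in fact the full $<_A$-interval between these two endpoints, so it reduces to a single point precisely when $0_B(z,M) = 1_B(z,M)$.

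Next, I would count components: $\NCSM(z)$ is the set of perfect noncrossing symmetric matchings on $\Neg(z) \sqcup -\Neg(z)$, and as recalled at the start of Section~\ref{shapes-sect} this set has cardinality $\binom{m}{\lfloor m/2 \rfloor}$ where $m = \neg(z)$. This equals $1$ precisely when $m \leq 1$, and in that case the sole matching is $M_{\min} = \{\{-i,i\} : i \in \Neg(z)\}$. Combining both conditions, $z$ is atomic if and only if $\neg(z) \leq 1$ and $0_B(z,M_{\min}) = 1_B(z,M_{\min})$. Since $0_B(z) = 0_B(z,M_{\min})$ holds by the very definition of $M_{\min}$ (as noted explicitly in the paragraph preceding Corollary~\ref{isatom-lem}), this is exactly the equivalence asserted in the proposition.

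I do not anticipate any substantive obstacle: the entire argument is a bookkeeping application of Theorems~\ref{0-z-thm}, \ref{minmax-thm}, and Corollary~\ref{cV-cor}. The only minor subtlety worth verifying carefully is that a single $\sim_A$-component collapses to a point exactly when its extreme elements coincide, but this is immediate from the interval description of each component via Theorem~\ref{0-z-thm}.
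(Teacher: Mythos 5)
Your argument is correct and is exactly the route the paper intends: the paper states this proposition as ``evident from Theorem~\ref{minmax-thm},'' and your write-up simply makes explicit the bookkeeping (components of $(\cA(z)^{-1},<_A)$ indexed by $\NCSM(z)$ via Corollary~\ref{cV-cor} and Theorem~\ref{minmax-thm}, each component an interval by Theorem~\ref{0-z-thm}, and $|\NCSM(z)|=1$ iff $\neg(z)\leq 1$). No gaps.
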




\begin{proposition}\label{nn-lem}
An involution in $\W_n$ is atomic if and only if
it has at most one negated point and 
it does not have two cycles $\{a,d\},\{b,c\}$ in $[\pm n]$
with $-d \neq a<b\leq c < d$.
\end{proposition}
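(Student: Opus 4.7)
My plan is to reduce the question to the type A characterization of atomic involutions. By Proposition~\ref{mem-prop}, an involution $z \in \I(W_n)$ is atomic if and only if $\neg(z) \leq 1$ and $0_B(z) = 0_B(z, M_{\min}) = 1_B(z, M_{\min})$, where $M_{\min} = \{\{-i,i\} : i \in \Neg(z)\}$. From the proof of Proposition~\ref{01-prop}, these two words are precisely $0_A(\zeta)$ and $1_A(\zeta)$ for a certain involution $\zeta \in \I(S_X)$, where $X$ is the set of letters in $0_B(z)$ and the cycles of $\zeta$ consist of the pairs in $\Pair(z)$ together with singletons $\{-i\}$ for $i \in \Neg(z)$ and $\{i\}$ for $i \in \Fix(z)$. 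Invoking the type A atomic characterization (the theorem stated just before Corollary~\ref{sis-cor}), I then have $0_B(z) = 1_B(z)$ if and only if $\zeta$ admits no pair of cycles $\{a',d'\}, \{b',c'\}$ in $X$ satisfying $a' < b' \leq c' < d'$.

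What remains is to verify that this nested-cycle condition on $\zeta$ translates into the asserted nested-cycle condition on $z$. The dictionary between cycles is: each 2-cycle of $\zeta$ is a non-symmetric 2-cycle of $z$ (specifically, the $\Pair(z)$-representative, with $d > 0$ and $|a| < d$, of a symmetric pair $\{C,-C\}$ of cycles of $z$); each fixed point $\{i\}$ of $\zeta$ with $i > 0$ is a fixed point of $z$; and each fixed point $\{-i\}$ of $\zeta$ with $i \in \Neg(z)$ corresponds to the symmetric 2-cycle $\{-i,i\}$ of $z$. In the forward direction, given bad nested cycles $\{a,d\}, \{b,c\}$ of $z$ with $-d \neq a < b \leq c < d$, I would first apply the symmetry $C \leftrightarrow -C$ to the outer cycle if necessary so that $(a,d) \in \Pair(z)$; this keeps the outer cycle non-symmetric and forces $-d < a$, hence $|b|, |c| < d$. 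A short case split on the inner cycle (singleton, symmetric 2-cycle, or non-symmetric 2-cycle, with the last case further split by the signs of $b$ and $c$) will then show that the cycle of $\zeta$ associated to $\{b,c\}$---either $\{b,c\}$ itself, its negation $\{-c,-b\}$, or an appropriate singleton---lies strictly between $a$ and $d$. Conversely, nested cycles of $\zeta$ pull back directly to nested cycles of $z$, since $\{a',d'\} \in \Pair(z)$ automatically gives a non-symmetric 2-cycle of $z$ with $-d' \neq a'$.

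The main obstacle is the case in the forward direction where $\{b,c\}$ is a non-symmetric 2-cycle with $b < 0 < c$ and $-b > c$, which forces me to work with the symmetric partner $\{-c,-b\}$ rather than $\{b,c\}$ itself. Here I would need to verify $a < -c$ and $-b < d$, both of which follow by negating $-d < a < b$ and $c < d$ and combining with $-b > c$. Once this translation is in place, the proposition reduces cleanly to the type A atomic characterization, and the first condition $\neg(z) \leq 1$ comes directly from Proposition~\ref{mem-prop}.
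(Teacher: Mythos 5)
Your proposal follows essentially the same route as the paper: both reduce via Proposition~\ref{mem-prop} to deciding whether $0_B(z,M_{\min}) = 1_B(z,M_{\min})$, recognize this (through the type A characterization of atomic involutions) as the absence of nested pairs in $\Cyc_B(z)$, and then match nested pairs in $\Cyc_B(z)$ with nested cycles of $z$ by normalizing cycle representatives under the negation symmetry. One small caution: after negating only the outer cycle $\{a,d\}$ so that $(a,d) \in \Pair(z)$, the inequality $a<b$ that you invoke in your ``main obstacle'' case can fail, since the inner cycle may now lie in $(-d,-a)$ rather than $(a,d)$ (e.g.\ outer $\{-10,5\}$, inner $\{-7,3\}$). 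The paper sidesteps this by negating \emph{both} cycles simultaneously in the first step, which preserves $a<b\leq c<d$, and only afterwards adjusting the inner cycle; your case split on the inner cycle's $\Cyc_B$-representative goes through once the normalization is done in that order.
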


 
\begin{proof}
Suppose $z \in \I(\W_n)$ has $\neg(z) \leq 1$ and $M_{\min} = \{ \{-i,i\} : i \in \Neg(z)\}$.
If the given condition holds then no pairs $(a,d),(b,c) \in \Cyc_B(z)$ can have $a < b\leq c < d$, so $0_B(z,M_{\min})=1_B(z,M_{\min})$.
Conversely, suppose $z$ has two cycles $\{a,d\},\{b,c\}$ in $[\pm n]$
with $-d \neq a<b\leq c < d$. Since the set of cycles of $z$ is symmetric under the map induced by
$i \mapsto -i$, we may assume that $|a| < d$. By invoking this symmetry a second time, we may further assume
that either $0<b=c$ or $|b| < c$. But then $(a,d)$ and $(b,c)$ are both in $\Cyc_B(z)$,
so  $0_B(z,M_{\min}) \neq 1_B(z,M_{\min})$.
\end{proof}

 We can describe the atomic elements of $\I(\W_n)$ more precisely.
Let $\X^0_{n}$ and $\X^1_n$ be the sets of atomic involutions in $\W_n$ with 0 and 1 negated points, respectively,
and let $\X_n = \X^0_n \sqcup \X^1_n$.
Define the \emph{radius} of $z \in \X_n$ to be the largest integer $r \in [n]$ such that $z(r) < -r$,
or 0 if no such $r$ exists. Denote the radius of $z$ by $\rho(z)$, and
let $\X_{n,r} = \{ z \in \X_n : \rho(z) = r\}$ and $\X^i_{n,r} = \X^i_n \cap \X_{n,r}$.

\begin{lemma}\label{radius-bound-lem}
If $z \in \X_n$ then $\rho(z) \leq \lfloor n/2 \rfloor$.
\end{lemma}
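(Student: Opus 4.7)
The plan is to argue directly using the atomic characterization from Proposition~\ref{nn-lem}. I would assume $z \in \X_n$ has $\rho(z) = r > 0$ (the case $r = 0$ being trivial) and set $s = -z(r)$, so that $s > r$ and $z$ contains the two cycles $\{r,-s\}$ and $\{-r,s\}$ in $[\pm n]$. Both of these are non-symmetric in the sense of Proposition~\ref{nn-lem}, since $r \neq s$.

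The heart of the argument is to show that for every $i \in \{1, 2, \ldots, r-1\}$, the image $z(i)$ must satisfy $|z(i)| \geq s+1$. The position $i$ lies strictly inside both intervals $(-s,r)$ and $(-r,s)$, so if $z(i)$ also lay in one of these intervals, then the cycle $\{i, z(i)\}$ (interpreted as a pair $\{b,c\}$ with $b \leq c$, and treating fixed points as $\{i,i\}$ and negated points as $\{i,-i\}$) would nest inside the corresponding non-symmetric cycle $\{r,-s\}$ or $\{-r,s\}$, contradicting atomicity. Hence $z(i)$ must lie outside $(-s,r) \cup (-r,s) = (-s,s)$. The boundary values $z(i) \in \{\pm r, \pm s\}$ are then ruled out by the involution property: $z(i) \in \{r, -s\}$ would force $i \in \{-s, r\}$, and $z(i) \in \{-r, s\}$ would force $i \in \{-r, s\}$, both contradicting $0 < i < r$. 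Thus $z(i) \in \{-n, \ldots, -s-1\} \cup \{s+1, \ldots, n\}$.

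To conclude, I would observe that the map $i \mapsto |z(i)|$ from $\{1, \ldots, r-1\}$ into $\{s+1, \ldots, n\}$ is injective: if $|z(i_1)| = |z(i_2)|$ then either $z(i_1) = z(i_2)$ (forcing $i_1 = i_2$), or $z(i_1) = -z(i_2)$, which by the equivariance $z(-j) = -z(j)$ forces $i_2 = -i_1$, contradicting that both lie in $[n]$. Therefore $r - 1 \leq n - s$, i.e., $r + s \leq n + 1$, and combining with $s \geq r + 1$ yields $2r + 1 \leq n + 1$, so $r \leq \lfloor n/2 \rfloor$.

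The main obstacle is the middle paragraph: correctly bookkeeping all the ways a cycle at position $i < r$ could nest inside either $\{r,-s\}$ or $\{-r,s\}$, and verifying that fixed points, negated points, and pair cycles are all simultaneously excluded by the single clean condition $z(i) \notin (-s,s)$. Once this uniform constraint is in place, the injection and the resulting count $r-1 \leq n-s$ follow immediately, and the bound $s \geq r+1$ does the rest.
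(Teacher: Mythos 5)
Your proof is correct and rests on the same foundation as the paper's: both arguments apply the nesting criterion of Proposition~\ref{nn-lem} to the non-symmetric cycle(s) created by $z(r)<-r$ and then count to show there is not enough room outside a large interval to absorb all the small indices. The paper compresses this into a single pigeonhole step on $[\pm n]\setminus[\pm r]$, whereas you make the count explicit via the injection $i\mapsto |z(i)|$ from $[r-1]$ into $\{s+1,\dots,n\}$; the conclusions agree.
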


\begin{proof}
Suppose $n<2r$ and $z \in \I(\W_n)$ has $z(r) < - r$.
Since $[\pm n]\setminus [\pm r]$ has $2n-2r < 2r$ elements,
$z$ must have a cycle $\{i,j\}$ with $z(r)<-r<i<j<r$, so $z$ is not atomic by Proposition~\ref{nn-lem}.
\end{proof}

Let  $r = \lfloor n/2\rfloor$. When $n\geq 2$,
define 
$\eta : \{ \pm 1\}^{r-1} \to \I(\W_n)$ as the map
given as follows: for $\epsilon=(\epsilon_1, \epsilon_2,\dots,\epsilon_{r-1}) \in \{\pm1\}^{r-1}$,
let $a_1<a_2<\dots<a_{r-1}$ be the numbers $i \cdot \epsilon_i $ for $i \in [r-1]$ listed in order,
and let $\eta(\epsilon)$ 
be the unique involution $z \in \I(\W_n)$ with $z(r+1)= -r$,
with $z(r+1+i) = a_i$ for $i \in [r-1]$, and with $z(n) =n$ if $n=2r+1$ is odd.
The map $\eta$ is clearly injective.
For example, if $n=7$ so  $r=3$ and $\epsilon=(-1,+1)$, then $a_1=-1 $ and $a_2=2$ so 
$\eta(\epsilon) =  \bar 5 6 \bar 4 \hs\bar 3\hs \bar 1 2 7$.
Define 
$\Y_n= \left\{ z \in \X^0_n: z(\lfloor n/2\rfloor) < -\lfloor n/2\rfloor\right\}$ when $n\geq 2$,
and set $\Y_0 =\Y_1 = \{1\} \subset \W_n$.

\begin{lemma}\label{eta-lem1}
Let $r = \lfloor n/2\rfloor$. If $n\geq 2$ 
then $ \X^0_{n,r} = \Y_n$ and $\eta : \{\pm 1\}^{r-1} \to \Y_n$ is a bijection.
\end{lemma}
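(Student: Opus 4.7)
The equality $\X^0_{n,r} = \Y_n$ is immediate from Lemma~\ref{radius-bound-lem} together with the definitions: since $\rho(z) \leq r$ for every $z \in \X_n$, requiring $\rho(z) = r$ is the same as requiring $z(r) < -r$. Injectivity of $\eta$ is also transparent, since the sorted tuple $(a_1, \dots, a_{r-1})$ determines and is determined by $\epsilon$ via $\epsilon_i = \mathrm{sign}(a_k)$ where $|a_k| = i$.

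To verify that $\eta(\epsilon) \in \Y_n$, I would check the three defining properties in turn. The radius condition follows since $\eta(\epsilon)(r) = -(r+1) < -r$ combined with $\rho \leq r$. The absence of negated points is routine from the explicit list of images appearing in the $\eta$ definition. For atomicity I would apply Proposition~\ref{nn-lem}: the cycles of $\eta(\epsilon)$ are $\{r, -(r+1)\}$, $\{-r, r+1\}$, the pairs $\{a_k, r+1+k\}$ and their negation-images, and (for $n = 2r+1$) the fixed points $\{n\}$ and $\{-n\}$; a case-by-case comparison using $|a_k| \leq r-1$ shows that no two of these exhibit the forbidden nested-in-non-symmetric pattern.

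The heart of the argument is surjectivity of $\eta$ onto $\Y_n$. The key tool is Proposition~\ref{mem-prop}, which for $\neg(z) = 0$ gives $0_B(z) = 1_B(z, \varnothing)$; unpacking the definitions, this means the pairs of $\Cyc_B(z)$ are \emph{comonotone}, i.e., sorting by first coordinate and sorting by second coordinate induce the same permutation. The crucial preliminary step is to show $z(r) = -(r+1)$. Suppose instead $z(r) = -m$ with $m \geq r+2$, so that $(-r, m) \in \Pair(z)$. I would partition the remaining pairs of $\Cyc_B(z)$ into ``before'' pairs (first coordinate $< -r$, second coordinate $\leq m-1$) and ``after'' pairs (first coordinate $> -r$, second coordinate $> m$), a dichotomy forced by comonotonicity with $(-r, m)$. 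A short case analysis that invokes Proposition~\ref{nn-lem} to rule out short-range cycles, in-range fixed points, and forbidden small partners shows that each element of $[r-1]$ is a source for an ``after'' pair, while each element of $\{r+1, \dots, m-1\}$ is either such a source or one of the two positive elements consumed by a ``before'' pair. Double-counting the positive elements of $\{r+1, \dots, m-1\}$ (each ``before'' pair uses two, each source uses one) and requiring the total number of sources to fit injectively into $\{m+1, \dots, n\}$ produces linear inequalities in the number $B$ of ``before'' pairs whose only common integer solution forces $m \leq r+1$, contradicting $m \geq r+2$.

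Once $z(r) = -(r+1)$ is in hand, a parallel but simpler comonotonicity argument shows that for each $i \in [r-1]$ the partner $z(i)$ has $|z(i)| \in \{r+2, \dots, n\}$ and that the resulting map $\sigma : i \mapsto |z(i)|$ is injective; for $n = 2r+1$ a fixed-point nesting argument (Proposition~\ref{nn-lem} applied to the unique element of $\{r+2, \dots, n\}$ missing from the image of $\sigma$) forces that element to be $n$ itself, recovering $z(n) = n$. Setting $\epsilon_i := \mathrm{sign}(z(i))$, comonotonicity applied to the pairs $(\epsilon_i i, \sigma(i))$ and $(-r, r+1)$ then forces $\sigma$ and the signed sequence $(\epsilon_i i)$ to sort in the same order, which is exactly the defining feature of $\eta$. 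Thus $z = \eta(\epsilon)$. I expect the main obstacle to be the reduction $z(r) = -(r+1)$: the naive ``partners of small elements must be large'' counting argument is incomplete because the cycle through $r+1$ can be short-range (paired within $[r+1, m-1]$), so the ``before''-pair double-count above is essential to close the edge case $n = 2r+1$, $m = r+2$.
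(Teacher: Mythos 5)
Your proposal is correct, but the heart of it --- surjectivity of $\eta$ --- is organized quite differently from the paper's argument. The paper's route is a single global cardinality count: Proposition~\ref{nn-lem} (applied to the cycle $\{z(r),r\}$) forces every $i\in[\pm r]$ to satisfy $z(i)\notin[\pm r]$, so the $2r$ elements of $[\pm r]$ inject under $z$ into $[\pm n]\setminus[\pm r]$, which has at most $2r+2$ elements; this immediately pins down that all of $[\pm 2r]\setminus[\pm r]$ maps into $[\pm r]$ and that $\pm n$ are fixed when $n$ is odd, after which $z(r+1)=-r$ and the monotonicity $z(r+1)<z(r+2)<\dots<z(2r)$ each follow from one more application of Proposition~\ref{nn-lem}. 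You instead take comonotonicity of $\Cyc_B(z)$ (via Proposition~\ref{mem-prop}) as the organizing principle, isolate the pair $(-r,m)$, and run a before/after double count on the interval $\{r+1,\dots,m-1\}$ to first force $z(r)=-(r+1)$, then propagate. I checked your inequalities: $2B+S=m-r-1$ together with $r-1+S\leq n-m$ gives $m\leq n+1-r-S\leq n+1-r$, which settles $n=2r$ outright, and the residual case $n=2r+1$, $m=r+2$ dies on the parity of $2B=1$ exactly as you flag --- so the argument does close. What the paper's version buys is economy: one count of $2r$ against $2r+2$ replaces your pair taxonomy and the edge-case analysis, and it delivers the global structure (including $z(n)=n$) in one stroke rather than requiring a separate fixed-point nesting argument afterward. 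Your version buys nothing extra here, but it is sound, and your observation that $\X^0_{n,r}=\Y_n$ is immediate from Lemma~\ref{radius-bound-lem} is a fair simplification of the paper's slightly roundabout phrasing of that containment.
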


\begin{proof}
Assume $n\geq 2$
and let $z \in \Y_n$. Since $z$ is atomic with no negated points,
every number $i \in [\pm r]$ must have $z(i)<-r$ or $r<z(i)$.
It follows that there are $2r$ numbers $i \in [\pm n] \setminus [\pm r]$
with $z(i) \in [\pm r]$.
Since $[\pm n]\setminus [\pm r]$ has at most $2r+2$ elements,
we deduce that $z(n) = n$ if $n$ is odd, and that every $i \in [\pm 2r]\setminus [\pm r]$
has $z(i) \in [\pm r]$. 
Thus $\X^0_{n,r}\supset \Y_n$, so we have $ \X^0_{n,r} = \Y_n$ since the reverse containment holds by definition.
As $z(r) < -r$ and $r <z(-r)$, 
it must hold that $-r=z(r+1)$ since otherwise we would have $-r<z(r+1) < r+1 < z(r)$, contradicting Proposition~\ref{nn-lem}.
By the same lemma, it follows that $-r = z(r+1) < z(r+2) < \dots < z(2r) < r.$
We conclude that if
$\epsilon \in \{\pm 1\}^{r-1}$
is the sequence of signs of $z(r+2),z(r+3),\dots,z(2r)$ then
 $z= \eta(\epsilon)$.

Consider an arbitrary sequence $\epsilon = (\epsilon_1,\epsilon_2,\dots,\epsilon_{r-1}) \in \{\pm 1\}^{r-1}$.
The involution $\eta(\epsilon)$ has no negated points and satisfies $\eta(\epsilon)(r)<-r$, so to finish the proof of the lemma it suffices to 
check that $\eta(\epsilon)$ is atomic, whence contained in $\Y_n$.
This is easy to deduce from Proposition~\ref{nn-lem}.
\end{proof}

Let $\Z_n$ be the set of atomic involutions in $S_n$.
For each $r\in \NN$, 
define $\Z_{n,r}$ as the subset of involutions $w \in \Z_n$ with $i<w(i)$ for $i \in [r]$,
so that $\Z_{n,0}=\Z_n$ and $\Z_{n,r}=\varnothing$ if $2r >n$.

\begin{lemma}\label{eta-lem2}
If $0\leq r \leq \lfloor n/2\rfloor$
 then  $|\Z_{n,r}| = \binom{n-r}{\lceil n/2\rceil}$.
\end{lemma}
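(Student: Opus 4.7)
The plan is to induct on $n$, taking Corollary~\ref{sis-cor} as the base case $r = 0$ and establishing the recursion
\[|\Z_{n,r}| = |\Z_{n,r+1}| + |\Z_{n-2,r-1}| \qquad (r \geq 1)\]
by case analysis on the value of $w(r+1)$ for $w \in \Z_{n,r}$.

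The structural heart of the argument is that $w(r+1)$ cannot equal $r+1$: otherwise, since $w(r) > r$ and $w(r) \neq r+1$ force $w(r) \geq r+2$, the positions $r, r+1, w(r)$ would exhibit the strictly decreasing values $w(r) > r+1 > r$, contradicting $321$-avoidance (equivalent to atomicity by the theorem preceding Corollary~\ref{sis-cor}). So $\Z_{n,r}$ partitions into those $w$ with $w(r+1) > r+1$ --- which is exactly $\Z_{n,r+1}$ --- and those with $w(r+1) = j \in [1, r]$. In the second case I would argue $j = 1$ as follows: every element of $[r]$ is an opener, so $w(1) > r$, and $w(1) \neq r+1$ (since $w^{-1}(r+1) = j$) forces $w(1) \geq r+2$; if $j \geq 2$ then the arcs $(1, w(1))$ and $(j, r+1)$ would nest, producing a $321$-pattern at the positions $1, j, w(1)$ with values $w(1) > r+1 > 1$. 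Hence $w(1) = r+1$ and $w(r+1) = 1$, and deleting these two positions while relabeling $\{2, \ldots, r, r+2, \ldots, n\}$ order-preservingly as $[n-2]$ yields a bijection with $\Z_{n-2, r-1}$ (checking that the resulting involution remains $321$-avoiding and that its first $r-1$ positions remain openers is straightforward from the no-nesting structure). The parallel identity $|\Z_{n,0}| = |\Z_{n,1}| + |\Z_{n-1,0}|$, obtained from the same dichotomy at position $1$ --- now the case $w(1) = 1$ just strips off a single fixed point --- handles $r = 0$.

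Finally, $\binom{n-r}{\lceil n/2\rceil}$ satisfies exactly these same recursions: Pascal's identity plus $\lceil (n-2)/2\rceil = \lceil n/2\rceil - 1$ gives $\binom{n-r}{\lceil n/2\rceil} = \binom{n-(r+1)}{\lceil n/2\rceil} + \binom{(n-2)-(r-1)}{\lceil (n-2)/2\rceil}$, and the analogous check at $r = 0$ uses $\binom{n}{\lfloor n/2\rfloor} = \binom{n}{\lceil n/2\rceil}$. A double induction --- outer on $n$, inner ascending on $r$ via the rearranged recursion $|\Z_{n,r+1}| = |\Z_{n,r}| - |\Z_{n-2,r-1}|$, anchored by $|\Z_{n,0}| = \binom{n}{\lceil n/2\rceil}$ from Corollary~\ref{sis-cor} --- then delivers the claimed formula. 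The one substantive step is the nesting argument forcing $j = 1$; everything else is routine bookkeeping with Pascal's identity.
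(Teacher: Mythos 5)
Your proposal is correct and follows essentially the same route as the paper: the same base case via Corollary~\ref{sis-cor}, the same trichotomy on $w(r+1)$ yielding $|\Z_{n,r}| = |\Z_{n,r+1}| + |\Z_{n-2,r-1}|$ (with $w(r+1)=r+1$ impossible and $w(r+1)<r+1$ forcing $w(r+1)=1$), and the same ascending induction on $r$ via Pascal's identity. Your write-up actually supplies the $321$-avoidance arguments that the paper leaves implicit, but there is no substantive difference in approach.
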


\begin{proof}
The atomic involutions in $S_n$ are the same as the involutions that 
are fully commutative (i.e., 321-avoiding), so $|\Z_{n,0}| = \binom{n}{\lceil n/2\rceil}$ \cite{SiS}.
If $z \in \Z_n$ then either $z \in \Z_{n,1}$ or $z(1) = 1$, 
so $|\Z_{n}| = |\Z_{n-1}| + |\Z_{n,1}|$ and $|\Z_{n,1}| = |\Z_n| - |\Z_{n-1}| = \binom{n}{\lceil n/2\rceil}-  \binom{n-1}{\lceil (n-1)/2\rceil}= \binom{n-1}{\lceil n/2\rceil}$.
Assume $0<r\leq \lfloor n/2\rfloor$.
If $z \in \Z_{n,r}$ and $z(r+1)<r+1$ then necessarily $z(r+1) = 1$.
In this case, removing $1$ and $r+1$ from the one-line
representation of $z$ and standardizing what remains produces an arbitrary element of $\Z_{n-2,r-1}$.
It is not possible for $z \in \Z_{n,r}$ to have $z(r+1)=r+1$ and the set of elements $z \in \Z_{n,r}$ with $r+1<z(r+1)$
is precisely $\Z_{n,r+1}$. We conclude that $|\Z_{n,r}| = |\Z_{n-2,r-1}| +|\Z_{n,r+1}|$,
so by induction $|\Z_{n,r+1}| = |\Z_{n,r}| - |\Z_{n-2,r-1}| = \binom{n-r}{\lceil n/2\rceil} -\binom{n-r-1}{\lceil n/2\rceil-1} = \binom{n-r-1}{\lceil n/2\rceil}$.
\end{proof}

Fix $0<r \leq \lfloor n/2\rfloor$ and $x \in \X^0_{n,r}$. Let $I = [\pm r] \sqcup x([\pm r]) $  and $J = x([\pm r]) \cap [n]$.
Since $x([\pm r])\cap [\pm r]  = \varnothing$, and we have $|I|=4r$ and $|J| = r$.
Define $y = \phi^{-1} \circ x \circ \phi \in \I(\W_{2r})$
where $\phi $ is the unique order-preserving bijection
$[\pm 2r] \to I$.
Now let $j_1<j_2<\dots<j_r$ be the distinct elements of $J$, set $w = (1,j_1)(2,j_2)\cdots (r,j_r) \in S_n$,
and define $z \in \I(S_n)$ to be the involution with
$z(i) = w(i)$ if $ i \in [r]\sqcup J$ and with $z(i) = x(i)$ for all other $i \in[n]$.
We finally set $\pi^0(x)= (y,z)$,
where if $r=0$  then we define $y=1 $ and $ z = x|_{[n]} \in \I(S_n)$. 

\begin{example}
This map may be understood in terms of the symmetric matchings in $[\pm n]$ that 
we draw to represent involutions in $\W_n$. For example, if $n=12$, $r=3$, and $x \in \X^0_{n,r}$  is
\[\label{x-pre-eq}
x = \Bigl( \arcstart
{
*{\bullet}  \arc{0.4}{r} & *{\bullet}    & 
*{\bullet}  \arc{0.6}{rrr} & *{\bullet} \arc{1.0}{rrrrr}   & *{\bullet} \arc{1.2}{rrrrrr}    & *{\bullet}     & *{\bullet}\arc{1.2}{rrrrrr}  & *{\bullet} \arc{1.4}{rrrrrrr}& *{\bullet}    & *{\bullet}    \arc{1.4}{rrrrrrr} & *{\bullet}  & *{\bullet} \arc{1.2}{rrrrrr}
&
*{\bullet}   & *{\bullet} \arc{1.2}{rrrrrr}   & *{\bullet}     & *{\bullet}   \arc{1.0}{rrrrr}    & *{\bullet}   & *{\bullet} & *{\bullet}  \arc{0.6}{rrr}    & *{\bullet}     & *{\bullet}  & *{\bullet}    
&
*{\bullet}  \arc{0.4}{r} & *{\bullet} \\
\bar{12} & \bar{11} &\bar{10} & \bar9 & \bar8 & \bar7 & \bar6 & \bar5 & \bar4 & \bar3 & \bar2 & \bar1 & 1 & 2 &3 & 4 &5 & 6 & 7 & 8 &9 & 10 &11 &12
} 
\arcstop\Bigr)
\]
then $y$ is obtained by first removing all edges  that do not have an endpoint in $[\pm r]$, then 
removing the isolated vertices from this picture, and finally standardizing what remains. This gives
\[
y  = \Bigl(
 \arcstart
{
 *{\bullet} \arc{0.8}{rrrr}        & *{\bullet}\arc{1.0}{rrrrr}  & *{\bullet} \arc{1.2}{rrrrrr}    & *{\bullet}    \arc{1.2}{rrrrrr} & *{\bullet}  & *{\bullet} \arc{1.0}{rrrrr}
&
*{\bullet}   & *{\bullet} \arc{0.8}{rrrr}   & *{\bullet}     & *{\bullet}     & *{\bullet}   & *{\bullet}     
 \\
 \bar6 & \bar5 & \bar4 & \bar3 & \bar2 & \bar1 & 1 & 2 &3 & 4 &5 & 6  
} 
\arcstop\Bigr)  \in \I(\W_{2r}).
\]
To construct $z$, we remove from $x$ all edges without an endpoint in $[n]\setminus[r] = \{4,5,\dots,12\}$
and all isolated vertices up to $r=3$. Then we relabel the endpoints $\bar3$, $\bar 1$, $2$ as 1, 2, 3 to get
\[
z = \Bigl(
 \arcstart
{
*{\bullet}  \arc{0.8}{rrrr}  & *{\bullet}   \arc{0.8}{rrrr} & *{\bullet}   \arc{1.0}{rrrrr}   & *{\bullet}   \arc{1.0}{rrrrr}    & *{\bullet}   & *{\bullet} & *{\bullet}  \arc{0.6}{rrr}    & *{\bullet}     & *{\bullet}  & *{\bullet}    
&
*{\bullet}  \arc{0.4}{r} & *{\bullet} \\
 1 & 2 &3 & 4 &5 & 6 & 7 & 8 &9 & 10 &11 &12
} 
\arcstop
 \Bigr) \in \I(S_{n}).
\]
\end{example}

We also have a simpler map 
$ \pi^1 : \X^1_{n+1,r} \to  \I(\W_n) \times \{r+1,r+2,\dots,n+1\} $. If $x \in \X^1_{n+1,r}$ has $\Neg(x) = \{ m\}$,
and
$\psi$ is the unique
order-preserving bijection $[\pm n] \to [\pm (n+1)] \setminus \{\pm m\}$,
then we set $y=\psi^{-1} \circ x \circ \psi$ and $\pi^1(x) = (y,m)$.
In terms of matchings, $y$ is obtained from $x$ by removing the single symmetric edge $\{-m,m\}$ and standardizing the remaining vertices.

\begin{lemma}\label{eta-lem3}
Choose an integer $0 \leq r \leq \lfloor n/2\rfloor$. Then the maps
$ \pi^0 : \X^0_{n,r} \to \Y_{2r} \times \Z_{n,r}$ and 
$\pi^1:\X^1_{n+1,r} \to  \X^0_{n,r}\times \{r+1,r+2,\dots,n+1\}$
are bijections.
%
\end{lemma}

\begin{proof}
One can verify the lemma directly
when $r=0$, so assume $0<r\leq \lfloor n/2\rfloor$.
Let $x \in \X^0_{n,r}$ and  $(y,z) = \pi^0(x) \in  \I(\W_{2r})\times \I(S_n)$.
By construction $y$ has no negated points and satisfies $y(r) < -r$. 
It follows from Proposition~\ref{nn-lem} that $y$ and $z$ are also atomic, so we have $y \in \Y_{2r}$ and $z \in \Z_{n,r}$.
To show that $\pi^0$ is a bijection, consider the inverse map defined
as follows. Given $(y,z) \in \Y_{2r} \times \Z_{n,r}$,
let $E = [\pm r] \sqcup z([r]) \sqcup -z([r])$, write $\theta$ for the unique order preserving bijection $[\pm 2r]\to E$,
and define $x \in \W_n$ 
as the permutation with $x(i) = (\theta \circ y \circ \theta^{-1})(i)$ for $i \in E$
and with $x(i) = z(i)$ and $x(-i) = -z(i)$ for $i \in [n]\setminus E$. 
Since $y$ and $z$ are both atomic and since $y$ has no negated points,   Proposition~\ref{nn-lem} implies
that $x \in \X^0_{n,r}$, and it is easy to see that $(y,z) \mapsto x$ is the inverse of $\pi^0$.

For the second map,
suppose $x \in \X^1_{n+1,r}$ and $(y,m) = \pi^1(x)$. 
Proposition~\ref{nn-lem}
implies that the 
single negated point $m \in \Neg(x)$ is greater than $r$,
so $y \in \X^0_{n,r}$.
It is straightforward to construct an inverse map $ \X^0_{n,r}\times \{r+1,r+2,\dots,n+1\}  \to \X^1_{n+1,r}$,
and we conclude that $\pi^1$ is also a bijection.
\end{proof}


\begin{theorem}\label{form-thm}
Suppose $r \in \NN$. The following identities hold:
 \ben
 
 \item[(a)] It holds that $|\X^0_{n,r}| = \left\lceil 2^{r-1} \right\rceil \binom{n-r}{\lceil n/2\rceil}
 $
 and
 $
 |\X^1_{n+1,r}| = (\lceil n/2\rceil+1) \left\lceil 2^{r-1}\right\rceil \binom{n-r+1}{\lceil n/2\rceil+1}.
 $
 
 \item[(b)] If $n$ is odd then $|\X^1_{n,r}| = \frac{1}{2}(n+1) |\X^0_{n,r}|$.
 
 \item[(c)] If $n$ is even and $r>0$ then $|\X^1_{n,0}| =  \frac{1}{2}(n+2)|\X^0_{n+1,1}|$ and $|\X^1_{n,r}| = \frac{1}{4}(n+2)|\X^0_{n+1,r+1}|$.
 
 \een
\end{theorem}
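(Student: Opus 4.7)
The plan is to derive all three identities as consequences of the bijections constructed in Lemma~\ref{eta-lem3} together with the cardinality formulas in Lemmas~\ref{eta-lem1} and~\ref{eta-lem2}. The master observations are that
\[
|\X^0_{n,r}| = |\Y_{2r}| \cdot |\Z_{n,r}| \qquand |\X^1_{n+1,r}| = (n+1-r) \cdot |\X^0_{n,r}|,
\]
which follow directly from parts (a) and (b) of Lemma~\ref{eta-lem3}.

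First, I would establish part (a). Lemma~\ref{eta-lem1} gives a bijection $\{\pm 1\}^{r-1} \to \Y_{2r}$ when $r \geq 1$, so $|\Y_{2r}| = 2^{r-1}$; on the other hand, the convention $\Y_0 = \{1\}$ gives $|\Y_0|=1$, so in both cases one may write $|\Y_{2r}| = \lceil 2^{r-1} \rceil$. Combining with $|\Z_{n,r}| = \binom{n-r}{\lceil n/2\rceil}$ from Lemma~\ref{eta-lem2} yields the first formula in (a). For the second, start with $|\X^1_{n+1,r}| = (n+1-r)\,\lceil 2^{r-1}\rceil\binom{n-r}{\lceil n/2\rceil}$ and apply the identity
\[
(\lceil n/2\rceil + 1)\binom{n-r+1}{\lceil n/2\rceil+1} \;=\; (n-r+1)\binom{n-r}{\lceil n/2\rceil},
\]
which is an immediate consequence of the factorial definitions.

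For (b) and (c), apply Lemma~\ref{eta-lem3}(b) with $n$ replaced by $n-1$ to obtain $|\X^1_{n,r}| = (n-r)\,|\X^0_{n-1,r}|$, then substitute the formula from (a). When $n$ is odd, $\lceil (n-1)/2 \rceil = (n-1)/2$ and $\lceil n/2\rceil = (n+1)/2$ so the ceiling factor $\lceil 2^{r-1}\rceil$ appears on both sides; the identity in (b) then reduces, after cancellation, to a short factorial manipulation essentially equivalent to pulling out a factor of $(n-r)/((n+1)/2)$. When $n$ is even one has $\lceil (n-1)/2 \rceil = n/2 = \lceil n/2\rceil$, and the comparison is against $|\X^0_{n+1,r+1}|$, for which the ceiling factor changes: $\lceil 2^{(r+1)-1}\rceil = \lceil 2^r\rceil$ equals $2^r = 2 \lceil 2^{r-1}\rceil$ when $r \geq 1$ but only $1 = \lceil 2^{r-1}\rceil$ when $r=0$. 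This discrepancy is precisely what forces (c) to split into the two stated formulas, with coefficients $\tfrac{1}{2}(n+2)$ and $\tfrac{1}{4}(n+2)$ respectively.

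The proofs of (b) and (c) are then finished by verifying the remaining binomial identities
\[
(n-r)\binom{n-1-r}{(n-1)/2} = \tfrac{1}{2}(n+1)\binom{n-r}{(n+1)/2}
\]
for the odd case and
\[
(n-r)\binom{n-1-r}{n/2} = \tfrac{1}{2}(n+2)\binom{n-r}{n/2+1}
\]
for the even case; both follow by writing each binomial as a ratio of factorials and cancelling. No serious obstacle arises beyond bookkeeping: the main point requiring care is the parity split and the $r=0$ edge case for the ceiling function, which is exactly why the theorem is stated in three parts.
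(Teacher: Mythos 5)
Your proposal is correct and follows exactly the paper's route: the paper's proof of Theorem~\ref{form-thm} simply cites Lemmas~\ref{eta-lem1}, \ref{eta-lem2}, and \ref{eta-lem3} for part (a) and derives (b) and (c) from (a), which is precisely the computation you carry out in detail. All of your binomial identities and the ceiling-function bookkeeping (including the $r=0$ edge case driving the split in part (c)) check out.
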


\begin{proof}
Part (a) follows from Lemmas~\ref{eta-lem1}, \ref{eta-lem2}, and \ref{eta-lem3}.
Parts (b) and (c) follow from (a).
\end{proof}

The elements of $\X_n^0$
are also naturally partitioned by their absolute lengths.
Let $\X_n^{0,k}$ be the set of atomic involutions $z \in \I(\W_n)$ with zero negated points
and  absolute length $\ell'(z) =k$.
Equivalently, $\X_n^{0,k}$ is the set of atomic involutions in $ \W_n$ with  $2k$ distinct 2-element cycles in $[\pm n]$.
To count the elements in these sets, we relate them to lattice paths of the following type.

Define $\cD_{n}$ as the set of $n$-step paths $(p_0,p_1,\dots,p_{n})$ in the nonnegative quadrant  $\NN^2$
that begin at $p_0=(0,0)$ and end at a point $p_{n} \in \{ (n,2m): m \in \NN\}$,
that have $p_i-p_{i-1} \in \{ (1,1),(1,-1),(1,0)\}$ for each $i \in [n]$,
but that have $p_i-p_{i-1} = (1,0)$ only if $p_i $ is on the $x$-axis.
Paths of this type  terminating at $(n,0)$ are sometimes called \emph{dispersed Dyck paths}.
Each path in $\cD_n$ must have an even number of steps not equal to $(1,0)$.
For each $k \in \NN$
let $\cD_{n,k}$ denote the subset of paths in $\cD_n$ that have $p_i - p_{i-1} = (1,0)$ for exactly $n-2k$ values of $i \in [n]$.

\begin{lemma}
If $0 \leq k \leq \lfloor n/2\rfloor$ then $|\cD_{n,k}| = \binom{n}{k}$.
\end{lemma}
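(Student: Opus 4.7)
The plan is to match $\cD_{n,k}$ bijectively with the set $\cW_{n,k}$ of non-negative $\pm 1$ walks of length $n$ with at most $k$ down-steps, and then count $\cW_{n,k}$ via the ballot formula and a telescoping sum.

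I would define a map $\phi:\cD_{n,k}\to\cW_{n,k}$ by replacing each horizontal step of the input by an up-step ($H\mapsto U$, $U\mapsto U$, $D\mapsto D$). If $P\in\cD_{n,k}$ ends at height $2m$, then $P$ has $k-m$ down-steps, so $\phi(P)$ is a $\pm 1$ walk of length $n$ with $k-m\le k$ down-steps and final height $n-2k+2m$; since replacing $H$ by $U$ can only raise running heights, $\phi(P)$ stays non-negative. I would then prove that $\phi$ is a bijection. For injectivity, suppose $P\ne P'$ share the same image $Q$ and let $i^\ast$ be the first disagreement, say $P_{i^\ast}=H$, $P'_{i^\ast}=U$. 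Since the two running heights both equal $h_Q$ minus the respective cumulative count of $H$-steps, the $P'$-height exceeds the $P$-height by exactly one from $i^\ast$ onward; this gap must persist until the next disagreement (forced to be of the reverse type by matching total $H$-counts), but placing an $H$ at that later index requires $P'$-height zero there, contradicting the positive gap. For surjectivity, given $Q\in\cW_{n,k}$ with $d\le k$ down-steps, I would construct the unique preimage $P$ via a greedy left-to-right algorithm: maintain the tentative $P$-height and a budget of $n-2k$ horizontal steps, and convert a $U$-step of $Q$ into an $H$ exactly when the current $P$-height is zero, $h_Q(i)$ attains the suffix minimum $\min_{j\ge i}h_Q(j)$, and budget remains. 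The suffix-minimum condition ensures every later $D$-step of $Q$ meets strictly positive $P$-height (so $P$ is non-negative and has horizontal steps only on the $x$-axis), and a direct count shows that the number of positions satisfying the eligibility criterion is precisely $n-2d\ge n-2k$, so the budget is exhausted exactly.

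Finally, the reflection principle gives the number of non-negative $\pm 1$ walks of length $n$ with exactly $d$ down-steps as $\binom{n}{d}-\binom{n}{d-1}$. Summing over $d\in\{0,1,\dots,k\}$ telescopes, so
\[|\cD_{n,k}|=|\cW_{n,k}|=\sum_{d=0}^{k}\Bigl[\binom{n}{d}-\binom{n}{d-1}\Bigr]=\binom{n}{k},\]
as required. The main obstacle is verifying the surjectivity step: one must check that the greedy rule's eligibility criterion yields exactly $n-2d$ opportunities (seen, for instance, by running the algorithm with unlimited budget and observing that the terminal condition $h_P(n)=0$ together with $h_Q(n)=n-2d$ forces exactly $n-2d$ horizontal steps), and that truncating to budget $n-2k$ still produces a valid element of $\cD_{n,k}$.
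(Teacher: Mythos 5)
Your proof is correct, but it takes a genuinely different route from the paper's. The paper argues by induction: it splits $\cD_{n,k}$ according to whether the first step is horizontal or an up step, exhibits bijections with $\cD_{n-1,k}$ and $\cD_{n-1,k-1}$ respectively (the latter by removing the initial up step and turning the first return to the $x$-axis into a horizontal step), and so obtains the Pascal recurrence $|\cD_{n,k}| = |\cD_{n-1,k}| + |\cD_{n-1,k-1}|$, anchored at the base cases $|\cD_{n,0}|=1$ and $|\cD_{2k,k}|=\binom{2k}{k}$. You instead build one global bijection onto nonnegative $\pm1$ walks with at most $k$ down-steps and then sum the ballot numbers $\binom{n}{d}-\binom{n}{d-1}$ over $d\le k$, which telescopes. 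Your injectivity argument is sound, though the parenthetical ``forced to be of the reverse type'' should really say that the \emph{first} reverse-type disagreement (which must exist since the $H$-counts agree) already yields the contradiction, since the height gap equals the excess of forward-type over reverse-type disagreements and is therefore still positive there. For surjectivity, the cleanest justification that there are exactly $n-2d$ eligible positions is that for each level $v\in\{1,\dots,n-2d\}$ the walk $Q$ has exactly one last up-crossing of level $v$ after which it never returns below $v$, and these are precisely the suffix-minimum-attaining up-steps; marking the first $n-2k$ of them as horizontal steps visibly produces a valid preimage. What your approach buys is a refinement the paper's recursion does not make explicit: the fibre of $\cD_{n,k}$ over paths ending at height $2m$ is counted by the ballot number $\binom{n}{k-m}-\binom{n}{k-m-1}$, so $\binom{n}{k}$ appears as a telescoping sum over endpoints. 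The cost is that the bijection's verification (especially surjectivity) is considerably more delicate than the paper's two-line case analysis on the first step.
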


\begin{proof}
Among the $2k$-step paths starting at the origin in $\ZZ^2$ using just the steps $(1,1)$ and $(1,-1)$,
those which do not stay in $\NN^2$ are in bijection with those
which do not terminate at $(2k,0)$ by the reflection principle; the number of paths of the latter type is
$\sum_{j \neq k} \binom{2k}{j}$, and
subtracting this  from $2^{2k}$ gives $|\cD_{2k,k}| = \binom{2k}{k}$.
It is also apparent that $|\cD_{n,0}| = 1$ for all $n \in \NN$.

Assume $0<k<\lfloor n/2\rfloor$.
The subset of paths in $\cD_{n,k}$ beginning with a horizontal step
are clearly in bijection with $\cD_{n-1,k}$,
while the subset of paths in $\cD_{n,k}$ beginning with an up step
are in bijection with $\cD_{n-1,k-1}$ via the following operation:
given a path in $\cD_{n,k}$,
remove its initial up step and replace the first down step which returns  to the $x$-axis
with a horizontal step.
Such a down step exists since a path in $\cD_{n,k}$ contains $n-2k>0$ horizontal steps.
We deduce that $|\cD_{n,k}| = |\cD_{n-1,k}| + |\cD_{n-1,k-1}|$,
so by induction $|\cD_{n,k}| = \binom{n}{k}$ for all $k \in \NN$.
\end{proof}

\begin{theorem}\label{binom-thm}
If $0 \leq k \leq \lfloor n/2\rfloor$ then  $|\X^{0,k}_n|= \binom{n}{k}$.
\end{theorem}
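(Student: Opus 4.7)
The plan is to establish the formula by constructing a bijection $\phi\colon \X^{0,k}_n \to \cD_{n,k}$, so the preceding lemma immediately gives the desired count $|\X^{0,k}_n| = |\cD_{n,k}| = \binom{n}{k}$.

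Given $z \in \X^{0,k}_n$, I would define $\phi(z)$ as a length-$n$ lattice walk whose $i$-th step is determined by the role of $i$ in the cycle structure of $z$: a horizontal step $(1,0)$ if $z(i) = i$; an up step $(1,1)$ if $i$ is the smaller positive endpoint of a pair orbit; a down step $(1,-1)$ if $i$ is the larger positive endpoint of a type A orbit (i.e., $0 < z(i) < i$); and an up step if $i$ is the larger positive endpoint of a type B orbit (i.e., $z(i) < 0$ and $|z(i)| < i$). The number of non-horizontal steps is then $2k$, since each of the $k$ pair orbits contributes two positive endpoints, and the final height of the walk is $2b$ where $b$ is the number of type B orbits.

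The first verifications---that $\phi(z) \in \cD_{n,k}$---follow from atomicity. Non-negativity of the walk holds because the smaller positive endpoint of any pair orbit is processed before its larger endpoint. Horizontal steps lie on the $x$-axis because Proposition~\ref{nn-lem} forbids any fixed point from nesting inside a non-symmetric cycle, which forces every previously opened orbit to have already closed by the time a fixed point is encountered.

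The main obstacle is proving that $\phi$ is a bijection, particularly surjectivity. The naive positional scan above is not injective: for example, the three elements of $\X^{0,2}_4$ obtained by combining one type A and one type B pair orbit in disjoint, crossing, or nested configurations all produce the path $\mathrm{UUUD}$. To repair this I would refine the encoding by maintaining a stack of currently-open orbits during the scan and shifting the up step emitted at each type B larger endpoint to a different slot of the path, determined by the stack state, so that the positions of the unmatched up-steps in $\phi(z)$ record the nesting pattern of type B orbits. The inverse map then reads horizontal steps as fixed points, pairs each down step with its most recently opened matching up step to reconstruct type A orbits, and groups the remaining unmatched up-steps into type B orbits by reversing the stack rule; atomicity of the reconstructed involution is verified via the characterization $0_B(z) = 1_B(z)$ from Proposition~\ref{mem-prop}. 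Designing this stack-based refinement so that it simultaneously respects the A/B type dichotomy and the non-nesting constraints enforced by atomicity is the technical crux of the proof.
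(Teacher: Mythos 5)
There is a genuine gap here, in fact two. First, the forward map as you define it does not land in $\cD_{n,k}$. Take $z = \bar{2}\hs\bar{1}\hs 3 \in \X^{0,1}_3$, the involution with cycles $\{1,-2\}$, $\{-1,2\}$ and fixed points $\pm 3$; it is atomic by Proposition~\ref{nn-lem} (no negated points, no nested cycles). Your scan of $i=1,2,3$ emits an up step at $i=1$ (smaller positive endpoint of the pair orbit), an up step at $i=2$ (larger positive endpoint of a type B orbit), and then a horizontal step at $i=3$ sitting at height $2$, violating the requirement that horizontal steps lie on the $x$-axis. Your justification of that requirement fails exactly here: in your encoding a type B orbit never ``closes'' --- both of its positive endpoints emit up steps --- and Proposition~\ref{nn-lem} only forbids a fixed point from lying \emph{strictly inside} an arc; it does not forbid a fixed point to the right of both positive endpoints of a type B pair, and indeed $3$ lies inside neither $\{-2,1\}$ nor $\{-1,2\}$. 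Second, while your diagnosis of non-injectivity is accurate (the three elements $\bar{2}\hs\bar{1}\hs4\hs3$, $\bar{3}\hs4\hs\bar{1}\hs2$, $4\hs\bar{3}\hs\bar{2}\hs1$ of $\X^{0,2}_4$ do all produce the same path under your rule), the stack-based repair you propose is never specified, let alone shown to be well defined and invertible; by your own account this is the crux, so no bijection has actually been exhibited.

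The paper's construction avoids both difficulties by scanning the \emph{negative} half of $[\pm n]$: for $j=-n,\dots,-1$ the $i$-th step (with $j=-n+i-1$) is horizontal, up, or down according to whether $z(j)=j$, $j<z(j)$, or $z(j)<z^{-1}(j)=j$ fails, i.e.\ $z(j)<j$. Under this rule a cycle $\{a,b\}$ with $0<a<b$ or $a<b<0$, taken together with its mirror, contributes one up and one down step among the negatives, while a cycle with $a<0<b$ and its mirror contribute two up steps; a horizontal step at positive height would force a fixed point strictly inside an arc, which Proposition~\ref{nn-lem} excludes. Most importantly --- and this is the structural fact your approach misses --- no auxiliary stack is needed for injectivity: since $z$ is atomic with no symmetric cycles, its arc diagram is completely non-nesting, so the pairing of openers with closers is the unique non-nesting one and is recovered from the sorted list $a_0<\dots<a_{2k-1}$ of openers by the rainbow rule $z(a_i)=-a_{2k-1-i}$. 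The opener/closer/fixed profile on $-[n]$ therefore already determines $z$, and the inverse map can be written down explicitly.
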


\begin{proof}
By the previous lemma, it suffices to construct a bijection $\X^{0,k}_n \to \cD_{n,k}$.
Given $z \in \X^{0,k}_n$,
define $p=(p_0,p_1,\dots,p_n)$ as the path starting at 
$p_0=(0,0)$  for which the step $p_i - p_{i-1}$ is
given by $(1,0)$, $(1,1)$, or $(1,-1)$
according to whether $j=-n + i-1$ has $z(j)=j$, $j<z(j)$, or $z(j)<j$, respectively.
It follows from Proposition~\ref{nn-lem}
 that $p \in \cD_{n,k}$,
so $z \mapsto p$ gives a map $\cX^{0,k}_n \to \cD_{n,k}$.

One defines an inverse map as follows.
Fix  a path $p=(p_0,p_1,\dots,p_n) \in \cD_{n,k}$ and
let $U$ and $D$ be the respective set of indices $i \in [n]$ where $p_i-p_{i-1}=(1,-1)$
and $p_i-p_{i-1} = (1,-1)$.
Write $a_0<a_1<\dots<a_{2k-1}$ for the numbers in $\{-n+i-1 : i \in U\} \sqcup \{ n-i+1 : i \in D\}$
arranged in order, and define $z\in \I(\W_n)$ to be the unique involution that has 
$z(a_i) = -a_{2k-i}$ for $i=0,1,\dots,2k-1$
and that fixes all numbers not equal to $ a_i$ or $-a_i$ for some $i$.
Since the path $p$ remains in $\NN^2$, we have $a_i < -a_{2k-i}$ for each $i$.
An index $i \in [n]$ corresponds to a horizontal step in $p$
if and only if $-n+i-1$ and $n-i+1$ 
are fixed points of $z$; since these steps are all at height zero,
$z$ has no fixed points $b$ with $a<b<z(a)$ for any $a \in [\pm n]$.
This is enough to conclude by Proposition~\ref{nn-lem} that $z$ is atomic.
Since $z$ has  $2k$ left endpoints $i \in [ \pm n]$ with $i<z(i)$,
it follows that $z$ has no negated points and belongs to $\X^{0,k}_n$.
Moreover, it holds essentially by definition that $p\mapsto z$ is the inverse of the 
map $z \mapsto p$ described in the first paragraph.
Thus $|\X^{0,k}_n| = |\cD_{n,k}| = \binom{n}{k}$.
\end{proof}


\begin{corollary}\label{atom-cor1}
The number $a_n^0 =|\X_n^0|$ of atomic involutions in $\W_n$ with no negated points satisfies $a_n^0 = 2^{n-1}$ if $n$ is odd
and $a_n^0 = 2^{n-1} + \frac{1}{2} \binom{n}{n/2}$ if $n$ is even.
\end{corollary}

\begin{proof}
Rewrite $|\X_n^0| = \sum_r |\X^0_{n,r}| = \sum_k |\X^{0,k}_n|$ using Theorems~\ref{form-thm} and \ref{binom-thm}.
\end{proof}

The sequence $\left\{a_n^0\right\}_{n=0,1,2,\dots} = (1, 1, 3, 4, 11, 16, 42, 64,\dots)$ is \cite[A027306]{OEIS}.

\begin{corollary}\label{atom-cor2}
The number $a_n^1 = |\X_n^1|$ of atomic involutions in $\W_n$ with one negated point
is either  $a_n^1 = (n+1)2^{n-2}$ when $n$ is odd  
or $a_n^1= \frac{1}{4} (n+2)( 2^n - \binom{n}{n/2})$ when $n$ is even. 
\end{corollary}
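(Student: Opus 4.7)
The plan is to derive both parts of Corollary~\ref{atom-cor2} by stratifying $\X_n^1$ according to the radius statistic $\rho$ and applying Theorem~\ref{form-thm} termwise, then collapsing the resulting sum using Corollary~\ref{atom-cor1}. By Lemma~\ref{radius-bound-lem} we have $\X_n^1 = \bigsqcup_{r=0}^{\lfloor n/2\rfloor} \X_{n,r}^1$, so in every case $a_n^1 = \sum_{r=0}^{\lfloor n/2\rfloor} |\X_{n,r}^1|$.

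For part (a), assume $n$ is odd. Theorem~\ref{form-thm}(b) gives $|\X_{n,r}^1| = \tfrac{1}{2}(n+1)|\X_{n,r}^0|$ for every $r$, so summing yields
\[
a_n^1 \;=\; \tfrac{1}{2}(n+1)\sum_{r=0}^{\lfloor n/2\rfloor} |\X_{n,r}^0| \;=\; \tfrac{1}{2}(n+1)\,a_n^0.
\]
Since $a_n^0 = 2^{n-1}$ by Corollary~\ref{atom-cor1} (as $n$ is odd), this equals $(n+1)2^{n-2}$, as required.

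For part (b), assume $n$ is even. The two clauses of Theorem~\ref{form-thm}(c) treat $r=0$ and $r\geq 1$ differently, so we split the sum accordingly:
\[
a_n^1 \;=\; \tfrac{1}{2}(n+2)\,|\X_{n+1,1}^0| \;+\; \tfrac{1}{4}(n+2)\sum_{r=1}^{\lfloor n/2\rfloor} |\X_{n+1,r+1}^0|.
\]
The remaining sum telescopes when combined with the isolated $r=0$ term: writing $\sum_{r\geq 1}|\X_{n+1,r+1}^0| = a_{n+1}^0 - |\X_{n+1,0}^0| - |\X_{n+1,1}^0|$, we collect coefficients and obtain
\[
a_n^1 \;=\; \tfrac{1}{4}(n+2)\Bigl(a_{n+1}^0 + |\X_{n+1,1}^0| - |\X_{n+1,0}^0|\Bigr).
\]
Since $n+1$ is odd, Corollary~\ref{atom-cor1} gives $a_{n+1}^0 = 2^n$, while Theorem~\ref{form-thm}(a) gives $|\X_{n+1,0}^0| = \binom{n+1}{n/2+1}$ and $|\X_{n+1,1}^0| = \binom{n}{n/2+1}$. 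By Pascal's identity, $|\X_{n+1,0}^0| - |\X_{n+1,1}^0| = \binom{n}{n/2}$, so the bracket reduces to $2^n - \binom{n}{n/2}$ and we get the desired formula $a_n^1 = \tfrac{1}{4}(n+2)(2^n - \binom{n}{n/2})$.

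The only slightly nontrivial step is keeping the binomial bookkeeping straight in part (b); otherwise both identities follow from direct substitution into Theorem~\ref{form-thm} followed by a one-line application of Corollary~\ref{atom-cor1}, so no genuine obstacle is expected.
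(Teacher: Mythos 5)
Your proposal is correct and follows essentially the same route as the paper's proof: stratify $\X_n^1$ by radius, apply Theorem~\ref{form-thm}(b) (odd case) or the two clauses of Theorem~\ref{form-thm}(c) (even case), and then substitute Corollary~\ref{atom-cor1} together with Pascal's identity $\binom{n+1}{n/2+1} = \binom{n}{n/2+1} + \binom{n}{n/2}$. The only difference is that you spell out the coefficient bookkeeping that the paper leaves implicit.
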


\begin{proof}
Theorem~\ref{form-thm}
 implies that if  $n$ is odd then
 $|\X^1_n| = \sum_r |\X^1_{n,r}|=\frac{1}{2}(n+1)|\X_n^0|$ and
 if $n$ is even then
 $|\X^1_n|=\frac{1}{4}(n+2) \(|\X^0_{n+1}|  - |\X^0_{n+1,0}| + |\X^0_{n+1,1}|\)
=\frac{1}{4}(n+2) \(|\X^0_{n+1}|  -\binom{n+1}{n/2+1}+ \binom{n}{n/2+1}\)$.
The corollary follows by substituting Corollary~\ref{atom-cor1}
and the identity $\binom{n+1}{n/2+1} = \binom{n}{n/2+1} + \binom{n}{n/2}$.
\end{proof}


\begin{corollary}\label{atom-cor}
The total number $a_n =a_n^0 + a_n^1$ of involutions $z \in \W_n$ with $|\cA(z)| = 1$ is 
either $a_n = (n+3) 2^{n-2}$ when $n$ is odd or $a_n = (n+4)2^{n-2} -  \frac{n}{4} \binom{n}{n/2} $ when $n$ is even.
%
\end{corollary}

The even-indexed terms of $\left\{a_n\right\}_{n=0,1,2,\dots} = (1,2,5,12,26,64,130,320,\dots)$
form the sequence \cite[A003583]{OEIS}. The odd-indexed terms are a subsequence of \cite[A045623]{OEIS}.

Recall that $\cR(w)$ is the set of reduced words for $w \in \I(\W_n)$ and  
 $\iR(z) = \bigsqcup_{w \in \cA(z)} \cR(w)$.
Suppose $\lambda $ is an integer partition and 
$\mu = (\mu_1 > \dots > \mu_l > 0)$ is a strict partition.
%
Let $f^\lambda$ be the number of standard tableaux of shape $\lambda$.
Let $g^\mu$ be the number of \emph{(unprimed) standard shifted tableaux} of shape $\mu$,
by which we mean
arrangements of $1,2,\dots,|\mu|$ in the \emph{shifted diagram}
$\{ (i,i+j-1): i \in [l],\ j\in [\mu_i]\}$
such that rows and columns are increasing.
 
  
\begin{corollary}\label{gamma-prop}
Let $p = \lfloor \frac{n+1}{2} \rfloor$, $q = \lceil \frac{n+1}{2}\rceil$,
and $\gamma_n = g^{\BC}_{n,n} := \bar{n} \cdots \bar{2}\hs\bar{1} \in \I(\W_n)$.
Then 
$|\iR(\gamma_n)| = f^{\lambda} = g^{\mu}$ 
for $\lambda = p^q = (p,p,\dots,p)$ and $\mu= (n,n-2,n-4,\dots)$.
\end{corollary}


\begin{proof}
By Lemma~\ref{0isatom-lem} and Proposition~\ref{nn-lem}, $\gamma_n$ is atomic
and its unique atom $v =0_B(z)^{-1}$ is either 
$v=\bar{n}\hs\bar{(n-2)} \cdots \bar{4}\hs\bar{2}135\cdots(n-1)$
or
$v=\bar{n}\hs\bar{(n-2)}\cdots \bar{3}\hs\bar{1}246\cdots (n-1).$
We have 
 $|\cR(v)| = g^\mu$ by \cite[Corollaries 3.3 and 4.4]{TKLam}. 
The equality $f^{\lambda} = g^{\mu}$ is a special case of \cite[Proposition 8.11]{HaimanMixed}.
\end{proof}

The sequences $\{|\iR(\gamma_n)|\}_{n=1,3,5,\dots}= (1,2, 42, 24024,701149020,\dots)$ 
and 
$\{|\iR(\gamma_n)|\}_{n=2,4,6,\dots} = (1, 5, 462, 1662804,\dots)$
are \cite[A039622]{OEIS} and \cite[A060855]{OEIS}. 

\newpage
\appendix
\section{Index of symbols}
\label{not-sect}

The table below lists our non-standard notations, with references to definitions where relevant.

\def\skip{\\[-4pt]}

\begin{center}
{\begin{tabular}{l | l r}
\text{Symbol} & \text{Meaning} & 
 \\
 \hline
  $S_X$ & Group of permutations of a finite set $X$ & \S\ref{nest-sect} \\
$\W_n$ & Group of signed permutations of $[\pm n] = \{\pm 1,\dots,\pm n\}$ & \S\ref{intro2-sect} \\
$s_i$, $t_i$ &  $s_i=(i,i+1)$ and $t_i =(-i-1,-i)(i,i+1)$, except $t_0 =(-1,1)$  \\
$\circ$ & Demazure product on a Coxeter group $W$ & \S\ref{dem-subsect} \\
$<_\I$ & Involution weak order in $\I(W)$, defined by $x \leq_\I w^{-1} \circ x \circ w$ & \S\ref{intro2-sect} \\
\skip
$\ell$, $\ell'$, $\ellhat$ & Coxeter, absolute, and involution length maps; $\ellhat = \frac{1}{2}(\ell + \ell')$ & \S\ref{dem-subsect} \\
$\ell_0$ & Map $\W_n\to \NN$ with $w \mapsto | \{ i \in [n] : w(i)<0\}|$ & \S\ref{sogmed-sect}  \\
$\ell'_0$ & Map $\W_n\to \NN$ counting cycles preserved by negation & \S\ref{sogmed-sect} \\
\skip
$\Cyc_A(z)$ & The set $\{ (a,b) \in X\times X : a \leq b = z(a)\}$ for $z \in \I(S_X)$ \\
$\Pair(z)$ & The set $ \{ (a,b) \in [\pm n]\times [n] : |a| < z(a) = b\}$ for $z \in \I(\W_n)$ \\
$\Neg(z)$ & The set $\{ i \in [n] : z(i) = -i\}$ for $z \in \I(\W_n)$ \\ 
$\Fix(z)$ & The set $\{ i \in [n] : z(i) =i\}$ for $z \in \I(\W_n)$ \\
\skip
$\Des(w)$ & The set of pairs $(w_i,w_{i+1})$ with $w_i>w_{i+1}$ for a word $w$ & \S\ref{nest-sect}  \\
$\NDes(w)$ & Nested descent set of an inverse atom $w$ for $z \in \I(\W_n)$ & \eqref{ndes-eq} \\
$\NFix(w)$ & Nested fixed points of an inverse atom $w$ for $z \in \I(\W_n)$ & Def.~\ref{recdes-thmdef} \\
$\NNeg(w)$ & Nested negated points of an inverse atom $w$ for $z \in \I(\W_n)$ & Def.~\ref{recdes-thmdef} \\
\skip
$\NCSM(z)$ & Noncrossing symmetric perfect matchings in $\{ i   : z(i) = -i\}$ 
& \S\ref{shapes-sect} \\
$\sh(w)$ & Shape of an inverse atom $w \in \cA(z)^{-1}$ for $z\in \I(\W_n)$ & \S\ref{shapes-sect}  \\
$\Pair(z,M)$  & Variant of $\Pair(z,M)$ for $z\in \I(\W_n)$ and $M \in \NCSM(z)$  & \eqref{neg-pair-cyc-b-eq} \\
$\Neg(z,M)$  & Variant of $\Neg(z,M)$ for $z\in \I(\W_n)$ and $M \in \NCSM(z)$ & \eqref{neg-pair-cyc-b-eq} \\
$\Cyc_B(z,M)$  & A certain set of pairs for $z\in \I(\W_n)$ and $M \in \NCSM(z)$  & \eqref{neg-pair-cyc-b-eq} \\
\skip
$\vartriangleleft_A$, $\vartriangleleft_B$ & Covering relations for atomic orders
& \eqref{<A-eq} \\
$\vartriangleleft^+_B$, $\scov$, $\bcov$ & Stronger forms of $\vartriangleleft_B$ & \eqref{black},  \eqref{bcov-eq} \\ 
$<_A$ & The transitive closure of $\vartriangleleft_A$ \\
$<_B$, $\ll_B$ & Transitive closures of ($\vartriangleleft_A$ and $\vartriangleleft_B$) and ($\vartriangleleft_A$ and $\scov$) \\
$\lll_B$ & The transitive closure of $\vartriangleleft_A$, $\scov$, and $\bcov$ \\
$\sim_A$, $\sim_B$ & Symmetric closures of $<_A$ and $<_B$ \\
$\approx_A$, $\approx_B$ & Equivalence relations on inverse Hecke atoms & \S\ref{hecke-sect} \\
\skip
$0_A(z)$, $1_A(z)$  & Extremal inverse atoms of $z \in \I(S_n)$ under $<_A$ & \eqref{01A-eq} \\ 
$0_B(z,M)$  & Minimal inverse atom for $z \in \I(\W_n)$ under $<_A$  & \eqref{01B-eq} \\
$1_B(z,M)$  & Maximal inverse atom for $z \in \I(\W_n)$ under $<_A$  & \eqref{01B-eq} \\
$0_B(z)$ & Minimum inverse atom for $z \in \I(\W_n)$ under $<_B$ & \eqref{0B-eq} \\
$1_B(z)$ & Maximum inverse atom for $ \in \I(\W_n)$ under $\lll_B$ & \eqref{1B-eq} \\
\skip
$y^{\BC}_{k,n}$ & The signed involution $(-1,1)(-2,2)\cdots(-k,k) \in \I(\W_n)$ & \eqref{ybc-eq} \\
$\yfpf_{k,n}$ & When $n-k$ is even, this is $y^{\BC}_{n,k} \cdot t_{k+1}\cdot t_{k+3} \cdots t_{n-1} \in \I(\W_n)$ & \eqref{ybc-eq-fpf} 
\end{tabular}}
\end{center}

\end{document}